\documentclass[11pt,a4paper]{amsart}

\usepackage[utf8]{inputenc}
\usepackage[T2A,T1]{fontenc}
\usepackage[russian,english]{babel}
\usepackage{microtype}
\usepackage{amsmath,amssymb,amsthm,mathtools,mathrsfs}
\usepackage{enumitem}
\usepackage{booktabs}
\usepackage{xcolor}
\usepackage[
  colorlinks=true,
  linkcolor=black,
  citecolor=black,
  urlcolor=black
]{hyperref}

\allowdisplaybreaks
\numberwithin{equation}{section}
\theoremstyle{plain}
\newtheorem{theorem}{Theorem}[section]
\newtheorem{proposition}[theorem]{Proposition}
\newtheorem{lemma}[theorem]{Lemma}
\newtheorem{corollary}[theorem]{Corollary}
\newtheorem{introtheorem}{Theorem}

\theoremstyle{definition}
\newtheorem{definition}[theorem]{Definition}
\newtheorem{example}[theorem]{Example}
\newtheorem{question}[theorem]{Question}
\theoremstyle{remark}
\newtheorem{remark}[theorem]{Remark}

\DeclareMathOperator{\adj}{adj}
\DeclareMathOperator{\Area}{Area}
\DeclareMathOperator{\Span}{span}
\DeclareMathOperator{\tr}{tr}
\DeclareMathOperator{\Gr}{Gr}
\DeclareMathOperator{\gr}{gr}
\DeclareMathOperator{\Tors}{Tors}
\DeclareMathOperator{\tors}{tors}
\DeclareMathOperator{\QGr}{QGr}
\DeclareMathOperator{\qgr}{qgr}
\DeclareMathOperator{\qprf}{qprf}
\DeclareMathOperator{\Perf}{Perf}
\DeclareMathOperator{\coh}{coh}
\DeclareMathOperator{\Qcoh}{Qcoh}
\newcommand{\PerfDG}[1]{\operatorname{Perf}_{\mathrm{dg}}(#1)}
\newcommand{\kk}{\Bbbk}

\newcommand{\GL}{\operatorname{GL}}

\newcommand{\PSL}{\operatorname{PSL}}
\newcommand{\SL}{\operatorname{SL}}

\title[Non-commutative Fano varieties and minifolds]
{Non-commutative Fano varieties and three-dimensional minifolds}
\author{Konstantin Loginov}
\address{Konstantin Loginov\\
Steklov Mathematical Institute, Moscow, Russia}
\email{loginov@mi-ras.ru}
\author{Dmitri Orlov}
\address{Dmitri Orlov\\
Steklov Mathematical Institute, Moscow, Russia}
\email{orlov@mi-ras.ru}
\date{}

\begin{document}

\begin{abstract}
We introduce notions of non-commutative Fano varieties and
non-commutative minifolds, and study the latter in dimension three.
We prove that the Euler matrices of full exceptional collections
generating geometric helices on these minifolds form five
mutation classes, distinguished by their Euler degrees
\(22,40,54,64,72.\)
The first four classes are realized by (commutative) Fano threefolds
\(X_{22},V_5,Q^3,\mathbb P^3,\) respectively.
The fifth is realized by the non-commutative family \(M_{72}\)
and cannot be realized by a full exceptional collection on
a smooth projective threefold.\end{abstract}

\subjclass[2020]{14F08, 14J45, 20H10, 30F60}
\keywords{Exceptional collections; Fano threefolds; Euler matrices; character varieties; modular groups}

\maketitle

\tableofcontents

\section*{Introduction}
\begingroup
\raggedbottom
\hypersetup{linkcolor=black,citecolor=black,urlcolor=black}

Throughout the paper, \(\kk\) denotes an algebraically closed field of
characteristic zero, unless otherwise specified.
A smooth projective variety \(X\) over \(\kk\) can be studied
through the bounded derived category \(D^b(\operatorname{coh}X)\) of
coherent sheaves. Since \(X\) is smooth, this category is equivalent to
the category \(\Perf(X)\) of perfect complexes. It admits a differential
graded (DG) enhancement \(\PerfDG{X},\) in which morphisms form complexes
and composition is compatible with their differentials. Taking the
zeroth cohomology of these morphism complexes gives the morphisms of
\(\Perf(X).\)

Non-commutative algebraic geometry extends this approach to DG categories
that need not arise from commutative varieties. In this paper, a
non-commutative scheme over \(\kk\) is a \(\kk\)-linear DG category of
the form \(\PerfDG{\mathscr R},\) the DG category of perfect right
modules over a DG algebra \(\mathscr R.\) We require \(\mathscr R\) to be
cohomologically bounded, meaning that \(H^m(\mathscr R)=0\) for all but
finitely many \(m.\)
Some notions of commutative algebraic geometry, such as smoothness
and properness, extend to non-commutative schemes,
see Definition~\ref{def:smooth-proper-nc-scheme}.

Let \(X\) be a smooth Fano variety of dimension \(n\) over
\(\kk.\)  If \(S_X\) is its Serre
functor, then \(T_X=S_X^{-1}[n]\) is tensoring with
the anticanonical line bundle \(\omega_X^{-1}.\) One can consider its anticanonical algebra
\(R(X,\mathcal O_X)=\bigoplus_{p\geq0}
\operatorname{Hom}(\mathcal O_X,T_X^p\mathcal O_X).\)  Kodaira vanishing gives
\(\operatorname{Hom}(\mathcal O_X,T_X^p\mathcal O_X[k])=0\) for
\(p\geq0\) and \(k\ne0.\)
More generally, for any object \(E\) of a \(\kk\)-linear triangulated
category \(\mathcal D\) and any \(\kk\)-linear autoequivalence \(T\) of
\(\mathcal D,\) one can consider the graded algebra
\(R=\bigoplus_{p\geq0}\operatorname{Hom}_{\mathcal D}(E,T^pE).\)
In the commutative case, Serre's theorem gives
\(\Qcoh X\simeq\QGr R(X,\mathcal O_X),\) see \cite{Serre1955}.
Here \(\QGr R\) is the quotient of the category of graded right
\(R\)-modules by the subcategory of torsion modules.

We propose a definition of a non-commutative Fano variety that
generalizes the above properties of smooth Fano varieties,
see Definition~\ref{def:orlov-fano-variety}. Let \(\mathsf X\) be a smooth proper
non-commutative scheme. Then the category \(\Perf(\mathsf X)\)
admits a Serre functor \(S.\) We fix an integer \(n\geq0\) and put
\(T=S^{-1}[n].\) We require an exceptional object
\(E\in\Perf(\mathsf X)\) satisfying
\(\operatorname{Hom}(E,T^pE[k])=0\) for \(p\geq0\) and \(k\ne0.\)
We also require an equivalence between the unbounded derived category
\(\mathcal D(\mathsf X)\) of quasi-coherent sheaves on \(\mathsf X\) and the unbounded derived category
\(\mathcal D(\QGr R).\) On \(\Perf(\mathsf X),\) this equivalence must be given by
the graded-Hom functor specified in
Definition~\ref{def:orlov-fano-variety}\textup{(ii)}.
A non-commutative Fano variety is \emph{of geometric type} if,
for some such \(E,\) we have that
\(\dim_{\kk}\operatorname{Hom}(E,T^pE)\) is given by a polynomial in \(p\) for every
\(p\geq0,\) see Definition~\ref{def:geometric-fano}.  The degree of this polynomial is called the Hilbert dimension of the non-commutative Fano variety with respect to \(E.\)
In general, in the non-commutative case, polynomial growth does not follow
from the other conditions, see
Example~\ref{ex:three-kronecker}.
Every smooth Fano variety over \(\kk\) is a non-commutative
Fano variety of geometric type,
see Proposition~\ref{prop:commutative-fano-is-geometric}.

Let \(\mathcal D=\Perf(\mathsf X),\) where \(\mathsf X\) is a smooth proper
non-commutative scheme. An object \(E\in\mathcal D\) is \emph{exceptional}
if its endomorphism complex has cohomology \(\kk\) in degree zero and
zero in every other degree. An \emph{exceptional collection} in
\(\mathcal D\) is an ordered sequence \(\mathcal E=(E_1,\ldots,E_s)\) of exceptional
objects such that \(\operatorname{Hom}_{\mathcal D}(E_j,E_i[m])=0\)
for \(j>i\) and every \(m\in\mathbb Z.\) It is \emph{full} if its objects
generate \(\mathcal D\) under taking shifts, cones, and direct summands, see
Definition~\ref{def:exceptional-strong}.

Left and right \emph{mutations} replace a pair of exceptional objects
\((E,F)\) by \((L_EF,E)\) and \((F,R_FE),\) respectively.
Mutations satisfy the braid relations. Thus, the
braid group \(B_s\) on \(s\) strands acts on exceptional collections of length \(s\) in
\(\mathcal D.\)

We study minifolds and their non-commutative analogues. The notion of a
minifold was introduced in \cite[Definition~2.4]{GKMS2013}. Recall that
an \(n\)-dimensional commutative minifold is a smooth projective variety
\(X\) such that \(D^b(\operatorname{coh}X)\) admits a full exceptional
collection of the minimal possible length \(n+1,\) see Definition~\ref{def:usual-minifold}.

Projective spaces \(\mathbb P^n\) and smooth odd-dimensional quadrics
over \(\kk\) are examples of minifolds \cite{Beilinson1978,Kapranov1988}.
In dimension three, every minifold is isomorphic to
\(\mathbb P^3,\) \(Q^3,\) \(V_5,\) or a member of the family \(X_{22},\) see
\cite[Theorem~1.1(2)]{GKMS2013}.
Here \(Q^3\subset\mathbb P^4\) is a smooth quadric hypersurface,
\(V_5=\operatorname{Gr}(2,5)\cap\mathbb P^6\subset\mathbb P^9\)
is a smooth linear section of the Grassmannian in its Pl\"ucker embedding, and \(X_{22}\)
denotes the Fano threefolds of Picard rank one and
anticanonical degree \(22.\)

We propose the following definition. An \emph{\(n\)-dimensional
non-commutative minifold} is a smooth proper non-commutative scheme
\(\mathsf X\) such that
\(\mathcal D=\Perf(\mathsf X)\) admits a full exceptional collection of
length \(n+1\) generating a geometric helix, and \(T=S^{-1}[n],\) where
\(S\) is the Serre functor of \(\mathcal D,\) acts maximally unipotently
on \(K_0(\mathcal D)\otimes\mathbb Q,\)
see Definition~\ref{def:orlov-minifold}. Here a \emph{helix} is an
infinite sequence in \(\mathcal D\) obtained by extending an exceptional
collection through mutations in both directions. It is \emph{geometric}
if the morphism complexes from earlier to later terms have cohomology
only in degree zero, see Definition~\ref{def:geometric-helix} and
\cite{BondalPolishchuk1994}. Such a scheme is a non-commutative
Fano variety of geometric type, and its Hilbert and Serre dimensions are
both equal to \(n,\) see Propositions~\ref{prop:geometric-collection-fano}
and~\ref{prop:minifold-hilbert-degree}.
If a commutative \(n\)-dimensional minifold \(X\) admits a full exceptional
collection of length \(n+1\) generating a geometric helix, then
\(\PerfDG{X}\) satisfies our definition, see
Proposition~\ref{prop:strong-is-noncommutative-minifold}.
In particular, this applies to all three-dimensional minifolds,
see the discussion following
Theorem~\ref{thm:threefold-minifolds}.\par

Let \(\mathsf X\) be a three-dimensional non-commutative minifold
and put \(\mathcal D=\Perf(\mathsf X).\) For objects \(E,F\in\mathcal D,\)
put \(\chi(E,F)=\sum_{m\in\mathbb Z}(-1)^m
\dim_{\kk}\operatorname{Hom}_{\mathcal D}(E,F[m]),\)
where the sum is finite by properness. Additivity in distinguished
triangles makes \(\chi\) an integral bilinear form on the Grothendieck
group \(K_0(\mathcal D),\) called the \emph{Euler form}, see
Definition~\ref{def:euler-form}. The group \(K_0(\mathcal D)\) together
with this form is preserved by \(\kk\)-linear exact equivalences and
provides a numerical invariant of \(\mathcal D.\)

For a full exceptional collection \(\mathcal E=(E_1,E_2,E_3,E_4)\) in \(\mathcal D,\)
the classes \([E_1],\ldots,[E_4]\) form a basis of \(K_0(\mathcal D).\) Mutations change this basis
and hence the matrix representing the Euler form.
The \emph{Euler matrix} of the exceptional collection \(\mathcal E\) has the form
\[
 A=A(a,b,c,d,e,f)=
 \begin{pmatrix}
 1&a&e&d\\
 0&1&b&f\\
 0&0&1&c\\
 0&0&0&1
 \end{pmatrix},
 \qquad A_{ij}=\chi(E_i,E_j).
\]

We say that an Euler matrix is \emph{positive} if its six parameters $a,b,c,d,e,f$ are at least three,
every \(3\times3\) minor of \(A+A^{\mathsf t}\) is positive, and
\(C=-A^{-1}A^{\mathsf t}\) is maximally unipotent,
see Definition~\ref{def:positive-euler-matrix}.
The following positivity result follows from
Theorem~\ref{thm:geometric-euler-positivity}.

\begin{introtheorem}\label{thm:intro-positivity}
Let \(\mathsf X\) be a three-dimensional non-commutative minifold, and let
\((E_i)_{i\in\mathbb Z}\) be a full geometric helix of period four in
\(\Perf(\mathsf X).\) Then the Euler matrix of the exceptional collection
\((E_i,E_{i+1},E_{i+2},E_{i+3})\) is positive for every \(i\in\mathbb Z.\)
\end{introtheorem}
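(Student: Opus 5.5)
The plan is to reduce all three conditions of Definition~\ref{def:positive-euler-matrix} to properties of the helix that are already available. The first reduction uses geometricity. By Definition~\ref{def:geometric-helix}, for \(i<j\) the complex \(\operatorname{Hom}^\bullet(E_i,E_j)\) has cohomology only in degree \(0\). Hence \(\chi(E_i,E_j)=\dim_{\kk}\operatorname{Hom}(E_i,E_j)\geq0\) for all \(i<j\), so every entry of the Euler matrix of a window \((E_i,\dots,E_{i+3})\) is a nonnegative Hom-dimension.

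Next I would handle the independence of the window. Periodicity of the helix, \(E_{j+4}=TE_j\) with \(T=S^{-1}[3]\), shows that the windows starting at \(i\) and \(i+1\) are related by one step of the helix operation. Consequently the matrices \(C=-A^{-1}A^{\mathsf t}\) attached to the two windows are conjugate. Indeed, \(-A^{-1}A^{\mathsf t}\) is the matrix of the action of \(S^{-1}[3]\), or its inverse, on \(K_0(\mathcal D)\) in the basis \([E_i],\dots,[E_{i+3}]\), by the standard formula for the Serre functor in terms of the Euler form. Maximal unipotence of \(C\) is then exactly the requirement in Definition~\ref{def:orlov-minifold} that \(T\) act maximally unipotently on \(K_0(\mathcal D)\otimes\mathbb Q\). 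Consequently this condition holds for every \(i\) at once.

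The inequalities come next. Writing out \(\det(t-C)=(t-1)^4\) gives polynomial identities in \(a,\dots,f\); the trace identity alone already gives one such relation of Markov type. I would combine these identities with nonnegativity of Hom-dimensions of the \emph{whole} helix, not just of one window. The sequence \(h_m=\dim\operatorname{Hom}(E_0,E_m)\) satisfies the linear recurrence with characteristic polynomial \((t-1)^4\). By Proposition~\ref{prop:minifold-hilbert-degree} it is a polynomial in \(m\) of degree exactly \(3\), and it is nonnegative for all \(m\geq0\).

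For the bound that each parameter is at least three, I would argue by contradiction. If \(\operatorname{Hom}(E_i,E_{i+1})\) had dimension \(0\), \(1\), or \(2\), the pair \((E_i,E_{i+1})\) would generate an orthogonal sum or a Kronecker-type subcategory. Mutating through it along the helix then keeps the Hom-dimensions bounded by a linear or oscillating recurrence. This is incompatible with cubic growth, or it forces some \(h_m<0\), or it contradicts \((C-1)^3\neq0\). The same argument is then applied to the parameters \(e,f,d\) attached to non-adjacent positions, using the shifted windows.

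For the \(3\times3\) minors of \(A+A^{\mathsf t}\), take the triple \((E_i,E_{i+1},E_{i+2})\). The corresponding minor is
\[
8+2abe-2(a^2+b^2+e^2).
\]
I would show that it is positive using the left mutation \(L_{E_{i+1}}E_{i+2}\). In a geometric helix the evaluation map \(\operatorname{Hom}(E_{i+1},E_{i+2})\otimes E_{i+1}\to E_{i+2}\) is well behaved, so the mutated object is again concentrated in one degree relative to \(E_i\). Its Hom-dimension from \(E_i\) equals \(ab-e\), possibly after a shift, and is therefore nonnegative, and similarly for the other mutations. Feeding these inequalities and the parameter bounds \(\geq3\) into the minor, and doing the same for the other triples and for the non-principal minors, should give strict positivity.

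I expect the bound \(a,\dots,f\geq3\) to be the main obstacle, in particular excluding the value \(2\). There the Kronecker block has affine rather than hyperbolic growth, and only a careful use of the cubic growth of \(h_m\) together with the unipotence identities will rule it out. The first thing I would try is to compute \(h_m\) explicitly from the recurrence when \(a=2\) and find a negative value or a contradiction with degree three.
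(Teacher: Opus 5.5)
Your handling of \(\mathrm{(MU)}\) is fine. Neither inequality condition is actually established, though, and the mechanisms you propose would not work.

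For \(a,\dots,f\ge3\), cubic growth is a red herring. This bound holds for every full geometric helix of period four, with no unipotence assumption. The growth of the whole helix is cubic by hypothesis whatever a single parameter is. Moreover, a pair with two-dimensional \(\operatorname{Hom}\), like \((\mathcal O,\mathcal O(1))\) on \(\mathbb P^1\), is perfectly compatible with geometricity. Also, \(\dim\operatorname{Hom}(E_0,E_m)\) is polynomial only along each residue class \(m=r+4k\). It does not satisfy a recurrence in \(m\) with characteristic polynomial \((t-1)^4\). And shifting windows never makes \(d,e,f\) adjacent.

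The missing idea is to pass to sub-triples:
\begin{itemize}
\item By \cite[Proposition~2.6]{BondalPolishchuk1994}, each \((E_i,E_j,E_k)\) with \(i<j<k\) generates a geometric helix of period three in its own subcategory.
\item Geometricity survives the \(B_3\)-action.
\item Forward \(\operatorname{Hom}\)s in a geometric helix are non-zero (Lemma~\ref{lem:nonzero-forward-homs}, which rests on generation in degree one).
\end{itemize}
Hence every triple in the \(B_3\)-orbit has positive entries, which excludes the entry \(1\). A Markov-type descent to a triple of minimal sum gives \(x\le y\le z\le xy/2\). If \(x=2\), this forces a triple \((2,r,r)\) whose helix has \(\chi(F_0,F_5)=r-2r+r=0\), which is impossible. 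So \(x\ge3\), which gives \(\Delta\ge4\) on the whole orbit. Then \(\Delta(2,q,s)=-(q-s)^2\) excludes the entry \(2\).

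This descent over the whole orbit is also what bounds the principal minors \(2\Delta(u,v,t)\). One mutation step is not enough: \((3,8,21)\) has all entries and all of \(uv-t,\ ut-v,\ vt-u\) at least \(3\), yet \(\Delta(3,8,21)=-6\).

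The more serious gap concerns the non-principal minors \(D_{ij}\), \(i\ne j\). They are not Gram determinants of any sub-collection, and nothing in your sketch reaches them. Here \(\mathrm{(MU)}\) is essential:
\begin{itemize}
\item It forces \(\operatorname{rank}B=3\), so \(\adj(B)=\lambda pp^t\) with \(p\) spanning \(\ker B\).
\item The principal minors give \(\lambda>0\) and all \(p_i\ne0\).
\item The identity \((T-I)^3=\adj(B)A^t\) gives \(\chi(u,T^mv)=\tfrac{\lambda}{6}(u^tAp)(v^tAp)m^3+O(m^2)\).
\end{itemize}
Apply this to the right dual collection \((F_4,\dots,F_1)\). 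It also generates a geometric helix, and it satisfies \(\chi(F_i,\pi)=(-1)^ip_i\) for \(\pi=\sum_ip_i[E_i]\). For \(m\ge1\) we have \(\chi(F_i,T^mF_j)=\dim\operatorname{Hom}(F_i,T^mF_j)>0\). So the leading coefficient \(D_{ij}/6=(-1)^{i+j}\lambda p_ip_j/6\) is non-negative, and hence positive since it is non-zero.

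Finally, the strict inequality \(\Delta>4\) in Definition~\ref{def:positive-euler-matrix}\textup{(iv)} needs one more arithmetic step. If \(\Delta(u,v,t)=4\), then \(3\mid u,v,t\), hence \(3\mid\Phi(A)\), contradicting \(\Phi(A)^2=16\).
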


The braid group \(B_4\) acts on Euler matrices by mutations, see
Lemma~\ref{lem:coordinate-mutations}. This action preserves positivity, see
Proposition~\ref{prop:mutation-positivity}.

For a positive Euler matrix, put \(B=A+A^{\mathsf t}.\)  Then \(B\) has
rank three.  If \(p\) is a primitive generator of
\(\ker B\cap\mathbb Z^4,\) then
\(\adj(B)=D(A)pp^{\mathsf t}\) for a positive even integer \(D(A)\) where \(\adj(B)\) denotes the adjoint of \(B.\)
We call \(D(A)\) the \emph{Euler degree}.  It is invariant under mutations, see Lemma~\ref{lem:formal-mutation-invariants}.
For the four commutative minifolds \(X,\) it equals the anticanonical
degree \((-K_X)^3,\) see Proposition~\ref{prop:strong-minifold-euler-degree}.
\begin{samepage}
Our main result follows from Theorem~\ref{thm:main-reduction},
Proposition~\ref{prop:strong-minifold-euler-degree}, and the realizations in
Subsections~\ref{subsec:commutative-minifolds} and~\ref{subsec:m72-construction}.

\begin{introtheorem}\label{thm:intro-classification}
The action of the braid group \(B_4\) on positive \(4\times4\) Euler
matrices has five orbits. The Euler degree distinguishes these orbits.
The table lists the Euler degree, a representative, and a family of
three-dimensional non-commutative minifolds realizing each orbit:
\[
\begin{array}{c|c|c}
D(A)&(a,b,c,d,e,f)&\textnormal{family of minifolds}\\
\hline
22&(4,4,7,7,3,20)&X_{22}\\
40&(5,5,5,5,3,18)&V_5\\
54&(4,5,5,4,4,11)&Q^3\\
64&(4,4,4,4,6,6)&\mathbb P^3\\
72&(3,6,3,6,7,7)&M_{72}.
\end{array}
\]
\end{introtheorem}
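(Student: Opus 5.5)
Structurally, the proof has three parts: a finiteness-and-reduction argument showing that every positive Euler matrix is in the $B_4$-orbit of one of finitely many \emph{reduced} matrices; a classification of those reduced matrices into five orbits; and realizations of each orbit. The Euler degree $D(A)$ is mutation invariant by Lemma~\ref{lem:formal-mutation-invariants}. A direct computation of $\adj(A+A^{\mathsf t})$ for the five listed representatives should give $22,40,54,64,72$. Since these values are distinct, the five representatives lie in pairwise different orbits. Positivity is preserved under mutations by Proposition~\ref{prop:mutation-positivity}, so the action is well defined on positive matrices.

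For the reduction, I will use a height function, for instance $h(A)=a+b+c+d+e+f$, or the sum of squares of the entries. Using the explicit coordinate formulas of Lemma~\ref{lem:coordinate-mutations}, a mutation at position $(1,2)$ replaces $e$ by $ab-e$ (or $b$-type analogues), and similarly for the other generators, in the manner of Markov-type equations. I will call a positive matrix \emph{reduced} if no single generator mutation decreases $h$. Since $h$ is a positive integer, descent terminates, so every orbit contains a reduced matrix.

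The heart of the proof is to bound reduced matrices. The conditions that each of $a,\dots,f$ is at least $3$, that the $3\times3$ minors of $B$ are positive, and that $C$ is maximally unipotent give polynomial identities: $\det B=0$, and $\operatorname{tr}$ of the powers of $C$ match those of a unipotent matrix. These identities tie the parameters together, and being reduced gives inequalities of the form $e\le ab/2$, $f\le bc/2$, $d\le$ (the analogous expression), and so on. Combining these inequalities with the positivity of the minors, I expect to prove $D(A)\le 72$ and to bound all the parameters by an explicit constant. This leaves a finite list of reduced positive matrices.

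I then enumerate that list and show, by explicit chains of mutations, that each reduced matrix of a given degree is equivalent to the tabulated representative; in particular, the degrees that occur are only $22,40,54,64,72$. Finally, realizability is taken from Proposition~\ref{prop:strong-minifold-euler-degree} and the constructions of Subsections~\ref{subsec:commutative-minifolds} and~\ref{subsec:m72-construction}. For $X_{22},V_5,Q^3,\mathbb P^3$ one uses the known full strong collections of length four; for $M_{72}$ one uses the non-commutative construction.

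The main obstacle is the bounding step: extracting a uniform bound on the parameters of reduced matrices from the maximal unipotency condition, which is nonlinear. I would first try to express unipotency as two polynomial equations in $a,\dots,f$ and handle them by cases on which parameter is largest, using the reduction inequality for the mutation that would lower that parameter. If that does not close the argument, I would fall back on a computer-assisted enumeration over the resulting bounded box.
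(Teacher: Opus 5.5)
There is a genuine gap, exactly at the step you flag as the main obstacle. Nothing in the proposal bounds the reduced matrices, and that bound is the whole content of the claim that there are at most five orbits. The Markov-type heuristic behind it also does not transfer, because a generator of \(B_4\) does not Vieta-flip a single parameter while fixing the others. By Lemma~\ref{lem:coordinate-mutations}, \(\sigma_1\) sends \((a,b,c,d,e,f)\) to \((a,e,c,ad-f,ae-b,d)\). This flips simultaneously inside the principal triples \((a,b,e)\) and \((a,d,f)\), turns the triple \((b,c,f)\) into \((e,c,d)\), and turns \((c,d,e)\) into \((c,ad-f,ae-b)\). The value of \(\Delta\) on that last triple is new: starting from \((4,4,4,4,6,6)\) it jumps from \(32\) to \(288\). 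So minimality of \(h=a+\cdots+f\) under \(\sigma_1^{\pm1}\) gives only the coupled inequalities \(ad+ae\ge2(b+f)\) and \(ab+af\ge2(d+e)\), not \(e\le ab/2\), and you cannot run independent Markov descents on the four principal triples. The relations \(\Phi(A)^2=16\) and \(\Psi(A)=8\) do not help by themselves, since every orbit contains matrices with arbitrarily large entries. Nothing then excludes infinitely many local minima of \(h\), or local minima with large entries. The computer fallback presupposes the bounded box you have not produced. Even granting \(D(A)\le72,\) you would still need finitely many orbits in each degree and exactly one orbit per degree; explicit chains of mutations can only process a list already known to be complete.

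The paper obtains the upper bound without any descent. A positive \(A\) determines the character \([\rho_A]\) of the twice-punctured torus. The half-turn construction of the Appendix gives it a Fricke representative, and integrality of all traces conjugates the image into \(\PSL_2(\mathbb Z)\). Gauss--Bonnet then forces a torsion-free subgroup of index twelve whose quotient has genus one and two cusps. Sebbar and Besrour list exactly five conjugacy classes of such subgroups. Birman--Hilden together with Dehn--Nielsen--Baer show that matrices with conjugate modular images are mutation-equivalent (Proposition~\ref{prop:modular-orbit-injectivity}). If you want to keep a descent proof, you need comparable geometric input rather than a naive height on the parameters. For instance, use this complete hyperbolic structure of area \(4\pi\), whose holonomy lies in \(\PSL_2(\mathbb Z)\) so that closed geodesics have length bounded below. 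You would then show that each mapping-class orbit contains a marking in which the six curves with traces \(a,\ldots,f\) have bounded length. Your treatment of the invariance of \(D(A)\), the distinctness of the five representatives, and the realizations agrees with the paper.
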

\par
\end{samepage}

The first four orbits are realized by commutative minifolds.
The orbit of Euler degree \(72\) cannot be realized by a full exceptional
collection of length four on a smooth projective threefold,
see Theorem~\ref{thm:threefold-minifolds} and
Propositions~\ref{prop:euler-degree-arithmetic}
and~\ref{prop:strong-minifold-euler-degree}.
It is realized by the family of non-commutative minifolds \(M_{72}\)
constructed in \cite{Orlov2026}, see Subsection~\ref{subsec:m72-construction}.
Theorem \ref{thm:intro-classification} classifies Euler matrices up to mutations; it does not
classify the categories having these
matrices. Nogin
\cite{Nogin1994} studies semiorthonormal
bases of \(K_0(X),\) equipped with its Euler form, for each of the four
commutative minifolds \(X.\) A basis \((v_1,\ldots,v_s)\) of
\(K_0(X)\) is \emph{semiorthonormal} if \(\chi(v_i,v_i)=1\) and
\(\chi(v_j,v_i)=0\) for \(j>i.\)

This classification of Euler matrices suggests a broader
boundedness question. Smooth Fano varieties
form a bounded family in each fixed dimension
\cite{KollarMiyaokaMori1992}.
It is natural to ask whether non-commutative Fano varieties of geometric
type are bounded in each fixed Serre dimension.  A precise version of this
question requires a definition of families of non-commutative schemes, which
we do not give here.

We prove Theorem \ref{thm:intro-classification} by passing from Euler matrices to character varieties,
see Subsection~\ref{subsec:trace-construction}.
For every positive Euler matrix \(A,\) the trace correspondence of
\cite[Propositions~5.9 and~6.8]{FanWhang2024} determines a unique
\(\SL_2\)-character \([\rho_A]\) of the twice-punctured torus,
see Proposition~\ref{prop:euler-fricke-dictionary}.
The map \(A\mapsto[\rho_A]\) is equivariant for the braid-group actions
specified in \eqref{eq:braid-action-euler-bases} and
\eqref{eq:character-braid-action}.  Then we realize \([\rho_A]\) by a representation
\(F_3\to\SL_2(\mathbb Z),\)
see Lemma~\ref{lem:integral-traces-modular}. By the \hyperref[app:half-turn-realization]{Appendix}, the
projective image \(\Gamma\subset\PSL_2(\mathbb Z)\) is torsion-free of
index twelve, and \(\Gamma\backslash\mathbb H\) is a torus with two cusps.
There are five such \(\PSL_2(\mathbb Z)\)-conjugacy classes, see
\cite[Section~6]{SebbarBesrour2021}.
Positive Euler matrices with \(\PSL_2(\mathbb Z)\)-conjugate modular images
are mutation-equivalent, see Proposition~\ref{prop:modular-orbit-injectivity}.
The five matrices in the theorem realize all five classes.
\par\endgroup

We thank Anton Fonarev and Alexander Kuznetsov for useful discussions.
OpenAI's ChatGPT was used to explore proof ideas, test preliminary
arguments, check computations, and identify possible references.  The authors
independently checked every argument, wrote the final text, and take
full responsibility for its content.

\section{Preliminaries}\label{sec:preliminaries}

\subsection{DG algebras, DG categories, and non-commutative schemes}

\begingroup\begingroup
All categories are \(\kk\)-linear, and modules are right modules.  We use standard
conventions; see \cite{Drinfeld2004,Keller1994,Keller2006,LuntsOrlov2010,Toen2007}.

\begin{definition}\label{def:dg-algebra-category}
A \emph{DG algebra} \(\mathscr R\) over \(\kk\) is a unital graded algebra
\(\mathscr R=\bigoplus_{m\in\mathbb Z}\mathscr R^m\) equipped with a
differential of degree one such that \(d^2=0\) and
$d(xy)=d(x)y+(-1)^{\deg x}x d(y)$
for homogeneous elements \(x,y\in\mathscr R.\)
A \emph{DG category} \(\mathscr C\) over \(\kk\) is a category whose
morphism spaces \(\mathsf{Hom}_{\mathscr C}(E,F)\) are complexes of
\(\kk\)-vector spaces and whose composition maps are morphisms of
complexes.
\end{definition}

Thus, a DG algebra is a DG category with one object.  The
homotopy category \(H^0(\mathscr C)\) has the same objects as
\(\mathscr C\) and morphisms
\(H^0(\mathsf{Hom}_{\mathscr C}(E,F)).\)
A \emph{DG enhancement} of a triangulated category \(\mathcal D\) is a
pretriangulated DG category \(\mathcal D_{\mathrm{dg}},\) together with
an exact equivalence \(H^0(\mathcal D_{\mathrm{dg}})\simeq\mathcal D.\)
If \(\mathscr C\) is pretriangulated, then \(H^0(\mathscr C)\) is
triangulated.

A right DG \(\mathscr R\)-module is a graded right \(\mathscr R\)-module with a
compatible differential. The unbounded derived category
\(\mathcal D(\mathscr R)\) is obtained from the category of right DG
\(\mathscr R\)-modules by inverting quasi-isomorphisms.
The perfect derived category is
\[
 \Perf(\mathscr R)=\mathcal D(\mathscr R)^c
 =\operatorname{thick}(\mathscr R).
\]
Here \(\mathcal D(\mathscr R)^c\) is the full subcategory of compact objects.
An object \(P\in\mathcal D(\mathscr R)\) is \emph{compact} if the functor
\(\operatorname{Hom}_{\mathcal D(\mathscr R)}(P,-)\) commutes with
arbitrary direct sums. Here \(\operatorname{thick}(\mathscr R)\)
is the smallest full triangulated subcategory of \(\mathcal D(\mathscr R)\)
containing the right DG module \(\mathscr R\) and closed under taking
direct summands. We denote its standard DG model by
\(\PerfDG{\mathscr R},\) so that
\[
 H^0\bigl(\PerfDG{\mathscr R}\bigr)\simeq\Perf(\mathscr R).
\]
The algebra \(\mathscr R\) is \emph{cohomologically bounded} if
\(H^m(\mathscr R)=0\) for all but finitely many \(m.\)
\endgroup\endgroup

\begin{definition}
\label{def:noncommutative-scheme}
A \emph{(derived) non-commutative scheme} \(\mathsf X\) over
\(\kk\) is a \(\kk\)-linear DG category of the form
\(\PerfDG{\mathscr R},\) where \(\mathscr R\) is a cohomologically bounded
DG algebra over \(\kk.\)  The derived category \(\mathcal D(\mathscr R)\)
is called the derived category of quasi-coherent sheaves on this
non-commutative scheme, while the triangulated category
\(\Perf(\mathscr R)\) is called the category of perfect complexes on it.
\end{definition}

In what follows, we omit the word ``derived'' and use the shorter term
\emph{non-commutative scheme}.  We write
\[
    \mathcal D(\mathsf X):=  \mathcal D(\mathscr R),\qquad   \Perf(\mathsf X):=\Perf(\mathscr R).
\]

\begin{definition}\label{def:smooth-proper-nc-scheme}
Let \(\mathsf X=\PerfDG{\mathscr R}\) be a non-commutative scheme. It is called
\emph{proper} if, for all \(M,N\in\Perf(\mathsf X),\) the  \(\kk\)-vector
space \(\bigoplus_{m\in\mathbb Z}
\operatorname{Hom}_{\Perf(\mathsf X)}(M,N[m])\) is finite-dimensional.
Equivalently, the total cohomology space
\(\bigoplus_{m\in\mathbb Z}H^m(\mathscr R)\) is finite-dimensional.

The scheme is \emph{smooth} if its diagonal bimodule is perfect, that is, if
\(\mathscr R\in
\Perf\bigl(\mathscr R^{\mathrm{op}}\otimes_{\kk}\mathscr R\bigr).\)
\end{definition}

Smoothness is invariant under Morita equivalence
\cite{LuntsSchnurer2014}, and hence does not depend on the presentation by
\(\mathscr R.\)  If \(X\) is a separated scheme of finite type over \(\kk,\)
then \(X\) is proper if and only if its standard DG category of perfect
complexes is proper, and \(X\) is smooth if and only if this DG
category is smooth {\cite[Propositions~3.30 and~3.31]{Orlov2016}}.
We denote the latter
DG category by \(\PerfDG{X}.\)

\begin{definition}\label{def:serre-functor} Let \(\mathcal D\) be a \(\kk\)-linear Hom-finite triangulated category.
  A
\emph{Serre functor} on \(\mathcal D\) is a \(\kk\)-linear autoequivalence
\(S\colon\mathcal D\to\mathcal D\) together with functorial isomorphisms
\[
 \operatorname{Hom}_{\mathcal D}(Y,SX)
 \simeq
 \operatorname{Hom}_{\mathcal D}(X,Y)^{\vee}.
\]
\end{definition}

\begingroup\begingroup
A Serre functor is exact and unique up to natural isomorphism
\cite{BondalKapranov1990}.  For every smooth proper non-commutative scheme
\(\mathsf X=\PerfDG{\mathscr R},\) it exists and is given by
\[
 S_{\mathsf X}(-)=-\overset{\mathbf L}{\otimes}_{\mathscr R}\mathscr R^\vee,
 \qquad \mathscr R^\vee=\mathsf{Hom}_{\kk}(\mathscr R,\kk).
\]
\endgroup\endgroup

\subsection{Graded algebras, quotient categories, and Serre's theorem}
\label{sec:graded-quotient-categories}

A graded algebra \(R=\bigoplus_{m\ge0}R_m\) is \emph{connected} if
\(R_0=\kk,\) and \emph{locally finite} if each \(R_m\) is finite-dimensional.
All connected graded algebras considered below are locally finite.

\begin{definition}\label{def:z-algebra}
A \emph{\(\mathbb Z\)-algebra} over \(\kk\) is a \(\kk\)-linear category with
object set \(\mathbb Z.\)  Set
\(\mathcal A_i^j=\operatorname{Hom}_{\mathcal A}(i,j).\)  Equivalently, it
is the locally unital algebra
\(\mathcal A=\bigoplus_{i,j\in\mathbb Z}\mathcal A_i^j,\) whose multiplication
is induced by composition:
\[
 \mathcal A_j^k\otimes\mathcal A_i^j\longrightarrow\mathcal A_i^k,
 \qquad b\otimes a\longmapsto b\circ a.
\]
The identities \(e_i=\operatorname{id}_i\) are
pairwise orthogonal idempotents, and every element of \(\mathcal A\) has a
two-sided identity of the form \(\sum_{i\in F}e_i\) for some finite subset
\(F\subset\mathbb Z.\) In particular,
\(e_j\mathcal A e_i=\mathcal A_i^j.\)
\end{definition}

\begingroup\begingroup
Right modules over a \(\mathbb Z\)-algebra \(\mathcal A\) are
understood to be unitary: \(M=\bigoplus_{i\in\mathbb Z}Me_i.\)
Any graded algebra \(R=\bigoplus_{p\in\mathbb Z}R_p\) determines a
\(\mathbb Z\)-algebra by \(\mathcal A_i^j=R_{i-j}.\)
A graded right \(R\)-module \(N\) corresponds to the right
\(\mathcal A\)-module \(M\) with \(Me_i=N_i,\) with the action
\(Me_j\otimes\mathcal A_i^j\to Me_i\) induced by multiplication in
\(N.\) This gives an equivalence of the
two module categories.
\endgroup\endgroup
{This \(\mathbb Z\)-algebra} is
\emph{1-periodic}: it has an automorphism \(\tau\) satisfying
\(\tau(\mathcal A_i^j)=\mathcal A_{i+1}^{j+1}.\)  More generally, a
\(\mathbb Z\)-algebra is called \emph{\(l\)-periodic} if it admits an
automorphism \(\tau\) satisfying
\(\tau(\mathcal A_i^j)=\mathcal A_{i+l}^{j+l}.\)
A \(\mathbb Z\)-algebra \(\mathcal A\) is called \emph{connected} if
all spaces \(\mathcal A_i^j\) are finite-dimensional,
\(\mathcal A_i^i\simeq\kk,\) and \(\mathcal A_i^j=0\) for \(j>i.\)

For a right module \(M\) over a connected \(\mathbb Z\)-algebra
\(\mathcal A,\) a homogeneous element
\(x\in Me_i,\) where \(e_i\) is the identity of the object \(i,\) is called
\emph{torsion} if \(x\mathcal A_j^i=0\) for every \(j\ge i+p,\) for some
\(p=p(x)\ge1.\)
An arbitrary element is torsion if each of its finitely many
homogeneous components is torsion.  The~torsion~elements form a submodule
\(t(M)\subset M,\) and \(M\) is called a \emph{torsion module} if
\(t(M)=M.\)  \begingroup
For the quotient construction below, we require the torsion modules to
form a localizing subcategory.  A sufficient condition is that every tail
\(\bigoplus_{j\ge q}\mathcal A_j^i\) of a representable right module be
finitely generated.  This holds for right noetherian algebras and for
algebras generated in degree one.  Indeed, in an extension
of two torsion modules, a tail of a homogeneous cyclic module lies in the
torsion submodule; finite generation of that tail gives a uniform bound
beyond which it vanishes.  Closure under submodules, quotients, and direct
   sums is immediate.
\endgroup

Let \(\mathcal A\) be a connected \(\mathbb Z\)-algebra whose torsion right
modules form a localizing subcategory of its category of right modules.  Denote its category
of right modules by \(\Gr\!\operatorname{-}\mathcal A.\)
Denote its localizing subcategory of torsion modules by
\(\Tors\!\operatorname{-}\mathcal A\) and put
\[
 \QGr\mathcal A
 =\Gr\!\operatorname{-}\mathcal A/
   \Tors\!\operatorname{-}\mathcal A.
\]
The category \(\qprf\!\operatorname{-}\mathcal A\) is the full subcategory of
compact objects in the unbounded derived category
\(\mathcal D(\QGr\mathcal A).\)
If \(\mathcal A\) is {right coherent}, let
\(\gr\!\operatorname{-}\mathcal A\) be its category of finitely presented
right modules.  Writing
\(\tors\!\operatorname{-}\mathcal A
=\gr\!\operatorname{-}\mathcal A\cap
  \Tors\!\operatorname{-}\mathcal A,\) define
\[
 \qgr\mathcal A
 =\gr\!\operatorname{-}\mathcal A/
   \tors\!\operatorname{-}\mathcal A.
\]

{We use the following form of Serre's theorem}
\cite[Section~59]{Serre1955}; see also \cite[Section~2]{ArtinZhang1994}.

\begin{theorem}
\label{thm:serre-qgr}
Let \(X\) be a projective scheme over \(\kk\) such that
\(H^0(X,\mathcal O_X)=\kk,\) let \(L\) be an ample line bundle, and put
\[
        R(X,L)=\bigoplus_{m\ge0}H^0(X,L^m).
\]
Then sheafification induces equivalences
\(
 \qgr R(X,L)\simeq\coh X
 \) and \(
 \QGr R(X,L)\simeq\Qcoh X.
\)
If \(X\) is smooth, it also induces
\(
 \qprf\!\operatorname{-}R(X,L)\simeq\Perf(X)=D^b(\coh X).
\)
\end{theorem}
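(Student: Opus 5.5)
The plan is to take the first two equivalences from the classical sources (Serre, Artin--Zhang) and deduce the third, on perfect objects, by passing to compact objects in the unbounded derived categories.

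\emph{Abelian equivalences.} Put \(R=R(X,L)\). The functor \(\Gamma_*\colon\Qcoh X\to\Gr R\), \(\mathcal F\mapsto\bigoplus_m H^0(X,\mathcal F\otimes L^m)\), is right adjoint to sheafification \(M\mapsto\widetilde M\). Since \(L\) is ample, \(R\) is a finitely generated \(\kk\)-algebra. After replacing \(L\) by a power, and using that \(\QGr\) is unchanged under Veronese subalgebras, we may take \(R\) generated in degree one; then torsion modules form a localizing subcategory, as noted above. Sheafification is exact, and its kernel is exactly the torsion modules, because a section of \(\widetilde M\) vanishing on every basic open \(D(s)\) is killed by a power of the irrelevant ideal. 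Hence sheafification factors through \(\QGr R\). Next, \(\widetilde{\Gamma_*\mathcal F}\simeq\mathcal F\) by Serre's section lemma: every section of \(\mathcal F\otimes L^m\) over \(D(s)\) extends after multiplying by a power of \(s\). Finally, \(M\to\Gamma_*\widetilde M\) has torsion kernel and cokernel. For finitely generated \(M\) this follows from Serre's vanishing and finiteness theorems, and the general case follows by passing to direct limits, since both functors commute with filtered colimits on a noetherian scheme. Therefore \(\QGr R\simeq\Qcoh X\), and it restricts to \(\qgr R\simeq\coh X\) because finitely generated modules correspond to coherent sheaves. The hypothesis \(H^0(X,\mathcal O_X)=\kk\) only ensures that \(R\) is connected, which makes the torsion formalism above apply.

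\emph{Perfect objects.} An exact equivalence of Grothendieck abelian categories induces an equivalence \(\mathcal D(\QGr R)\simeq\mathcal D(\Qcoh X)\). Such an equivalence preserves arbitrary direct sums, so it identifies the full subcategories of compact objects. By definition, the compact objects on the left form \(\qprf\!\operatorname{-}R\). On the right, \(\mathcal D(\Qcoh X)\) is compactly generated, and its compact objects are exactly the perfect complexes (Neeman; Bondal--Van den Bergh). When \(X\) is smooth, every bounded complex of coherent sheaves is perfect, so \(\Perf(X)=D^b(\coh X)\).

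\emph{Main obstacle.} The delicate step is showing that \(M\to\Gamma_*\widetilde M\) is an isomorphism modulo torsion for modules that are not finitely generated. I would handle it by writing \(M\) as a filtered colimit of finitely generated submodules. This requires checking that \(\Gamma_*\), computed in sufficiently large degrees, commutes with filtered colimits, which holds because \(X\) is noetherian and cohomology commutes with filtered colimits there.
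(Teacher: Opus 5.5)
Your route is the paper's. The paper cites Serre and Artin--Zhang for \(\qgr R\simeq\coh X\) and \(\QGr R\simeq\Qcoh X\). It then gets the last equivalence exactly as you do: pass to compact objects in \(\mathcal D(\QGr R)\simeq\mathcal D(\Qcoh X)\) and use \(\mathcal D(\Qcoh X)^c=\Perf(X)\) (Bondal--Van den Bergh).

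There is one false step in your sketch of the abelian part. \(\QGr\) is not invariant under passing to Veronese subalgebras in general. What always holds is \(\operatorname{Proj}R\simeq\operatorname{Proj}R^{(d)}\) as schemes, but without degree-one generation \(\QGr R\) need not be \(\Qcoh(\operatorname{Proj}R)\). Take \(R=\kk[x,y]\) with \(\deg x=1\) and \(\deg y=2\), and let \(N=(R/xR)(1)\).
\begin{itemize}
\item \(N_{2m}=0\) for all \(m\), so its restriction to \(R^{(2)}=\kk[x^2,y]\) vanishes.
\item \(N\) is not torsion: the class of \(1\) in \(N_{-1}\) is not killed by any \(y^k\).
\end{itemize}
Indeed, \(\QGr R\) is the category of quasi-coherent sheaves on the weighted projective stack \(\mathbb P(1,2)\), whereas \(\QGr R^{(2)}\simeq\Qcoh\mathbb P^1\). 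The first step in the proof of Lemma~\ref{lem:periodic-z-subalgebra} shows what Veronese invariance actually needs: surjectivity of multiplication in large degrees.

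For \(R=R(X,L)\) this condition does hold. When \(L\) is ample, \(H^0(X,L^a)\otimes H^0(X,L^b)\to H^0(X,L^{a+b})\) is surjective for \(a,b\gg0\). So your reduction to \(R(X,L^d)\) can be justified, but only with this extra input.

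The reduction is also unnecessary. \(R\) is a finitely generated commutative algebra, hence noetherian, so its torsion modules already form a localizing subcategory. Moreover, the Artin--Zhang form of Serre's theorem uses only ampleness, not generation in degree one. Geometrically, \(\operatorname{Spec}R\setminus V(R_+)\) is the free \(\mathbb G_m\)-torsor \(\operatorname{Spec}_X\bigl(\bigoplus_{n\in\mathbb Z}L^n\bigr)\) over \(X\). This is why sheafification works for the full section ring even when it is not generated in degree one.

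With this correction, the rest of your argument is fine, including the colimit argument for \(M\to\Gamma_*\widetilde M\) and the compact-object step.
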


\begingroup\begingroup
The assertion for perfect complexes follows by passing to compact
objects in the induced equivalence of unbounded derived categories:
\(\mathcal D(\Qcoh X)^c=\Perf(X),\) see
\cite[Theorem~3.1.1]{BondalVanDenBergh2003}.
\endgroup\endgroup

\subsection{Exceptional collections, Euler form, and Euler matrix}
\label{sec:exceptional-euler-matrix}

Let \(\mathcal D=\Perf(\mathsf X),\) where \(\mathsf X\) is a smooth proper
non-commutative scheme.

\begin{definition}\label{def:exceptional-strong}
An object \(E\in\mathcal D\) is \emph{exceptional} if
\(\operatorname{Hom}_{\mathcal D}(E,E[m])=0\) for \(m\ne0,\) and
\(\operatorname{Hom}_{\mathcal D}(E,E)\simeq\kk.\)  An \emph{exceptional
collection} is a sequence \(\mathcal E=(E_1,\ldots,E_s)\) of exceptional
objects satisfying
\[
\operatorname{Hom}_{\mathcal D}(E_i,E_j[m])=0
\qquad\text{for all }i>j\text{ and }m\in\mathbb Z.
\]
An exceptional
collection \(\mathcal E\) is called \emph{full} if
\(\mathcal D=\operatorname{thick}(E_1,\ldots,E_s),\) and
\emph{strong} if
\[
\operatorname{Hom}_{\mathcal D}(E_i,E_j[m])=0
\qquad\text{for all }1\le i,j\le s
\text{ and }m\ne0.
\]
\end{definition}

\begin{example}\label{ex:endomorphism-algebra}
\begingroup\begingroup
\postdisplaypenalty=10000
For a full exceptional collection \(\mathcal E=(E_1,\ldots,E_s)\) in
\(\mathcal D,\) put
\(G=\bigoplus_iE_i\) and
\(\mathscr A_{\mathcal E}=\mathsf{Hom}_{\mathsf X}(G,G).\)
Derived Morita equivalence gives
\[
 \Perf(\mathsf X)\simeq\Perf(\mathscr A_{\mathcal E}),
 \qquad F\longmapsto\mathsf{Hom}_{\mathsf X}(G,F),
\]
see \cite[Theorem~3.8]{Keller2006}.  If \(\mathcal E\) is strong, then
\(\mathscr A_{\mathcal E}\) is quasi-isomorphic to the finite-dimensional
algebra \(\Lambda_{\mathcal E}=\operatorname{End}_{\mathcal D}(G).\)
The object \(E_i\) corresponds to the projective right module
\(e_i\Lambda_{\mathcal E},\) where \(e_i\) is the projector onto \(E_i.\)
\endgroup\endgroup
\end{example}

\begingroup\begingroup
We use the Grothendieck group \(K_0(\mathcal D),\) with its relations
\([B]=[A]+[C]\) for distinguished triangles \(A\to B\to C\to A[1],\)
and write \(K_0(X)=K_0(D^b(\coh X))\) for a smooth projective variety.
\endgroup\endgroup

\begin{definition}\label{def:euler-form}
The \emph{Euler form} of \(\mathcal D\) is the bilinear form on
\(K_0(\mathcal D)\) determined by
\[
 \chi([E],[F])
 =\sum_{i\in\mathbb Z}(-1)^i
   \dim_{\kk}\operatorname{Hom}_{\mathcal D}(E,F[i]).
\]
\end{definition}

We write \(\chi(E,F)=\chi([E],[F])\) for objects of \(\mathcal D.\)  The sum
is finite by properness, and additivity in distinguished triangles makes the
form well-defined on \(K_0(\mathcal D).\)

\begin{lemma}\label{lem:full-collection-k0-basis}
If \(\mathcal E=(E_1,\ldots,E_s)\) is a full exceptional collection{ in
\(\mathcal D\)}, then
\([E_1],\ldots,[E_s]\) is a basis of \(K_0(\mathcal D).\)  In particular,
\(
        K_0(\mathcal D)\simeq\mathbb Z^s.
\)
\end{lemma}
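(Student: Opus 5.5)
The plan is to prove generation and linear independence separately. The Euler form is the tool that separates the classes, and fullness gives spanning.

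\emph{Spanning.} Let \(L\subset K_0(\mathcal D)\) be the subgroup generated by \([E_1],\ldots,[E_s]\). Consider the full subcategory \(\mathcal T\subset\mathcal D\) of objects whose class lies in \(L\). It contains each \(E_i\) and is closed under shifts, since \([F[1]]=-[F]\). It is closed under cones, since for a triangle \(A\to B\to C\) we have \([C]=[B]-[A]\). Closure under direct summands is the one point needing care, because \([A]\in L\) for \(A\oplus A'\) does not immediately give \([A]\in L\).

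I would avoid this issue using the semiorthogonal decomposition generated by the full exceptional collection. The standard filtration argument shows that every object \(F\) of \(\operatorname{thick}(E_1,\ldots,E_s)\) admits a sequence of triangles whose cones lie in \(\langle E_i\rangle\). Concretely, one applies successively the projection functors \(L_{E_1}\circ\cdots\) or uses the adjoints of the admissible inclusions \(\langle E_i\rangle\hookrightarrow\mathcal D\). Each \(\langle E_i\rangle\) is equivalent to \(\Perf(\kk)\) because \(E_i\) is exceptional, so its objects are finite sums of shifts of \(E_i\) and are already closed under summands. Hence \([F]=\sum_i n_i[E_i]\) with \(n_i=\chi(E_i\text{-component})\in\mathbb Z\), and the classes span.

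\emph{Independence.} Suppose \(\sum_i n_i[E_i]=0\) in \(K_0(\mathcal D)\). I pair this relation with \([E_j]\) on the left via \(\chi\), which is well defined on \(K_0\) by Definition~\ref{def:euler-form}. The matrix \(\chi(E_j,E_i)\) is upper unitriangular: \(\chi(E_i,E_i)=1\) because \(E_i\) is exceptional, and \(\chi(E_j,E_i)=0\) for \(j>i\) by semiorthogonality. An upper unitriangular integer matrix is invertible over \(\mathbb Z\), so all \(n_i=0\). Downward induction starting from \(j=s\) gives this directly.

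Combining the two steps, \(K_0(\mathcal D)\) is free with basis \([E_1],\ldots,[E_s]\), so \(K_0(\mathcal D)\simeq\mathbb Z^s\).

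The main obstacle is the idempotent-completion point in the spanning step. A naive closure argument only controls the triangulated hull. The thick closure coincides with the triangulated hull here because the category generated by a full exceptional collection is already idempotent complete. This holds since it is glued from copies of \(\Perf(\kk)\) by admissible subcategories, which I would cite from Bondal's theory of exceptional collections and semiorthogonal decompositions.
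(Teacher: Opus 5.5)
Your proposal is correct. The paper states this lemma without proof as a standard fact, and your argument is the standard one: a filtration with factors in \(\langle E_i\rangle\simeq\Perf(\kk)\) gives spanning, and unitriangularity of \(\bigl(\chi(E_j,E_i)\bigr)\) gives independence, which is the same observation the paper uses right afterwards to conclude that the Euler form is non-degenerate.

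You can also drop the appeal to idempotent completeness. Apply the triangles \(L_{E_k}G\to\mathsf{Hom}_{\mathsf X}(E_k,G)\otimes E_k\to G\) for \(k=s,s-1,\ldots,1\), starting from an arbitrary \(F\in\mathcal D\). This ends in an object \(G\) with \(\operatorname{Hom}(E_i,G[m])=0\) for all \(i,m\). The objects \(X\) with \(\operatorname{Hom}(X,G[m])=0\) for all \(m\) form a thick subcategory containing every \(E_i\), so fullness forces \(G=0\).
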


\begin{definition}\label{def:euler-matrix}
Let \(\mathcal E=(E_1,\ldots,E_s)\) be an exceptional collection{ in
\(\mathcal D\)}.  Its
\emph{Euler matrix} is
\[
        A_{\mathcal E}
        :=\bigl(\chi(E_i,E_j)\bigr)_{i,j=1}^s.
\]
\end{definition}

The matrix \(A_{\mathcal E}\) is upper triangular with diagonal entries equal
to \(1.\)  If \(\mathcal E\) is full, it is the matrix of the Euler form in the
basis of Lemma~\ref{lem:full-collection-k0-basis}.  In particular, the Euler
form is non-degenerate.  If \(\mathcal E\) is strong, then
\(\chi(E_i,E_j)=\dim_{\kk}\operatorname{Hom}_{\mathcal D}(E_i,E_j)\ge 0\) for all
\(i<j.\)

\subsection{Mutations, the braid group, helices, and dual {exceptional} collections}
\label{sec:mutations-helices}

\begingroup\begingroup
Throughout this subsection, let \(\mathsf X=\PerfDG{\mathscr R}\) be a smooth proper
non-commutative scheme and put
\(\mathcal D=\Perf(\mathsf X)=H^0(\mathsf X).\)  We identify the objects of
\(\mathcal D\) with those of its DG enhancement \(\mathsf X.\)  For
\(E,F\in\PerfDG{\mathscr R},\) let \(\mathsf{Hom}_{\mathsf X}(E,F)\) denote the
complex of morphisms.  This complex belongs to \(\Perf(\kk),\) and
\(H^m\bigl(\mathsf{Hom}_{\mathsf X}(E,F)\bigr)
\cong\operatorname{Hom}_{\mathcal D}(E,F[m]).\)    For
\(V\in\Perf(\kk),\) we write \(V\otimes E\) for the DG tensor product and
\(V^\vee=\mathsf{Hom}_{\kk}(V,\kk).\)
\endgroup\endgroup

\begin{definition}\label{def:mutations}
For a pair of exceptional objects \((E,F)\) in \(\mathcal D\) with
\(\operatorname{Hom}_{\mathcal D}(F,E[m])=0\) for every \(m\in\mathbb Z,\) define \(L_EF\) and \(R_FE\) by
the distinguished triangles whose middle arrows are the canonical
evaluation and coevaluation morphisms
\[
\begin{gathered}
 L_EF\longrightarrow \mathsf{Hom}_{\mathsf X}(E,F)\otimes E
 \xrightarrow{\mathrm{ev}} F\longrightarrow L_EF[1],
\\
 R_FE[-1]\longrightarrow E\xrightarrow{\mathrm{coev}}
 \mathsf{Hom}_{\mathsf X}(E,F)^{\vee}\otimes F
 \longrightarrow R_FE.
\end{gathered}
\]
{The left and right mutations} of the pair of {exceptional} objects \((E,F)\) are
the pairs of exceptional objects \((L_EF,E)\) and \((F,R_FE),\) respectively.  On the Grothendieck group \(K_0(\mathcal D),\) we have
\[
 [L_EF]=\chi(E,F)[E]-[F],
 \qquad
 [R_FE]=\chi(E,F)[F]-[E].
\]
\end{definition}

For \(s\ge2,\) the braid group on \(s\) strands is
\begin{equation}
\label{def-braid-group}
 B_s=\left\langle \sigma_1,\ldots,\sigma_{s-1}\ \middle|\
 \begin{aligned}
  \sigma_i\sigma_j&=\sigma_j\sigma_i
     && \text{for }1\le i,j\le s-1\text{ and }|i-j|>1,\\
  \sigma_i\sigma_{i+1}\sigma_i
   &=\sigma_{i+1}\sigma_i\sigma_{i+1} && \text{for }1\le i\le s-2
 \end{aligned}
 \right\rangle.
\end{equation}
\begingroup\begingroup
The group \(B_s\) acts on exceptional collections of length $s$ in \(\mathcal D\)
 by
\[
\begin{aligned}
 \sigma_i(\ldots,E_i,E_{i+1},\ldots)
   &=(\ldots,L_{E_i}E_{i+1},E_i,\ldots),\\
 \sigma_i^{-1}(\ldots,E_i,E_{i+1},\ldots)
   &=(\ldots,E_{i+1},R_{E_{i+1}}E_i,\ldots).
\end{aligned}
\]
These operations are inverse and satisfy \eqref{def-braid-group}, see
\cite{BondalKapranov1990}.  Exceptional collections in the same orbit are called
\emph{mutation-equivalent}.
\endgroup\endgroup

\begin{definition}\label{def:geometric-helix}
A \emph{helix} of period \(s\in{\mathbb N}\) in a triangulated category
\(\mathcal D\) is a sequence \((E_i)_{i\in\mathbb Z}\) such that
\((E_i,E_{i+1},\ldots,E_{i+s-1})\) is an exceptional collection
for every \(i\in\mathbb Z\) and
\begin{equation}
\label{eq-mutations-generate-helix}
 E_{i-s}=L_{E_{i-s+1}}\ldots L_{E_{i-1}}E_i,
 \qquad
 E_{i+s}=R_{E_{i+s-1}}\ldots R_{E_{i+1}}E_i.
\end{equation}
The helix is \emph{full} if
\((E_i,E_{i+1},\ldots,E_{i+s-1})\) is full for one, equivalently every,
\(i\in\mathbb Z.\)
The helix is \emph{geometric} if
\[
 \operatorname{Hom}_{\mathcal D}(E_i,E_j[m])=0
 \qquad\text{for all }i\le j
 \text{ and }m\ne0.
\]
\end{definition}

Let \(\mathcal E=(E_1,\ldots,E_s)\) be a full exceptional
collection{ in \(\mathcal D\)}, and set
\[
 F_i=R_{E_s}\ldots R_{E_{i+1}}E_i,
 \qquad
 G_i=L_{E_1}\ldots L_{E_{i-1}}E_i.
\]
{We call \(\mathcal E^\vee=(F_s,\ldots,F_1)\) the
\emph{right dual exceptional collection} of \(\mathcal E,\) and
\({}^\vee\mathcal E=(G_s,\ldots,G_1)\) its
\emph{left dual exceptional collection}.}  If
\(\widehat F_i=F_i[i-s]\) and \(\widehat G_i=G_i[i-1],\) then
\begin{equation}\label{eq:normalized-duality}
 \mathsf{Hom}_{\mathsf X}(\widehat F_i,E_j)
 \simeq
 \mathsf{Hom}_{\mathsf X}(E_j,\widehat G_i)
 \simeq
 \begin{cases}
 \kk,&i=j,\\
 0,&i\ne j.
\end{cases}
\end{equation}
Here \(\kk\) is concentrated in degree zero.
In particular,
\(\chi(\widehat F_i,E_j)=\chi(E_j,\widehat G_i)=\delta_{ij}.\)

\begin{remark}\label{rem:serre-period}
\begingroup\begingroup
With our convention, a full helix of period \(s\) satisfies
\(E_{i+s}\simeq S^{-1}(E_i)[s-1],\) see
\cite{BondalKapranov1990}.  This also follows from
\eqref{eq:normalized-duality} and Serre duality.  On a smooth projective
variety \(X\) of dimension \(s-1,\) where
\(S=(-)\otimes\omega_X[s-1],\) it becomes
\(E_{i+s}\simeq E_i\otimes\omega_X^{-1}.\)
\endgroup\endgroup
\end{remark}

\subsection{The \texorpdfstring{\(\mathbb Z\)-algebra}{Z-algebra} of a helix}
Let \((E_i)_{i\in\mathbb Z}\) be a full geometric helix of period
\(m.\)
Its directed \(\mathbb Z\)-algebra is defined by
\(\mathcal A_i^j=\operatorname{Hom}_{\mathcal D}(E_{-i},E_{-j})\) for \(j\le i,\)
and \(\mathcal A_i^j=0\) otherwise, with multiplication given by composition.  Since the helix is geometric,
\(\operatorname{Hom}_{\mathcal D}(E_{-i},E_{-j}[q])=0\) for \(j\le i\) and
\(q\ne0,\) and hence
\(\chi(E_{-i},E_{-j})=\dim_{\kk}\mathcal A_i^j.\)
\begingroup\begingroup
Put \(T_m=S^{-1}[m-1]\) and choose isomorphisms
\(u_i:T_m^{-1}(E_i)\xrightarrow{\sim}E_{i-m}.\)
The maps
\[
 \mathcal A_i^j\longrightarrow\mathcal A_{i+m}^{j+m},
 \qquad f\longmapsto u_{-j}\circ T_m^{-1}(f)\circ u_{-i}^{-1},
\]
preserve composition and define an \(m\)-periodicity automorphism
of \(\mathcal A.\)
\endgroup\endgroup

Bondal and Polishchuk proved in
\cite[Theorem~4.2]{BondalPolishchuk1994} that the \(\mathbb Z\)-algebra of a
geometric helix is Koszul and co-Koszul and has global dimension equal to the
period.  In particular, it is quadratic and generated in degree one, that
is, by the components
\(\mathcal A_{i+1}^i=\operatorname{Hom}_{\mathcal D}(E_{-i-1},E_{-i})\): for every
\(i<j,\) the composition map
\(\mathcal A_{i+1}^i\otimes\cdots\otimes\mathcal A_j^{j-1}
\longrightarrow\mathcal A_j^i\) is surjective.

\begin{lemma}\label{lem:nonzero-forward-homs}
Let \(\mathcal H=(E_i)_{i\in\mathbb Z}\) be a full geometric helix of period
\(s.\)  Then
\[
        \operatorname{Hom}_{\mathcal D}(E_i,E_j)\ne0
        \qquad\text{for all } \; i\le j.
\]
\end{lemma}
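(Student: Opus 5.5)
The plan is to show that the \(\mathbb Z\)-algebra \(\mathcal A\) of the helix has no vanishing components above the diagonal. I would argue by contradiction in two stages. First, reduce an arbitrary vanishing to a vanishing of an entire tail. Second, show that such a tail vanishing is incompatible with the Koszul resolutions of \cite[Theorem~4.2]{BondalPolishchuk1994}.

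\emph{Step 1: reduction to tail vanishing.} Suppose \(\operatorname{Hom}_{\mathcal D}(E_i,E_j)=0\) for some \(i<j\). The case \(i=j\) is trivial because \(E_i\) is exceptional. Since the helix is geometric, this gives \(\mathsf{Hom}_{\mathsf X}(E_i,E_j)\simeq0\). By \cite[Theorem~4.2]{BondalPolishchuk1994}, \(\mathcal A\) is generated in degree one. Hence for every \(j'\ge j\) the composition map
\[
\operatorname{Hom}(E_{j'-1},E_{j'})\otimes\cdots\otimes\operatorname{Hom}(E_i,E_{i+1})\to\operatorname{Hom}(E_i,E_{j'})
\]
is surjective. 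Every such composite factors through \(\operatorname{Hom}(E_i,E_j)=0\), so \(\operatorname{Hom}(E_i,E_{j'})=0\) for all \(j'\ge j\). In module language, the indecomposable projective \(P_i=\bigoplus_{k}\operatorname{Hom}(E_i,E_k)\) is finite-dimensional, hence a torsion module.

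\emph{Step 2: contradiction from Koszulity.} Koszulity of \(\mathcal A\) with global dimension \(s\) gives, for each \(i\), an exact Koszul complex
\[
0\to W^{(i)}_s\otimes P_{i+s}\to\cdots\to W^{(i)}_1\otimes P_{i+1}\to P_i\to \kk_i\to0 .
\]
Here the multiplicity spaces are built from the dual collection of \((E_{i+1},\dots,E_{i+s})\) via \eqref{eq:normalized-duality}. In particular, \(W^{(i)}_s\) is one-dimensional: it corresponds to \(E_{i+s}=R_{E_{i+s-1}}\cdots R_{E_{i+1}}E_i\). I would then argue inductively in \(i\), using the periodicity automorphism of \(\mathcal A\) (the shift \(i\mapsto i+s\) induced by \(T_s=S^{-1}[s-1]\)):

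\begin{itemize}
\item Finite-dimensionality of \(P_i\) forces finite-dimensionality of \(P_{i+s}\), by periodicity.
\item Using exactness of the Koszul complexes starting at \(i+1,\dots,i+s-1\), I would propagate this to all \(P_k\) with \(k\ge i\).
\item In Hilbert series terms, the matrix series \(H(t)=\bigl(\dim\operatorname{Hom}(E_a,E_{a+k})\bigr)\), organized by residues modulo \(s\), satisfies \(H(t)Q(t)=I\). Here \(Q(t)\) is a matrix polynomial of degree \(s\) whose top coefficient comes from the one-dimensional spaces \(W_s\), so it is a permutation-type invertible matrix.
\item If \(H(t)\) were polynomial, then \(Q(t)\) would be invertible over \(\kk[t]\). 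This is incompatible with \(\det Q(t)\) having degree \(\ge s\ge1\) with nonzero leading coefficient.
\end{itemize}

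An alternative route is to note directly that \(P_i\) finite-dimensional means \(\operatorname{Hom}(E_i,T_s^kE_i)=0\) for \(k\gg0\). One would then try to contradict fullness: the objects \(T_s^kE_{i},\dots,T_s^kE_{i+s-1}\) generate \(\mathcal D\), and \(E_i\) is nonzero, so \(\mathsf{Hom}(E_i,-)\) cannot vanish on all of them. The catch is that vanishing might hold only for the \(E_i\)-row and not for the others. So this route still needs the propagation step.

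\emph{Main obstacle.} The hard part is exactly that propagation. Vanishing of a single row \(\operatorname{Hom}(E_i,E_{\ge j})\) has to be turned into a global statement, such as polynomiality of the full Hilbert matrix or vanishing of \(\mathsf{Hom}(E_i,-)\) on a full collection, to which fullness or the determinant argument applies. My first attempt would be to use the exact Koszul complex of \(\kk_{i-s+1},\dots,\kk_{i}\) read backwards. Its injective leftmost map \(P_{i}\hookrightarrow W\otimes P_{i-1}\), with \(W\neq0\) one-dimensional at the extreme end, shows that torsion of \(P_i\) forces torsion of \(P_{i-1}\), and by descending induction torsion of all \(P_k\). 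In particular \(P_k\) is torsion for \(k=i-s+1,\dots,i\), i.e. \(\mathsf{Hom}(E_k,E_n)=0\) for \(n\gg0\) and all \(k\) in one period. Fullness of \((E_{i-s+1},\dots,E_i)\) then gives \(\mathsf{Hom}(G,E_n)=0\) for a generator \(G\). Hence \(E_n=0\), which contradicts exceptionality.
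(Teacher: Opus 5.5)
Your Step 1 is exactly the first half of the paper's proof. The trouble is that you then set aside the argument that finishes it. The ``catch'' you raise against your alternative route does not exist. Step 1 already gives \(\operatorname{Hom}_{\mathcal D}(E_i,E_{j'})=0\) for \emph{every} \(j'\ge j\), not only for \(j'\in i+s\mathbb Z\). Geometricity upgrades this to \(\mathsf{Hom}_{\mathsf X}(E_i,E_{j'})\simeq0\). The objects \(F\) with \(\mathsf{Hom}_{\mathsf X}(E_i,F)\simeq0\) form a thick subcategory containing the window \((E_j,\ldots,E_{j+s-1})\), which is full because the helix is full. Hence this subcategory is all of \(\mathcal D\), so in particular \(\operatorname{Hom}_{\mathcal D}(E_i,E_i)=0\), contradicting exceptionality. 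Only the \(E_i\)-row is needed, because the contradiction comes from evaluating \(\mathsf{Hom}(E_i,-)\) on \(E_i\) itself. No other rows, Koszul complexes, or Hilbert series enter. This is the paper's proof.

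The route you actually commit to has a genuine gap at the propagation step. In the Koszul resolution of \(\kk_{i-s}\), the leftmost injective map \(W^{(i-s)}_s\otimes P_i\hookrightarrow W^{(i-s)}_{s-1}\otimes P_{i-1}\) shows only that torsion of \(P_{i-1}\) implies torsion of \(P_i\), since a submodule of a torsion module is torsion. It does not give the converse you use: torsion of a submodule says nothing about the ambient module. So the descending induction is unsupported as written. With it fall both the column argument of your last paragraph and the Hilbert-series argument of Step 2, which presupposes that the whole matrix \(H(t)\) is polynomial.

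The downward propagation is in fact true, but for the reason you already used in Step 1. Degree-one generation makes \(\operatorname{Hom}(E_i,E_n)\otimes\operatorname{Hom}(E_k,E_i)\to\operatorname{Hom}(E_k,E_n)\) surjective for \(k\le i\le n\), so vanishing for the source \(E_i\) spreads to every source \(E_k\) with \(k\le i\). With that replacement your column argument becomes a correct, if roundabout, proof: \(\mathsf{Hom}(-,E_n)\) vanishes on the full window \((E_{i-s+1},\ldots,E_i)\), which forces \(E_n=0\).
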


\begin{proof}
The assertion is clear for \(i=j.\)  Suppose that \(i<j\) and
\(\operatorname{Hom}_{\mathcal D}(E_i,E_j)=0.\)  Since \(\mathcal A\) is
generated in degree one, the composition map
\[
 \operatorname{Hom}_{\mathcal D}(E_j,E_k)\otimes
 \operatorname{Hom}_{\mathcal D}(E_i,E_j)
 \longrightarrow\operatorname{Hom}_{\mathcal D}(E_i,E_k)
\]
is surjective for every \(k\ge j,\) and its source is zero.
Hence, \(\operatorname{Hom}_{\mathcal D}(E_i,E_k)=0\) for
all \(k\ge j.\)  Geometricity then gives
\(\operatorname{Hom}_{\mathcal D}(E_i,E_k[m])=0\) for all such \(k\) and
all \(m\in\mathbb Z.\)  The exceptional collection
\((E_j,E_{j+1},\ldots,E_{j+s-1})\) is full, so
\(\operatorname{Hom}_{\mathcal D}(E_i,-[m])\) vanishes on \(\mathcal D\)
for every \(m\in\mathbb Z.\)  In particular,
\(\operatorname{Hom}_{\mathcal D}(E_i,E_i)=0,\) contrary to the
exceptionality of \(E_i.\)
\end{proof}

{Van den Bergh proves the noetherian case of the following result in
\cite[Lemma~3.5]{VanDenBergh2011}.  Here we assume instead that the algebra
is generated in degree one.}

\begin{lemma}\label{lem:periodic-z-subalgebra}
Let \(\mathcal A\) be a connected \(\mathbb Z\)-algebra
generated in degree one.  Fix
\(m\ge1\) and \(r\in\mathbb Z,\) put \(J=r+m\mathbb Z,\) and let
\(\mathcal B\) be the full subalgebra of \(\mathcal A\) on the indices in
\(J.\)  If \(e_i\) is the identity at the object \(i,\) write formally
\(e=\sum_{i\in J}e_i,\) so that \(\mathcal B=e\mathcal A e.\)  The functors
\[
 \operatorname{Res}\colon
 \Gr\!\operatorname{-}\mathcal A\longrightarrow
 \Gr\!\operatorname{-}\mathcal B,
 \; M\longmapsto Me=\bigoplus_{i\in J}Me_i,
\quad
\text{and}
\quad
 \operatorname{Ind}\colon
 \Gr\!\operatorname{-}\mathcal B\longrightarrow
 \Gr\!\operatorname{-}\mathcal A,
 \; N\longmapsto N\otimes_{\mathcal B}e\mathcal A,
\]
descend to mutually inverse exact equivalences
\(\QGr\mathcal A\simeq\QGr\mathcal B.\)  The induced equivalence of derived
categories restricts to an equivalence
\(
 \qprf\!\operatorname{-}\mathcal A
 \simeq
 \qprf\!\operatorname{-}\mathcal B.
\)
\end{lemma}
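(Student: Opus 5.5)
We check directly that \(\operatorname{Res}\) and \(\operatorname{Ind}\) preserve torsion and that the unit and counit of the adjunction have torsion kernels and cokernels. Then both functors descend to \(\QGr\), and the descended functors are mutually inverse.

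\emph{Adjunction and unit.} \(\operatorname{Res}=\operatorname{Hom}_{\mathcal A}(e\mathcal A,-)\) is exact, and \(\operatorname{Ind}\) is its left adjoint. Since \(e\mathcal A e=\mathcal B\), we get \(\operatorname{Res}\operatorname{Ind}N=N\otimes_{\mathcal B}e\mathcal Ae=N\). Hence the unit \(N\to\operatorname{Res}\operatorname{Ind}N\) is an isomorphism.

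\emph{Restriction preserves torsion.} Let \(x\in Me_i\) with \(i\in J\) be killed by \(\mathcal A_j^i\) for all \(j\ge i+p\). Then it is killed by \(\mathcal B_j^i\subset\mathcal A_j^i\) for \(j\ge i+p\). So \(\operatorname{Res}\) sends torsion modules to torsion modules.

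\emph{Torsion detected by restriction.} This is the key input of generation in degree one. Suppose \(Me=0\) and take \(x\in Me_i\). Choose \(i'\in J\) with \(i\le i'<i+m\). For \(j\ge i'\), every element of \(\mathcal A_j^i\) is a sum of products factoring through the object \(i'\), since \(\mathcal A_j^i\) is spanned by products of degree-one pieces and \(\mathcal A_{i'}^{i}\otimes\mathcal A_j^{i'}\to\mathcal A_j^i\) is surjective. Therefore \(x\mathcal A_j^i\subset (Me_{i'})\mathcal A_j^{i'}=0\), so \(x\) is torsion. More generally, if \(\operatorname{Res}M\) is torsion, take \(x\in Me_i\). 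Since \(\mathcal A_{i'}^i\) is finite-dimensional, the elements \(xa\in Me_{i'}\) with \(a\) running over a basis of \(\mathcal A_{i'}^i\) are finitely many. Each is killed by \(\mathcal B_{j}^{i'}\) for \(j\ge i'+p\) with a common \(p\). Any \(j\ge i'+p\) can be reached through some \(j'\in J\) with \(j'\le j<j'+m\) and \(j'\ge i'+p\). Generation in degree one, used again, factors \(\mathcal A_j^{i'}\) through \(\mathcal A_{j'}^{i'}\), and \(\mathcal A_{j'}^{i'}=\mathcal B_{j'}^{i'}\). So \(x\) is torsion. Thus \(\operatorname{Res}M\) is torsion if and only if \(M\) is torsion.

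\emph{Counit.} Apply \(\operatorname{Res}\) to the counit \(\operatorname{Ind}\operatorname{Res}M\to M\). The result is the identity of \(Me\), by the unit computation above. Since \(\operatorname{Res}\) is exact, the kernel and cokernel of the counit restrict to zero, and the previous step shows they are torsion. Similarly, \(\operatorname{Ind}\) sends a torsion \(\mathcal B\)-module \(T\) to a module with \(\operatorname{Res}\operatorname{Ind}T=T\) torsion, so \(\operatorname{Ind}T\) is torsion. Being right exact, \(\operatorname{Ind}\) also respects the right exactness needed for the quotient.

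\emph{Passing to \(\QGr\).} A concern is that \(\operatorname{Ind}\) is only right exact, so we must check it induces an exact functor on quotients. I would handle this by showing its left derived functors \(\operatorname{Tor}^{\mathcal B}_k(-,e\mathcal A)\), \(k\ge1\), take torsion values: they restrict under \(e\) to \(\operatorname{Tor}_k^{\mathcal B}(-,\mathcal B)=0\), and we then apply the detection step. By the universal properties of Serre quotients, both functors descend to exact functors, and they are mutually inverse because the unit and counit become isomorphisms modulo torsion.

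\emph{Derived statement.} The abelian equivalence induces an equivalence \(\mathcal D(\QGr\mathcal A)\simeq\mathcal D(\QGr\mathcal B)\). An equivalence preserves compact objects, so it restricts to \(\qprf\!\operatorname{-}\mathcal A\simeq\qprf\!\operatorname{-}\mathcal B\).

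The main obstacle is the torsion-detection step, namely the uniform bound \(p\) across the finitely many elements \(xa\), together with the exactness of \(\operatorname{Ind}\) modulo torsion. Both reduce to generation in degree one and finite dimensionality of the components \(\mathcal A_j^i\).
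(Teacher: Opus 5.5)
Your proposal is correct and follows essentially the paper's route. Both arguments use the same ingredients:
\begin{enumerate}
\item a module with $Me=0$ is torsion, by factoring through an index of $J$ using generation in degree one;
\item $\operatorname{Res}\operatorname{Ind}\simeq\mathrm{id}$;
\item the kernel and cokernel of the counit, and the kernel of $\operatorname{Ind}$ applied to a monomorphism, have zero restriction and hence are torsion (your $\operatorname{Tor}$ argument plays the role of the paper's kernel argument).
\end{enumerate}
The only real variation is that you deduce that $\operatorname{Ind}$ preserves torsion from your two-sided detection statement, whereas the paper computes directly with the generators $n\otimes e_i$.

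Two small points to tidy. In the detection step, take $p$ to be a multiple of $m$, so that $i'+p\in J$. Also note, as the paper does, that $\mathcal B$ is itself generated in degree one, so that $\QGr\mathcal B$ is defined.
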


\begin{proof}
The algebra \(\mathcal B,\) after reindexing, is also generated in
degree one.  Thus, both quotient categories are defined.
Suppose first that \(Me=0.\)  If \(x\in M e_i,\) choose
\(\ell\in r+m\mathbb Z\) such that \(\ell-m<i\le\ell.\)
For every \(j\ge\ell,\) the multiplication map
\(
 \mathcal A_\ell^i\otimes\mathcal A_j^\ell
 \rightarrow\mathcal A_j^i
\)
is surjective because \(\mathcal A\) is generated in degree one.  Since
\(M e_\ell=0,\) it follows that
\(x\mathcal A_j^i=0\) for all \(j\ge\ell.\)  Thus, \(M\) is torsion.

\begingroup\begingroup
Restriction sends torsion \(\mathcal A\)-modules to torsion
\(\mathcal B\)-modules. To prove the corresponding assertion for
induction, let \(N\) be a torsion right \(\mathcal B\)-module and let
\(n\in Ne_i\) be homogeneous. Choose \(q\in J,\) with \(q\ge i,\)
such that \(n\mathcal B_q^i=0.\) For \(j\ge q,\) factor
\(\mathcal A_j^i=\mathcal A_q^i\mathcal A_j^q.\)
The tensor relation then gives
\((n\otimes e_i)\mathcal A_j^i=0.\)
The elements \(n\otimes e_i\) generate
\(N\otimes_{\mathcal B}e\mathcal A\) as a right
\(\mathcal A\)-module; hence, this induced module is torsion.
\endgroup\endgroup
Moreover,
\(
 \bigl(N\otimes_{\mathcal B}e\mathcal A\bigr)e\simeq N.
\)
For an \(\mathcal A\)-module \(M,\) the kernel and cokernel of the
adjunction morphism
\(
        (Me)\otimes_{\mathcal B}e\mathcal A\rightarrow M
\)
have zero restriction to \(\mathcal B,\) and hence are torsion.  If
\(f\colon N'\hookrightarrow N\) is a monomorphism and
\(K=\ker(\operatorname{Ind}(f)),\) then exactness of restriction and the
natural isomorphism
\(\bigl(N'\otimes_{\mathcal B}e\mathcal A\bigr)e\simeq N'\) give
\(Ke\simeq\ker(f)=0.\)  Thus, \(K\) is torsion.
Since induction is right exact, it therefore becomes exact on the quotient
categories.  Restriction and induction induce mutually inverse
exact equivalences
\(\QGr\mathcal A\simeq\QGr\mathcal B.\)
The induced equivalence of derived categories preserves arbitrary direct
sums and {therefore compact objects, which gives the second equivalence.}
\end{proof}

\section{Non-commutative Fano varieties and minifolds}
\label{sec:minifolds}

\subsection{Non-commutative Fano varieties}

Let \(\mathsf X\) be a smooth proper non-commutative scheme and put
\(\mathcal D=\Perf(\mathsf X).\)  Let \(S\) be the Serre functor
of \(\mathcal D.\)

\begin{definition}\label{def:orlov-fano-variety}
Let $n\geq 0$ be an integer.
An \emph{\(n\)-dimensional non-commutative Fano variety} is a smooth proper
non-commutative scheme \(\mathsf X\) such that
\begin{enumerate}[label=\textup{(\roman*)}]
\item there exists an exceptional object \(E\in\Perf(\mathsf X)\) satisfying
\begin{equation}\label{eq:fano-vanishing}
 \operatorname{Hom}_{\mathcal D}(E,T^pE[k])=0
 \quad\text{for }p\ge0\text{ and }k\ne0,
 \quad\text{where}\quad T=S^{-1}[n];
\end{equation}
\item For the graded algebra
\(R:=R(\mathsf X,E)
 =\bigoplus_{p\ge0}\operatorname{Hom}_{\mathcal D}(E,T^pE),\) with
multiplication \(f\cdot g=T^q(f)\circ g\) for homogeneous elements \(f\) and
\(g\) of degrees \(p\) and \(q,\) respectively, there is an equivalence
\(\Phi_E\colon\mathcal D(\mathsf X)
 \xrightarrow{\ \sim\ }\mathcal D(\QGr R)\)
whose restriction to compact objects is
\begin{equation}\label{eq:fano-equivalence}
 \Phi_E|_{\mathcal D}\colon\mathcal D=\Perf(\mathsf X)
 \xrightarrow{\ \sim\ }\qprf\!\operatorname{-}R,\qquad
 F\longmapsto q_R\!\left(\bigoplus_{p\ge0}
 \mathsf{Hom}_{\mathsf X}(E,T^pF)\right).
\end{equation}
Here \(q_R\colon\mathcal D(\Gr\!\operatorname{-}R)
\to\mathcal D(\QGr R)\) is the derived quotient functor.
\end{enumerate}
\end{definition}

\begin{remark}\label{rem:fano-serre-dimension}
The integer \(n\) is called the \emph{Serre dimension} of \(\mathsf X.\)
This terminology agrees with the Serre dimension introduced in
\cite[Definition~5.3 and Proposition~5.5]{ElaginLunts2021}.
\end{remark}

Definition~\ref{def:orlov-fano-variety} is a more detailed version
of the definition given in \cite{Orlov2026}.
\begingroup\begingroup
Condition~\textup{(ii)} implies
\(\operatorname{thick}\{T^aE\mid a\in\mathbb Z\}=\mathcal D.\)
Use the grading convention \(R(a)_q=R_{a+q}.\)
For every \(a\in\mathbb Z,\) condition~\textup{(i)} identifies the
graded complex defining \(\Phi_E(T^aE)\) in
\eqref{eq:fano-equivalence} with \(R(a)\) in all sufficiently large
internal degrees, up to quasi-isomorphism. The discarded components
have torsion cohomology, so \(\Phi_E(T^aE)\simeq q_R(R(a)).\)
The modules \(R(a),\) for \(a\in\mathbb Z,\) generate
\(\mathcal D(\Gr\!\operatorname{-}R)\) as a localizing subcategory;
their images therefore generate \(\mathcal D(\QGr R)\) as a
localizing subcategory.
These images are compact by~\textup{(ii)} and hence generate
\(\qprf\!\operatorname{-}R\) as a thick subcategory.
\endgroup\endgroup

\begin{definition}\label{def:geometric-fano}
\begingroup\begingroup
A non-commutative Fano variety \(\mathsf X\) is \emph{of geometric type}
if there exist an exceptional object \(E\) satisfying conditions
\textup{(i)} and \textup{(ii)} of Definition~\ref{def:orlov-fano-variety}
and a polynomial
\(h\in\mathbb Q[x]\) such that
\(\dim_\kk R(\mathsf X,E)_p=h(p)\) for every integer \(p\ge0.\)
The degree of \(h\) is the \emph{Hilbert dimension} with respect to \(E.\)
\endgroup\endgroup
\end{definition}

{Example~\ref{ex:three-kronecker} shows that polynomial growth
is not automatic.}

\begin{example}\label{ex:three-kronecker}
For \(N\ge3,\) let \(Q_N\) be the quiver with two vertices and \(N\) arrows
from \(0\) to \(1,\) and put \(\Lambda_N=\kk Q_N.\)
\begingroup\begingroup
The scheme \(\mathsf X_N=\PerfDG{\Lambda_N}\) is smooth and proper.
Let \((P_1,P_0)\) be its indecomposable projective right
\(\Lambda_N\)-modules, ordered so that
\(\operatorname{Hom}_{\Lambda_N}(P_1,P_0)\simeq\kk^N,\) and put
\(T=S_{\mathsf X_N}^{-1}[1].\) The pair
\(\mathcal E_N=(P_1,P_0)\) is a full strong exceptional collection in
\(\Perf(\mathsf X_N).\) In the basis \(([P_1],[P_0])\) of
\(K_0(\Perf(\mathsf X_N)),\) the Euler matrix and the matrix of
\(T_*\) are
\endgroup\endgroup
\[
 A_N=\begin{pmatrix}1&N\\0&1\end{pmatrix},
 \qquad
 T_*=-A_N^{-t}A_N
 =\begin{pmatrix}-1&-N\\N&N^2-1\end{pmatrix}.
\]
By \cite[Section~2]{Minamoto2008}, the scheme \(\mathsf X_N\) is a
one-dimensional non-commutative Fano variety; see also
\cite[Proposition~2.5]{Polishchuk2005}. If
\(h_N(p)=\dim_{\kk}\operatorname{Hom}(P_1,T^pP_1),\) then
\(h_N(0)=1,\) \(h_N(1)=N^2-1,\) and
\begin{equation}\label{eq:kronecker-hilbert-recurrence}
 h_N(p+2)=(N^2-2)h_N(p+1)-h_N(p),\qquad p\ge0.
\end{equation}
\begingroup
The characteristic roots are \(\lambda,\lambda^{-1},\) with
\(\lambda>1,\) so \(h_N\) grows exponentially.  More generally, for an
exceptional object \(E\) satisfying \eqref{eq:fano-vanishing}, the sequence
\(h_E(p)=\chi(E,T^pE)\) satisfies the recurrence
\eqref{eq:kronecker-hilbert-recurrence} and has
\(h_E(0)=1.\)  Its coefficient of \(\lambda^p\) cannot vanish: otherwise
this nonnegative integer sequence would tend to zero and hence, be
eventually zero, contrary to the recurrence and \(h_E(0)=1.\)
That coefficient is therefore positive.  {Thus, no exceptional object
satisfying Definition~\ref{def:orlov-fano-variety} has polynomial growth,}
and \(\mathsf X_N\) is not of geometric type.
\endgroup
\end{example}

\begin{proposition}\label{prop:commutative-fano-is-geometric}
Let \(X\) be an \(n\)\!-dimensional smooth Fano variety over
\(\kk.\)  Then  \(\PerfDG{X}\) is an \(n\)\!-dimensional
non-commutative Fano variety of geometric type.
\end{proposition}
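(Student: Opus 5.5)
We take \(E=\mathcal O_X\). For \(\mathcal D=\Perf(X)\) the Serre functor is \(S=(-)\otimes\omega_X[n]\), so \(T=S^{-1}[n]=(-)\otimes\omega_X^{-1}\) and \(T^pE=\omega_X^{-p}\). The argument then checks, in order: that \(\mathcal O_X\) is exceptional, the vanishing \eqref{eq:fano-vanishing}, condition (ii), and polynomiality of \(\dim R_p\).

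\emph{Exceptionality and vanishing.} Exceptionality means \(H^k(X,\mathcal O_X)=0\) for \(k>0\) and \(H^0=\kk\). Since \(X\) is Fano, \(\mathcal O_X=\omega_X\otimes\omega_X^{-1}\) with \(\omega_X^{-1}\) ample, so Kodaira vanishing (characteristic zero) kills the higher cohomology. Also \(H^0(X,\mathcal O_X)=\kk\), since \(X\) is connected (as a Fano variety it is in particular a variety), projective, and \(\kk\) is algebraically closed. For \(p\ge0\), write \(\operatorname{Hom}(\mathcal O_X,\omega_X^{-p}[k])=H^k(X,\omega_X\otimes\omega_X^{-(p+1)})\). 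Because \(\omega_X^{-(p+1)}\) is ample, Kodaira vanishing gives zero for \(k>0\). For \(k<0\) the group vanishes trivially, since a sheaf has no negative cohomology.

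\emph{Condition (ii).} The multiplication \(f\cdot g=T^q(f)\circ g\) on \(R(\mathsf X,\mathcal O_X)\) is identified with the tensor product of sections, so \(R=R(X,\omega_X^{-1})\). We then apply Theorem~\ref{thm:serre-qgr} with \(L=\omega_X^{-1}\), which gives \(\QGr R\simeq\Qcoh X\) via sheafification \(M\mapsto\widetilde M\). Deriving this exact equivalence gives \(\mathcal D(\Qcoh X)\simeq\mathcal D(\QGr R)\). Let \(\Phi_E\) be the inverse of derived sheafification, and identify \(\mathcal D(\mathsf X)=\mathcal D(\Qcoh X)\) as usual (\cite{BondalVanDenBergh2003}).

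It remains to check that on a perfect complex \(F\) this functor is given by \eqref{eq:fano-equivalence}, i.e.\ that \(\Phi_E(F)\simeq q_R\bigl(\bigoplus_{p\ge0}\mathsf{Hom}(\mathcal O_X,F\otimes\omega_X^{-p})\bigr)\). This is the step requiring care and is the main obstacle. The plan is to show that the derived sheafification of the graded complex \(\Gamma_*(F)=\bigoplus_{p\ge0}\mathbf R\Gamma(X,F(p))\) is \(F\), where \(F(p)=F\otimes\omega_X^{-p}\); this would be the derived version of the classical isomorphism \(\widetilde{\Gamma_*(\mathcal F)}\simeq\mathcal F\).

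To do this, represent \(F\) by a bounded complex of coherent sheaves, or better by a finite Čech resolution with respect to an affine cover, so that \(\mathbf R\Gamma(X,F(p))\) is computed functorially in \(p\) by a complex of graded \(R\)-modules. Then compare cohomology sheaves. The cohomology of the graded complex in degree \(k\) is the graded module \(\bigoplus_p H^k(X,F(p))\). By Serre vanishing, for \(p\gg0\) this agrees with \(\bigoplus_p H^0(X,\mathcal H^k(F)(p))\) up to a bounded (hence torsion) error, and its sheafification is \(\mathcal H^k(F)\) by the classical theorem. Since \(q_R\) is exact and kills torsion, the natural comparison map is a quasi-isomorphism. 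Compact objects correspond by the statement for perfect complexes in Theorem~\ref{thm:serre-qgr}.

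\emph{Geometric type.} Kodaira vanishing shows \(H^k(X,\omega_X^{-p})=0\) for \(k>0\) and \(p\ge0\). Hence \(\dim R_p=\chi(X,\omega_X^{-p})\), which by Riemann--Roch (or Snapper's theorem) is a polynomial in \(p\) of degree \(n\) with leading coefficient \((-K_X)^n/n!\). It holds for all \(p\ge0\), including \(p=0\), where it equals \(1\). So \(\mathsf X\) is of geometric type, with Hilbert dimension \(n\) equal to its Serre dimension.
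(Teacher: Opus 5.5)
Your proof is correct and follows the paper's argument: take \(E=\mathcal O_X\), get \eqref{eq:fano-vanishing} from Kodaira vanishing, get condition~(ii) from Theorem~\ref{thm:serre-qgr} with \(L=\omega_X^{-1}\), and get polynomiality of \(\dim R_p\) from Riemann--Roch together with Kodaira vanishing. The paper simply cites Serre's theorem for~(ii) and does not check that the equivalence takes the form \eqref{eq:fano-equivalence} on perfect complexes; your \v{C}ech-model comparison supplies that check, and it is cleanest to observe that termwise sheafification of \(\bigoplus_{p\ge0}\Gamma\bigl(X,\check C(\mathfrak U,F)\otimes\omega_X^{-p}\bigr)\) returns \(\check C(\mathfrak U,F)\), which receives a natural quasi-isomorphism from \(F\), so no Serre-vanishing estimate on cohomology sheaves is needed.
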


\begin{proof}[Proof]
Put \(\mathcal D=D^b(\coh X)=\Perf(X).\)  Its Serre functor is
\[
        S(F)=F\otimes\omega_X[n],
        \qquad
        T=S^{-1}[n]=(-)\otimes\omega_X^{-1}.
\]
For \(E=\mathcal O_X,\) the algebra \(R(X,E)\) is the anticanonical section {algebra}
\(
        R(X,\mathcal O_X)
        =\bigoplus_{p\ge0}H^0(X,\omega_X^{-p}).
\)
Kodaira vanishing gives \eqref{eq:fano-vanishing}.  Since
\(\omega_X^{-1}\) is ample, {this algebra is finitely
generated}.  Theorem~\ref{thm:serre-qgr} gives
\[
 \QGr R(X,\mathcal O_X)\simeq\Qcoh X,
 \qquad
 \mathcal D\bigl(\QGr R(X,\mathcal O_X)\bigr)
 \simeq\mathcal D(\Qcoh X),
\]
and, on compact objects,
\(\qprf\!\operatorname{-}R(X,\mathcal O_X)\simeq\Perf(X)=\mathcal D.\)
Finally, Riemann--Roch and Kodaira vanishing show that
\(h_{\mathcal O_X}(p)=\chi(X,\omega_X^{-p})\) is a polynomial in \(p.\)
\end{proof}

\subsection{Minifolds}

In this subsection, we define non-commutative minifolds.

\begin{definition}\label{def:orlov-minifold}
Let \(n\geq0.\)  A smooth proper non-commutative scheme
\(\mathsf X\) is called a \emph{non-commutative minifold of dimension \(n\)} if
\(\mathcal D=\Perf(\mathsf X)\) admits a full exceptional collection \(
        \mathcal E=(E_1,\ldots,E_{n+1})
\)
such that the helix generated by \(\mathcal E\) is geometric,
and \(T=S^{-1}[n]\) acts maximally unipotently on
\(K_0(\mathcal D)\otimes\mathbb Q,\) that is, it has a single Jordan block
with eigenvalue \(1.\)
\end{definition}

\begin{proposition}\label{prop:geometric-collection-fano}
Let \(\mathsf X\) be a smooth proper non-commutative scheme such that
\(\mathcal D=\Perf(\mathsf X)\) admits a full exceptional collection
\(
        \mathcal E=(E_1,\ldots,E_{n+1})
\)
with
a geometric helix.
Then \(\mathsf X\) is an \(n\)-dimensional non-commutative Fano variety,
and every \(E_i\) can be used as the exceptional object in
Definition~\ref{def:orlov-fano-variety}.
{Moreover, if \(T=S^{-1}[n]\) acts unipotently on
\(K_0(\mathcal D)\otimes\mathbb Q,\) then \(\mathsf X\) is of geometric type.}
\end{proposition}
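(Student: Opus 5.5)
Let \((E_i)_{i\in\mathbb Z}\) be the geometric helix generated by \(\mathcal E\), with period \(s=n+1\). By Remark~\ref{rem:serre-period}, \(E_{i+s}\simeq S^{-1}(E_i)[s-1]=T(E_i)\) for \(T=S^{-1}[n]\). Thus \(T^pE_i\simeq E_{i+ps}\). First, condition~\textup{(i)} of Definition~\ref{def:orlov-fano-variety} for \(E=E_i\) is exactly geometricity of the helix applied to the pair \(i\le i+ps\), for \(p\ge0\). Next, let \(\mathcal A\) be the directed \(\mathbb Z\)-algebra of the helix. Fix the residue class \(J=-i+s\mathbb Z\) and let \(\mathcal B\) be the full subalgebra on \(J\). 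By the periodicity automorphism, \(\mathcal B\) is the \(\mathbb Z\)-algebra of the graded algebra \(R(\mathsf X,E_i)=\bigoplus_{p\ge0}\operatorname{Hom}(E_i,T^pE_i)\). We check that the multiplication \(f\cdot g=T^q(f)\circ g\) matches composition \(\operatorname{Hom}(E_{i+qs},E_{i+(p+q)s})\otimes\operatorname{Hom}(E_i,E_{i+qs})\to\operatorname{Hom}(E_i,E_{i+(p+q)s})\) once the isomorphisms \(u\) are used to identify \(T^q(E_{i})\), \(T^q(E_{i+ps})\) with \(E_{i+qs}\), \(E_{i+(p+q)s}\). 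By Bondal--Polishchuk \cite[Theorem~4.2]{BondalPolishchuk1994}, \(\mathcal A\) is connected (Lemma~\ref{lem:nonzero-forward-homs} is not even needed for this) and generated in degree one. Then Lemma~\ref{lem:periodic-z-subalgebra} gives \(\QGr\mathcal A\simeq\QGr\mathcal B\simeq\QGr R\), together with the corresponding equivalence of \(\qprf\).

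The main step is to produce the equivalence \(\mathcal D(\mathsf X)\simeq\mathcal D(\QGr\mathcal A)\) and identify it with \eqref{eq:fano-equivalence}. I would define \(\Psi\colon\mathcal D(\mathsf X)\to\mathcal D(\QGr\mathcal A)\) by \(F\mapsto q\bigl(\bigoplus_{j}\mathsf{Hom}_{\mathsf X}(E_{-j},F)\bigr)\), viewed as a DG \(\mathcal A\)-module. This is the analogue of Bondal--Polishchuk/Van den Bergh. On \(F=E_k\), geometricity shows that the module agrees with the representable projective \(e_{-k}\mathcal A\) up to components \(\operatorname{Hom}(E_{-j},E_k[\ast])\) with \(-j>k\). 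Only finitely many of these are nonzero modulo torsion, by semiorthogonality within windows of length \(s\) and periodicity. Hence \(\Psi(E_k)\simeq q(e_{-k}\mathcal A)\).

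Full faithfulness on generators then reduces to computing \(\operatorname{Hom}_{\mathcal D(\QGr\mathcal A)}(q P_a,qP_b[m])\) for representables. This is the key obstacle: it requires \(\chi\)-type ``\(\operatorname{Ext}\) vanishing of torsion'' (the AS–Gorenstein/\(\chi\) condition of Artin–Zhang), namely \(\varinjlim\operatorname{Ext}^m(\mathcal A_{\ge q},P_b)=0\) for \(m\ne0\) and equal to \((P_b)_a\) for \(m=0\). I would deduce it from Koszulity and finite global dimension \(s\), together with the Gorenstein-type symmetry of Koszul duals supplied by Serre duality in \(\mathcal D\): the minimal resolution of the simple module \(S_a\) is given by the dual collection \eqref{eq:normalized-duality}. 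Its \(\mathcal A\)-dual is then again a shifted simple, which shows that local cohomology of projectives is concentrated in degree \(s\) and vanishes in low degrees. Since the images \(\Psi(E_k)\) are compact and generate (full exceptional collection on one side, the representables on the other), \(\Psi\) is an equivalence. Composing with the restriction of Lemma~\ref{lem:periodic-z-subalgebra} yields exactly the formula \eqref{eq:fano-equivalence}, because \(\mathsf{Hom}(E_{i+ps},F)\simeq\mathsf{Hom}(E_i,T^{-p}F)\). Up to the sign convention for the grading, which I will reconcile with \(R(a)_q=R_{a+q}\) via the paper's \(\mathcal A_i^j=R_{i-j}\), this matches \(\mathsf{Hom}(E,T^pF)\).

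For geometric type, the helix is geometric, so \(\dim R_p=\chi(E_i,T^pE_i)=\langle[E_i],T_*^p[E_i]\rangle_\chi\). If \(T_*\) is unipotent on \(K_0\otimes\mathbb Q\), write \(T_*^p=\sum_{k\le n}\binom pk N^k\) with \(N=T_*-1\) nilpotent. Then \(\dim R_p\) is a polynomial in \(p\) for all \(p\ge0\).
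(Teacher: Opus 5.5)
Your architecture matches the paper's: the same functor \(F\mapsto q\bigl(\bigoplus_j\mathsf{Hom}_{\mathsf X}(E_{-j},F)\bigr)\), the same passage from \(\mathcal A\) to \(R\) via Lemma~\ref{lem:periodic-z-subalgebra}, and the same binomial argument for geometric type. The divergence, and the gap, is the central equivalence \(\mathcal D(\mathsf X)\simeq\mathcal D(\QGr\mathcal A)\). The paper computes nothing in \(\QGr\). It takes the strong full window \(G=\bigoplus_{j=r}^{r+m-1}E_j\), gets \(\mathcal D(\mathsf X)\simeq\mathcal D(B)\) with \(B=\operatorname{End}(G)\) from Keller's Morita theorem, and imports \(\mathcal D(\QGr\mathcal A)\simeq\mathcal D(B)\) from \cite[Theorem~8.14]{EfimovLuntsOrlov2011}.

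You try to prove the latter by hand, and the criterion you state is false. You require \(\operatorname{Hom}_{\mathcal D(\QGr\mathcal A)}(qP_a,qP_b[m])\) to vanish for \(m\ne0\) and to equal \((P_b)_a=\mathcal A_a^b\) for \(m=0\), for \emph{all} representables. But your functor sends \(E_k\) to \(qP_{-k}\). For \(k\ge l+s\), Serre duality together with \(SE_k[-n]=T^{-1}E_k\simeq E_{k-s}\) gives \(\operatorname{Hom}_{\mathcal D}(E_k,E_l[n])\simeq\operatorname{Hom}_{\mathcal D}(E_l,E_{k-s})^{\vee}\neq0\) by Lemma~\ref{lem:nonzero-forward-homs}. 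Hence for \(n\ge1\) an equivalence forces \(\operatorname{Ext}^n_{\QGr}(qP_a,qP_b)\ne0\) whenever \(b-a\ge s\); on \(\mathbb P^n\) this is \(H^n(\mathcal O(-n-1))\ne0\). Your own next claim, that local cohomology of projectives is concentrated in degree \(s\) with \(H^s_{\mathfrak m}(P_b)\ne0\), produces exactly these classes. So the key step is internally inconsistent as written.

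The repair is to check full faithfulness only on one window \(E_k,\dots,E_{k+s-1}\). These are compact generators of \(\mathcal D(\mathsf X)\). Because \(\Psi\) is exact and every \(E_a\) lies in the thick closure of the window, the essential image contains all \(qP_a\), so it is everything once full faithfulness is known. Since the window is strong, what you need is precisely your criterion restricted to \(|a-b|<s\), where it is true. Proving it still requires three things:
\begin{enumerate}
\item the AS--Gorenstein statement \(R\Gamma_{\mathfrak m}(P_b)\simeq H^s_{\mathfrak m}(P_b)[-s]\) with the correct Gorenstein shift, so that \(H^s_{\mathfrak m}(P_b)_a=0\) for \(b-a<s\);
\item compactness of the \(qP_a\);
\item the \(\varinjlim\) formula for \(\operatorname{Ext}\) in \(\QGr\).
\end{enumerate}
You only assert that these follow from Koszulity and Serre duality. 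That is the substance of the result the paper cites, so either carry out that argument or cite it.

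Two smaller points. First, the components \(\mathsf{Hom}(E_{-j},E_k)\) with \(-j\ge k+s\) are nonzero for infinitely many \(j\), with cohomology in degree \(n\). The identification \(\Psi(E_k)\simeq q(P_{-k})\) holds because these components form a quotient supported in indices bounded above, hence torsion, not because only finitely many survive. Second, \(\bigoplus_j\mathsf{Hom}_{\mathsf X}(E_{-j},F)\) is a module over the directed DG endomorphism category of the helix, not over \(\mathcal A\). It must be transported along the zigzag of quasi-isomorphisms that geometricity provides, as the paper does.
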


\begin{proof}
Put \(m=n+1,\) and let
\(\mathcal H=(E_i)_{i\in\mathbb Z}\) be the full geometric helix generated
by \(\mathcal E.\)  By Remark~\ref{rem:serre-period},
\begin{equation}\label{eq:geometric-fano-period}
        E_{i+m}\simeq T(E_i).
\end{equation}
Fix \(r\in\mathbb Z,\) put \(E=E_r,\) and denote by \(\mathcal A\) the
directed \(\mathbb Z\)-algebra of \(\mathcal H.\)  For \(p\ge0,\)
geometricity and \eqref{eq:geometric-fano-period} give
\[
 \operatorname{Hom}_{\mathcal D}(E,T^pE[k])
 =
 \operatorname{Hom}_{\mathcal D}(E_r,E_{r+pm}[k])
 =0
 \qquad\text{for }k\ne0.
\]
Thus, \(E\) satisfies \eqref{eq:fano-vanishing}.
By \cite[Theorem~4.2]{BondalPolishchuk1994}, the algebra
\(\mathcal A\) is a geometric \(\mathbb Z\)-algebra of period \(m.\)
\begingroup
Put
\[
        G=\bigoplus_{j=r}^{r+m-1}E_j,
        \qquad B=\operatorname{End}_{\mathcal D}(G).
\]
The block \((E_r,\ldots,E_{r+m-1})\) is full and strong.  Hence, its DG
endomorphism algebra is quasi-isomorphic to \(B,\) and derived Morita
equivalence \cite[Theorem~3.8]{Keller2006} gives
\[
        \mathcal D(\mathsf X)\simeq\mathcal D(B),
        \qquad \mathcal D\simeq\Perf(B).
\]
Under the definition of \(\mathcal A,\) this algebra is
\(B=\bigoplus_{-r-m<i,j\le-r}\mathcal A_i^j.\)  {This is the finite
block algebra in \cite[Theorem~8.14]{EfimovLuntsOrlov2011}, which gives}
\(\mathcal D(\QGr\mathcal A)\simeq\mathcal D(B).\)  It preserves
coproducts and therefore restricts to compact objects; {this restriction}
is also stated in
\cite[Proposition~8.21]{EfimovLuntsOrlov2011}.  Composing these equivalences gives
\endgroup
\begin{equation}\label{eq:helix-equivalence}
  \mathcal D(\mathsf X) \simeq\mathcal D(\QGr \mathcal A),\qquad      \mathcal D
        \simeq
        \qprf\!\operatorname{-}\mathcal A.
\end{equation}

Let \(\mathcal B\) be the full subalgebra of \(\mathcal A\) on the
indices in \(-r+m\mathbb Z.\)
\begingroup\begingroup
After reindexing \(-r+pm\) by \(p,\) compatible period isomorphisms
\(E_{r-pm}\simeq T^{-p}E\) identify \(\mathcal B\) with the
\(\mathbb Z\)-algebra of
\endgroup\endgroup
\[
        R=R(\mathsf X,E)
        =\bigoplus_{p\ge0}\operatorname{Hom}_{\mathcal D}(E,T^pE).
\]
{Indeed, for \(q\le p,\) applying \(T^p\) to morphisms gives}
\[
 \mathcal B_p^q
 =\mathcal A_{-r+pm}^{-r+qm}
 =\operatorname{Hom}_{\mathcal D}
 \bigl(E_{r-pm},E_{r-qm}\bigr)
 \simeq
 \operatorname{Hom}_{\mathcal D}\bigl(E,T^{p-q}E\bigr)
 =R_{p-q}.
\]
{These identifications preserve multiplication.}
Lemma~\ref{lem:periodic-z-subalgebra} and
\eqref{eq:helix-equivalence} now give
\(
        \mathcal D
        \simeq
        \qprf\!\operatorname{-}\mathcal A
        \simeq
        \qprf\!\operatorname{-}R.
\)
\begingroup\begingroup
The first equivalence in \eqref{eq:helix-equivalence} is induced by DG
Yoneda and the derived quotient functor. Here we use derived change of
scalars along a chain of quasi-isomorphisms connecting the directed DG
endomorphism algebra of the helix to \(\mathcal A,\) which exists by
geometricity. For \(F\in\mathcal D,\)
restriction to \(-r+m\mathbb Z,\) with internal degree \(p\)
corresponding to the object \(-r+pm,\) gives
\(N_F\in\mathcal D(\Gr\!\operatorname{-}R)\) with
\[
 (N_F)_p\simeq\mathsf{Hom}_{\mathsf X}(E_{r-pm},F)
 \simeq\mathsf{Hom}_{\mathsf X}(E,T^pF)
 \qquad\text{in }\mathcal D(\kk).
\]
The period identifications respect composition and give the right
\(R\)-action in Definition~\ref{def:orlov-fano-variety}\textup{(ii)}.
Since \(N_F/(N_F)_{\ge0}\) has torsion cohomology, the composite
equivalence sends \(F\) to \(q_R((N_F)_{\ge0}),\) as in
\eqref{eq:fano-equivalence}. Thus, its restriction to compact objects
is \(\Phi_E|_{\mathcal D},\) proving the Fano condition.
\endgroup\endgroup

{Suppose now that \(T\) acts unipotently.}  Write
\(
        T_*=I+N
\)
on \(K_0(\mathcal D)\otimes\mathbb Q.\)  Fullness of \(\mathcal E\) gives
\(\dim_{\mathbb Q}K_0(\mathcal D)\otimes\mathbb Q=n+1,\) and hence
\(N^{n+1}=0.\)  The vanishing
\eqref{eq:fano-vanishing} yields
\[
 \dim_{\kk}\operatorname{Hom}_{\mathcal D}(E,T^pE)
 =\chi(E,T^pE)
 =\sum_{j=0}^{n}\binom pj\chi\bigl([E],N^j[E]\bigr)
\]
for every \(p\ge0.\)
Thus, the Hilbert function of \(R\) is polynomial in \(p,\) so
\(\mathsf X\) is of geometric type.
\end{proof}

\begin{proposition}\label{prop:minifold-hilbert-degree}
Let \(\mathsf X\) be an \(n\)-dimensional non-commutative minifold, and let
\(\mathcal H=(E_i)_{i\in\mathbb Z}\) be the helix generated by
{a full exceptional collection \(\mathcal E=(E_1,\ldots,E_{n+1})\)
in \(\mathcal D=\Perf(\mathsf X)\)}
as in Definition~\ref{def:orlov-minifold}.  Then, for every \(i,\) the
Hilbert function
\[
 h_{E_i}(p)
 =\dim_{\kk}\operatorname{Hom}_{\mathcal D}(E_i,T^pE_i),
 \qquad p\ge0,
\]
is a polynomial of degree \(n\) with positive leading coefficient.
For every exceptional object \(E\) satisfying conditions
\textup{(i)} and \textup{(ii)} of Definition~\ref{def:orlov-fano-variety},
its Hilbert function is also a
polynomial of degree \(n\) with positive leading coefficient.  Consequently, the Hilbert dimension
of \(\mathsf X\) is equal to \(n\) and is independent of the choice of \(E.\)
\end{proposition}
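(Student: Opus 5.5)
From the proof of Proposition~\ref{prop:geometric-collection-fano}, the Hilbert function of any exceptional object satisfying \eqref{eq:fano-vanishing} is already a polynomial. Indeed, write \(T_*=I+N\) on \(K_0(\mathcal D)\otimes\mathbb Q\). Then
\[
 h_E(p)=\chi(E,T^pE)=\sum_{j=0}^{n}\binom pj\chi\bigl([E],N^j[E]\bigr).
\]
This uses only \eqref{eq:fano-vanishing} and unipotence, so it applies to every \(E\) in condition~(i), not just to the \(E_i\). The degree is at most \(n\), and the coefficient of \(p^n\) is \(\chi([E],N^n[E])/n!\). So the whole statement reduces to showing that \(\chi([E],N^n[E])>0\) for every such \(E\).

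For this I use maximal unipotence. Since \(T_*\) has a single Jordan block of size \(n+1\), the operator \(N^n\) has rank one, with image the line \(\ker N\). Write \(N^n v=\lambda(v)\,w\) for a fixed \(w\in\ker N\) and a nonzero linear form \(\lambda\). Then \(\chi(v,N^nv)=\lambda(v)\chi(v,w)\). The Serre functor satisfies \(\chi(x,y)=\chi(y,S x)\), and \(T\) is an autoequivalence commuting with \(S\), so \(\chi(T_*x,T_*y)=\chi(x,y)\). From this, the bilinear form \(Q(v)=\chi(v,N^nv)\) should turn out to be (up to sign) of the form \(c\,\lambda(v)^2\) for a single constant \(c\ne0\). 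To see this, first check that \(\chi(\ker N,-)\) and \(\lambda\) are proportional, using \(T\)-invariance and the relation \(\chi(x,Ny)=-\chi(N'x,y)\) with \(N'=T_*^{-1}-I\), which has the same flag. So \(Q(v)=c\lambda(v)^2\), and its sign is uniform over \(K_0\).

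It remains to show that \(c>0\) and \(\lambda(E)\ne0\) for the relevant \(E\). For \(E=E_i\), Lemma~\ref{lem:nonzero-forward-homs} gives \(h_{E_i}(p)=\dim\operatorname{Hom}(E_i,E_{i+p(n+1)})\ge1\). A finer growth estimate should then show that the degree is exactly \(n\). Here I would use the directed \(\mathbb Z\)-algebra \(\mathcal A\), which by Bondal--Polishchuk is Koszul of global dimension \(n+1\). Its Hilbert series is the inverse of a polynomial whose roots all equal \(1\), by maximal unipotence, with multiplicity \(n+1\). Summing over a period, this forces the \(\sum_i\dim\mathcal A_i^j\) to grow like \(p^n\).

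More concretely, pick the basis \([E_1],\ldots,[E_{n+1}]\). Since these classes span \(K_0\), we have \(\lambda(E_k)\ne0\) for some \(k\), so \(h_{E_k}\) has degree exactly \(n\). Because \(h_{E_k}\ge0\), its leading coefficient \(c\lambda(E_k)^2/n!\) is positive, which gives \(c>0\). Then, for any \(E\) satisfying (i) and (ii), the leading coefficient is \(c\lambda(E)^2/n!\ge0\). To see that \(\lambda(E)\ne0\), suppose instead that \(\lambda(E)=0\). Then \(h_E\) has degree less than \(n\), and I would derive a contradiction from condition~(ii). By Lemma~\ref{lem:periodic-z-subalgebra} and \eqref{eq:helix-equivalence}, \(\QGr R(\mathsf X,E)\) must recover \(\mathcal D\), whose Hilbert-type growth is of order \(p^n\). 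I expect the main obstacle to be ruling out \(\lambda(E)=0\), and more precisely proving the proportionality \(Q=c\lambda^2\) cleanly, since the rest is bookkeeping.
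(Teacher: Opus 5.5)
Your reduction is correct as far as it goes, but it stops short of the actual content of the proposition, so there is a genuine gap.

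\textbf{What is correct.} The expansion \(h_E(p)=\sum_j\binom pj\chi([E],N^j[E])\) needs only \eqref{eq:fano-vanishing} and unipotence. The rank-one identity \(\chi(u,N^nv)=c\,\lambda(u)\lambda(v)\) is true. Both \(\chi(-,w)\) and \(\lambda\) vanish on the hyperplane \(\operatorname{im}N=\ker N^n\): for \(\chi(-,w)\) this is because \(\chi(Nx,w)=\chi(Tx,Tw)-\chi(x,w)=0\), and \(\chi(-,w)\ne0\) by non-degeneracy. Choosing \(k\) with \(\lambda([E_k])\ne0\) then correctly gives \(c>0\).

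\textbf{The gap.} You never prove \(\lambda([E])\ne0\) for an arbitrary \(E\) satisfying (i) and (ii). You do not even get it for every \(E_i\) of the helix, only for some \(k\). Since your claim about every \(E_i\) is deduced from the general claim, both assertions remain unproved.

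The route you sketch does not close this. Lemma~\ref{lem:periodic-z-subalgebra} and \eqref{eq:helix-equivalence} are set up for \(E=E_r\) in the helix, not for a general \(E\). The phrase ``\(\QGr R\) recovers \(\mathcal D\), whose growth is of order \(p^n\)'' is an intuition, not an argument.

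The question also cannot be settled inside \(K_0\). Since \(\lambda\) is \(T\)-invariant, \(\lambda([E])=0\) would put every \([T^aE]\) in the hyperplane \(\ker\lambda\). But thick generation does not force the classes of generators to span \(K_0\otimes\mathbb Q\), because thick closure allows direct summands. For example, \(\kk\oplus\kk[1]\) generates \(\Perf(\kk)\) and has class \(0\). The Koszul Hilbert-series paragraph is likewise speculative, and at best it reproves what you already have for a single \(k\).

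\textbf{The missing idea.} You need a growth estimate on actual dimensions, which are monotone under direct summands. Put \(d=\deg h_E\) and \(b_p(F,G)=\sum_k\dim\operatorname{Hom}(F,T^pG[k])\).

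\begin{enumerate}[label=(\arabic*)]
\item By \eqref{eq:fano-vanishing}, \(b_p(T^aE,T^bE)=h_E(p+b-a)=O(p^d)\) for fixed \(a,b\) and \(p\gg0\).
\item The bound \(O(p^d)\) survives finite sums, shifts, cones and direct summands in either variable.
\item Condition (ii) gives \(\operatorname{thick}\{T^aE\mid a\in\mathbb Z\}=\mathcal D\), so \(b_p(F,G)=O(p^d)\) for all \(F,G\in\mathcal D\).
\item Take \(F=G=E_k\) with \(\lambda([E_k])\ne0\). Then \(\chi(E_k,T^pE_k)\) has exact degree \(n\) and satisfies \(|\chi(E_k,T^pE_k)|\le b_p(E_k,E_k)=O(p^d)\). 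Hence \(d\ge n\), so \(d=n\).
\item Positivity of the leading coefficient of \(h_E\) follows from \(h_E(p)\ge0\).
\end{enumerate}

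This is exactly the paper's argument. Once you have it, the proportionality \(Q=c\lambda^2\) is no longer needed.
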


\begin{proof}
\begingroup
Put \(V=K_0(\mathcal D)\otimes\mathbb Q\) and \(N=T_*-I.\)
Maximal unipotence gives \(N^{n+1}=0\) and \(N^n\ne0.\)  For
\(F,G\in\mathcal D,\) the binomial expansion gives
\begin{equation}\label{eq:minifold-hilbert-leading-term}
 \chi(F,T^pG)=\sum_{k=0}^{n}\binom pk
                 \chi\bigl([F],N^k[G]\bigr),\qquad p\ge0.
\end{equation}
Let \(E\) be an exceptional object satisfying conditions
\textup{(i)} and \textup{(ii)} of Definition~\ref{def:orlov-fano-variety}.
By \eqref{eq:fano-vanishing}, \(h_E(p)=\chi(E,T^pE)\) is a polynomial
of degree \(d\le n,\) and \(h_E(0)=1.\)
For fixed \(F,G\in\mathcal D,\) put
\[
 b_p(F,G)=\sum_k\dim_\kk\operatorname{Hom}_{\mathcal D}(F,T^pG[k]).
\]
For fixed integers \(a,b\) and sufficiently large \(p,\)
the vanishing \eqref{eq:fano-vanishing} gives
\[
 b_p(T^aE,T^bE)=h_E(p+b-a)=O(p^d).
\]
Long exact sequences show that the estimate \(b_p(F,G)=O(p^d)\)
is preserved by finite sums, shifts, cones, and direct summands in
either argument.  {Condition~\textup{(ii)} of
Definition~\ref{def:orlov-fano-variety} gives}
\(\operatorname{thick}\{T^aE\mid a\in\mathbb Z\}=\mathcal D.\)
Extending the estimate first in the second argument and then in the
first therefore proves it for every fixed pair \(F,G\in\mathcal D.\)

The Euler form is non-degenerate, and the classes of
{the full exceptional collection \(\mathcal E\)} form a basis of \(V.\)
Since \(N^n\ne0,\) some pair \(i,j\) satisfies
\(\chi([E_i],N^n[E_j])\ne0.\)  Thus, \(\chi(E_i,T^pE_j)\) has degree
\(n\) by \eqref{eq:minifold-hilbert-leading-term}, while
\[
 |\chi(E_i,T^pE_j)|\le b_p(E_i,E_j)=O(p^d).
\]
Hence, \(n\le d,\) so \(d=n.\)  Its leading coefficient is positive
because \(h_E(p)\ge0\) for \(p\ge0.\)
By Proposition~\ref{prop:geometric-collection-fano}, every term of
\(\mathcal H\) {satisfies the conditions imposed on \(E.\)}
\endgroup
\end{proof}

\subsection{Commutative minifolds}
\label{subsec:commutative-minifolds}

In this subsection, we recall the definition and examples of commutative minifolds.

\begin{definition}\label{def:usual-minifold}
An
\emph{\(n\)-dimensional minifold} is a smooth projective variety \(X\) of
dimension \(n\) such that \(D^b(\coh X)\) admits a full exceptional
collection of length \(n+1.\)
\end{definition}
\begin{definition}\label{def:strong-minifold}
{An \(n\)-dimensional minifold \(X\) is called \emph{strong} if
\(D^b(\coh X)\) admits a full exceptional collection of length \(n+1\)
whose helix is geometric.}
\end{definition}

If \(X\) is a smooth projective variety of dimension \(n,\) then
\(\dim_{\mathbb Q}(K_0(X)\otimes\mathbb Q)\ge n+1.\)  Indeed, the Chern
character identifies this space with the rational Chow ring, in which the
powers \(1,H,\ldots,H^n\) of an ample class are non-zero and belong to
distinct graded pieces.

\begin{theorem}
\label{thm:minifold-strong-geometric}
{Let \(X\) be an \(n\)-dimensional minifold, and let}
\(\mathcal E=(E_1,\ldots,E_{n+1})\) be a full exceptional collection in
\(D^b(\coh X).\)  Then the helix generated by
\(\mathcal E\) is geometric if and only if the {exceptional} collection \(\mathcal E\) is
strong.  If these conditions hold, there are vector bundles
\(V_1,\ldots,V_{n+1}\) on \(X\) and an integer \(q\) such that
\[
        E_i\simeq V_i[q]
        \qquad\text{for }1\le i\le n+1.
\]
If \(\mathcal E\) is strong, \(X\) is a Fano variety.
\end{theorem}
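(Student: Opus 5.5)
The implication from geometric to strong is immediate. For \(1\le i\le j\le n+1\), the vanishing \(\operatorname{Hom}(E_i,E_j[m])=0\) for \(m\ne0\) is part of Definition~\ref{def:geometric-helix}. For \(i>j\) all \(\operatorname{Hom}(E_i,E_j[m])\) vanish by exceptionality. So the work is in the converse and in the two geometric consequences. I would organise the proof in this order: first show that a full strong collection of length \(n+1\) consists of shifted vector bundles with a common shift; then use this to prove that the helix is geometric; and finally deduce the Fano property from the helix.

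\emph{Vector bundles.} Let \(\mathcal E\) be strong and put \(B=\operatorname{End}(\bigoplus E_i)\). This is a directed algebra on \(n+1\) vertices, so \(\operatorname{gl.dim}B\le n\), and \(D^b(\coh X)\simeq D^b(\operatorname{mod}B)\) by Example~\ref{ex:endomorphism-algebra}.
\begin{itemize}
\item For a closed point \(x\), the skyscraper \(\mathcal O_x\) goes to the complex \(M_x=\mathsf{Hom}(\bigoplus E_i,\mathcal O_x)\).
\item On the sheaf side, \(\operatorname{Ext}^n(\mathcal O_x,\mathcal O_x)\ne0\). On the \(B\)-module side, morphisms between two complexes of \(B\)-modules can be nonzero only in degrees bounded by the global dimension plus the difference of the cohomological amplitudes.
\item Combining the two, \(M_x\) must have cohomology in a single degree \(q_x\); equivalently, the spaces \(H^*(E_i^\vee\otimes\mathcal O_x)\) sit in one degree for all \(i\) simultaneously.
\end{itemize}
By semicontinuity of fibre cohomology, \(E_i\) is then locally a shifted locally free sheaf, so \(E_i\simeq V_i[q_i]\) with \(q_i\) locally constant, hence constant on the connected variety \(X\). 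Strongness gives \(\operatorname{Hom}(E_i,E_j)\ne0\) only if \(q_i=q_j\) (because \(\operatorname{Hom}(V_i,V_j[q_j-q_i])\) can be nonzero in only one degree). I would also need the forward Homs inside the block to be nonzero; I would argue this directly as in Lemma~\ref{lem:nonzero-forward-homs}, using fullness of the block rather than geometricity. That forces a common shift \(q\).

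\emph{Geometricity.} By Remark~\ref{rem:serre-period}, the helix is \(E_{i+k(n+1)}=E_i\otimes\omega_X^{-k}\), so it suffices to check \(\operatorname{Ext}^m(V_i,V_j\otimes\omega_X^{-k})=0\) for \(m\ne0\), \(k\ge1\). I would proceed by induction on \(k\), using the Bondal–Polishchuk description of a geometric helix through its dual collections. Each \(E_{j+n+1}\) is an iterated right mutation of \(E_j\) through \(E_{j+1},\dots,E_{j+n}\). The triangle defining each step can be rewritten as an exact sequence of vector bundles of length at most \(n+1\), coming from the projective resolution over \(B\) of the corresponding simple module. Strongness of the current window then propagates vanishing of higher \(\operatorname{Ext}\) to the next window, and negative Exts vanish because all objects are sheaves. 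This propagation is \textbf{the step I expect to be hardest}. The length bound \(n+1\) on the resolutions (i.e. minimality \(s=n+1\)) is exactly what prevents higher \(\operatorname{Ext}\)s from appearing. If the direct argument stalls, I would first try to invoke \cite[Theorem~4.2]{BondalPolishchuk1994}'s criterion that a full strong collection of length equal to \(\dim X+1\) generates a geometric helix.

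\emph{Fano property.} Once the helix is geometric, Lemma~\ref{lem:nonzero-forward-homs} gives \(H^0(X,V_i^\vee\otimes V_i\otimes\omega_X^{-p})\ne0\) for all \(p\ge0\), with vanishing higher cohomology. By Proposition~\ref{prop:minifold-hilbert-degree}, applied after checking maximal unipotence of \(T_*=\otimes\omega_X^{-1}\) on \(K_0(X)\otimes\mathbb Q\), this grows like a polynomial of degree \(n\). That growth, together with nefness from global generation of \(\omega_X^{-p}\otimes\operatorname{End}V_i\), shows \(\omega_X^{-1}\) is big and nef. I would then upgrade to ampleness using that \(\operatorname{Pic}X\otimes\mathbb Q\) has rank one, which holds because \(\dim K_0(X)_{\mathbb Q}=n+1\) forces each graded piece of the Chow ring to be one-dimensional. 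In rank one, a big class is a positive multiple of the ample generator, so \(X\) is Fano.
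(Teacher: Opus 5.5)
\paragraph{The central step does not go through.} The paper does not reconstruct the converse. It invokes \cite[Theorem and its proof]{Positselski1995}, which gives the common shift, geometricity of the helix, and the Fano property together. Your reconstruction fails at the point everything else rests on: showing that $M_x=\mathsf{Hom}(\bigoplus E_i,\mathcal O_x)$ has cohomology in a single degree.

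Suppose $M_x$ has cohomology in $[a,b]$. The global-dimension bound gives $\operatorname{Hom}(M_x,M_x[k])=0$ for $k>\operatorname{gl.dim}B+(b-a)$. Combined with $\operatorname{Ext}^n(\mathcal O_x,\mathcal O_x)\neq0$, this only yields $n\le\operatorname{gl.dim}B+(b-a)$. That inequality holds for every amplitude once $\operatorname{gl.dim}B=n$, and if $\operatorname{gl.dim}B<n$ it even forces $b>a$. It is a lower bound on the amplitude, not an upper bound.

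To exclude $b>a$ you would need the top group $\operatorname{Hom}(M_x,M_x[n+b-a])$ to be non-zero. Since $\nu(M_x)\cong M_x[n]$, Serre duality makes this group dual to $\operatorname{Hom}(M_x,M_x[a-b])=\operatorname{Hom}_B(H^bM_x,H^aM_x)$. Nothing in your plan produces a non-zero map there. Supplying this input is the real content of the result the paper cites. Without it, the vector-bundle step, and everything you build on it, is unproved.

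Your separate argument for a common shift is both unnecessary and flawed. The $i$-th component of $M_x$ is $V_i^\vee|_x[-q_i]$, so concentration of $M_x$ already forces all $q_i$ to be equal. On the other side, Lemma~\ref{lem:nonzero-forward-homs} relies on generation in degree one of the helix algebra, i.e.\ on geometricity, which you do not yet have. And a non-zero $\operatorname{Hom}(E_i,E_j)$ would only give $0\le q_j-q_i\le n$, not $q_i=q_j$.

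\paragraph{The later steps also need repair.} For geometricity, your fallback \cite[Theorem~4.2]{BondalPolishchuk1994} is the Koszulity theorem for a helix already known to be geometric, so it cannot establish geometricity. Once the $E_i$ are shifted sheaves, the statement that does the job is \cite[Proposition~3.3]{BondalPolishchuk1994}: a full exceptional collection of $n+1$ sheaves generates a geometric helix. Your ``propagation'' sketch is not yet an argument.

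For the Fano property, Proposition~\ref{prop:minifold-hilbert-degree} presupposes maximal unipotence. The paper derives maximal unipotence (Proposition~\ref{prop:strong-is-noncommutative-minifold}) from the Fano property, via $H^n\neq0$ for the ample class $H=c_1(\omega_X^{-1})$, so using it here is circular. Moreover, non-vanishing of $H^0(\operatorname{End}V_i\otimes\omega_X^{-p})$ gives no global generation of anything, hence no nefness.

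Your rank-one observation can replace both steps. Write $c_1(\omega_X^{-1})=\lambda h$ in $\mathrm{CH}^1(X)_{\mathbb Q}$ with $h$ ample.
\begin{itemize}
\item If $\lambda<0$, then $\omega_X$ is ample, and Serre vanishing contradicts $\operatorname{Hom}(E_i,E_{i+p(n+1)})\neq0$ for $p\gg0$.
\item If $\lambda=0$, then $S$ acts by $(-1)^n$ on $K_0(X)_{\mathbb Q}$. This forces the Euler matrix to be $(-1)^n$-symmetric, which is impossible for a unitriangular matrix whose off-diagonal entries are positive by geometricity.
\end{itemize}
Hence $\lambda>0$ and $X$ is Fano.
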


\begin{proof}
{If the helix generated by \(\mathcal E\) is geometric, then
\(\operatorname{Hom}_{D^b(\coh X)}(E_i,E_j[m])=0\) for \(m\ne0\): for
\(i\le j\) this is geometricity, and for \(i>j\) it is exceptionality.
Hence, \(\mathcal E\) is strong.}

Conversely, suppose that \(\mathcal E\) is strong.  Since
\(\mathcal E\) is full,
its classes form a basis of \(K_0(X),\) so
\(\operatorname{rk}K_0(X)=n+1.\)  By
\cite[Theorem and its proof]{Positselski1995}, there are vector bundles
\(V_i\) and an integer \(q\) such that \(E_i\simeq V_i[q];\) moreover, the
helix is geometric and \(X\) is Fano.
\end{proof}

By Theorem~\ref{thm:minifold-strong-geometric}, the condition in
Definition~\ref{def:strong-minifold} is equivalent to the existence of a
full strong exceptional collection of length \(n+1\)
{in \(D^b(\coh X)\)}.  It is also
equivalent to the existence of a full exceptional collection of \(n+1\)
coherent sheaves {in \(D^b(\coh X),\) see}
\cite[Proposition~3.3]{BondalPolishchuk1994}.

\begin{question}\label{ques:all-minifolds-strong}
{For every \(n\)-dimensional minifold \(X,\) does \(D^b(\coh X)\)
admit a full exceptional collection \(\mathcal E=(E_1,\ldots,E_{n+1})\)
such that the helix generated by \(\mathcal E\) is geometric?}
\end{question}

An affirmative answer would imply, by
Theorem~\ref{thm:minifold-strong-geometric}, that every minifold is Fano.
It is not known in arbitrary dimension whether every minifold is
Fano.  For complex minifolds,
Question~\ref{ques:all-minifolds-strong} has an affirmative answer in
dimensions at most three.  In dimension four it is known under the
additional assumption that the minifold is Fano
\cite[Theorem~1.1(3)]{GKMS2013}.

\begin{proposition}\label{prop:strong-is-noncommutative-minifold}
Let \(X\) be a strong \(n\)-dimensional minifold.  Then
\(\PerfDG{X}\) is an \(n\)-dimensional non-commutative minifold.  Moreover,
\(T=(-)\otimes\omega_X^{-1}\) acts maximally unipotently on
\(K_0(X)\otimes\mathbb Q.\)
\end{proposition}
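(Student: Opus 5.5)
By Definition~\ref{def:strong-minifold}, there is a full exceptional collection \(\mathcal E=(E_1,\ldots,E_{n+1})\) in \(D^b(\coh X)\) whose helix is geometric. The scheme \(\PerfDG{X}\) is smooth and proper by \cite[Propositions~3.30 and~3.31]{Orlov2016}, and \(\Perf(X)=D^b(\coh X)\). So the only condition of Definition~\ref{def:orlov-minifold} left to check is that \(T=S^{-1}[n]\) acts maximally unipotently on \(V=K_0(X)\otimes\mathbb Q\). By Theorem~\ref{thm:minifold-strong-geometric}, \(\mathcal E\) is strong and \(X\) is Fano. By Serre duality \(S=(-)\otimes\omega_X[n]\), so \(T=(-)\otimes\omega_X^{-1}\). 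This also gives the identification of \(T\) claimed in the statement.

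To show maximal unipotence, I will use the Chern character isomorphism \(\operatorname{ch}\colon V\to \bigoplus_{k=0}^n A^k(X)_{\mathbb Q}\), which holds rationally. Since \(\dim_{\mathbb Q}V=n+1\) and each of \(A^0,\ldots,A^n\) is nonzero (as noted before Theorem~\ref{thm:minifold-strong-geometric}), every graded piece \(A^k(X)_{\mathbb Q}\) is one-dimensional. Because \(H=c_1(\omega_X^{-1})\) is ample, \(H^k\ne0\) for \(0\le k\le n\), so \(H^k\) spans \(A^k\). Under \(\operatorname{ch}\), the operator \(T_*\) becomes multiplication by \(e^{H}\). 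Hence \(N=T_*-I\) is multiplication by \(e^H-1=H+H^2/2+\cdots\), which raises the grading by at least one, so \(N^{n+1}=0\). Moreover \(N^n\) sends \(1\) to \(H^n\ne0\). A nilpotent operator on an \((n+1)\)-dimensional space with \(N^n\ne0\) has a single Jordan block, so \(T_*\) is maximally unipotent.

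The one point needing care is the rational isomorphism between \(K_0\) and Chow groups; I would cite Grothendieck–Riemann–Roch and the standard isomorphism \(\operatorname{ch}\colon K_0(X)_{\mathbb Q}\simeq A^*(X)_{\mathbb Q}\) for smooth projective \(X\). I would also check that \(\operatorname{ch}\) of a tensor product by a line bundle \(L\) is multiplication by \(e^{c_1(L)}\). If one prefers to avoid Chow groups, an alternative is to argue directly that \(\chi(\mathcal O_X,T^p\mathcal O_X)\) has degree exactly \(n\) by Riemann–Roch, with leading coefficient \((-K_X)^n/n!>0\). Combined with the expansion \eqref{eq:minifold-hilbert-leading-term}, this forces \(N^n\ne0\). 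Unipotence itself still needs an argument, and the Chern character gives it most cleanly, so I would keep the Chow-group argument as the main route.
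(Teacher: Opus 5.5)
Your proposal is correct and follows essentially the same route as the paper: get that \(X\) is Fano from Theorem~\ref{thm:minifold-strong-geometric}, then use the rational Chern character isomorphism to identify \(T\) with multiplication by \(e^{H}\). This gives \((T-I)^{n+1}=0\) while \((T-I)^n[\mathcal O_X]\) maps to \(H^n\ne0\), which forces a single unipotent Jordan block. Your extra remarks, on smoothness and properness of \(\PerfDG{X}\) and on one-dimensionality of each graded piece, are harmless additions that the argument does not need.
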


\begin{proof}
Choose a full exceptional collection
{\(\mathcal E=(E_1,\ldots,E_{n+1})\) in \(D^b(\coh X)\)}
with a geometric helix as in
Definition~\ref{def:strong-minifold}.  Theorem~\ref{thm:minifold-strong-geometric}
shows that \(X\) is Fano.  It remains to verify maximal unipotence.  {The
classes of the exceptional collection \(\mathcal E\) give}
\(K_0(X)\simeq\mathbb Z^{n+1}.\)  Put
\(H=c_1(\omega_X^{-1}).\)  The Chern character gives an
isomorphism
\(
 \operatorname{ch}\colon K_0(X)\otimes\mathbb Q
 \xrightarrow{\sim}\mathrm{CH}^*(X)_{\mathbb Q}
\)
and identifies the action of \(T\) with multiplication by \(e^H.\)  Thus
for every \(\alpha\in K_0(X)\otimes\mathbb Q,\) we have
\[
 \operatorname{ch}\bigl((T-I)^{n+1}\alpha\bigr)=0,
 \qquad
 \operatorname{ch}\bigl((T-I)^n[\mathcal O_X]\bigr)=H^n\ne0.
\]
The nilpotency index of \(T-I\) is therefore \(n+1.\)  Since
\(K_0(X)\otimes\mathbb Q\) has dimension \(n+1,\) the operator \(T\) has
one unipotent Jordan block.
\end{proof}

{Beilinson's exceptional collection in \(D^b(\coh\mathbb P^n)\)
shows that projective spaces are strong minifolds}
over \(\kk\) \cite{Beilinson1978}.  {If
\(Q\) is a smooth odd-dimensional
quadric over \(\kk,\) Kapranov's exceptional collection in \(D^b(\coh Q)\) shows
that \(Q\) is a strong minifold}
\cite{Kapranov1988}.  By
Proposition~\ref{prop:strong-is-noncommutative-minifold}, the DG categories
of projective spaces and these quadrics are non-commutative minifolds.
{A smooth}
even-dimensional quadric {\(Q\) over \(\kk\)} has two spinor objects in a full exceptional
collection {in \(D^b(\coh Q)\)} and its Grothendieck group has rank
\(\dim Q+2;\) hence, it is not a minifold.

{The classification of complex three-dimensional minifolds appears in
\cite[Theorem~1.1(2)]{GKMS2013}.}

\begin{theorem}
\label{thm:threefold-minifolds}
Let \(X\) be a complex three-dimensional minifold.  Then
\(X\) is isomorphic to \(\mathbb P^3,\) \(Q^3,\) \(V_5,\) or a member of
the family \(X_{22}.\)
\end{theorem}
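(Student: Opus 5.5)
Since the statement only asserts that \(X\) is one of the four listed varieties, I would prove the necessity direction only: a full exceptional collection of length four forces \(X\) onto the list. The argument has three steps. First extract Hodge-theoretic constraints from the collection, then show that \(X\) is Fano of Picard rank one, and finally invoke the Iskovskikh--Mukai classification. This is the argument of \cite[Theorem~1.1(2)]{GKMS2013}, which I would follow.

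\emph{Step 1: Hodge constraints.} A full exceptional collection \((E_1,\ldots,E_4)\) gives \(K_0(X)\simeq\mathbb Z^4\) by Lemma~\ref{lem:full-collection-k0-basis}. Hochschild homology is additive under semiorthogonal decompositions, and each exceptional object contributes one copy of \(\mathbb C\) in degree zero. Hence \(HH_\bullet(X)=HH_0(X)=\mathbb C^4\). By the Hochschild--Kostant--Rosenberg isomorphism, \(HH_i(X)=\bigoplus_{q-p=i}H^{p,q}(X)\). Therefore \(h^{p,q}=0\) for \(p\ne q\), and \(\sum_p h^{p,p}=4\). Since \(h^{p,p}\ge1\) for \(0\le p\le3\), each \(h^{p,p}\) equals \(1\). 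Consequences:
\begin{itemize}
\item \(h^{1,2}=0\), so \(b_3=0\);
\item \(H^1(\mathcal O_X)=H^2(\mathcal O_X)=H^3(\mathcal O_X)=0\);
\item \(\operatorname{Pic}X\simeq H^2(X,\mathbb Z)\) has rank one, so \(\operatorname{Pic}X\otimes\mathbb Q=\mathbb Q H\) for an ample class \(H\).
\end{itemize}
In particular \(\chi(\mathcal O_X)=1\).

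\emph{Step 2: \(X\) is Fano.} Write \(K_X\equiv kH\) with \(k\in\mathbb Q\). There are three cases.
\begin{itemize}
\item If \(k=0\), then \(K_X\) is numerically trivial, and \(h^{0,q}=0\) for \(q>0\) forces \(\omega_X\simeq\mathcal O_X\). The Serre functor is then \([3]\), and Serre duality gives \(\operatorname{Hom}(E_1,E_1[3])\simeq\operatorname{Hom}(E_1,E_1)^\vee=\mathbb C\), contradicting exceptionality.
\item If \(k>0\), then \(K_X\) is ample. Riemann--Roch on a threefold gives \(\chi(\mathcal O_X)=-\tfrac1{24}K_X\cdot c_2(X)\), so \(K_X\cdot c_2(X)=-24<0\). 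This contradicts Miyaoka's pseudo-effectivity of \(c_2\) on minimal threefolds, which gives \(K_X\cdot c_2(X)\ge0\) for \(K_X\) nef.
\item Hence \(k<0\), so \(-K_X\) is ample and \(X\) is a Fano threefold of Picard rank one with \(h^{1,2}(X)=0\).
\end{itemize}

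\emph{Step 3: Classification.} The Iskovskikh--Mukai classification lists Fano threefolds of Picard rank one together with their Hodge numbers. By index:
\begin{itemize}
\item index \(4\): \(\mathbb P^3\);
\item index \(3\): \(Q^3\);
\item index \(2\): del Pezzo threefolds \(V_d\), \(1\le d\le5\), where \(h^{1,2}=0\) holds only for \(d=5\);
\item index \(1\): prime Fano threefolds \(X_{2g-2}\), where \(h^{1,2}=0\) holds only for \(g=12\), that is, \(X_{22}\).
\end{itemize}
Reading off the table leaves exactly \(\mathbb P^3\), \(Q^3\), \(V_5\) and \(X_{22}\).

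The main obstacle is Step 2, specifically excluding \(K_X\) ample. This is the only place where a genuinely deep input is required: Miyaoka's inequality \(K_X\cdot c_2\ge0\). The Calabi--Yau case is excluded formally by Serre duality, and Steps 1 and 3 amount to bookkeeping via HKR and a table lookup.
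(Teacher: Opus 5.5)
The paper gives no proof of its own; it cites \cite[Theorem~1.1(2)]{GKMS2013}. Your outline is the standard argument behind that citation: Hodge numbers \(h^{p,q}=\delta_{pq}\) from the collection, exclusion of nef \(K_X\) via Riemann--Roch and Miyaoka, then the Iskovskikh--Mukai table. Step~1, Step~3 and the case \(k>0\) are fine as written.

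The case \(k=0\), however, rests on a false claim. The vanishing \(h^{0,3}=0\) does not force \(\omega_X\simeq\mathcal O_X\); it excludes it, because Serre duality gives \(H^0(X,\omega_X)\simeq H^3(X,\mathcal O_X)^\vee=0.\) So if \(K_X\equiv0,\) then \(\omega_X\) is a non-trivial torsion line bundle, and the Serre functor is \((-)\otimes\omega_X[3]\) rather than \([3].\) Serre duality then only yields \(\operatorname{Hom}(E_1,E_1\otimes\omega_X[3])\simeq\mathbb C,\) which does not contradict the exceptionality of \(E_1.\)

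The repair uses the formula you already have. If \(K_X\equiv0,\) then \(K_X\cdot c_2(X)=0,\) hence \(\chi(\mathcal O_X)=-\tfrac1{24}K_X\cdot c_2(X)=0\ne1.\) Your Serre-duality idea also works numerically: \(\operatorname{ch}(\omega_X)=1\) in rational cohomology, so \(\chi(E_1,E_1)=-\chi(E_1,E_1\otimes\omega_X)=-\chi(E_1,E_1).\) This is impossible since \(\chi(E_1,E_1)=1.\) More economically, you can dispose of all \(k\ge0\) at once. If \(K_X\) is nef, Miyaoka gives \(K_X\cdot c_2(X)\ge0,\) so \(\chi(\mathcal O_X)\le0,\) contradicting \(\chi(\mathcal O_X)=1.\)
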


The variety \(Q^3\subset\mathbb P^4\) is a smooth quadric
hypersurface.
The variety \(V_5\) is a smooth codimension-three linear section
of \(\operatorname{Gr}(2,5),\) i.e.
\(
        V_5=\operatorname{Gr}(2,5)\cap\mathbb P^6
        \subset\mathbb P^9.
\)
If \(H\) is the
restriction of the Pl\"ucker hyperplane class, then
\(\operatorname{Pic}(V_5)=\mathbb ZH,\) \(-K_{V_5}=2H,\) and \(H^3=5.\)

The varieties \(X_{22}\) can be constructed as follows; see
\cite[Section~4]{Kuznetsov2009}.  Let \(W\) be a seven-dimensional vector
space and let
\(\mathcal U\) be the tautological rank-three subbundle on
\(\operatorname{Gr}(3,W).\)  Such a threefold is the smooth zero locus of a
regular section of \((\Lambda^2\mathcal U^\vee)^{\oplus3}.\)
Equivalently,
for a three-dimensional subspace \(B\subset\Lambda^2W^\vee,\) it
parametrizes the three-dimensional subspaces of \(W\) isotropic for every
form in \(B.\)  These threefolds form a six-dimensional family and satisfy
\(\operatorname{Pic}(X_{22})=\mathbb Z\cdot(-K_{X_{22}})\) and
\((-K_{X_{22}})^3=22.\)

{We use the following exceptional collections and denote them by
\(\mathcal E_X.\) The exceptional collections
\(\mathcal E_{\mathbb P^3},\mathcal E_{Q^3},\mathcal E_{V_5}\)
are given, respectively, in}
\cite{Beilinson1978,Kapranov1988,Orlov1991}.  Kuznetsov constructed the
four-term exceptional collection {\(\mathcal E_{X_{22}},\) see} \cite{Kuznetsov1996}; its
fullness follows from \cite[Theorem~4.1]{Kuznetsov2009}.
{By \cite[Proposition~3.3]{BondalPolishchuk1994}, each of these
four-term {exceptional} collections generates a geometric helix.  Together with
Theorem~\ref{thm:threefold-minifolds}, this shows that every
complex three-dimensional minifold is strong.}

{For each of these four exceptional collections \(\mathcal E_X,\)
let \(A_X\) be its Euler matrix and let \(A_X^\vee\) be the Euler matrix
of its right dual exceptional collection \(\mathcal E_X^\vee.\)}

{\small
\[
\begin{gathered}
\mathbb P^3:\
(\mathcal O,\mathcal O(1),\mathcal O(2),\mathcal O(3))\\[0.4em]
A_{\mathbb P^3}=
\begin{pmatrix}
1&4&10&20\\0&1&4&10\\0&0&1&4\\0&0&0&1
\end{pmatrix}
\qquad
A_{\mathbb P^3}^\vee=
\begin{pmatrix}
1&4&6&4\\0&1&4&6\\0&0&1&4\\0&0&0&1
\end{pmatrix}
\end{gathered}
\]

\[
\begin{gathered}
Q^3:\
(\mathcal O,\mathcal S_Q^\vee,\mathcal O(1),\mathcal O(2))\\[0.4em]
A_{Q^3}=
\begin{pmatrix}
1&4&5&14\\0&1&4&16\\0&0&1&5\\0&0&0&1
\end{pmatrix}
\qquad
A_{Q^3}^\vee=
\begin{pmatrix}
1&5&4&5\\0&1&4&11\\0&0&1&4\\0&0&0&1
\end{pmatrix}
\end{gathered}
\]

\[
\begin{gathered}
V_5:\
(\mathcal O,\mathcal Q_5,\mathcal S_5^\vee,\mathcal O(1))\\[0.4em]
A_{V_5}=
\begin{pmatrix}
1&5&5&7\\0&1&3&10\\0&0&1&5\\0&0&0&1
\end{pmatrix}
\qquad
A_{V_5}^\vee=
\begin{pmatrix}
1&5&5&7\\0&1&3&10\\0&0&1&5\\0&0&0&1
\end{pmatrix}
\end{gathered}
\]
\[
\begin{gathered}
X_{22}:\
(\mathcal O,\mathcal S_{22}^\vee,\mathcal E_{22}^\vee,
  \Lambda^2\mathcal S_{22}^\vee)\\[0.4em]
A_{X_{22}}=
\begin{pmatrix}
1&7&8&18\\0&1&4&13\\0&0&1&4\\0&0&0&1
\end{pmatrix}
\qquad
A_{X_{22}}^\vee=
\begin{pmatrix}
1&4&3&7\\0&1&4&20\\0&0&1&7\\0&0&0&1
\end{pmatrix}
\end{gathered}
\]
}
\noindent The eight matrices \(A_X,A_X^\vee,\) for
\(X\in\{\mathbb P^3,Q^3,V_5,X_{22}\},\) also appear in
\cite[Sections~3.1 and~4]{Golyshev2008}.
Here \(\mathcal S_Q\) is the spinor bundle on \(Q^3,\)
\(\mathcal S_5\) and \(\mathcal Q_5\) are the restrictions of the
tautological subbundle and quotient bundle on \(\operatorname{Gr}(2,5),\)
and \(\mathcal S_{22},\) \(\mathcal E_{22}\) are the rank-three and
rank-two bundles in Kuznetsov's {exceptional collection \(\mathcal E_{X_{22}}\)}.

{Bondal and Polishchuk conjectured that the group
\(B_s\ltimes\mathbb Z^s,\) acting by mutations and independent shifts of
the objects, acts transitively on the full exceptional collections of
length \(s\) in a fixed triangulated category
\cite[Conjecture~2.2]{BondalPolishchuk1994}.}  {For each threefold \(X\) listed in
Theorem~\ref{thm:threefold-minifolds}, Nogin \cite{Nogin1994} proved
transitivity on semiorthonormal bases of \(K_0(X)\) under mutations and
independent changes of signs.}  {For each threefold \(X\)
listed in Theorem~\ref{thm:threefold-minifolds}},
\cite[Theorem~1.1]{Polishchuk2011} proves
that \(B_4\) acts transitively on full exceptional collections of vector
bundles {in \(D^b(\coh X)\)}.

\begin{theorem}
\label{thm:polishchuk-transitivity}
Let \(X\) be a complex three-dimensional minifold.  Then \(B_4\) acts
transitively by mutations on the full exceptional collections of vector
bundles in \(D^b(\coh X).\)
\end{theorem}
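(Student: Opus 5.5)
The plan is to reduce the statement to the classification of three-dimensional minifolds and then apply Polishchuk's transitivity theorem variety by variety; no new categorical argument should be needed.

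\emph{Step 1: reduce to four cases.} By Theorem~\ref{thm:threefold-minifolds}, the minifold \(X\) is isomorphic to \(\mathbb P^3,\) \(Q^3,\) \(V_5,\) or a member of the family \(X_{22}.\) An isomorphism \(X\simeq X'\) induces an exact equivalence \(D^b(\coh X)\simeq D^b(\coh X')\). This equivalence sends vector bundles to vector bundles and full exceptional collections to full exceptional collections. It also commutes with left and right mutations, because these are defined by the canonical evaluation and coevaluation triangles of Definition~\ref{def:mutations}. Hence the statement for \(X\) follows from the statement for the corresponding model variety. We may therefore assume that \(X\) is one of the four listed threefolds, with \(X_{22}\) an arbitrary member of its family.

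\emph{Step 2: invoke Polishchuk.} For each of these threefolds, \cite[Theorem~1.1]{Polishchuk2011} proves the required claim. Namely, any two full exceptional collections of vector bundles in \(D^b(\coh X)\) lie in the same orbit of the action of \(B_4\) by mutations defined in Subsection~\ref{sec:mutations-helices}. Such collections exist, for example \(\mathcal E_X\) from Subsection~\ref{subsec:commutative-minifolds}, so the orbit in question is nonempty. This completes the argument. Note that the braid words relating two collections may pass through intermediate collections that are not collections of sheaves. The statement only asserts that the bundle collections lie in a single orbit, so this is harmless.

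\emph{Main obstacle: matching conventions.} The delicate point is checking that Polishchuk's hypotheses and conventions agree with ours, since the mathematics is contained in the cited result. Three things need to be checked. First, his mutation conventions must agree with ours up to inversion of the generators \(\sigma_i\), which does not affect orbits. Second, his transitivity must be stated without allowing extra shifts. This holds because shifting a bundle by a nonzero amount does not produce a bundle, so within bundle collections no shift freedom arises; the only remaining ambiguity is the overall twist, which he handles. Third, his theorem must cover every smooth member of the \(X_{22}\) family, not only special ones such as the Mukai--Umemura threefold. I would verify the third point directly in \cite{Polishchuk2011}. If coverage turned out to be partial, I would fall back on Kuznetsov's description of \(X_{22}\) as a zero locus in \(\operatorname{Gr}(3,W)\) and use the uniform construction of \(\mathcal E_{X_{22}}\) over the family.
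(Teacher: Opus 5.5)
Your proposal is correct and follows the paper's route: the paper gets this theorem by combining the classification in Theorem~\ref{thm:threefold-minifolds} with \cite[Theorem~1.1]{Polishchuk2011}. That theorem is stated for all Fano threefolds with \(b_2=1\) and \(b_3=0\), so it covers every smooth member of the \(X_{22}\) family and your fallback is unnecessary. One correction to your aside: intermediate non-sheaf collections do not occur. By \cite[Proposition~3.3 and Theorem~2.3]{BondalPolishchuk1994} and Theorem~\ref{thm:minifold-strong-geometric}, every collection in the \(B_4\)-orbit of a bundle collection consists of vector bundles with a common shift, and that shift stays zero because each elementary mutation keeps three of the four objects.
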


{The theorem below was proved by Nordskova and Van den Bergh
for arbitrary full exceptional collections}
\cite[Theorem~8.6 and Corollary~8.7]{NordskovaVdB2024}.

\begin{theorem}
\label{thm:nordskova-vdb-transitivity}
Let \(X\) be a complex three-dimensional minifold.  The group
\(B_4\ltimes\mathbb Z^4\) acts freely and transitively on the full
exceptional collections in \(D^b(\coh X),\) where \(B_4\) acts by mutations and
\(\mathbb Z^4\) by independent shifts.  {After shifting its objects
independently,} every such {exceptional collection is strong and consists of vector bundles}.
\end{theorem}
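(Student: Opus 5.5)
Since this result is quoted from \cite[Theorem~8.6 and Corollary~8.7]{NordskovaVdB2024}, the paper relies on that reference. To reprove it, I would reduce it to three inputs: Nogin's lattice-level transitivity \cite{Nogin1994}, Polishchuk's transitivity for vector bundles (Theorem~\ref{thm:polishchuk-transitivity}), and a rigidity statement that an exceptional object in \(D^b(\coh X)\) is determined up to shift by its class in \(K_0(X)\).

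\emph{Transitivity.} Fix \(X\) and the strong collection \(\mathcal E_X\) listed above. Let \(\mathcal F=(F_1,\ldots,F_4)\) be an arbitrary full exceptional collection. Its classes form a semiorthonormal basis of \(K_0(X)\). By Nogin's theorem there is a braid \(\beta\in B_4\) such that \(\beta(\mathcal E_X)=(E'_1,\ldots,E'_4)\) satisfies \([E'_i]=\pm[F_i]\) for every \(i\). The collection \(\beta(\mathcal E_X)\) is obtained from \(\mathcal E_X\) by mutations, so it is again full and exceptional. It then suffices to show that \(F_i\simeq E'_i[k_i]\) for suitable integers \(k_i\). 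I would argue by induction along the collection, working in the semiorthogonal pieces: \(F_4\) and \(E'_4\) both generate \({}^\perp\langle F_1,F_2,F_3\rangle\)-type complements once the first three objects are matched. So the key claim is that two exceptional objects with equal or opposite classes agree up to shift.

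\emph{Rigidity, the main obstacle.} This is the hard step, and I would attack it by first proving that every exceptional object is a shift of a vector bundle. The plan has two parts.
\begin{enumerate}[label=\textup{(\alph*)}]
\item Use the equivalence \(D^b(\coh X)\simeq D^b(\operatorname{mod}\Lambda)\), where \(\Lambda\) is the endomorphism algebra of \(\mathcal E_X\) (Example~\ref{ex:endomorphism-algebra}). Then bound the cohomological amplitude of an exceptional object via \(\chi(E,E)=1\), Serre duality \(S=(-)\otimes\omega_X[3]\), and the ampleness of \(\omega_X^{-1}\).
\item Once \(E\) is a bundle, the vanishing \(\operatorname{Ext}^{>0}(E,E)=0\) gives a Mukai-type uniqueness: two exceptional bundles \(E,E'\) with the same class satisfy \(\chi(E,E')=1\). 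Serre duality together with Kodaira-type vanishing would then force \(\operatorname{Hom}(E,E')\ne0\), and \(\operatorname{Hom}(E,E')\) would give an isomorphism.
\end{enumerate}
I expect part~(a) to be the real difficulty. It is exactly where Nordskova and Van den Bergh need their detailed analysis, and I would first try the case of \(\mathbb P^3\) and \(Q^3\), where homogeneity helps. With rigidity in hand, Theorem~\ref{thm:polishchuk-transitivity} also yields the final assertion that every collection is strong and consists of vector bundles after shifts.

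\emph{Freeness.} Suppose \(\beta\in B_4\) and a shift vector \(s\in\mathbb Z^4\) satisfy \(\beta(\mathcal E)\simeq\mathcal E[s]\). Passing to \(K_0\), \(\beta\) fixes the basis \(([E_i])\) up to signs. I would show that the stabilizer of a semiorthonormal basis modulo signs is trivial in \(B_4\). One route uses the equivariant map \(A\mapsto[\rho_A]\) to twice-punctured-torus characters, where stabilizers can be computed from the positivity of the Euler matrix. After that, \(s=0\) follows from comparing the objects termwise, since \(E_i\not\simeq E_i[k]\) for \(k\ne0\).
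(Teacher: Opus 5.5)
The paper does not reprove this theorem; it quotes it from \cite[Theorem~8.6 and Corollary~8.7]{NordskovaVdB2024}. Your attempt at a self-contained proof has genuine gaps in both the transitivity and the freeness parts.

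\textbf{Transitivity.} The reduction itself is sound. Nogin's theorem gives \(\beta\) with \([E'_i]=\pm[F_i]\). You also correctly note that once \(F_1,F_2,F_3\) are matched up to shift, \(F_4\) and \(E'_4\) generate the same one-object orthogonal complement and so agree up to shift. But the rigidity lemma carries essentially all of the content of the transitivity statement, and neither sketch proves it.
\begin{itemize}
\item In (a), \(\chi(E,E)=1\) is a numerical condition and says nothing about amplitude. Passing to \(\operatorname{mod}\Lambda\) does not help, since \(\Lambda\) is not hereditary; for Beilinson's collection it has global dimension three. On a surface, the standard route to ``exceptional objects are shifted sheaves'' uses the spectral sequence \(\bigoplus_i\operatorname{Ext}^p(\mathcal H^iE,\mathcal H^{i+q}E)\Rightarrow\operatorname{Ext}^{p+q}(E,E)\). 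There only the columns \(p=0,1,2\) occur, so the \(p=1\) column survives and the cohomology sheaves are rigid. On a threefold the differential \(d_2\) from the \(p=1\) column to the \(p=3\) column destroys this, and nothing in your list replaces it.
\item In (b), write \(e^i=\dim\operatorname{Ext}^i(E,E')\). Then \(\chi(E,E')=1\) only gives \(e^0-e^1+e^2-e^3=1\). Serre duality turns \(\operatorname{Ext}^3(E,E')\) into \(\operatorname{Hom}(E',E\otimes\omega_X)^\vee\). However, \(\operatorname{Ext}^2(E,E')\simeq\operatorname{Ext}^1(E',E\otimes\omega_X)^\vee\) is not controlled by any Kodaira-type vanishing for the arbitrary bundle \(E^\vee\otimes E'\). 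So \(\operatorname{Hom}(E,E')\ne0\) does not follow.
\item Even a nonzero map between two bundles with the same class is an isomorphism only if both are known to be stable. This is how the corresponding argument works for exceptional bundles on \(\mathbb P^2\), and you have not established stability here.
\end{itemize}
Your target is also stronger than needed. It concerns every exceptional object of \(D^b(\coh X)\), whereas the theorem only involves members of full collections of the minimal length four, and a proof has to use that. The input a proof really needs is the last sentence of the theorem: every full exceptional collection becomes strong after independent shifts. Granting that, Theorem~\ref{thm:minifold-strong-geometric} makes it a collection of shifted vector bundles, and Theorem~\ref{thm:polishchuk-transitivity} then gives transitivity. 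This is exactly the step separating Theorem~\ref{thm:polishchuk-transitivity}, which is restricted to collections of bundles, from the statement you are proving.

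\textbf{Freeness.} You correctly reduce to showing that the \(B_4\)-stabilizer of the basis \(([E_i])\) modulo signs is trivial. The proposed tool cannot do this. The map \(A\mapsto[\rho_A]\) sees only the Euler matrix, that is, the Gram matrix of the basis, and stabilizers at that level are never trivial.
\begin{itemize}
\item The full twist \((\sigma_1\sigma_2\sigma_3)^4\) generates \(Z(B_4)\) and acts on bases by the isometry \(T^{-1}\). So it fixes every Euler matrix, and hence every character \([\rho_A]\); compare Proposition~\ref{prop:braid-pmod-quotient}. Yet on collections it replaces the helix window \((E_1,\ldots,E_4)\) by \((E_{-3},\ldots,E_0)\). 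By Remark~\ref{rem:serre-period}, this is \((E_1\otimes\omega_X,\ldots,E_4\otimes\omega_X)\).
\item Likewise, \(\sigma_1\sigma_2\sigma_3\) fixes \(A_{\mathbb P^3}\) but sends \((\mathcal O,\ldots,\mathcal O(3))\) to \((\mathcal O(-1),\ldots,\mathcal O(2))\).
\end{itemize}
So triviality of the stabilizer must be proved using the classes \([E_i]\in K_0(X)\) themselves, or the objects, and the proposal gives no argument for it. Only the final step, \(s=0\) once \(\beta=1\), goes through as written.
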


For complex varieties, it is conjectured that projective spaces are the only
even-dimensional minifolds.  This is known in dimension two, while in
dimension four it is known for Fano minifolds
\cite[Theorem~1.1(1),(3)]{GKMS2013}.  In dimension five the known examples
are \(\mathbb P^5,\) \(Q^5,\) \(X_{18}^5,\) \(X_{16}^5,\) and
\(X_{12}^5;\) their geometry and exceptional collections are discussed in
\cite{Kuznetsov2006Hyperplane,Kuznetsov2018Spinor,
Kuznetsov2018Minifolds}.  A classification in dimension five is not known.
\section{Euler matrices and positivity}
\label{sec:numerical-euler-data}

In this section we introduce positive Euler matrices
(Definition~\ref{def:positive-euler-matrix}) and prove that a full geometric
helix of period four with maximally unipotent $S^{-1}[3]$ has a positive
Euler matrix (Theorem~\ref{thm:geometric-euler-positivity}).

\begin{definition}
An \emph{Euler matrix} is an
upper-triangular integer matrix of the form
\begin{equation}\label{eq:A}
A=\begin{pmatrix}
1&a&e&d\\
0&1&b&f\\
0&0&1&c\\
0&0&0&1
\end{pmatrix},
\qquad a,b,c,d,e,f\in\mathbb Z.
\end{equation}
We call \(a,b,c,d,e,f\) the \emph{parameters} of \(A.\)
\end{definition}

We put
\begin{equation}\label{eq:euler-operators}
B=A+A^t,
\qquad \Omega=A-A^t,
\qquad T=-A^{-t}A,
\qquad C=T^{-1}=-A^{-1}A^t.
\end{equation}
Thus, \(B\) and \(\Omega\) are the symmetrization and skew-symmetrization of
\(A,\) respectively.  For any
invertible matrix \(M,\) we use the convention \(M^{-t}=(M^{-1})^t.\)

\begin{definition}
We say that an Euler matrix \(A\) satisfies condition
\(\mathrm{(MU)}\) if \(T\) is maximally unipotent; that is, \(C\) has a
single Jordan block with eigenvalue \(1.\)
\end{definition}

When \(A\) is the Euler matrix of a full exceptional collection
{\(\mathcal E=(E_1,E_2,E_3,E_4)\) in a triangulated category \(\mathcal D\) with
Serre functor \(S\)}, we use the same symbol \(T\) for the functor
\(S^{-1}[3]\) and for its induced action on
\(K_0(\mathcal D)\otimes\mathbb Q.\)
The identities
\begin{equation}\label{eq:fixed-subspace-radical}
        T-I=-A^{-t}B,
        \qquad
        C-I=-A^{-1}B
\end{equation}
identify the fixed subspaces of \(T\) and \(C\) with \(\ker B.\)  If \(A\)
is the Euler matrix of {such an exceptional collection \(\mathcal E\)
and \(\mathsf S\) is the matrix of \(S\)} in the basis \([E_1],\ldots,[E_4],\) then Serre
duality gives
\(A=\mathsf S^tA^t.\)  Hence, \(\mathsf S=A^{-1}A^t,\) and \(T\) represents
\(S^{-1}[3]\) on \(K_0(\mathcal D)\otimes\mathbb Q.\)

For \(u,v,t\in\mathbb R,\) put
\[
        \Delta(u,v,t)=uvt-u^2-v^2-t^2+4.
\]
Equivalently, for every \(\delta\in\mathbb R,\) the equation
\(\Delta(u,v,t)=\delta\) is the Markov cubic
\(u^2+v^2+t^2-uvt=4-\delta.\)
The four principal \(3\times3\) minors of \(B\) are twice the values of \(\Delta\)
on
\begin{equation}\label{eq:principal-triples}
        (a,b,e),\qquad (a,d,f),\qquad (c,d,e),\qquad (b,c,f).
\end{equation}
We call these the \emph{principal triples} of \(A.\)

\subsection{Consequences of geometricity}
\label{sec:consequences-geometricity}

Throughout Subsection~\ref{sec:consequences-geometricity},
\(\mathcal D=\Perf(\mathsf X),\) where
\(\mathsf X\) is a smooth proper non-commutative scheme.  Thus,
\(\mathcal D\) is a \(\kk\)-linear Hom-finite idempotent-complete
triangulated category with a DG enhancement and a Serre functor \(S.\)

\begin{proposition}\label{prop:geometric-bounds}
Let \(\mathcal H=(E_i)_{i\in\mathbb Z}\) be a full geometric
helix of period four, set \(\mathcal E=(E_1,E_2,E_3,E_4),\) and let \(A\)
be its Euler matrix.  Then
$
        a,b,c,d,e,f\ge3.
$
For each principal triple \((u,v,t)\) of \(A,\) one has
\(\Delta(u,v,t)\ge4.\)  Consequently, every principal
\(3\times3\) minor of \(B\) is at least \(8.\)
\end{proposition}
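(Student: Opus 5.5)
The plan is to reduce both inequalities to a single statement about principal triples, namely \(\Delta(u,v,t)\ge4\). The bound \(a,\dots,f\ge3\) then follows by elementary arithmetic, and the bound on the principal minors of \(B\) is immediate.

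\emph{Step 1 (positivity of the parameters).} Since \(\mathcal H\) is geometric, every window \((E_i,\dots,E_{i+3})\) is strong. Hence each parameter equals the dimension of a Hom space between two terms of the helix, and by Lemma~\ref{lem:nonzero-forward-homs} all six parameters are \(\ge1\).

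\emph{Step 2 (reduction to \(\Delta\ge4\)).} Each parameter \(u\) lies in some principal triple \((u,v,t)\) by \eqref{eq:principal-triples}, and the other two entries are also \(\ge1\) by Step 1. If \(u=1\), then
\[
\Delta=-(v^2-vt+t^2)+3\le 2,
\]
because \(v^2-vt+t^2\ge1\) for positive integers \(v,t\). If \(u=2\), then
\[
\Delta=-(v-t)^2\le 0.
\]
In both cases \(\Delta<4\). So once \(\Delta\ge4\) is known for every principal triple, it follows that \(a,\dots,f\ge3\). The statement about minors is then immediate, since each principal \(3\times3\) minor of \(B\) equals \(2\Delta\) of the corresponding principal triple, hence is \(\ge8\).

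\emph{Step 3 (the core inequality).} Each principal triple is the Euler data of a three-term subcollection \(E_i,E_j,E_k\) with \(i<j<k\) inside a window. Write \(u=\chi(E_i,E_j)\), \(v=\chi(E_j,E_k)\), \(t=\chi(E_i,E_k)\). The identity to exploit is
\[
\Delta-4=t\,(uv-t)-u^2-v^2 .
\]
I would use the Koszulity of the \(\mathbb Z\)-algebra \(\mathcal A\) of \(\mathcal H\) \cite[Theorem~4.2]{BondalPolishchuk1994}. Its quadratic dual is governed by the dual collections \eqref{eq:normalized-duality}, and its dimensions are entries of \(A^{-1}\) up to sign. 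Nonnegativity of the dual dimensions then gives the nonnegativity of quantities such as \(w=uv-t\).

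Next I would use that the helix is generated in degree one: the multiplication \(\operatorname{Hom}(E_j,E_k)\otimes\operatorname{Hom}(E_i,E_j)\to\operatorname{Hom}(E_i,E_k)\) is surjective when \(j=i+1\), with kernel controlled by the quadratic relations. Combined with the exactness of the Koszul resolution of length four, this should yield \(tw\ge u^2+v^2\).

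A concrete route I would try first is to evaluate \(\chi\) between the left mutation \(L_{E_i}E_j\) and \(E_k\). Its class is \(u[E_i]-[E_j]\), so \(\chi(L_{E_i}E_j,E_k)=ut-v\). I would then bound the dimension of the relevant Hom space from below using the long exact sequences and the vanishing supplied by geometricity. Non-consecutive triples such as \((E_1,E_2,E_4)\) would be handled by moving to another window via \(E_{i+4}\simeq T(E_i)\) (Remark~\ref{rem:serre-period}), or by a further mutation.

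I expect Step 3 to be the main obstacle. Nonnegativity of the dual entries alone gives only \(uv\ge t\), which is not enough. The extra quadratic slack \(u^2+v^2\) has to come from a genuine exactness statement for the Koszul complex of the helix, not from sign information alone.
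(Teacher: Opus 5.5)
\textbf{There is a genuine gap, in Step~3.} Your Steps~1 and~2 are fine. The parameters are positive by Lemma~\ref{lem:nonzero-forward-homs}. Since $\Delta(1,v,t)=3-(v^2-vt+t^2)\le2$ and $\Delta(2,v,t)=-(v-t)^2\le0$, the bound $\Delta\ge4$ on all principal triples does give $a,\dots,f\ge3$ and the bound on the minors.

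Step~3, however, is the whole content of the proposition, and you only sketch it (``this should yield''). The tools you name cannot produce it. At the level of Euler characteristics, exactness of a Koszul complex gives only linear \emph{identities}, which already follow from $A$ and the mutation formulas. Inequalities can only come from nonnegativity of dimensions. The ones you propose are the entries themselves, the dual entry $uv-t$, and $\chi(L_{E_i}E_j,E_k)=ut-v$. All of them are positive for $(u,v,t)=(2,r,r)$ with $r\ge2$, yet $\Delta(2,r,r)=0$. Take the period-three helix $(F_i)$ of a collection with this Euler data. One finds $\chi(F_0,F_3)=r^2-3$ and $\chi(F_0,F_4)=r^2-4$. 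Positivity first fails only at $F_4$ (when $r=2$) or at $F_5$, where $\chi(F_0,F_5)=0$. So no argument built on one window, its dual and single mutations can give $\Delta\ge4$. You need positivity along the whole helix, or along the whole braid orbit.

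\textbf{The missing idea is a descent over the $B_3$-orbit.} Here is how the paper's argument runs.
\begin{enumerate}
\item By \cite[Proposition~2.6]{BondalPolishchuk1994}, each $(E_i,E_j,E_k)$ with $i<j<k$ in a window generates a geometric helix of period three. By \cite[Theorem~2.3]{BondalPolishchuk1994}, mutations preserve geometricity.
\item Hence, by Lemma~\ref{lem:nonzero-forward-homs}, every triple in the $B_3$-orbit of $(u,v,t)$ under \eqref{eq:period-three-coordinate-mutations} has positive entries. Those formulas then also exclude an entry equal to $1$.
\item Since $\Delta$ is $B_3$-invariant, you may pass to a triple of minimal sum, sorted as $2\le x\le y\le z$. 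Minimality gives $z\le xy/2$, so $\Delta(x,y,z)\ge\Delta(x,y,y)=(x-2)y^2-x^2+4\ge(x-3)x^2+4\ge4$ when $x\ge3$.
\item If $x=2$, minimality forces $y=z=r$. After mutating the entry $2$ into first position, the computation $\chi(F_0,F_5)=0$ above contradicts Lemma~\ref{lem:nonzero-forward-homs}.
\end{enumerate}
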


\begin{proof}
By \cite[Proposition~2.6]{BondalPolishchuk1994}, for every
\(1\le i<j<k\le4,\) the ordered triple \((E_i,E_j,E_k)\) is a full
geometric exceptional collection in \(\langle E_i,E_j,E_k\rangle,\) and
hence, generates a geometric helix of period three.  Consider one such triple
\((E_0,E_1,E_2),\) with Euler matrix
\[
 A_0
 =\begin{pmatrix}
  1&u&v\\
  0&1&t\\
  0&0&1
 \end{pmatrix}.
\]
By Lemma~\ref{lem:nonzero-forward-homs}, the integers
\(u,v,t\) are positive.  The generators \(\sigma_1,\sigma_2\)
of the braid group \(B_3,\) together with their inverses, act on the Euler
parameters by
\begin{equation}\label{eq:period-three-coordinate-mutations}
\begin{aligned}
 \sigma_1(u,v,t)&=(u,uv-t,v),&
 \sigma_1^{-1}(u,v,t)&=(u,t,ut-v),\\
 \sigma_2(u,v,t)&=(ut-v,u,t),&
 \sigma_2^{-1}(u,v,t)&=(v,vt-u,t).
\end{aligned}
\end{equation}
Mutations preserve geometricity \cite[Theorem~2.3]{BondalPolishchuk1994},
so every coordinate in the orbit is positive.  No coordinate is equal to \(1.\) Indeed, if,
for example, \(u=1,\) the formulas
\eqref{eq:period-three-coordinate-mutations} give both \(v-t>0\) and
\(t-v>0.\)  For \(v=1\) and \(t=1,\)
\eqref{eq:period-three-coordinate-mutations} gives, respectively,
\(u-t,t-u>0\) and \(u-v,v-u>0.\)  Thus, all three entries of every triple in the
\(B_3\)-orbit of \((u,v,t)\) are at least \(2.\)

Choose a triple \((u',v',t')\) in this orbit minimizing \(u'+v'+t';\)
such a triple exists because these sums are positive integers.
Write its entries in nondecreasing order as \(2\le x\le y\le z.\)  According to which original coordinate equals \(z,\)
one of the four formulas in
\eqref{eq:period-three-coordinate-mutations} replaces \(z\) by \(xy-z\)
and {leaves the other two values unchanged, up to order}.  Minimality therefore gives
\[
        xy-z\ge z,
        \qquad
        y\le z\le\frac{xy}{2}.
\]
If \(x=2,\) then \(y=z=:r.\)  {By applying the mutations in
\eqref{eq:period-three-coordinate-mutations}, we can move the entry \(2\)
to the first position}, so a mutation-equivalent triple
\(\mathcal F=(F_0,F_1,F_2)\) has Euler matrix
\[
 \begin{pmatrix}
  1&2&r\\
  0&1&r\\
  0&0&1
 \end{pmatrix},
 \qquad r\ge2.
\]
Let \((F_i)_{i\in\mathbb Z}\) be its geometric helix.  Repeatedly using
\([R_FE]=\chi(E,F)[F]-[E]\) gives
\begingroup
\[
\begin{aligned}
 [F_3]&=[F_0]-2[F_1]+r[F_2],\\
 [F_4]&=2[F_0]-3[F_1]+r[F_2],\\
 [F_5]&=r[F_0]-r[F_1]+[F_2].
\end{aligned}
\]
Applying \(\chi(F_0,-)\) to the last equality gives
\[
 \chi(F_0,F_5)=r\chi(F_0,F_0)-r\chi(F_0,F_1)
                  +\chi(F_0,F_2)=r-2r+r=0.
\]
\endgroup
Since the helix is geometric, we have
\(\chi(F_0,F_5)=\dim_{\kk}\operatorname{Hom}(F_0,F_5).\)  This contradicts
Lemma~\ref{lem:nonzero-forward-homs}.  Hence, \(x\ge3.\)

The function \(s\mapsto\Delta(x,y,s)\) is non-decreasing on
\([y,xy/2],\) and hence
\[
 \Delta(x,y,z)
 \ge \Delta(x,y,y)
 =(x-2)y^2-x^2+4
 \ge (x-3)x^2+4
 \ge4.
\]
{Each of the four mutations in
\eqref{eq:period-three-coordinate-mutations} preserves \(\Delta.\)
Therefore} \(\Delta(u,v,t)=\Delta(x,y,z)\ge4.\)  Moreover, no coordinate of
\((u,v,t)\) is \(2,\) because
\(\Delta(2,q,s)=-(q-s)^2\le0.\)  Hence, \(u,v,t\ge3.\)

{Applying \(u,v,t\ge3\) and
\(\Delta(u,v,t)\ge4\) to each of the four triples in
\eqref{eq:principal-triples} gives
\(a,b,c,d,e,f\ge3\) and \(\Delta(a,b,e),\Delta(a,d,f),
\Delta(b,c,f),\Delta(c,d,e)\ge4\)}, since each principal
minor is \(2\Delta(u,v,t).\)
\end{proof}

\subsection{Adding maximal unipotence}
\label{sec:minifold-positivity}

For an Euler matrix \(A,\) define
\begin{align}
 \Phi(A)&=\operatorname{Pf}(\Omega)=ac+bd-ef,
                                                        \label{eq:Phi-def}\\
 \Psi(A)&=a^2+b^2+c^2+d^2+e^2+f^2-abe-adf-bcf-cde+abcd. \label{eq:Psi-def}
\end{align}
The characteristic polynomial of \(C\) is
\begin{equation}\label{eq:coxeter-characteristic}
\det(\lambda I-C)=\lambda^4+(4-\Psi(A))\lambda^3+\bigl(\Phi(A)^2-2\Psi(A)+6\bigr)\lambda^2+(4-\Psi(A))\lambda+1.
\end{equation}
Thus, \(\mathrm{(MU)}\) implies
\begin{equation}\label{eq:maximal-unipotence-relations}
\Phi(A)^2=16,\qquad \Psi(A)=8.
\end{equation}
Moreover,
\begin{equation}\label{eq:detB}
        \det B=\Phi(A)^2-4\Psi(A)+16.
\end{equation}
Conversely, if at least one principal \(3\times3\) minor of \(B\) is
non-zero, equation \eqref{eq:maximal-unipotence-relations} implies
\(\mathrm{(MU)}.\)
Indeed, \eqref{eq:coxeter-characteristic} gives the characteristic polynomial
\((\lambda-1)^4,\) while \eqref{eq:detB} and
\eqref{eq:fixed-subspace-radical} give
\(\operatorname{rank}(C-I)=\operatorname{rank}B=3.\)

\begingroup\begingroup
\begin{lemma}\label{lem:vieta-complements-positive}
Let \(u,v,t\in\mathbb Z,\) with \(u,v,t\ge3,\) and suppose that
\(\Delta(u,v,t)>0.\)  Then
\(uv-t\ge3,\) \(ut-v\ge3,\) and \(vt-u\ge3.\)
\end{lemma}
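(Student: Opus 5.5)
By the symmetry of \(\Delta\) in its three arguments, it suffices to prove \(uv-t\ge3\); the other two inequalities follow by permuting \((u,v,t)\). The plan is to read \(\Delta(u,v,t)>0\) as a quadratic inequality in \(t\) and locate \(t\) between the two roots. As a function of \(t\),
\[
 \Delta(u,v,t)=-t^2+uvt-(u^2+v^2-4),
\]
so \(\Delta>0\) means that \(t\) lies strictly between the roots \(t_\pm\) of \(t^2-uvt+u^2+v^2-4=0\). These roots are Vieta partners: \(t_++t_-=uv\) and \(t_+t_-=u^2+v^2-4\). The involution \(t\mapsto uv-t\) preserves \(\Delta\) (the mutation symmetry already used in the proof of Proposition~\ref{prop:geometric-bounds}). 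Hence \(t':=uv-t\) is an integer with \(\Delta(u,v,t')=\Delta(u,v,t)>0\), and it also lies strictly inside \((t_-,t_+)\).

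It therefore suffices to show \(t_-\ge2\), since then the integer \(t'>t_-\ge2\) gives \(t'\ge3\). Evaluate the quadratic \(g(s)=s^2-uvs+u^2+v^2-4\) at \(s=2\):
\[
 g(2)=u^2+v^2-2uv=(u-v)^2\ge0 .
\]
Thus \(2\) is not strictly between the roots. Since \(t_-+t_+=uv\ge9\), the midpoint \(uv/2\) exceeds \(2\), so \(2\le t_-\). Note that the interval \((t_-,t_+)\) is nonempty, with real roots, precisely because \(\Delta(u,v,t)>0\) for the given \(t\).

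There is no real obstacle here. The only care needed is the strictness bookkeeping: the hypothesis \(\Delta>0\) is strict, so \(t'>t_-\), and together with integrality and \(t_-\ge2\) this yields \(t'\ge3\). The case \(u=v\), where \(t_-=2\) exactly, is consistent with this: it gives \(t'\ge3\) rather than \(t'\ge2\).
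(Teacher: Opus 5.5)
Your proof is correct and is essentially the paper's argument: the paper also reduces to \(t'=uv-t\) via the \(\Delta\)-preserving involution, and its contradiction identity \(\Delta(u,v,t')+(u-v)^2=(2-t')(2+t'-uv)\) is exactly your observation \(-\Delta(u,v,2)=(u-v)^2\ge0\) combined with \(uv>4\). The only difference is presentation: the paper writes this as a direct factorization, while you phrase it as locating the roots of \(s^2-uvs+u^2+v^2-4\).
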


\begin{proof}
By symmetry, it is enough to consider \(t'=uv-t.\)  If \(t'\le2,\) then
\[
 \Delta(u,v,t')+(u-v)^2
   =(2-t')(2+t'-uv)\le0,
\]
which contradicts \(\Delta(u,v,t')=\Delta(u,v,t)>0.\)  Thus, \(t'\ge3.\)
\end{proof}
\endgroup\endgroup

\begin{proposition}\label{prop:strict-principal-deltas}
Let \(A\) be an Euler matrix satisfying \(\mathrm{(MU)}.\)  Suppose
 its parameters are at least \(3\) and each principal
\(3\times3\) minor of \(B=A+A^t\) is positive.  Then, for every principal triple
\((u,v,t)\) of \(A,\)
\begin{equation}\label{eq:strict-principal-deltas}
        \Delta(u,v,t)>4.
\end{equation}
\end{proposition}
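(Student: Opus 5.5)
The plan is to reduce the bound to a Markov-type descent, as in the proof of Proposition~\ref{prop:geometric-bounds}. That descent shows \(\Delta\ge4\), with equality only on the Markov orbit of \((3,3,3)\). The equality case is then excluded by a congruence on \(\Phi(A)\) modulo \(3\).

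\emph{Step 1: the descent.} Fix a principal triple \((u,v,t)\). It consists of integers at least \(3\), and \(\Delta(u,v,t)\) is half a positive principal minor of \(B\), hence a positive integer. By Lemma~\ref{lem:vieta-complements-positive}, each Vieta move in \eqref{eq:period-three-coordinate-mutations} keeps all entries \(\ge3\). These moves also preserve \(\Delta\), so the whole \(B_3\)-orbit of \((u,v,t)\) stays in \(\{\ge3\}^3\). Choose an element of minimal sum and write it as \(3\le x\le y\le z\). Minimality gives \(y\le z\le xy/2\), and exactly as in Proposition~\ref{prop:geometric-bounds} we get
\[
\Delta(x,y,z)\ \ge\ (x-2)y^2-x^2+4\ \ge\ (x-3)x^2+4\ \ge\ 4.
\]
Equality in this chain forces \(x=y=3\) and \(z\in[3,9/2]\). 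Since \(\Delta(3,3,4)=6\), it forces \(z=3\). Hence \(\Delta(u,v,t)=4\) only if \((u,v,t)\) lies in the \(B_3\)-orbit of \((3,3,3)\).

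\emph{Step 2: transport to the \(4\times4\) matrix.} It remains to rule out a principal triple in the orbit of \((3,3,3)\). I will use that \(\Phi(A)^2\) and \(\Psi(A)\) are invariant under the formal operations on Euler matrices. They are read off from the characteristic polynomial \eqref{eq:coxeter-characteristic} of \(C\), and \(C\) is conjugated under the formal mutation action of \(B_4\) on bases with Euler form \(A\). The same holds under the formal helix rotation, i.e.\ passing from the basis \((e_1,\dots,e_4)\) to \((e_2,e_3,e_4,T e_1)\). I expect the rotation to permute the principal triples cyclically, \((a,b,e)\to(b,c,f)\to\cdots\). Possibly also a mutation \(\sigma_3^{\pm1}\) will be needed to move the triple \((a,d,f)\) into the positions \(1,2,3\).

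Using these moves, I bring the offending triple into the position \((a,b,e)\). Then \(\sigma_1^{\pm1},\sigma_2^{\pm1}\) act on \((a,b,e)\) exactly by \eqref{eq:period-three-coordinate-mutations}, and they change \(c,d,f\) only to other integers. So we reach an integral Euler matrix \(A'\) with \(a'=b'=e'=3\), \(\Phi(A')^2=16\) and \(\Psi(A')=8\).

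\emph{Step 3: the contradiction.} Now \(\Phi(A')=a'c'+b'd'-e'f'=3(c'+d'-f')\) is divisible by \(3\). This contradicts \(\Phi(A')=\pm4\). Hence \(\Delta(u,v,t)\ne4\), and with Step 1 we get \(\Delta(u,v,t)>4\).

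Note that Step 2 needs only the algebraic invariance of \(\Phi^2\) and \(\Psi\), not preservation of the positivity hypotheses under mutation.

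The main obstacle is the bookkeeping in Step 2. One must check that every one of the four principal triples can be moved into the first three positions by rotations and mutations. One must also check that \(\Phi^2\) and \(\Psi\) are genuinely invariant under these formal moves. If the rotation bookkeeping is awkward, I would first try verifying directly from \eqref{eq:Phi-def} and \eqref{eq:Psi-def} that \(\Phi^2\) and \(\Psi\) are symmetric under the relabelings that permute the principal triples.
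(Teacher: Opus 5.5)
Your proof is correct. Step~1 is the same descent the paper uses to obtain \(\Delta\ge4\); the two arguments differ only in how \(\Delta=4\) is excluded.

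The paper neither analyses the equality case of the descent nor moves anything. It observes that \(\Delta(u,v,t)=4\) is the equation \(u^2+v^2+t^2=uvt\). Reducing modulo \(3\) forces \(3\mid u,v,t\), since squares are \(0\) or \(1\) modulo \(3\). Each monomial \(ac\), \(bd\), \(ef\) of \(\Phi(A)\) contains an entry of every principal triple, so \(3\mid\Phi(A)\), contradicting \(\Phi(A)^2=16\).

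Your Steps~2--3 reach the same divisibility only after transporting the triple to the position \(a'=b'=e'=3\). This costs the mutation invariance of \(\Phi^2\) and \(\Psi\), plus the bookkeeping you flag. In return you get the sharper fact that \(\Delta=4\) occurs exactly on the orbit of \((3,3,3)\), which is not needed here.

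Your own Step~1 already allows the paper's shortcut. Vieta moves preserve divisibility by \(3\), so every triple in the orbit of \((3,3,3)\) has all entries divisible by \(3\). The monomial observation then finishes directly for the original \(A\).

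If you keep Step~2, note that the formal rotation \((e_2,e_3,e_4,Te_1)\) does not permute the four principal triples. Only \((b,c,f)\) survives; the other new triples involve \(Te_1\). You do not need the rotation anyway: right-mutate the omitted vector past the later ones. Use \(\sigma_3^{-1}\) for \((a,d,f)\); \(\sigma_2^{-1}\) followed by \(\sigma_3^{-1}\) for \((c,d,e)\); and \(\sigma_1^{-1},\sigma_2^{-1},\sigma_3^{-1}\) in turn for \((b,c,f)\). Each of these brings the corresponding subcollection unchanged into positions \(1,2,3\).

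Your relabeling fallback also works. View \(a,b,c,d\) as the edges of the cycle \(1,2,3,4\) and \(e,f\) as its diagonals. Then \(\Phi\) and \(\Psi\) are invariant under the dihedral symmetries of this cycle, and these act transitively on the principal triples.
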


\begin{proof}
\begingroup
{Let \(u,v,t\ge3\) be integers with \(\Delta(u,v,t)>0.\)
By Lemma~\ref{lem:vieta-complements-positive}, replacing \(t\) by
\(uv-t\) preserves this inequality and keeps every coordinate at least
\(3.\)}
Permute the coordinates so that
\(x\le y\le z,\) and replace \(z\) by \(xy-z\) whenever \(xy-z<z.\)
Each step preserves \(\Delta,\) keeps all coordinates at least \(3,\) and
strictly decreases their sum.  The process therefore stops with
\(3\le x\le y\le z\le xy/2.\)  On this interval \(\Delta(x,y,z)\)
is nondecreasing in \(z,\) so
\[
 \Delta(u,v,t)=\Delta(x,y,z)
 \ge (x-2)y^2-x^2+4\ge(x-3)x^2+4\ge4.
\]
Apply this to each principal triple of \(A.\)
If \(\Delta(u,v,t)=4,\) reduction of
\(u^2+v^2+t^2=uvt\) modulo \(3\) gives \(3\mid u,v,t.\)
Each term \(ac,\) \(bd,\) and \(ef\) contains an entry of that principal
triple, so \(3\mid\Phi(A),\) contrary to \(\Phi(A)^2=16.\)
Thus, every principal triple satisfies \(\Delta>4.\)
\endgroup
\end{proof}

For the remainder of Subsection~\ref{sec:minifold-positivity}, let
\(\mathsf X\) be a smooth proper
non-commutative scheme, put \(\mathcal D=\Perf(\mathsf X),\) and let
\(\mathcal H=(E_i)_{i\in\mathbb Z}\) be a full geometric helix of period
four in \(\mathcal D.\)  Suppose that \(T=S^{-1}[3]\) acts maximally unipotently on
\(K_0(\mathcal D)\otimes\mathbb Q.\)  By
\cite[Theorem~2.3]{BondalPolishchuk1994}, every exceptional collection
{in the braid-group orbit of} \((E_i,E_{i+1},E_{i+2},E_{i+3}),\) for some
\(i\in\mathbb Z,\) extends to a geometric helix.  For the Euler matrix of
any such exceptional collection,
Proposition~\ref{prop:geometric-bounds} gives a positive principal
\(3\times3\) minor of \(B,\) whereas
\eqref{eq:maximal-unipotence-relations} and \eqref{eq:detB} give
\(\det B=0.\)  Hence, \(\operatorname{rank}B=3.\)

\begingroup\begingroup
For a square matrix \(M,\) let \(\adj(M)\) denote its adjoint, that is,
the transpose of its cofactor matrix.
\endgroup\endgroup

\begin{lemma}\label{lem:adjoint-leading-term}
Let \(A\) be the Euler matrix of the exceptional collection
\((E_i,E_{i+1},E_{i+2},E_{i+3})\) for some \(i\in\mathbb Z,\) and
let \(B,T\) be as in \eqref{eq:euler-operators}.  For a non-zero vector
\(p\in\ker(B\colon\mathbb Q^4\to\mathbb Q^4),\) put \(q=Ap.\)  There is a
unique \(\lambda\in\mathbb Q^\times\) such that
\(
\adj(B)=\lambda pp^t
.\)  Then
\begin{equation}\label{eq:adjoint-leading-term}
 (T-I)^3=\adj(B)A^t,
 \qquad
 A(T-I)^3=\lambda qq^t,
\end{equation}
and, for all \(u,v\in\mathbb Q^4,\)
\begin{equation}\label{eq:euler-matrix-asymptotic}
 u^tAT^mv
        =\frac{\lambda}{6}(u^tq)(v^tq)m^3+O(m^2).
\end{equation}
\end{lemma}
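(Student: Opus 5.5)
The plan is to deduce everything from the Cayley--Hamilton form of the adjugate, applied to the nilpotent matrix \(N=T-I=-A^{-t}B.\) First I would settle the shape of \(\adj(B).\) We have already seen, just before the lemma, that \(\operatorname{rank}B=3.\) The adjugate therefore has rank exactly one. Since \(B\) is symmetric, \(\adj(B)\) is symmetric. The identity \(B\adj(B)=\det(B)I=0\) puts every column of \(\adj(B)\) in \(\ker B=\mathbb Q p.\) A symmetric rank-one matrix whose columns are multiples of \(p\) has the form \(\lambda pp^t\) with \(\lambda\in\mathbb Q^\times,\) and \(\lambda\) is unique because \(pp^t\ne0.\)

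Next I would prove \((T-I)^3=\adj(B)A^t.\) By the standing hypothesis, \(T\) acts maximally unipotently. In the basis \([E_i],\ldots,[E_{i+3}]\) the operator \(T\) has matrix \(-A^{-t}A,\) as explained after \eqref{eq:fixed-subspace-radical}. So the characteristic polynomial of \(T\) is \((x-1)^4,\) and \(M:=A^{-t}B=-N\) has characteristic polynomial \(x^4.\)

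For a \(4\times4\) matrix, Cayley--Hamilton expresses the adjugate as
\[
\adj(M)=-\bigl(M^3-c_1M^2+c_2M-c_3I\bigr),
\]
where the \(c_k\) are the coefficients of the characteristic polynomial. All \(c_k\) vanish here, so \(\adj(M)=-M^3=(-M)^3=N^3.\)

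On the other hand, \(\adj(XY)=\adj(Y)\adj(X),\) which gives \(\adj(M)=\adj(B)\adj(A^{-t}).\) Since \(\det A=1,\) we have \(\adj(A^{-t})=\det(A^{-t})\,A^{t}=A^t.\) Together these give \((T-I)^3=\adj(B)A^t.\) Substituting \(\adj(B)=\lambda pp^t\) then yields
\[
A(T-I)^3=\lambda\,Ap\,(Ap)^t=\lambda qq^t.
\]

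For the asymptotic formula, I would expand \(T^m=(I+N)^m=\sum_{k=0}^{3}\binom mk N^k\) for \(m\ge0,\) using \(N^4=0.\) This gives \(u^tAT^mv=\sum_{k\le3}\binom mk\,u^tAN^kv.\) Only the \(k=3\) term contributes to order \(m^3,\) with coefficient \(\tfrac16u^tAN^3v=\tfrac{\lambda}{6}(u^tq)(q^tv).\) For negative \(m\) the same expansion holds as a polynomial identity in \(m\) (with \(\binom mk\) read as a polynomial), because \(N\) is nilpotent.

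I expect the main obstacle to be the bookkeeping in the second paragraph: the sign convention \((-1)^{n-1}\) in the adjugate formula for \(n=4,\) and the order reversal in \(\adj(XY).\) A fallback that avoids these is a direct argument. By maximal unipotence, \(N^3\) has rank one with image in \(\ker N=\ker B=\mathbb Qp.\) Its row space is orthogonal to \(\operatorname{im}N=A^{-t}p^{\perp}.\) This forces \(N^3=\mu\,pq^t\) for some \(\mu.\) However, identifying \(\mu\) with \(\lambda\) still seems to need the adjugate computation above, so I would present that computation as the main route.
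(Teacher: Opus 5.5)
Your proposal is correct and follows the paper's proof. Both apply the Cayley--Hamilton formula for the adjugate of a nilpotent $4\times4$ matrix to $T-I=-A^{-t}B$, factor the adjugate using $\det A=1$ to get $(T-I)^3=\adj(B)A^t$, and read off the cubic coefficient from the binomial expansion of $(I+(T-I))^m$; the only differences are cosmetic (you apply the formula to $A^{-t}B$ rather than to $-A^{-t}B$, and you spell out why $\adj(B)$ is a nonzero multiple of $pp^t$).
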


\begin{proof}
Since \(\operatorname{rank}B=3,\) we have $\adj(B)=\lambda pp^t$.  Put
\(U=T-I=-A^{-t}B.\)  Its characteristic polynomial is \(t^4,\) so
{the characteristic-polynomial formula for the adjoint gives}
\(\adj(U)=-U^3.\)  Since \(\det A=1,\)
\[
 \adj(U)=\adj(B)\adj(-A^{-t})
        =-\adj(B)A^t=-\lambda pq^t.
\]
Thus, \(U^3=\adj(B)A^t=\lambda pq^t,\) and multiplication by \(A\) gives
\eqref{eq:adjoint-leading-term}.  Finally, the binomial expansion
\[
        T^m=(I+U)^m
        =I+mU+\binom m2U^2+\binom m3U^3
\]
and \(U^3=\lambda pq^t\) give
\eqref{eq:euler-matrix-asymptotic}.
\end{proof}

For \(1\le i,j\le4,\) let \(D_{ij}\) be the minor of \(B\) obtained by
deleting row \(i\) and column \(j.\)  Since \(B\) is symmetric,
\(\adj(B)_{ij}=(-1)^{i+j}D_{ij}.\)

\begin{proposition}\label{prop:geometric-minors}
Let \(\mathcal H=(E_i)_{i\in\mathbb Z}\) be a full geometric
helix of period four in \(\mathcal D,\)
and suppose that \(T=S^{-1}[3]\) acts maximally unipotently on
\(K_0(\mathcal D)\otimes\mathbb Q.\)  Let
\(\mathcal E=(E_1,E_2,E_3,E_4),\) let \(A\) be its
Euler matrix, and put \(B=A+A^t.\)  Then every \(3\times3\) minor of \(B\)
is positive.
\end{proposition}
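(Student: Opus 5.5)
The plan is to combine the rank-three structure of \(B\) with the cubic asymptotics of Lemma~\ref{lem:adjoint-leading-term}. Since \(\operatorname{rank}B=3,\) we have \(\adj(B)=\lambda pp^t\) with \(p\in\ker B\) non-zero. Since \(\adj(B)_{ij}=(-1)^{i+j}D_{ij},\) this gives
\[
 D_{ij}=(-1)^{i+j}\lambda p_ip_j .
\]
The principal minors satisfy \(D_{ii}=\lambda p_i^2\ge 8\) by Proposition~\ref{prop:geometric-bounds}. Hence \(\lambda>0\) and every coordinate \(p_i\) is non-zero. The statement is therefore equivalent to the claim that the signs of \(p_1,p_2,p_3,p_4\) alternate, that is, \((-1)^{i+j}p_ip_j>0\) for all \(i,j\). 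The rest of the argument is devoted to this sign pattern.

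To detect the signs of the \(p_k\), I would use the normalized dual objects \(\widehat F_k\) from \eqref{eq:normalized-duality}. Writing \(p\) as the class \(\sum_k p_k[E_k]\), duality gives \(p_k=\pm\chi(\widehat F_k,p)\), up to a global sign convention I would fix carefully. Similarly \(q=Ap\) has coordinates \(q_j=\chi(E_j,p)\), and \(\chi(p,x)=-\chi(x,p)\) because \(Bp=0\). Now apply \eqref{eq:euler-matrix-asymptotic} twice.
\begin{itemize}
\item With \(u=e_i\) and \(v=e_j\): geometricity and Lemma~\ref{lem:nonzero-forward-homs} give \(\chi(E_i,T^mE_j)=\dim\operatorname{Hom}(E_i,E_{j+4m})>0\) for \(m\gg0\), and its leading coefficient is \(\tfrac\lambda6q_iq_j\). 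Hence all \(q_j\) have a common sign, provided one checks \(q_j\neq0\); I would get this from degree considerations as in Proposition~\ref{prop:minifold-hilbert-degree}.
\item With \(u\) the coordinate vector of \([\widehat F_k]\) and \(v=e_j\): the leading coefficient of \(\chi(\widehat F_k,T^mE_j)\) is \(\tfrac\lambda6\,(\pm p_k)\,q_j\).
\end{itemize}
Thus the sign of \(p_k\) is determined by the sign of \(\chi(\widehat F_k,E_{j+4m})\) for large \(m\).

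The key step, and the main obstacle, is to show that for each \(k\) and every \(l\) large, \(\operatorname{Hom}(F_k,E_l[s])\) is concentrated in a single degree \(s\). This degree should depend on \(k\) so that, after the normalization \(\widehat F_k=F_k[k-4]\) and the sign convention relating \(p_k\) to \(\chi(\widehat F_k,-)\), the resulting signs alternate in \(k\). My first attempt would use the Bondal--Polishchuk description of the dual collection inside a geometric helix. The object \(F_k=R_{E_4}\cdots R_{E_{k+1}}E_k\) lies in \(\langle E_k,\dots,E_4\rangle\). It is obtained from a Koszul-type resolution by the terms \(E_k,\dots,E_4\), whose morphism spaces are governed by the Koszul \(\mathbb Z\)-algebra \(\mathcal A\) of \cite[Theorem~4.2]{BondalPolishchuk1994}. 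Applying \(\operatorname{Hom}(-,E_l)\) with \(l\gg0\) to this resolution yields a complex of the forward Hom spaces \(\operatorname{Hom}(E_r,E_l)\), all in degree zero. Koszulity implies that this complex is exact except at one end, by the same mechanism that makes \(\mathcal A\) have global dimension four.

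If this concentration is established, \(\chi(\widehat F_k,T^mE_j)\) has the sign \((-1)^{k}\) up to a global constant, with non-zero leading term. Combined with the common sign of the \(q_j\), this gives that the \(p_k\) alternate in sign, and hence \(D_{ij}=(-1)^{i+j}\lambda p_ip_j>0\) for all \(i,j\).

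As a fallback, if the concentration argument is awkward, I would get the alternation from the characters alone. The vector \(p\) spans the kernel of \(B\), so it can be written explicitly as a signed vector of principal cofactors. The alternation would then follow from positivity of the \(3\times3\) principal minors together with \(\Phi(A)^2=16\), \(\Psi(A)=8\), and the bounds of Propositions~\ref{prop:geometric-bounds} and~\ref{prop:strict-principal-deltas}. This would only need a parity or sign check on the off-diagonal cofactors.
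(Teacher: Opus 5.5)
Your reduction matches the paper's: \(D_{ij}=(-1)^{i+j}\lambda p_ip_j\) with \(\lambda>0\) and all \(p_i\ne0,\) so everything rests on the signs of \(p\) alternating. The asymptotic bookkeeping is also sound. In fact \(\chi(\widehat F_k,\pi)=p_k\) exactly, and \(q_j\ne0\) because \(\lambda q_j^2/6\) is the cubic coefficient of \(\chi(E_j,T^mE_j),\) which has degree three by Proposition~\ref{prop:minifold-hilbert-degree}.

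The gap is the step you yourself call the key one. You do not show that \(\operatorname{Hom}(F_k,E_l[s])\) is concentrated in one degree, nor which degree, and that degree is exactly the sign you need. ``Koszulity makes the complex exact except at one end'' is an assertion rather than an argument, and it does not say at which end. The fallback is circular as stated: writing \(p\) as a column of \(\adj(B)\) expresses its signs through \(D_{12},D_{13},D_{14},\) whose positivity is the claim itself.

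The missing idea is that \(F_k\) is itself a term of a geometric helix. Right-mutating \(E_k\) past \(E_{k+1},\dots,E_4\) gives \((E_1,\dots,E_{k-1},E_{k+1},\dots,E_4,F_k),\) which lies in the \(B_4\)-orbit of \(\mathcal E.\) By \cite[Theorem~2.3]{BondalPolishchuk1994} its helix is geometric, and \(T^mE_j\) with \(j\ne k\) and \(m\ge1\) occurs after \(F_k\) in it. Hence \(\operatorname{Hom}(F_k,T^mE_j[s])=0\) for \(s\ne0,\) and Lemma~\ref{lem:nonzero-forward-homs} gives \(\chi(F_k,T^mE_j)>0.\) Together with \(\chi(F_k,\pi)=(-1)^kp_k,\) this yields \((-1)^kp_kq_j>0,\) which completes your route.

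The paper applies the same mechanism more economically. Since \(\mathcal E^\vee=(F_4,\dots,F_1)\) is in the braid orbit, \(\chi(F_i,T^mF_j)>0\) for \(m\ge1,\) and its cubic coefficient is \(\lambda(-1)^{i+j}p_ip_j/6=D_{ij}/6.\) No information about \(q\) is needed.

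Your fallback can also be made rigorous, giving a purely algebraic proof from Proposition~\ref{prop:geometric-bounds} and \(\mathrm{(MU)}\) alone. The expansions \(-a\adj(B)_{12}-\adj(B)_{11}-\adj(B)_{22}=2\Psi(A)-16+2a^2+2c^2-ac\Phi(A)\) and \(-b\adj(B)_{23}-\adj(B)_{22}-\adj(B)_{33}=2\Psi(A)-16+2b^2+2d^2-bd\Phi(A)\) from the proof of Lemma~\ref{lem:radical-mutation-inequalities} are polynomial identities in the parameters. They do not depend on the sign of any off-diagonal minor. The same expansion with \(\adj(B)_{33},\adj(B)_{44},\adj(B)_{34}\) and the coefficient \(c\) reproduces the first right-hand side. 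Substituting \(\adj(B)=\lambda pp^t,\) \(\Phi(A)=4\varepsilon\) and \(\Psi(A)=8\) gives \(\lambda(-ap_1p_2-p_1^2-p_2^2)=2(a-\varepsilon c)^2\ge0,\) \(\lambda(-bp_2p_3-p_2^2-p_3^2)=2(d-\varepsilon b)^2\ge0,\) and \(\lambda(-cp_3p_4-p_3^2-p_4^2)=2(a-\varepsilon c)^2\ge0.\) Since \(a,b,c>0\) and all \(p_i\ne0,\) consecutive coordinates of \(p\) have opposite signs, as required.
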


\begin{proof}
\begingroup
Choose \(0\ne p\in\ker B,\) and {write
\(\pi=\sum_i p_i[E_i]\in K_0(\mathcal D)\otimes\mathbb Q\)},
and write \(\adj(B)=\lambda pp^t.\)
By Proposition~\ref{prop:geometric-bounds},
\(D_{ii}=\lambda p_i^2\ge8,\) so \(\lambda>0\) and every \(p_i\ne0.\)
For the {right dual exceptional collection
\(\mathcal E^\vee=(F_4,F_3,F_2,F_1)\) of \(\mathcal E\)}, duality
\eqref{eq:normalized-duality} gives
\(\chi(F_i,\pi)=(-1)^ip_i.\)
{The exceptional collection \(\mathcal E^\vee\) also generates a geometric helix.}  Hence, for \(m\ge1,\)
Lemma~\ref{lem:nonzero-forward-homs} and
\eqref{eq:euler-matrix-asymptotic} give
\[
 0<\chi(F_i,T^mF_j)
   =\frac{\lambda(-1)^{i+j}p_ip_j}{6}m^3+O(m^2)
   =\frac{D_{ij}}6m^3+O(m^2).
\]
Thus, \(D_{ij}\ge0.\)  Since
\(D_{ij}=(-1)^{i+j}\lambda p_ip_j\ne0,\) every \(D_{ij}\) is positive.
\endgroup
\end{proof}

\subsection{Positive Euler matrices and the Euler degree}

In this subsection, we define positive Euler matrices and their Euler degree.

\begingroup\begingroup
\begin{definition}\label{def:positive-euler-matrix}
An Euler matrix \(A\) is \emph{positive} if
\begin{enumerate}[label=\textup{(\roman*)}]
\item its parameters satisfy \(a,b,c,d,e,f\ge3;\)
\item every \(3\times3\) minor of \(B=A+A^t\) is positive;
\item it satisfies \(\mathrm{(MU)};\)
\item for every principal triple \((u,v,t)\) in
\eqref{eq:principal-triples}, one has \(\Delta(u,v,t)>4.\)
\end{enumerate}
\end{definition}
In fact, condition \textup{(iv)} follows from \textup{(i)}--\textup{(iii)} by
Proposition~\ref{prop:strict-principal-deltas}.
\endgroup\endgroup

\begin{theorem}\label{thm:geometric-euler-positivity}
Let \(\mathsf X\) be a three-dimensional non-commutative minifold, and let
\((E_i)_{i\in\mathbb Z}\) be a full geometric helix of period four in
\(\Perf(\mathsf X).\) Then the Euler matrix of the exceptional collection
\((E_i,E_{i+1},E_{i+2},E_{i+3})\) is positive for every \(i\in\mathbb Z.\)
\end{theorem}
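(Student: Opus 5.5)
The plan is to verify the four conditions of Definition~\ref{def:positive-euler-matrix} one at a time for the Euler matrix \(A\) of \(\mathcal E_i=(E_i,E_{i+1},E_{i+2},E_{i+3})\), using the results of Subsections~\ref{sec:consequences-geometricity} and~\ref{sec:minifold-positivity}. First I reduce to the standing hypotheses of Subsection~\ref{sec:minifold-positivity}. The helix \((E_j)_{j\in\mathbb Z}\) is full, geometric and of period four. The shifted helix \((E_{i+j-1})_{j\in\mathbb Z}\) is the same sequence reindexed, so it is again a full geometric helix of period four whose first four terms are \(\mathcal E_i\). Since \(\mathsf X\) is a three-dimensional non-commutative minifold, \(T=S^{-1}[3]\) acts maximally unipotently on \(K_0(\mathcal D)\otimes\mathbb Q\). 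This is a property of the category, not of the particular helix, so it holds regardless of which helix witnesses the minifold condition.

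Condition \textup{(i)} follows from Proposition~\ref{prop:geometric-bounds} applied to the reindexed helix. For condition \textup{(iii)}, \(\mathcal E_i\) is full, so its classes form a basis of \(K_0(\mathcal D)\) by Lemma~\ref{lem:full-collection-k0-basis}. In this basis, Serre duality (as recorded after \eqref{eq:fixed-subspace-radical}) shows that the matrix \(T=-A^{-t}A\) represents \(S^{-1}[3]\). Maximal unipotence of the functor is therefore exactly \(\mathrm{(MU)}\) for \(A\). Condition \textup{(ii)} is Proposition~\ref{prop:geometric-minors} applied to the reindexed helix, whose hypotheses (full geometric helix of period four, maximally unipotent \(T\)) we have just checked.

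Finally, condition \textup{(iv)} follows from Proposition~\ref{prop:strict-principal-deltas}. Its hypotheses are \(\mathrm{(MU)}\), parameters at least \(3\), and positive principal \(3\times3\) minors, and these are contained in \textup{(i)}--\textup{(iii)}.

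I expect the only real subtlety to be bookkeeping rather than mathematics. One must make sure that Proposition~\ref{prop:geometric-minors} and Lemma~\ref{lem:adjoint-leading-term} are stated for the collection starting at an arbitrary index. Lemma~\ref{lem:adjoint-leading-term} is already phrased for \((E_i,\dots,E_{i+3})\), and the reindexing argument covers Proposition~\ref{prop:geometric-minors}. One must also confirm that the rank-three property of \(B\), derived in the paragraph before Lemma~\ref{lem:adjoint-leading-term}, applies to \(\mathcal E_i\). It does: that paragraph covers every collection in the braid orbit of any window of the helix, and in particular \(\mathcal E_i\) itself.
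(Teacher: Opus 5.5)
Your proposal is correct and follows the paper's proof: \textup{(i)} and \textup{(ii)} come from Propositions~\ref{prop:geometric-bounds} and~\ref{prop:geometric-minors}, \textup{(iii)} from the maximal unipotence in Definition~\ref{def:orlov-minifold} (with \(T=-A^{-t}A\) representing \(S^{-1}[3]\) on \(K_0(\mathcal D)\)), and \textup{(iv)} from Proposition~\ref{prop:strict-principal-deltas}. Your reindexing of the helix, and your remark that maximal unipotence depends only on the category and not on the helix, supply bookkeeping that the paper leaves implicit.
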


\begingroup\begingroup
\begin{proof}
Conditions \textup{(i)} and \textup{(ii)} follow from
Propositions~\ref{prop:geometric-bounds} and~\ref{prop:geometric-minors}.
Condition \textup{(iii)} holds by Definition~\ref{def:orlov-minifold}.
Condition \textup{(iv)} then follows from
Proposition~\ref{prop:strict-principal-deltas}.
This proves Theorem~\ref{thm:intro-positivity} stated in the Introduction.
\end{proof}
\endgroup\endgroup

Since \(B\) is symmetric of rank \(3\) and
\(\adj(B)=\lambda pp^t,\) we have
\begin{equation}\label{eq:deleted-minor-factorization}
        D_{ij}=(-1)^{i+j}\lambda p_ip_j.
\end{equation}

\begin{proposition}\label{prop:positive-radical-vector}
Let \(A\) be a positive Euler matrix.  Then
\(\ker B\cap\mathbb Z^4\) has a unique primitive generator of the form
\(
        p=(-x,y,-z,w)^t,
\) where
\(  x,y,z,w\in\mathbb Z_{>0}.
\)
\end{proposition}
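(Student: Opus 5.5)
Since \(A\) is positive, condition (ii) makes the principal minor \(D_{11}\) positive, so \(\operatorname{rank}B\ge3\). Condition (iii) gives \(\Phi(A)^2=16\) and \(\Psi(A)=8\) by \eqref{eq:maximal-unipotence-relations}, and then \eqref{eq:detB} yields \(\det B=16-32+16=0\). Hence \(\operatorname{rank}B=3\), and \(\ker B\subset\mathbb Q^4\) is a line. Because \(B\) has integer entries, this line is spanned by an integral vector. Intersecting it with \(\mathbb Z^4\) gives a rank-one lattice \(\mathbb Z p_0\), with exactly two primitive generators \(\pm p_0\). The claim therefore reduces to a sign statement: every coordinate of \(p_0\) is non-zero, and the sign pattern is \((-,+,-,+)\) up to a global sign. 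Uniqueness is then automatic, since exactly one of \(\pm p_0\) has a negative first coordinate.

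To determine the signs, I would use the factorization \eqref{eq:deleted-minor-factorization}. It holds for any rank-three symmetric matrix: \(\adj(B)\) has rank at most one, and \(B\adj(B)=\det B\cdot I=0\), so the columns of \(\adj(B)\) lie in \(\ker B\). Symmetry then gives \(\adj(B)=\lambda pp^t\) for any non-zero \(p\in\ker B\), with \(\lambda\ne0\) because \(D_{11}\ne0\). This argument uses only the matrix, not the categorical setting of Lemma~\ref{lem:adjoint-leading-term}, and I would write it out explicitly.

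From \(D_{ii}=\lambda p_i^2>0\) we get \(\lambda>0\) and \(p_i\ne0\) for all \(i\). For \(i\ne j\), positivity of \(D_{ij}\) and \eqref{eq:deleted-minor-factorization} give \((-1)^{i+j}p_ip_j>0\). So consecutive coordinates have opposite signs, while \(p_1,p_3\) share a sign and \(p_2,p_4\) share a sign. The sign pattern is therefore \(\pm(-,+,-,+)\). Choosing the generator with \(p_1<0\) gives \(p=(-x,y,-z,w)^t\) with \(x,y,z,w>0\).

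The only point requiring care is the \(\lambda pp^t\) factorization in this purely matrix-theoretic setting; everything else is sign bookkeeping. Condition (iv) is not needed.
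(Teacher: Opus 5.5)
Your proof is correct and follows the paper's argument. In both, rank three gives \(\adj(B)=\lambda pp^t\), the positive diagonal minors give \(\lambda>0\) and \(p_i\ne0\), and the positive off-diagonal minors force the alternating sign pattern, with the sign then fixed by requiring \(p_1<0\). The one small difference is how you reach \(\operatorname{rank}B=3\): the paper reads it off directly from (MU) via \eqref{eq:fixed-subspace-radical}, while you combine \(\det B=0\) from \eqref{eq:detB} with \(D_{11}>0\), which is equally valid.
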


\begin{proof}
Maximal unipotence and
\eqref{eq:fixed-subspace-radical} give \(\operatorname{rank}B=3.\)  For a
primitive generator \(p\) of \(\ker B\cap\mathbb Z^4,\) write
$
        \adj(B)=\lambda pp^t.
$
The principal \(3\times3\) minors show that \(\lambda>0\) and that no coordinate of
\(p\) vanishes.  Applying \eqref{eq:deleted-minor-factorization} with
\((i,j)=(1,2),(2,3),(3,4)\) shows that consecutive coordinates of \(p\) have
opposite signs.  {Changing the common sign if necessary gives
\(p_1,p_3<0<p_2,p_4.\)}  Primitivity determines it up to sign, and the condition
\(p_1,p_3<0<p_2,p_4\) fixes the sign.
\end{proof}

For a positive Euler matrix \(A,\) let \(p\) denote the primitive
radical vector given by Proposition~\ref{prop:positive-radical-vector}.
Since \(B\) has rank \(3,\) there is a unique non-zero rational number
\(D(A)\) such that
\begin{equation}\label{eq:euler-degree}
        \adj(B)=D(A)pp^t.
\end{equation}
The positivity of the principal \(3\times3\) minors gives \(D(A)>0.\)  We call
\(D(A)\) the \emph{Euler degree} of \(A.\)

\begin{proposition}\label{prop:euler-degree-arithmetic}
Let \(A\) be a positive Euler matrix, and let \(p\) be the
primitive radical vector given by
Proposition~\ref{prop:positive-radical-vector}.  The degree \(D(A)\) defined
by \eqref{eq:euler-degree} is a positive even integer.  It is the
greatest common divisor of the entries of \(\adj(B).\)  If
\(U\in\operatorname{GL}_4(\mathbb Z),\)
then, for
\(
        B'=U^tBU,\; p'=U^{-1}p,
\)
one has \(\adj(B')=D(A)p'(p')^t.\)  In particular, whenever
\(A'=U^tAU\) is again a positive Euler matrix, \(D(A')=D(A).\)
\end{proposition}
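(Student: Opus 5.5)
Three facts have to be established: that \(D(A)\) is the gcd of the entries of \(\adj(B)\), that it is even, and that it is invariant under integral base change. I would prove them in that order.

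\emph{Integrality and the gcd.} Since \(B\) is an integer matrix of rank three, \(\adj(B)\) is a nonzero integer matrix. It equals \(D(A)pp^t\) with \(p\) primitive in \(\mathbb Z^4\). Write \(D(A)=r/s\) in lowest terms with \(s>0\). Then \(s\) divides every product \(p_ip_j\), and in particular every \(p_i^2\). Because \(\gcd(p_1,\dots,p_4)=1\), we also have \(\gcd(p_1^2,\dots,p_4^2)=1\), so \(s=1\) and \(D(A)\in\mathbb Z\). The same primitivity gives \(\gcd_{i,j}(p_ip_j)=1\), so the gcd of the entries of \(\adj(B)\) equals \(|D(A)|=D(A)\). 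Positivity of \(D(A)\) comes from \(D_{11}=D(A)p_1^2>0\).

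\emph{Evenness.} \(B\) is an even symmetric matrix, with diagonal entries \(2\). I would show that every \(3\times3\) minor of \(B\) is even and then use the gcd description. For a principal minor this is the stated fact that each principal minor equals \(2\Delta(u,v,t)\). For a non-principal minor \(D_{ij}\) with \(i\ne j\), there are two routes. One is to expand directly: the submatrix has two diagonal entries equal to \(2\), and the determinant expansion pairs its terms so that the odd contributions cancel mod \(2\). The cleaner route is \(D_{ij}^2=D_{ii}D_{jj}\), which follows from \eqref{eq:deleted-minor-factorization}. Since \(D_{ii}\) and \(D_{jj}\) are both even, \(4\mid D_{ij}^2\), hence \(2\mid D_{ij}\). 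So every entry of \(\adj(B)\) is even, and therefore so is their gcd \(D(A)\).

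\emph{Invariance.} For \(U\in\GL_4(\mathbb Z)\), multiplicativity of the adjugate gives \(\adj(B')=\adj(U)\adj(B)\adj(U^t)\). Using \(\adj(U)=\det(U)U^{-1}\) with \(\det U=\pm1\), this becomes \((\det U)^2U^{-1}\adj(B)U^{-t}=D(A)(U^{-1}p)(U^{-1}p)^t\). The vector \(p'=U^{-1}p\) is again primitive and spans \(\ker B'\), because \(B'p'=U^tBp=0\). So if \(A'=U^tAU\) is positive, its primitive radical vector is \(\pm p'\), and the sign does not affect \(p'(p')^t\). Comparing with \eqref{eq:euler-degree} for \(A'\), and using that \(p'(p')^t\ne0\), gives \(D(A')=D(A)\). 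Alternatively, \(D(A')\) is the gcd of the entries of \(\adj(B')\), and this gcd is unchanged under the unimodular transformation.

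The only step that needs care is evenness, and the identity \(D_{ij}^2=D_{ii}D_{jj}\) handles it without any computation.
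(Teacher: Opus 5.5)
Your proof is correct. For integrality, the gcd description and invariance, it matches the paper up to presentation. The paper writes \(1=\sum_{i,j}c_{ij}p_ip_j\) and reads off \(D(A)=\sum_{i,j}c_{ij}\adj(B)_{ij}\in\mathbb Z\), where you clear denominators. It also uses the same formula \(\adj(U^tBU)=U^{-1}\adj(B)U^{-t}\).

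The genuine difference is parity.

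\begin{itemize}
\item The paper extends \(p\) to a \(\mathbb Z\)-basis, so that \(B\) becomes \(B_0\oplus0\) with \(\det B_0=D(A)\). Thus \(D(A)\) is the determinant of the even rank-three lattice \(\mathbb Z^4/\mathbb Zp\). An even symmetric matrix of odd size is alternating modulo \(2\), so its determinant is even.
\item You instead stay with the minors of \(B\). The principal minors are \(2\Delta\), and the rank-one identity \(D_{ij}^2=D_{ii}D_{jj}\) transfers evenness to the off-diagonal minors.
\end{itemize}

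The paper's route explains why parity is forced: \(D(A)\) is the discriminant of an even lattice of odd rank. Yours is a quick computation with data already available. You can shorten it further. Primitivity makes some \(p_i\) odd, and \(D(A)p_i^2=D_{ii}\) is even, so \(D(A)\) is even without using the off-diagonal minors at all.

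One caution about the alternative you set aside. Direct expansion does not show that off-diagonal minors of a symmetric matrix with diagonal \(2\) are even. For \(B=A+A^t\) one finds \(D_{12}\equiv c\,(ac+bd+ef)\pmod 2\), which can be odd for a general such matrix. Here it is even only because \(\det B\equiv\Phi(A)^2\pmod 2\) and \(\det B=0\). Your identity \(D_{ij}^2=D_{ii}D_{jj}\) encodes exactly this rank-three input, so the route you actually use is sound.
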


\begin{proof}
Since \(p\) is primitive, the integers \(p_ip_j,\)
\(1\le i,j\le4,\) have greatest common divisor \(1.\)  Choose integers
\(c_{ij}\) such that \(\sum_{i,j}c_{ij}p_ip_j=1.\)  Equation
\eqref{eq:euler-degree} gives
\(D(A)=\sum_{i,j}c_{ij}\adj(B)_{ij}\in\mathbb Z.\)
Since \(\adj(B)_{ij}=D(A)p_ip_j,\) the greatest common divisor of the
entries of \(\adj(B)\) is \(D(A).\)

{With \(B'\) and \(p'\) as in the statement, the formula for the adjoint
under a change of basis and the equality \(\det U=\pm1\) give}
\[
\adj(B')
 =U^{-1}\adj(B)U^{-t}
 =D(A)p'(p')^t.
\]
The vector \(p'\) is primitive.  If \(A'=U^tAU\) is a positive
Euler matrix, \eqref{eq:euler-degree} applied to \(A'\) gives
\(D(A')=D(A).\)
For the parity assertion, extend \(p\) to an integral basis.  In that
basis \(B\) is \(B_0\oplus0,\) where \(B_0\) represents the induced
form on \(\mathbb Z^4/\mathbb Zp,\) and \(\det B_0=D(A).\)
This rank-three lattice is even, so \(B_0\) modulo \(2\) is an
alternating matrix of odd size.  Its determinant is zero modulo \(2,\)
and \(D(A)\) is even.
\end{proof}
\begingroup\begingroup
\subsection{Euler degree of a strong minifold}

In this subsection, we identify the Euler degree of a strong three-dimensional
minifold with its anticanonical degree.

\begin{proposition}\label{prop:strong-minifold-euler-degree}
Let \(X\) be a strong three-dimensional minifold, and let
\(\mathcal E=(E_1,E_2,E_3,E_4)\) be a full exceptional collection in
\(D^b(\coh X)\) generating a geometric helix. Let
\(A\) be its Euler matrix.  Then \(A\) is positive and satisfies
\(
        D(A)=(-K_X)^3.
\)
\end{proposition}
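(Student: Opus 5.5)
Positivity follows from results already established: by Proposition~\ref{prop:strong-is-noncommutative-minifold}, \(\PerfDG{X}\) is a three-dimensional non-commutative minifold, and \(\mathcal E\) generates a full geometric helix of period four. Theorem~\ref{thm:geometric-euler-positivity} then shows that \(A\) is positive. The real content is the identity \(D(A)=(-K_X)^3\). The plan is to compute \(D(A)\) through the leading term of \(\chi(E_i,T^mE_j)\), using Lemma~\ref{lem:adjoint-leading-term}, and to compare it with Riemann--Roch.

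First, identify the radical vector. The fixed subspace of \(T\) on \(K_0(X)\otimes\mathbb Q\) is \(\ker B\), by \eqref{eq:fixed-subspace-radical}. Under the Chern character, \(T\) acts as multiplication by \(e^H\), where \(H=c_1(\omega_X^{-1})\). Hence the fixed line consists of classes \(\alpha\) with \(\operatorname{ch}(\alpha)\in\mathrm{CH}^3(X)_{\mathbb Q}\), that is, multiples of the class \([\mathcal O_x]\) of a point. Since \(\chi(\mathcal O_X,\mathcal O_x)=1\), the class \([\mathcal O_x]\) is primitive in \(K_0(X)\). So its coordinate vector in the basis \([E_1],\ldots,[E_4]\) equals \(\pm p\), and the sign is irrelevant because \(pp^t\) is invariant. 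Write \(\pi=[\mathcal O_x]\).

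Next, compute the cubic coefficient in two ways. By \eqref{eq:euler-matrix-asymptotic} with \(\lambda=D(A)\), for classes \(u,v\) we have \(\chi(u,T^mv)=\tfrac{D(A)}{6}(u^tq)(v^tq)m^3+O(m^2)\), where \(q=Ap\). Here \(u^tq=u^tAp=\chi(u,\pi)\) up to the global sign. Now take \(u=v=[\mathcal O_X]\). Then \(\chi(\mathcal O_X,\pi)=\pm1\), so the coefficient of \(m^3\) is \(D(A)/6\). On the other hand, Riemann--Roch gives \(\chi(\mathcal O_X,\omega_X^{-m})=\tfrac{H^3}{6}m^3+O(m^2)\). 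Comparing the two expressions yields \(D(A)=(-K_X)^3\).

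The step needing most care is the bookkeeping identifying \(u^tAp\) with \(\chi(u,\pi)\) in the basis convention of Lemma~\ref{lem:adjoint-leading-term}. The Euler form is \(\chi(x,y)=x^tAy\) for coordinate vectors, so \(u^tq=\chi(u,\pi)\) holds exactly. We also need to confirm that \(\pi\) is primitive, not merely rational; this follows from \(\chi(\mathcal O_X,\pi)=1\) being integral with value \(1\).

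An alternative that avoids asymptotics is to use \(A(T-I)^3=D(A)qq^t\). Evaluating at \([\mathcal O_X]\) on both sides gives \(\chi(\mathcal O_X,(T-I)^3\mathcal O_X)=D(A)\). Since \(\operatorname{ch}((T-I)^3[\mathcal O_X])=H^3\) on a threefold, the class \((T-I)^3[\mathcal O_X]\) equals \(H^3\) points, and the left side is \(H^3\). This is the version I would write.
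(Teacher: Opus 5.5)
Your proof is correct and follows the paper's argument: positivity comes from Proposition~\ref{prop:strong-is-noncommutative-minifold} and Theorem~\ref{thm:geometric-euler-positivity}, and the identity comes from comparing the cubic coefficient in Lemma~\ref{lem:adjoint-leading-term} with Hirzebruch--Riemann--Roch at \([\mathcal O_X]\). The only difference is that you identify \(p\) with \(\pm[\mathcal O_x]\) via the Chern character, whereas the paper identifies \(q=Ap\) with \(\pm\) the rank vector via annihilation of \(\operatorname{im}(T-I)\); these are the same fact because \(\chi(\alpha,[\mathcal O_x])=\operatorname{rk}\alpha\), and your evaluation of \(A(T-I)^3=D(A)qq^t\) at \([\mathcal O_X]\) is a neat way to avoid asymptotics.
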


\begin{proof}

The matrix \(A\) is positive by
Proposition~\ref{prop:strong-is-noncommutative-minifold} and
Theorem~\ref{thm:geometric-euler-positivity}.  Let \(p\) be its primitive
radical vector, put \(q=Ap,\) and write
\[
        r=(\operatorname{rk}E_1,\ldots,\operatorname{rk}E_4)^t.
\]
Both \(q^t\) and \(r^t\) annihilate \(\operatorname{im}(T-I).\)
For \(r,\) this follows because tensoring by \(\omega_X^{-1}\) preserves
rank.  For \(q,\) it follows from \(Tp=p\) and the invariance of
\(\chi\): \(\chi(Tu,p)=\chi(u,p).\)
Maximal unipotence gives \(\dim\operatorname{im}(T-I)=3,\) so \(q\)
and \(r\) span the same rational line.
They are both primitive integral vectors: \(q=Ap\) is primitive because
\(A\) is unimodular.  The classes \([E_i]\) form a
\(\mathbb Z\)-basis of \(K_0(X),\) so writing \([\mathcal O_X]\) in this
basis and applying the rank homomorphism shows that
\(\gcd_i(\operatorname{rk}E_i)=1;\) hence, \(r\) is primitive.
Therefore, \(q=\pm r.\)

For classes \(\alpha,\beta\) with coordinate vectors \(u,v,\) we obtain
\(
        (u^tq)(v^tq)=\operatorname{rk}(\alpha)\operatorname{rk}(\beta).
\)
Lemma~\ref{lem:adjoint-leading-term} now gives
\[
 \chi(\alpha,T^m\beta)
 =\frac{D(A)}6\operatorname{rk}(\alpha)\operatorname{rk}(\beta)m^3
   +O(m^2).
\]
Hirzebruch--Riemann--Roch gives the cubic coefficient
\(\frac{(-K_X)^3}{6}\operatorname{rk}(\alpha)
\operatorname{rk}(\beta).\)  Taking
\(\alpha=\beta=[\mathcal O_X]\) and comparing
cubic coefficients proves the identity.
\end{proof}
\endgroup\endgroup

\section{The braid-group action on Euler matrices}
\label{sec:braid-action-euler-matrices}

{In this section we describe the action of the braid group \(B_4\) on Euler matrices and prove that it
preserves positivity and the Euler degree.}

Let \(A\) be an Euler matrix as in \eqref{eq:A}, let
\(\mathbf e=(e_1,e_2,e_3,e_4)\) be a basis of \(\mathbb Z^4,\)
and put \(\chi(u,v)=u^tAv.\)  For \(1\le i\le3,\) set
\(h_i=\chi(e_i,e_{i+1}).\)  Define \(\sigma_i\) and \(\sigma_i^{-1}\)
by replacing the pair \((e_i,e_{i+1})\) as follows and leaving
the other basis vectors fixed:
\begin{equation}\label{eq:braid-action-euler-bases}
\begin{aligned}
\sigma_i(e_i,e_{i+1})&=(h_ie_i-e_{i+1},e_i),\qquad
\sigma_i^{-1}(e_i,e_{i+1})&=(e_{i+1},h_ie_{i+1}-e_i).
\end{aligned}
\end{equation}

{For each \(i,\) \(\sigma_i\) and \(\sigma_i^{-1}\) are inverse.
The \(\sigma_i\) satisfy the braid relations and define an action of
\(B_4\) on Euler matrices.}  When \(A\) is the Euler matrix of an exceptional collection
{\(\mathcal E=(E_1,E_2,E_3,E_4)\) in a triangulated category \(\mathcal D\)},
{Equation~\eqref{eq:braid-action-euler-bases} gives the Euler matrices
of the left and right mutations of the consecutive objects.}

\begin{lemma}\label{lem:coordinate-mutations}
{The standard generators of \(B_4\) and their inverses act on the
parameters of \(A\) as follows:}
\begin{align}
\sigma_1(a,b,c,d,e,f)=(a,e,c,ad-f,ae-b,d),&&
\sigma_1^{-1}(a,b,c,d,e,f)=(a,ab-e,c,f,b,af-d),\notag\\
\sigma_2(a,b,c,d,e,f)=(ab-e,b,f,d,a,bf-c),&&
\sigma_2^{-1}(a,b,c,d,e,f)=(e,b,bc-f,d,be-a,c),\label{eq:elementary-mutations}\\
\sigma_3(a,b,c,d,e,f)=(a,bc-f,c,e,ce-d,b),&&
\sigma_3^{-1}(a,b,c,d,e,f)=(a,f,c,cd-e,d,cf-b).\notag
\end{align}
\end{lemma}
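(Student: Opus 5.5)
This is a direct computation from the basis-change rule \eqref{eq:braid-action-euler-bases}. Throughout, write \(e_1,\dots,e_4\) for the old basis and \(\chi(u,v)=u^tAv\). The old parameters are read off as
\[
a=\chi(e_1,e_2),\quad b=\chi(e_2,e_3),\quad c=\chi(e_3,e_4),\quad e=\chi(e_1,e_3),\quad f=\chi(e_2,e_4),\quad d=\chi(e_1,e_4).
\]
We also use \(\chi(e_j,e_i)=0\) for \(j>i\) and \(\chi(e_i,e_i)=1\). For each generator we form the new ordered basis and evaluate \(\chi\) on all six pairs \((e'_i,e'_j)\) with \(i<j\), using bilinearity. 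As a by-product, \(\chi(e'_j,e'_i)=0\) for \(j>i\) and \(\chi(e'_i,e'_i)=1\), so the new matrix is again an Euler matrix.

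Take \(\sigma_1\), with \(h_1=a\). The new basis is \((ae_1-e_2,\,e_1,\,e_3,\,e_4)\). The pair \((e'_1,e'_2)\) gives \(\chi(ae_1-e_2,e_1)=a\). The pair \((e'_2,e'_3)\) gives \(\chi(e_1,e_3)=e\), and \((e'_3,e'_4)\) gives \(c\). The pair \((e'_1,e'_3)\) gives \(ae-b\), and \((e'_1,e'_4)\) gives \(ad-f\). Finally \((e'_2,e'_4)\) gives \(\chi(e_1,e_4)=d\). In the paper's ordering \((a,b,c,d,e,f)\) this is \((a,e,c,ad-f,ae-b,d)\), as claimed.

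The checks \(\chi(e'_1,e'_1)=a^2-a\cdot a+1=1\) and \(\chi(e'_2,e'_1)=\chi(e_1,ae_1-e_2)=a-a=0\) confirm triangularity. The same bookkeeping handles \(\sigma_2\), with \(h_2=b\) and basis \((e_1,\,be_2-e_3,\,e_2,\,e_4)\). It also handles \(\sigma_3\), with \(h_3=c\) and basis \((e_1,\,e_2,\,ce_3-e_4,\,e_3)\). For instance, under \(\sigma_2\) the new \(a\) is \(\chi(e_1,be_2-e_3)=ab-e\) and the new \(f\) is \(\chi(be_2-e_3,e_4)=bf-c\).

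For the inverses there are two options. One is to compute directly from \(\sigma_i^{-1}(e_i,e_{i+1})=(e_{i+1},h_ie_{i+1}-e_i)\) in the same way. The quicker one is to check that the displayed formula for \(\sigma_i^{-1}\) composed with that for \(\sigma_i\) gives the identity. For \(\sigma_1\), applying \(\sigma_1^{-1}\) to \((a,e,c,ad-f,ae-b,d)\) yields \((a,\,a e-(ae-b),\,c,\,d,\,e,\,ad-(ad-f))=(a,b,c,d,e,f)\). Since \(\sigma_i\) and \(\sigma_i^{-1}\) are mutually inverse on bases, this suffices.

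There is no real obstacle here; the only risk is mislabeling which parameter sits in which slot. The labeling \(e=A_{13}\), \(d=A_{14}\), \(f=A_{24}\) has to be tracked carefully in each of the six cases.
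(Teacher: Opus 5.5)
Your proposal is correct and follows the paper's proof. Both compute the matrix of \(\chi\) in the mutated basis given by \eqref{eq:braid-action-euler-bases}, and your formulas for \(\sigma_1,\sigma_2,\sigma_3\) are right. Deriving the inverse formulas by checking that they undo the \(\sigma_i\) formulas is a valid minor shortcut: the parameter maps induced by \(\sigma_i\) and \(\sigma_i^{-1}\) are mutually inverse bijections, so any map that undoes the \(\sigma_i\) formula must be the \(\sigma_i^{-1}\) formula.
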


\begin{proof}
For example, \(\sigma_1\) replaces the ordered basis by
\(
        (ae_1-e_2,e_1,e_3,e_4).
\)
Computing the matrix of \(\chi\) in this basis gives the formula
for \(\sigma_1\) in \eqref{eq:elementary-mutations}; the other five
identities in that equation follow in the same way.
\end{proof}

Two Euler matrices are called \emph{mutation-equivalent} {if they lie in the
same orbit under this \(B_4\)-action}.

\begin{lemma}\label{lem:formal-mutation-invariants}
Let
\(W=\sigma_{i_m}^{\delta_m}\cdots\sigma_{i_1}^{\delta_1},\) where
\(\delta_j\in\{\pm1\}.\)  Starting from \(\mathbf e,\) apply successively
\(\sigma_{i_1}^{\delta_1},\ldots,\sigma_{i_m}^{\delta_m}\) using
\eqref{eq:braid-action-euler-bases}.  Let
\(U_W\in\operatorname{SL}_4(\mathbb Z)\) be the matrix whose columns are
the {new basis vectors} written in the coordinates of \(\mathbf e.\)
Let \(p\in\mathbb Z^4,\) and let \(p_W\) be the coordinate vector
of the same element of \(\mathbb Z^4\) in the {new basis}.
Then
\begin{equation}\label{eq:formal-mutation-change-of-basis}
 A_W=U_W^tAU_W,
 \qquad B_W=U_W^tBU_W,
 \qquad p_W=U_W^{-1}p.
\end{equation}
The operators \(T\) and \(C\) of \eqref{eq:euler-operators} are conjugated
by \(U_W,\) and
\(
        \Phi(A_W)=\Phi(A),
        \;
        \Psi(A_W)=\Psi(A).
\)

If \(A\) and \(A_W\) are positive, then \(D(A_W)=D(A).\)
\end{lemma}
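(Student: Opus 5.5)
The plan is to reduce everything to how the Gram matrix of a bilinear form transforms under a change of basis. The parameters \(a,\dots,f\) are the entries of the matrix of \(\chi(u,v)=u^tAv\) in the basis \(\mathbf e\). Each elementary step \eqref{eq:braid-action-euler-bases} replaces one ordered basis by another. In the new basis, the form \(\chi\) is unchanged, but its matrix becomes \(M^tAM\), where \(M\) is the elementary matrix whose columns are the new vectors in old coordinates.

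First, I would check that each elementary step lies in \(\SL_4(\mathbb Z)\). For \(\sigma_i\), the \(2\times2\) block on the pair of indices is \(\begin{pmatrix}h_i&1\\-1&0\end{pmatrix}\), which has determinant \(1\). For \(\sigma_i^{-1}\), the block is \(\begin{pmatrix}0&-1\\1&h_i\end{pmatrix}\), also of determinant \(1\). The new Gram matrix is again an Euler matrix: it is unitriangular by Lemma~\ref{lem:coordinate-mutations}, or by the direct computation \(\chi(h e_i-e_{i+1},e_i)=h-h=0\) using \(\chi(e_{i+1},e_i)=0\).

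Next, I would induct on the length \(m\) of \(W\). Suppose the current basis is given by \(U\) in old coordinates, so the current Euler matrix is \(A'=U^tAU\). The next step uses \(h\) computed from \(A'\), and the corresponding elementary matrix \(M\) is expressed in the current basis. The new basis in old coordinates is then \(UM\), so \(U_{W'}=U_WM\). It follows that \(A_{W'}=M^tA'M=U_{W'}^tAU_{W'}\). Transposing gives the statement for \(B\). The identity \(p_W=U_W^{-1}p\) holds by the definition of coordinates.

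For the operators, I would use \(\det U=1\) to get
\[
T_W=-(U^tAU)^{-t}U^tA^tU\ldots
\]
More carefully, \(A_W^{-t}=U^{-1}A^{-t}U^{-t}\), and \(A_W=U^tAU\). Hence
\[
T_W=-U^{-1}A^{-t}U^{-t}U^tAU=U^{-1}TU,
\]
and similarly \(C_W=U^{-1}CU\). Conjugation preserves the characteristic polynomial \eqref{eq:coxeter-characteristic}, so its coefficients agree. This gives \(\Psi(A_W)=\Psi(A)\) from the \(\lambda^3\) coefficient, and then \(\Phi(A_W)^2=\Phi(A)^2\).

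The main obstacle is the sign of \(\Phi\), since the characteristic polynomial only determines \(\Phi^2\). Here I would use \(\Phi=\operatorname{Pf}(\Omega)\) together with \(\Omega_W=U^t\Omega U\). The identity \(\operatorname{Pf}(U^t\Omega U)=\det(U)\operatorname{Pf}(\Omega)\) and \(\det U=1\) then give \(\Phi(A_W)=\Phi(A)\).

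Finally, if \(A\) and \(A_W\) are both positive, then \(A_W=U_W^tAU_W\) with \(U_W\in\GL_4(\mathbb Z)\). The last assertion of Proposition~\ref{prop:euler-degree-arithmetic} then yields \(D(A_W)=D(A)\).
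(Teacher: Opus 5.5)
Your proposal is correct and follows the paper's proof. The steps match: the change-of-basis formula \(A_W=U_W^tAU_W\), conjugation of \(T\) and \(C\) by substitution, the Pfaffian identity with \(\det U_W=1\) for the sign of \(\Phi\), and Proposition~\ref{prop:euler-degree-arithmetic} for \(D(A_W)=D(A)\). The only variation is that you read off \(\Psi\) from the \(\lambda^3\)-coefficient of the characteristic polynomial of \(C\), whereas the paper uses \(\det B_W=\det B\) and \eqref{eq:detB} once \(\Phi\) is known; both work.

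There is one harmless slip in your side check of unitriangularity. The entry that vanishes is \(\chi(e_i,h_ie_i-e_{i+1})=h_i-h_i=0\), whereas \(\chi(h_ie_i-e_{i+1},e_i)=h_i\) is the new superdiagonal entry.
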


\begin{proof}
The identities \eqref{eq:formal-mutation-change-of-basis} are the
change-of-basis formulas.  Substitution in \eqref{eq:euler-operators} gives
\(T_W=U_W^{-1}TU_W\) and \(C_W=U_W^{-1}CU_W.\)  Moreover,
\(\Omega_W:=A_W-A_W^t=U_W^t\Omega U_W.\)  For every skew-symmetric
matrix \(\Omega,\)
\(\operatorname{Pf}(U_W^t\Omega U_W)=\det(U_W)\operatorname{Pf}(\Omega).\)
Since \(\det U_W=1,\) definition \eqref{eq:Phi-def} gives
\(\Phi(A_W)=\Phi(A).\)  Since
\(\det B_W=\det B,\) equation~\eqref{eq:detB} then gives
\(\Psi(A_W)=\Psi(A).\)  The equality \(D(A_W)=D(A)\) follows from
Proposition~\ref{prop:euler-degree-arithmetic}.
\end{proof}

\begin{lemma}\label{lem:radical-mutation-inequalities}
Let \(A\) be a positive Euler matrix, let \(p\) be the primitive
radical vector given by Proposition~\ref{prop:positive-radical-vector}, and write
\(p=(-x,y,-z,w)^t,\) where \(x,y,z,w\in\mathbb Z_{>0}.\)  Put
\(\varepsilon=\Phi(A)/4\in\{\pm1\}.\)  Then
\begin{align}
 N&:=axy-x^2-y^2=czw-z^2-w^2>0,\label{eq:mutation-N}\\
 L&:=byz-y^2-z^2>0.\label{eq:mutation-L}
\end{align}
Moreover,
\begin{equation}\label{eq:mutation-square-identities}
        D(A)N=2(a-\varepsilon c)^2,
        \qquad
        D(A)L=2(d-\varepsilon b)^2.
\end{equation}
\end{lemma}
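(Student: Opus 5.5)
The plan is to work directly with the kernel equation \(Bp=0\) and with the rank-one structure \(\adj(B)=D(A)pp^t\). Write out \(Bp=0\) for \(p=(-x,y,-z,w)^t\) with
\[
B=\begin{pmatrix}2&a&e&d\\a&2&b&f\\e&b&2&c\\d&f&c&2\end{pmatrix}.
\]
This gives four linear relations:
\[
-2x+ay-ez+dw=0,\qquad -ax+2y-bz+fw=0,
\]
\[
-ex+by-2z+cw=0,\qquad -dx+fy-cz+2w=0.
\]

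\textbf{Step 1: the equality in \eqref{eq:mutation-N}.} I would form \(x\cdot(\text{row }1)+y\cdot(\text{row }2)\), that is, evaluate the quadratic form \(p^tBp=0\) split into blocks. Let \(Q(u,v)=u^tBv\) and decompose \(p=p_{12}+p_{34}\) with \(p_{12}=(-x,y,0,0)\) and \(p_{34}=(0,0,-z,w)\). Then
\[
Q(p_{12},p_{12})=2x^2+2y^2-2axy=-2N_{12},\qquad Q(p_{34},p_{34})=-2N_{34}.
\]
The key observation is \(Q(p_{12},p)=0=Q(p_{34},p)\), since \(Bp=0\). Hence
\[
Q(p_{12},p_{12})=-Q(p_{12},p_{34})=Q(p_{34},p_{34}),
\]
so \(axy-x^2-y^2=czw-z^2-w^2\).

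\textbf{Step 2: positivity of \(N\) and \(L\).} I would use the adjugate. Consider the \(2\times2\) block \(\bigl(\begin{smallmatrix}2&a\\a&2\end{smallmatrix}\bigr)\) and the complementary minors. By Jacobi's complementary-minor identity for the rank-one matrix \(\adj(B)\), a \(2\times2\) minor of \(\adj(B)\) equals \(\det B\) times a complementary minor, and \(\det B=0\), so this route degenerates. Instead I would compute \(N\) directly and then prove \eqref{eq:mutation-square-identities}. Since \(D(A)>0\), positivity follows from the identity as soon as \(a\ne\varepsilon c\) and \(d\ne\varepsilon b\). So the order is: identity first, then positivity.

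For nonvanishing, suppose \(a=\varepsilon c\); then \(N=0\), i.e. \(x^2-axy+y^2=0\) with \(a\ge3\). The discriminant \(a^2-4\) is not a perfect square for \(a\ge3\), so there is no nonzero integer solution, a contradiction. The same argument applies to \(L\) with \(b\ge3\). This yields \(N,L\ne0\), and the identity gives the sign.

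\textbf{Step 3: the square identities.} This is the main obstacle. I would use \(D_{ij}=(-1)^{i+j}D(A)p_ip_j\) from \eqref{eq:deleted-minor-factorization} to express \(D(A)x^2=D_{11}\), \(D(A)y^2=D_{22}\), and \(D(A)xy=D_{12}\). Then
\[
D(A)N=aD_{12}-D_{11}-D_{22},
\]
which is an explicit polynomial in \(a,\dots,f\). One then has to show that this equals \(2(a-\varepsilon c)^2\) modulo the relations \eqref{eq:maximal-unipotence-relations}, namely \(\Phi=4\varepsilon\) and \(\Psi=8\). I expect the expression \(aD_{12}-D_{11}-D_{22}+2\) (the constant coming from the principal \(3\times3\) minors \(2\Delta\)) to reduce to \(2(a^2+c^2)-2ac\cdot(\Phi/4)\)-type terms after substituting \(\Psi=8\). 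Concretely, I would expand \(D_{11}=2\Delta(b,c,f)\), \(D_{22}=2\Delta(c,d,e)\) and \(D_{12}\), and collect terms. The cross term \(-4\varepsilon ac\) should arise as \(-ac\Phi\), with the remaining \(-\,ac(bd-ef)\)-type pieces cancelling against \(\Psi\).

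\textbf{Step 4: the remaining identity.} The identity for \(L\) follows by the same computation with \(D_{22},D_{33},D_{23}\) and coefficient \(b\). Alternatively, it follows by symmetry: the symmetry \(A\mapsto\) (reverse order, transpose), i.e. \((a,b,c,d,e,f)\mapsto(c,b,a,d,f,e)\), together with a cyclic relabeling coming from the helix or a mutation, should transport the \(N\)-identity to the \(L\)-identity. I would first attempt the direct polynomial verification, since it is guaranteed to close once the Pfaffian and \(\Psi\) substitutions are organized.
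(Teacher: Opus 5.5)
Your proposal is correct and follows the paper's proof. The equality of the two expressions for \(N\) comes from \(Bp=0\), as in the paper. The square identities come from the same adjugate expansion \(D(A)N=aD_{12}-D_{11}-D_{22}=2\Psi(A)-16+2a^2+2c^2-ac\Phi(A)\), and likewise \(D(A)L=bD_{23}-D_{22}-D_{33}=2\Psi(A)-16+2b^2+2d^2-bd\Phi(A)\). Two slips in your Step 3 heuristics should be dropped: there is no extra constant \(+2\), and the cross term is \(-ac\Phi(A)=-4\varepsilon ac\), not \(-2ac\Phi/4\). The only real deviation is that you rule out \(N=0\) and \(L=0\) by noting that \(a^2-4\) and \(b^2-4\) are not perfect squares for \(a,b\ge3\), where the paper uses a gcd/divisibility argument; both work.
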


\begin{proof}
The relation \(Bp=0\) implies \(p^tAp=0.\)  Moreover, its third and fourth
components give
\[
 -z(Bp)_3+w(Bp)_4
 =\bigl(axy-x^2-y^2\bigr)-\bigl(czw-z^2-w^2\bigr)+p^tAp.
\]
This proves the equality of the two expressions for \(N\) in
\eqref{eq:mutation-N}.  Since
\(\adj(B)=D(A)pp^t,\) direct expansion gives
\begin{align*}
 D(A)N&=-a\adj(B)_{12}-\adj(B)_{11}-\adj(B)_{22}
        =2\Psi(A)-16+2a^2+2c^2-ac\Phi(A),\\
 D(A)L&=-b\adj(B)_{23}-\adj(B)_{22}-\adj(B)_{33}
        =2\Psi(A)-16+2b^2+2d^2-bd\Phi(A).
\end{align*}
Equation \eqref{eq:maximal-unipotence-relations} gives
\eqref{eq:mutation-square-identities}.  Hence, \(N,L\ge0.\)

If \(N=0,\) then \(x^2+y^2=axy.\)  {Write
\(x=gm\) and \(y=gn,\) where \(g=\gcd(x,y)\) and \(\gcd(m,n)=1.\)  Then}
\(m^2+n^2=amn.\)  Thus, \(m\mid n^2\) and \(n\mid m^2,\) so
\(m=n=1\) and \(a=2,\) contrary to positivity.  Therefore, \(N>0.\)
If \(L=0,\) write \(y=gm\) and \(z=gn,\) where
\(g=\gcd(y,z)\) and \(\gcd(m,n)=1.\)  Then
\(m^2+n^2=bmn,\) so \(m\mid n^2\) and \(n\mid m^2.\)  Hence,
\(m=n=1\) and \(b=2,\) again a contradiction.  Thus, \(L>0.\)
\end{proof}

\begin{proposition}\label{prop:mutation-positivity}
The action of \(B_4\) on Euler matrices preserves positivity.
\end{proposition}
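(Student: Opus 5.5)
It suffices to prove that a single generator \(\sigma_i^{\pm1}\) (\(1\le i\le 3\)) sends a positive Euler matrix \(A\) to a positive one; induction on word length then handles all of \(B_4\). Let \(A'=\sigma_i^{\pm1}A=U^tAU\) with \(U\in\SL_4(\mathbb Z)\) as in Lemma~\ref{lem:formal-mutation-invariants}, and put \(B'=U^tBU\). We check the four conditions of Definition~\ref{def:positive-euler-matrix} in the order (iii), (i), (ii), and finally (iv).

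\emph{Condition (iii).} By Lemma~\ref{lem:formal-mutation-invariants}, \(C'\) is conjugate to \(C\) by \(U\). Hence it still has a single unipotent Jordan block, so (MU) holds for \(A'\).

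\emph{Condition (i).} Lemma~\ref{lem:coordinate-mutations} shows that every new parameter is either an old parameter or a Vieta complement \(uv-t\) inside one of the principal triples of \eqref{eq:principal-triples}. Concretely, the complements occurring are:
\begin{itemize}
\item \(ae-b\), \(ab-e\) from \((a,b,e)\);
\item \(ad-f\), \(af-d\) from \((a,d,f)\);
\item \(bf-c\), \(bc-f\), \(cf-b\) from \((b,c,f)\);
\item \(ce-d\), \(cd-e\) from \((c,d,e)\);
\item \(be-a\) from \((a,b,e)\).
\end{itemize}
Since \(A\) is positive, each of these triples has all entries at least \(3\) and satisfies \(\Delta>4>0\). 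Lemma~\ref{lem:vieta-complements-positive} therefore gives that every new parameter is at least \(3\).

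\emph{Condition (ii).} This is the main step. By Proposition~\ref{prop:euler-degree-arithmetic}, applied to an arbitrary \(U\in\GL_4(\mathbb Z)\), we have \(\adj(B')=D(A)\,p'(p')^t\) with \(p'=U^{-1}p\) and \(D(A)>0\). Hence the minor of \(B'\) obtained by deleting row \(i\) and column \(j\) equals \((-1)^{i+j}D(A)\,p'_ip'_j\). All of these are positive exactly when \(p'\) has nonzero coordinates with alternating signs. So the task reduces to showing that the coordinates of \(p'\) keep the sign pattern \((-,+,-,+)\).

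We compute \(p'\) directly from \eqref{eq:braid-action-euler-bases}, writing \(p=(-x,y,-z,w)^t\). For \(\sigma_1\), the new basis is \((ae_1-e_2,e_1,e_3,e_4)\), so \(e_1=e_2'\) and \(e_2=ae_2'-e_1'\). This gives
\[
p'=(-y,\;ay-x,\;-z,\;w)^t .
\]
Similarly, for \(\sigma_1^{-1}\) one gets
\[
p'=(-(ax-y),\;x,\;-z,\;w)^t .
\]
For \(\sigma_3^{\pm1}\), the new vector replaces \((z,w)\) by \((w,cw-z)\) or by \((cz-w,z)\), up to the same sign pattern. For \(\sigma_2^{\pm1}\), it replaces \((y,z)\) by \((z,bz-y)\) or by \((by-z,y)\).

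The required strict inequalities all come from Lemma~\ref{lem:radical-mutation-inequalities}:
\begin{itemize}
\item \(axy>x^2+y^2\) gives \(ay>x+y^2/x>x\) and \(ax>y\);
\item \(czw>z^2+w^2\) gives \(cw>z\) and \(cz>w\);
\item \(byz>y^2+z^2\) gives \(bz>y\) and \(by>z\).
\end{itemize}
Thus \(p'\) has the alternating sign pattern with all coordinates nonzero, and every \(3\times3\) minor of \(B'\) is positive. The only delicate point is getting the index bookkeeping for \(U^{-1}\) right in each of the six cases.

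\emph{Condition (iv).} With (i)--(iii) established for \(A'\), Proposition~\ref{prop:strict-principal-deltas} gives (iv). Hence \(A'\) is positive.
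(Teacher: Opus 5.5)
Your proposal is correct and follows the paper's proof essentially step for step: Vieta complements via Lemma~\ref{lem:vieta-complements-positive} for (i), conjugacy of \(C\) for (iii), and Proposition~\ref{prop:strict-principal-deltas} for (iv). For (ii) you use the same six formulas for the transformed radical vector, together with \(\adj(B_W)=D(A)p_Wp_W^t\) and the inequalities \(N,L>0\) of Lemma~\ref{lem:radical-mutation-inequalities}; the only cosmetic difference is that the paper writes the positivity as identities such as \(y(ax-y)=N+x^2\), whereas you divide \(N>0\) by \(x\) or \(y\), which is equivalent.
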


\begin{proof}
{It is enough to consider one of the six elementary mutations in
Lemma~\ref{lem:coordinate-mutations}.}  Write it as
\(W=\sigma_i^\delta,\) where \(1\le i\le3\) and
\(\delta\in\{1,-1\}.\)  Every parameter of {\(A_W\)} is either a parameter of \(A\) or {one of
\(uv-t,ut-v,vt-u\) for a principal triple \((u,v,t)\)}.
Lemma~\ref{lem:vieta-complements-positive} therefore
shows that {every parameter of \(A_W\) is at least \(3\)}.  Maximal
unipotence is preserved by Lemma~\ref{lem:formal-mutation-invariants}.

Let \(p=(-x,y,-z,w)^t\) be the primitive radical vector.  Write
\(p_W=(-x_W,y_W,-z_W,w_W)^t,\) without assuming that the four
quantities are positive.  Formula~\eqref{eq:braid-action-euler-bases}
gives
\[
\begin{array}{c|c}
W&(x_W,y_W,z_W,w_W)\\
\hline
\sigma_1&(y,ay-x,z,w)\\
\sigma_1^{-1}&(ax-y,x,z,w)\\
\sigma_2&(x,z,bz-y,w)\\
\sigma_2^{-1}&(x,by-z,y,w)\\
\sigma_3&(x,y,w,cw-z)\\
\sigma_3^{-1}&(x,y,cz-w,z).
\end{array}
\]

All entries in the second column are positive by
\eqref{eq:mutation-N} and \eqref{eq:mutation-L}, since
\begin{align*}
 y(ax-y)&=N+x^2,&
 x(ay-x)&=N+y^2,&
 z(by-z)&=L+y^2,\\
 y(bz-y)&=L+z^2,&
 w(cz-w)&=N+z^2,&
 z(cw-z)&=N+w^2.
\end{align*}
{The adjoint satisfies} \(\adj(B_W)=D(A)p_Wp_W^t.\)
For the minor \(D_{ij}^W\) {formed by deleting} row \(i\) and column \(j,\)
this gives
\(
        D_{ij}^W=(-1)^{i+j}D(A)(p_W)_i(p_W)_j>0
\)
because the coordinates of \(p_W\) have alternating signs.
{Thus, condition \textup{(ii)} is preserved as well, and condition
\textup{(iv)} follows from Proposition~\ref{prop:strict-principal-deltas}.}
\end{proof}
\section{Euler--Fricke correspondence and modular classification}
\label{sec:euler-fricke}

\begingroup\begingroup
In this section we classify positive Euler matrices up to mutations,
thus proving the classification statement of
Theorem~\ref{thm:intro-classification} stated in the Introduction.
We associate to each positive Euler matrix a unique \(\SL_2\)-character of the
twice-punctured torus and identify mutation orbits with pure
mapping-class-group orbits, see
Propositions~\ref{prop:euler-fricke-braid-equivariance}--\ref{prop:euler-fricke-dictionary}.
Using Lemma~\ref{lem:integral-traces-modular} and
Theorem~\ref{thm:positive-fricke-character}, we realize these characters by representations
\(F_3\to\SL_2(\mathbb Z).\) Their images in \(\PSL_2(\mathbb Z)\)
are torsion-free subgroups of index twelve, and the corresponding quotients
of \(\mathbb H\) have genus one and two cusps, see
Proposition~\ref{prop:index-twelve}.
By Proposition~\ref{prop:modular-orbit-injectivity}, two positive Euler matrices
are mutation-equivalent if the corresponding subgroups are conjugate in
\(\PSL_2(\mathbb Z).\)
The five conjugacy classes of such subgroups in
\cite[Section~6]{SebbarBesrour2021} give at most five mutation orbits.
Five positive matrices with distinct Euler degrees complete the
classification in Theorem~\ref{thm:main-reduction}.
\endgroup\endgroup

\subsection{Character varieties}

\begingroup\begingroup
Let \(\Gamma\) be a finitely generated group.  Denote by
\(\operatorname{Hom}(\Gamma,\SL_2(\kk))\) the affine representation
variety whose \(\kk\)-points are the homomorphisms
\(\Gamma\to\SL_2(\kk).\) Indeed, evaluation on generators
\(\gamma_1,\ldots,\gamma_r\) identifies this set with the tuples
\((M_1,\ldots,M_r)\in\SL_2(\kk)^r\) satisfying
\(w(M_1,\ldots,M_r)=I\) for every
relation \(w(\gamma_1,\ldots,\gamma_r)=1\) in \(\Gamma.\)
Hence, this subset is closed and affine, cf.
\cite[Section~5]{Sikora2012}. The \(\SL_2\)-character variety of
\(\Gamma\) is
\[
 \mathcal X(\Gamma)
 :=\operatorname{Hom}(\Gamma,\SL_2(\kk))
   \mathbin{/\mkern-6mu/}\SL_2(\kk).
\]
Here the affine quotient is taken with respect to the conjugation action
of \(\SL_2(\kk).\) Its coordinate algebra is
\[
 \kk[\mathcal X(\Gamma)]
 =\kk[\operatorname{Hom}(\Gamma,\SL_2(\kk))]
      ^{\SL_2(\kk)}.
\]
Let \(q_\Gamma\colon\operatorname{Hom}(\Gamma,\SL_2(\kk))\to\mathcal X(\Gamma)\)
denote the quotient morphism, and put \([\rho]=q_\Gamma(\rho).\)
Each fibre of \(q_\Gamma\) over a \(\kk\)-point contains a unique
closed orbit. An orbit is closed if and only if the corresponding
representation is completely reducible, cf. \cite[Theorem~30]{Sikora2012}.
Thus, the \(\kk\)-points of \(\mathcal X(\Gamma)\)
correspond to \(\SL_2(\kk)\)-conjugacy classes of completely
reducible representations, cf.
\cite[Section~11]{Sikora2012}.
\endgroup\endgroup

\subsection{The trace construction}
\label{subsec:trace-construction}

{Every \(\gamma\in\Gamma\)}
defines a regular function
\(
 [\rho]\mapsto\tr\rho(\gamma)
\)
on \(\mathcal X(\Gamma).\)  

Let \(F_3=\langle\alpha_1,\alpha_2,\alpha_3\rangle\) be the free group of
rank three.  For \([\rho]\in\mathcal X(F_3),\) define
\begin{equation}\label{eq:trace-stokes-matrix}
 A_\rho=
 \begin{pmatrix}
  1&\tr\rho(\alpha_1)&\tr\rho(\alpha_1\alpha_2)
      &\tr\rho(\alpha_1\alpha_2\alpha_3)\\
  0&1&\tr\rho(\alpha_2)&\tr\rho(\alpha_2\alpha_3)\\
  0&0&1&\tr\rho(\alpha_3)\\
  0&0&0&1
 \end{pmatrix}.
\end{equation}
Let \(\mathcal V(4)\simeq\mathbb A^6_{\kk}\) be the affine space of Euler
matrices \eqref{eq:A} whose six parameters lie in \(\kk\).
\begingroup
The six trace functions in \eqref{eq:trace-stokes-matrix} define a morphism
\begin{equation}\label{eq:theta-map}
 \Theta\colon\mathcal X(F_3)\longrightarrow\mathcal V(4),
 \qquad [\rho]\longmapsto A_\rho.
\end{equation}
\endgroup

Set \(t(\rho)=\tr\rho(\alpha_1\alpha_3).\)  The polynomials
\(\Phi(A)\) and \(\Psi(A)\) are defined in
\eqref{eq:Phi-def} and \eqref{eq:Psi-def}.

\begin{proposition}
\label{prop:full-euler-fricke-cover}
The map
\(
 \widetilde\Theta=(\Theta,t)\colon\mathcal X(F_3)
 \longrightarrow\mathcal V(4)\times\mathbb A^1_{\kk}
\)
identifies \(\mathcal X(F_3)\) with the hypersurface
\begin{equation}\label{eq:full-euler-fricke-cover}
 \left\{(A,t)\in\mathcal V(4)\times\mathbb A^1_{\kk}
 \ \middle|\
 t^2-\Phi(A)t+\Psi(A)-4=0\right\}.
\end{equation}
Equivalently, we have
\(
 \kk[\mathcal X(F_3)]
 \simeq
 \kk[a,b,c,d,e,f,t]
 \big/\bigl(t^2-\Phi(A)t+\Psi(A)-4\bigr)
\) and, hence,
the projection \(\Theta\) in \eqref{eq:theta-map} is finite and flat
of degree two.\end{proposition}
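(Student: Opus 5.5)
The plan is to use the classical description of the \(\SL_2\)-character variety of the free group of rank three by traces. By the Fricke--Vogt theorem, as refined by Horowitz and Magnus, \(\kk[\mathcal X(F_3)]\) is generated by the seven functions
\[
 x_1=\tr\rho(\alpha_1),\ x_2=\tr\rho(\alpha_2),\ x_3=\tr\rho(\alpha_3),\quad
 x_{12},\ x_{23},\ x_{13},\quad x_{123}=\tr\rho(\alpha_1\alpha_2\alpha_3).
\]
These generators satisfy one relation, quadratic in \(x_{123}\):
\[
 x_{123}^2-P\,x_{123}+Q=0,\qquad P=x_{12}x_3+x_{13}x_2+x_{23}x_1-x_1x_2x_3,
\]
where \(Q=x_1^2+x_2^2+x_3^2+x_{12}^2+x_{13}^2+x_{23}^2-x_1x_2x_{12}-x_1x_3x_{13}-x_2x_3x_{23}+x_{12}x_{13}x_{23}-4\). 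Moreover, \(\kk[\mathcal X(F_3)]\cong\kk[x_1,\dots,x_{123}]/(\text{relation})\), since the ring of invariants is a hypersurface ring. In the notation of \eqref{eq:trace-stokes-matrix} we have \(a=x_1\), \(b=x_2\), \(c=x_3\), \(e=x_{12}\), \(f=x_{23}\), \(d=x_{123}\), and \(t=x_{13}\). Our generating set is the same, except that \(x_{123}\) is now a coordinate on \(\mathcal V(4)\) and \(x_{13}\) plays the distinguished role.

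First, I will rewrite the classical relation as a polynomial in \(t=x_{13}\). It is quadratic in \(x_{13}\) as well, with leading coefficient \(1\). I then expect it to take exactly the form \(t^2-\Phi(A)t+\Psi(A)-4\). The coefficient of \(t\) comes from the terms \(-x_{123}x_{13}x_2\), \(-x_1x_3x_{13}\) and \(x_{12}x_{13}x_{23}\), giving \(-(bd+ac-ef)=-\Phi(A)\). The constant term collects the remaining monomials. This gives \(a^2+\dots+f^2-abe-bcf-\dots\), and I must check that it matches \(\Psi(A)-4\) with the signs and monomials \(adf\), \(cde\), \(abcd\) as in \eqref{eq:Psi-def}. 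This is the routine but delicate bookkeeping step. I would verify it by evaluating both sides on a few explicit matrix triples, for example diagonal and upper-triangular ones, rather than trusting memory of the sign conventions.

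Given the identity, the map \(\widetilde\Theta\) induces a surjection from \(\kk[a,\dots,f,t]/(h)\) to \(\kk[\mathcal X(F_3)]\), with \(h=t^2-\Phi t+\Psi-4\); surjectivity holds because the seven functions generate. For injectivity I would use a dimension argument. \(\mathcal X(F_3)\) is irreducible of dimension \(3\cdot3-3=6\), as the quotient of the irreducible variety \(\SL_2^3\) by a group with generic stabilizer \(\pm I\). The hypersurface \(\{h=0\}\) is six-dimensional, and it is irreducible: \(h\) is monic in \(t\), and its discriminant \(\Phi^2-4\Psi+16=\det B\) is not a square in \(\kk[a,\dots,f]\), since it is a nonzero polynomial of odd total degree \(5\) through the term \(abcd\cdot\) and degree considerations. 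Hence \(h\) is irreducible and the ideal \((h)\) is prime. A surjection from the integral domain \(\kk[\dots]/(h)\) onto a domain of the same dimension has kernel a prime of height zero, hence zero.

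Finally, finiteness and flatness follow because \(\kk[\mathcal X(F_3)]\) is a free \(\kk[a,\dots,f]\)-module with basis \(1,t\), since \(h\) is monic of degree two in \(t\). I expect the main obstacle to be the exact matching of the constant term with \(\Psi(A)-4\), which depends on sign conventions. A fallback is to verify the relation directly from the trace identity \(\tr(XY)+\tr(XY^{-1})=\tr X\tr Y\), applied to the sum and product of \(\tr(\alpha_1\alpha_3)\) and \(\tr(\alpha_1\alpha_2^{-1}\alpha_3\alpha_2)\), the two roots.
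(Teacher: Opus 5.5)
Your route is essentially the paper's. The paper gives no argument beyond citing the classical Fricke--Vogt presentation of \(\kk[\mathcal X(F_3)]\) (Goldman, Proposition~5.1.1; Fan--Whang, Propositions~5.9 and~6.8), and your renaming \(d=x_{123}\), \(t=x_{13}\) is exactly what turns that presentation into the stated one. The bookkeeping you postponed works out with no sign ambiguity. In \(d^2-(ce+bt+af-abc)d+a^2+b^2+c^2+e^2+f^2+t^2-abe-bcf-act+eft-4\), the coefficient of \(t\) is \(-(ac+bd-ef)=-\Phi(A)\), and the \(t\)-free part is exactly \(\Psi(A)-4\). The seven generators and the relation are literally the same after renaming, so the hypersurface statement you quote in your first paragraph already gives the isomorphism. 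The free basis \(1,t\) then gives finite flatness of degree two.

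One step is wrong as written, in your optional independent proof of injectivity. The discriminant \(\Phi^2-4\Psi+16\) has total degree \(4\), not \(5\): both \(\Phi^2\) and the term \(abcd\) of \(\Psi\) are quartic, and the quartic part \((ac+bd-ef)^2-4abcd\) is nonzero. So the parity argument for its not being a square collapses. The claim itself is true. Setting \(b=d=e=f=0\) turns the discriminant into \((a^2-4)(c^2-4)\), which is not a square in \(\kk[a,c]\) because \(a-2\) divides it exactly once. Hence \(h\) is irreducible and your height-zero argument goes through. With this repair, that argument is a genuine, if redundant, addition: it needs only that the seven traces generate the invariant ring and satisfy \(h=0\), not that \(h\) generates the whole ideal of relations.
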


Indeed, the trace function $t(\rho)$,
together with the six entries of \(A_\rho,\) generates the coordinate
{algebra} of \(\mathcal X(F_3),\) subject to the single relation
\eqref{eq:full-euler-fricke-cover};
see \cite[equations~(5.1.1)--(5.1.3) and Proposition~5.1.1]{Goldman2009}.
This relation goes back to \cite{Vogt1889}.  We use its formulation in
\cite[Propositions~5.9 and~6.8]{FanWhang2024}, where the matrices in \(\mathcal V(4)\) are called Stokes matrices.

\subsection{The braid-group action}

Define automorphisms \(\beta_1,\beta_2,\beta_3\) of \(F_3\) by
\begin{equation}\label{eq:free-group-braid-automorphisms}
\begin{array}{c|ccc}
 &\alpha_1&\alpha_2&\alpha_3\\ \hline
\beta_1&\alpha_1&\alpha_1\alpha_2&\alpha_3\\
\beta_2&\alpha_1\alpha_2^{-1}&\alpha_2&\alpha_2\alpha_3\\
\beta_3&\alpha_1&\alpha_2\alpha_3^{-1}&\alpha_3.
\end{array}
\end{equation}
Direct substitution shows that the \(\beta_i\) satisfy the
relations of the braid group \eqref{def-braid-group}; hence, \(\sigma_i\mapsto\beta_i\) extends to a homomorphism
\(\beta\colon B_4\to\operatorname{Aut}(F_3).\)

\begingroup\begingroup
We define an action of the braid group \(B_4\) on
\(\mathcal X(F_3)\) by setting, for \(i=1,2,3,\)
\begin{equation}\label{eq:character-braid-action}
 \sigma_i\cdot\rho=\rho\circ\beta_i,\qquad
 \sigma_i\cdot[\rho]=[\rho\circ\beta_i].
\end{equation}
\endgroup\endgroup
\begin{proposition}\label{prop:euler-fricke-braid-equivariance}
For the $B_4$-action on
\(\mathcal X(F_3)\) as in  \eqref{eq:character-braid-action} and the \(B_4\)-action on \(\mathcal V(4)\) given by
\eqref{eq:elementary-mutations}, the map
\(\Theta\) is \(B_4\)-equivariant:
\begin{equation}\label{eq:trace-braid-equivariance}
 \Theta(\sigma_i\cdot[\rho])
 =\sigma_i\bigl(\Theta([\rho])\bigr),
 \qquad i=1,2,3.
\end{equation}
\end{proposition}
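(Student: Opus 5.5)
The plan is a direct computation. Both actions are given explicitly on generators, and \eqref{eq:trace-braid-equivariance} is only asserted for \(\sigma_1,\sigma_2,\sigma_3\). So it suffices to compute, for each \(i\), the six trace functions defining \(A_{\rho\circ\beta_i}\) in \eqref{eq:trace-stokes-matrix}, and to compare them with the formulas for \(\sigma_i\) in Lemma~\ref{lem:coordinate-mutations}.

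The only input beyond substitution will be the basic \(\SL_2\) trace identities. These are \(\tr(XY)=\tr(YX)\), and
\[
 \tr(XY^{-1})=\tr X\,\tr Y-\tr(XY),
 \qquad
 \tr(X^2Y)=\tr X\,\tr(XY)-\tr Y,
\]
where the second follows from Cayley--Hamilton, \(X^2=(\tr X)X-I\). Throughout, I write \(a=\tr\rho(\alpha_1)\), \(b=\tr\rho(\alpha_2)\), \(c=\tr\rho(\alpha_3)\), \(e=\tr\rho(\alpha_1\alpha_2)\), \(f=\tr\rho(\alpha_2\alpha_3)\) and \(d=\tr\rho(\alpha_1\alpha_2\alpha_3)\), as dictated by \eqref{eq:trace-stokes-matrix}. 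All trace functions are conjugation invariant, so the computation descends to \(\mathcal X(F_3)\).

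\emph{Case \(\beta_1\).} The new entries are \(\tr\alpha_1=a\), \(\tr(\alpha_1\alpha_2)=e\) and \(\tr\alpha_3=c\). Next, \(\tr(\alpha_1\cdot\alpha_1\alpha_2)=ae-b\), by the \(X^2Y\) identity. Then \(\tr(\alpha_1\alpha_2\alpha_3)=d\). Finally, \(\tr(\alpha_1^2\alpha_2\alpha_3)=ad-f\), again by the \(X^2Y\) identity with \(Y=\alpha_2\alpha_3\). In the order \((a,b,c,d,e,f)\) this gives \((a,e,c,ad-f,ae-b,d)\), which is \(\sigma_1\) in \eqref{eq:elementary-mutations}.

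\emph{Case \(\beta_2\).} First, \(\tr(\alpha_1\alpha_2^{-1})=ab-e\). The entry \(\tr\alpha_2=b\) is unchanged. The image of \(\alpha_3\) gives \(\tr(\alpha_2\alpha_3)=f\). The image of \(\alpha_1\alpha_2\) is \(\alpha_1\), with trace \(a\). The image of \(\alpha_1\alpha_2\alpha_3\) collapses to \(\alpha_1\alpha_2\alpha_3\), with trace \(d\). The image of \(\alpha_2\alpha_3\) is \(\alpha_2^2\alpha_3\), with trace \(bf-c\). This gives \((ab-e,b,f,d,a,bf-c)\).

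\emph{Case \(\beta_3\).} The entries \(\tr\alpha_1=a\) and \(\tr\alpha_3=c\) are unchanged. Next, \(\tr(\alpha_2\alpha_3^{-1})=bc-f\). The image of \(\alpha_1\alpha_2\alpha_3\) collapses to \(\alpha_1\alpha_2\), with trace \(e\). For the image of \(\alpha_1\alpha_2\), we get \(\tr(\alpha_1\alpha_2\alpha_3^{-1})=\tr(\alpha_1\alpha_2)\tr\alpha_3-\tr(\alpha_1\alpha_2\alpha_3)=ce-d\). The image of \(\alpha_2\alpha_3\) is \(\alpha_2\), with trace \(b\). This gives \((a,bc-f,c,e,ce-d,b)\). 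These match the three formulas for \(\sigma_i\) in Lemma~\ref{lem:coordinate-mutations}.

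There is no real obstacle here. The only point requiring care is the bookkeeping of which trace sits in which parameter slot: \(e\) is the \((1,3)\) entry and \(d\) the \((1,4)\) entry. One should also use consistently that \(\sigma_i\cdot\rho=\rho\circ\beta_i\) is evaluated by substituting \(\beta_i(\alpha_j)\) into each word. Since only generators are involved, no compatibility of left versus right actions on products needs to be checked.
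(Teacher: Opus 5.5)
Your proposal is correct and takes the same route as the paper. Both substitute \(\beta_i(\alpha_j)\) into the six words of \eqref{eq:trace-stokes-matrix}, reduce with the \(\SL_2\) trace identities coming from Cayley--Hamilton (the paper writes these as \(M^{-1}=\tr(M)I-M\)), and compare with Lemma~\ref{lem:coordinate-mutations}; I checked that all three of your parameter tuples agree with the formulas for \(\sigma_1,\sigma_2,\sigma_3\).
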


\begin{proof}
Write \(A_\rho=A(a,b,c,d,e,f).\)  The identity
\(M^{-1}=\tr(M)I-M\) for \(M\in\SL_2(\kk)\) gives
\[
\begin{aligned}
 A_{\rho\circ\beta_1}&=A(a,e,c,ad-f,ae-b,d),\\
 A_{\rho\circ\beta_2}&=A(ab-e,b,f,d,a,bf-c),\\
 A_{\rho\circ\beta_3}&=A(a,bc-f,c,e,ce-d,b).
\end{aligned}
\]
{These are precisely the formulas for the action of
\(\sigma_1,\sigma_2,\sigma_3\) in Lemma~\ref{lem:coordinate-mutations}.}
\end{proof}

\subsection{The twice-punctured torus and its boundary classes}

Let \(D\) be an oriented closed disc and let
\(\lambda_1,\ldots,\lambda_4\) be four distinct points in its interior.
Let \(\varpi\colon\Sigma\to D\) be the connected double cover branched
over these four points.  Riemann--Hurwitz gives
\[
        \chi(\Sigma)=2\chi(D)-4=-2.
\]
The monodromy around each of the four branch points exchanges the two sheets.
The monodromy along \(\partial D\) is the product of these four permutations
and is therefore trivial.  Hence
\(\varpi^{-1}(\partial D)\) has two components.  If \(g\) is the genus of
\(\Sigma,\) then \(-2=2-2g-2,\) so \(g=1.\)  Thus,
\(\Sigma=\Sigma_{1,2},\) and its interior
\(\Sigma_{1,2}^{\circ}\) is a torus with two punctures.

\begingroup\begingroup
Choose an embedded arc in \(D\) passing through
\(\lambda_1,\ldots,\lambda_4\) in this order, and let \(I_i\) be its subarc
from \(\lambda_i\) to \(\lambda_{i+1}.\)  Choose a base point \(*\in\Sigma_{1,2}^{\circ},\)
orient the lifted curves \(\varpi^{-1}(I_i)\), and choose paths from
\(*\) to these curves so that
\((\alpha_1,\alpha_2,\alpha_3)\) is the sequence of generators of \(\pi_1(\Sigma_{1,2},*)\)
specified in \cite[Definition~4.8 and Figure~1]{FanWhang2024}.  These generators give an isomorphism
\begin{equation}\label{eq:surface-free-group}
 \pi_1(\Sigma_{1,2})
 \simeq F_3.
\end{equation}
\endgroup\endgroup
Set
\begin{equation}\label{eq:goldman-generators-boundary}
 U=\alpha_1,
 \qquad X=\alpha_2^{-1},
 \qquad Y=\alpha_2\alpha_3,
 \qquad K_1=UXY,
 \qquad K_2=UYX.
\end{equation}
The elements \(U,X,Y\) also form a basis of $\pi_1(\Sigma_{1,2})$, since
\(
\alpha_1=U,
\alpha_2=X^{-1},
\alpha_3=XY
.\)
The words \(K_1\) and \(K_2\) represent the two boundary components $\partial \Sigma$, up to
conjugacy and inversion.  In particular,
\[
 K_1=\alpha_1\alpha_3,
 \qquad
 K_2=\alpha_1\alpha_2\alpha_3\alpha_2^{-1}.
\]
Write \(\mathcal X(\Sigma_{1,2})\) for the \(\SL_2\)-character variety
of \(\pi_1(\Sigma_{1,2}).\)  The basis \eqref{eq:surface-free-group}
identifies it with \(\mathcal X(F_3).\)

Proposition~6.8 of \cite{FanWhang2024} proves that, if
\(A_\rho=A(a,b,c,d,e,f),\) then
\begin{equation}\label{eq:boundary-traces}
 \tr\rho(K_1)+\tr\rho(K_2)=\Phi(A_\rho),
 \qquad
 \tr\rho(K_1)\tr\rho(K_2)=\Psi(A_\rho)-4
\end{equation}
Since
\(t=\tr\rho(\alpha_1\alpha_3)=\tr\rho(K_1),\) the other root of the
quadratic equation in \eqref{eq:full-euler-fricke-cover} is
\(\tr\rho(K_2).\)

\begingroup\begingroup
Let \(\widehat\Sigma\) be the torus obtained by filling the two punctures,
and let \(p_1,p_2\) be the added points.  Label them so that, for \(i=1,2,\)
a small loop going once around \(p_i\) represents \(K_i\) up to conjugacy
and inversion.
The pure mapping class group
\(\operatorname{PMod}^{+}(\Sigma_{1,2}^{\circ})\) consists of
orientation-preserving self-homeomorphisms of \(\widehat\Sigma\) fixing
each \(p_i,\) modulo isotopies fixing them throughout.  A mapping class
induces an element of \(\operatorname{Out}(\pi_1(\Sigma_{1,2}^{\circ},*)).\)
Precomposition defines a right action of
\(\operatorname{PMod}^{+}(\Sigma_{1,2}^{\circ})\) on \(\mathcal X(F_3)\):
\(
 [\rho]\cdot[f]=[\rho\circ f_*].
\)
Here \(f_*\in\operatorname{Aut}(F_3)\) is any representative of
the outer automorphism induced by \(f;\) the right-hand side is
independent of this choice because inner automorphisms act trivially
on characters.
\endgroup\endgroup
\begingroup
{For \(i=1,2,3,\) let \(H_i\) be the positive half-twist interchanging
\(\lambda_i\) and \(\lambda_{i+1},\) supported in a small neighbourhood of \(I_i.\)}  {Among its two
lifts to \(\Sigma_{1,2},\) let \(\widetilde H_i\) be the one equal to the
identity near \(\partial\Sigma_{1,2}.\)  By
\cite[Proposition~4.9\textup{(1)}]{FanWhang2024}, \(\widetilde H_i\) is the
Dehn twist about \(\varpi^{-1}(I_i).\)  The assignments
\(\sigma_i\mapsto[\widetilde H_i]\) define a homomorphism}
\begin{equation}\label{eq:braid-lifting-map}
        B_4\longrightarrow\operatorname{PMod}^{+}(\Sigma_{1,2}^{\circ}).
\end{equation}
{The Birman--Hilden theorem gives the following exact sequence}; see
\cite{BirmanHilden1973},
\cite[Sections~9.2 and~9.4.1]{FarbMargalit2012}, and the remark following
\cite[Proposition~4.9]{FanWhang2024}.
\endgroup\begin{proposition}\label{prop:braid-pmod-quotient}
The homomorphism \eqref{eq:braid-lifting-map} induces an exact sequence
\begin{equation}\label{eq:braid-pmod-exact}
 1\longrightarrow Z(B_4)\longrightarrow B_4
 \longrightarrow\operatorname{PMod}^{+}(\Sigma_{1,2}^{\circ})
 \longrightarrow1
\end{equation}
where \(Z(B_4)\) is generated by the full twist
\((\sigma_1\sigma_2\sigma_3)^4.\)
\end{proposition}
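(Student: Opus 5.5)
The plan is to derive Proposition~\ref{prop:braid-pmod-quotient} from the Birman--Hilden theorem for the hyperelliptic double cover \(\varpi\colon\Sigma\to D\). I would treat \(B_4\) as the mapping class group \(\operatorname{Mod}(D,\{\lambda_1,\ldots,\lambda_4\})\) of the disc rel boundary, with \(\sigma_i\) the half-twist \(H_i\). The argument has three steps: well-definedness of the homomorphism, surjectivity, and identification of the kernel.

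\emph{Well-definedness.} Every homeomorphism of \(D\) fixing \(\partial D\) pointwise and preserving the branch set lifts in exactly two ways to \(\Sigma\), differing by the deck involution \(\iota\). Exactly one lift is the identity near \(\partial\Sigma\), because \(\iota\) swaps the two boundary components. Uniqueness of this normalized lift makes lifting multiplicative, and isotopies downstairs lift to isotopies. This gives a homomorphism \(B_4\to\operatorname{Mod}(\Sigma_{1,2},\partial)\). I then compose with capping the boundary by punctures to land in \(\operatorname{PMod}^{+}(\Sigma_{1,2}^{\circ})\); each lift fixes each boundary component, so each puncture is fixed. On generators, the lift of \(H_i\) is the Dehn twist about \(\varpi^{-1}(I_i)\) by \cite[Proposition~4.9(1)]{FanWhang2024}, so the map is the one in \eqref{eq:braid-lifting-map}.

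\emph{Surjectivity.} The three curves \(\varpi^{-1}(I_1),\varpi^{-1}(I_2),\varpi^{-1}(I_3)\) form a chain: consecutive curves meet once and the others are disjoint. On the torus with two punctures, Dehn twists about such a chain of length three generate \(\operatorname{PMod}^{+}(\Sigma_{1,2}^{\circ})\). This follows from the Humphries-type generation results for \(\operatorname{PMod}(S_{g,n})\) in \cite[Section~4.4]{FarbMargalit2012}. Alternatively, it follows from the Birman--Hilden statement that \(\operatorname{SMod}\) is everything in genus one with two boundary components, combined with the fact that \(\operatorname{SMod}\) is generated by lifts of half-twists \cite[Section~9.4.1]{FarbMargalit2012}.

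\emph{Kernel.} By Birman--Hilden, the symmetric mapping class group \(\operatorname{SMod}(\Sigma)\) satisfies \(\operatorname{SMod}(\Sigma)/\langle\iota\rangle\simeq\operatorname{Mod}(D,\{\lambda_i\})\), and symmetric isotopy coincides with isotopy here. So an element of \(B_4\) whose normalized lift is trivial in \(\operatorname{Mod}(\Sigma_{1,2},\partial)\) is itself trivial. The kernel of the composite map into \(\operatorname{PMod}^{+}\) is therefore the preimage of the kernel of capping, \(\operatorname{Mod}(\Sigma_{1,2},\partial)\to\operatorname{PMod}^{+}(\Sigma_{1,2}^{\circ})\). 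That kernel is free abelian on the two boundary Dehn twists \(T_{\partial_1},T_{\partial_2}\) by the capping sequence \cite[Proposition~3.19]{FarbMargalit2012}.

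It remains to decide which products \(T_{\partial_1}^mT_{\partial_2}^n\) lie in the image of \(B_4\). The full twist \(\Delta^2=(\sigma_1\sigma_2\sigma_3)^4\) is the Dehn twist about a curve parallel to \(\partial D\). Its normalized lift is \(T_{\partial_1}T_{\partial_2}\), using the chain relation for a chain of odd length three: \((T_1T_2T_3)^4=T_{\partial_1}T_{\partial_2}\). Conversely, \(\iota\) conjugates \(T_{\partial_1}\) to \(T_{\partial_2}\). Since lifts commute with \(\iota\), only symmetric products with \(m=n\) can occur. Hence the kernel is \(\langle\Delta^2\rangle\), which is \(Z(B_4)\) \cite[Section~9.2]{FarbMargalit2012}. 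I expect this kernel identification to be the main obstacle. The care needed is to confirm that \(T_{\partial_1}\) alone is not the lift of any braid, and that the boundary-fixing normalization does not add an extra factor of \(\iota\). I would settle both points by the symmetry argument above, together with injectivity of \(\langle T_{\partial_1},T_{\partial_2}\rangle\simeq\mathbb Z^2\).
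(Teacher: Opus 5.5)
Your proposal is correct and follows the paper's route. The paper simply invokes the Birman--Hilden theorem, via Farb--Margalit, Sections~9.2 and~9.4.1, and the remark after Fan--Whang's Proposition~4.9. Your normalized-lift embedding \(B_4\hookrightarrow\operatorname{Mod}(\Sigma_{1,2},\partial)\), the capping kernel \(\langle T_{\partial_1},T_{\partial_2}\rangle\simeq\mathbb Z^2\), the deck-involution symmetry forcing \(m=n\), and the chain relation \((T_1T_2T_3)^4=T_{\partial_1}T_{\partial_2}\) are a faithful unpacking of that citation.

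One correction concerns your ``alternative'' surjectivity argument, which is misstated in two ways. First, with boundary fixed pointwise, \(\operatorname{SMod}(\Sigma_{1,2},\partial)\) is \(B_4\) itself, not a quotient by the involution, which does not lie in \(\operatorname{Mod}(\Sigma_{1,2},\partial)\). Second, it is not everything, since your own argument shows \(T_{\partial_1}\) is not symmetric. The true statement is that every element of \(\operatorname{PMod}^{+}(\Sigma_{1,2}^{\circ})\) is symmetric. Your primary argument, that the three chain twists generate \(\operatorname{PMod}^{+}(\Sigma_{1,2}^{\circ})\), already suffices for surjectivity.
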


{By \cite[Proposition~4.9\textup{(2)}]{FanWhang2024}, the mapping class
\([\widetilde H_i]\) induces the outer automorphism represented by \(\beta_i.\)
Hence, the \(B_4\)-orbits on \(\mathcal X(F_3)\) coincide with the
\(\operatorname{PMod}^{+}(\Sigma_{1,2}^{\circ})\)-orbits.}

Let \(\jmath\colon\Sigma_{1,2}\to\Sigma_{1,2}\) be the deck involution
of \(\varpi.\)  It preserves orientation and interchanges the two
boundary components.  {In the basis \(U,X,Y,\) choose the following representative
\(\jmath_*\in\operatorname{Aut}(F_3)\) of the outer automorphism
induced by \(\jmath\):}
\begin{equation}\label{eq:elliptic-involution-free-group}
 \jmath_*(U)=U^{-1},
 \qquad
 \jmath_*(X)=X^{-1},
 \qquad
 \jmath_*(Y)=Y^{-1}.
\end{equation}
\begingroup
Let \(\iota_{\jmath}\) be the automorphism of \(\mathcal X(F_3)\) induced
by \(\jmath.\)  Substitution of
\eqref{eq:elliptic-involution-free-group} into
\eqref{eq:trace-stokes-matrix}, using
\(\tr(PQ^{-1})=\tr(P)\tr(Q)-\tr(PQ),\) shows that
\(\iota_{\jmath}\) fixes the six trace functions occurring in \(A_\rho\)
and interchanges \(\tr\rho(K_1)\) with \(\tr\rho(K_2)\):
\begin{equation}\label{eq:euler-fricke-deck-involution}
 \iota_{\jmath}([\rho])=[\rho\circ\jmath_*],
 \qquad
 \bigl(\tr\rho(K_1),\tr\rho(K_2)\bigr)
 \longmapsto
 \bigl(\tr\rho(K_2),\tr\rho(K_1)\bigr).
\end{equation}
Equations~\eqref{eq:euler-fricke-deck-involution} and
\eqref{eq:full-euler-fricke-cover} show that \(\iota_{\jmath}\) is the
non-identity involution of \(\mathcal X(F_3)\) over \(\mathcal V(4)\) given
in these coordinates by
\(
 (A,t)\longmapsto(A,\Phi(A)-t).
\)
\endgroup

\begin{proposition}\label{prop:euler-fricke-dictionary}
Let \(A\in\mathcal V(4)\) satisfy
\(\Phi(A)^2=16\) and \(\Psi(A)=8.\)  The fibre
\(\Theta^{-1}(A)\) has a unique closed point, denoted by
\([\rho_A].\)  Every representative \(\rho\) of this character satisfies
\[
 \tr\rho(K_1)=\tr\rho(K_2)
 =\frac{\Phi(A)}2\in\{-2,2\}.
\]
\end{proposition}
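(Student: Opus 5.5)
The plan is to use Proposition~\ref{prop:full-euler-fricke-cover}, which identifies \(\mathcal X(F_3)\) with the hypersurface \eqref{eq:full-euler-fricke-cover} in \(\mathcal V(4)\times\mathbb A^1_{\kk}\), and then to read off the fibre of \(\Theta\) directly.

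First I compute the fibre. Over a fixed \(A\), the fibre \(\Theta^{-1}(A)\) is the zero set in \(\mathbb A^1_{\kk}\) of the quadratic
\[
t^2-\Phi(A)t+\Psi(A)-4.
\]
Substituting \(\Psi(A)=8\) and \(\Phi(A)^2=16\) turns this into \(t^2-\Phi(A)t+\Phi(A)^2/4=(t-\Phi(A)/2)^2\). Hence the fibre is a single closed point, namely \(t=\Phi(A)/2\), and this point carries a non-reduced scheme structure of length two.

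Next I translate this back into characters. Since the \(\kk\)-points of \(\mathcal X(F_3)\) correspond bijectively to closed points of the hypersurface, there is exactly one character \([\rho_A]\) with \(\Theta([\rho_A])=A\). For it, \(\tr\rho(K_1)=t(\rho)=\Phi(A)/2\). By \eqref{eq:boundary-traces}, \(\tr\rho(K_2)=\Phi(A)-\tr\rho(K_1)=\Phi(A)/2\) as well. Equivalently, the two roots of the quadratic coincide, and by the discussion after \eqref{eq:boundary-traces} they are \(\tr\rho(K_1)\) and \(\tr\rho(K_2)\). Finally, \(\Phi(A)^2=16\) gives \(\Phi(A)/2\in\{-2,2\}\).

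It remains to pass from the character to its representatives. The trace of any fixed word is a regular function on \(\mathcal X(F_3)\). It is therefore constant on the fibre of \(q_{F_3}\), and this fibre consists of all representations \(\rho\) with \([\rho]=[\rho_A]\). This holds even when \(\rho\) is not completely reducible, because \(\tr\rho(\gamma)\) depends only on \([\rho]\). So every representative of \([\rho_A]\) has the same boundary traces, as claimed.

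The only subtle point is to keep uniqueness of the closed point separate from the possibly non-reduced fibre structure. The statement concerns only closed points, so the double root causes no difficulty. I also need the identification in Proposition~\ref{prop:full-euler-fricke-cover} to be an isomorphism of varieties, not merely a bijection on an open set; this is the form in which it is stated.
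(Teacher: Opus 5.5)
Your proposal is correct and follows the paper's proof: substituting \(\Psi(A)=8\) and \(\Phi(A)^2=16\) turns the fibre equation into \((t-\Phi(A)/2)^2=0\), which gives a unique closed point (of length two), and \eqref{eq:boundary-traces} then yields both boundary traces. Your remark that trace functions are constant on the fibres of \(q_{F_3}\) makes explicit the passage to arbitrary representatives, which the paper leaves implicit.
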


\begin{proof}
Under the stated assumptions, the equation of the fibre in
\eqref{eq:full-euler-fricke-cover} is
\[
 t^2-\Phi(A)t+4
 =\left(t-\frac{\Phi(A)}2\right)^2=0.
\]
\begingroup\begingroup
The scheme-theoretic fibre is
\(
 \Theta^{-1}(A)\simeq
 \operatorname{Spec}\!\left(
 \kk[t]/\bigl((t-\Phi(A)/2)^2\bigr)\right).
\)
It has length two and a unique closed point, at which
\(t=\Phi(A)/2.\)
\endgroup\endgroup
Equation~\eqref{eq:boundary-traces} gives the two stated traces.
\end{proof}

The hypersurface \eqref{eq:full-euler-fricke-cover} is defined over
\(\mathbb Q\). For a positive Euler matrix \(A\), the rational point
\((A,\Phi(A)/2)\) defines characters over both \(\kk\) and the complex numbers.
In the arguments below, \([\rho_A]\) denotes the corresponding complex character.

\begingroup\begingroup
\begin{corollary}\label{cor:all-traces-integral}
Let \(A\) be a positive Euler matrix, and let \(\rho\) represent
\([\rho_A].\)  Then \(\tr\rho(\gamma)\in\mathbb Z\) for every
\(\gamma\in F_3.\)
\end{corollary}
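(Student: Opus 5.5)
The plan is to use the standard fact that every trace function on $\mathcal X(F_3)$ is an integer polynomial in finitely many basic trace functions. Then it suffices to show that these basic traces are integers at $[\rho_A]$. Concretely, by the Fricke--Vogt description recalled in Proposition~\ref{prop:full-euler-fricke-cover} (see \cite[Proposition~5.1.1]{Goldman2009}), the coordinate algebra of $\mathcal X(F_3)$ is generated by the seven trace functions of $\alpha_1,\alpha_2,\alpha_3,\alpha_1\alpha_2,\alpha_2\alpha_3,\alpha_1\alpha_3,\alpha_1\alpha_2\alpha_3$. More precisely, for every word $\gamma\in F_3$ the function $\tr\rho(\gamma)$ is a polynomial with \emph{integer} coefficients in these seven functions. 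This follows by induction on word length, using only the $\SL_2$ identities $\tr(PQ)+\tr(PQ^{-1})=\tr P\,\tr Q$ and $\tr(M^{-1})=\tr M$. The second identity removes inverses; the first reduces words in which some generator appears twice, after cyclic permutation. One further reduction concerns words of the form $\alpha_{\tau(1)}\alpha_{\tau(2)}\alpha_{\tau(3)}$ in the opposite cyclic order. For $\tr(\alpha_1\alpha_3\alpha_2)$, the sum $\tr(\alpha_1\alpha_2\alpha_3)+\tr(\alpha_1\alpha_3\alpha_2)$ is an integer polynomial in the lower traces. I will state this integrality claim as the first step, citing Goldman/Vogt and indicating the induction.

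Next I check the seven generators at $[\rho_A]$. Six of them are the entries of $A_\rho=\Theta([\rho_A])=A$: by \eqref{eq:trace-stokes-matrix} they equal $a,b,c,d,e,f$, which are integers because $A$ is an integer matrix. The seventh is $t=\tr\rho(\alpha_1\alpha_3)=\tr\rho(K_1)$. By Proposition~\ref{prop:euler-fricke-dictionary} it equals $\Phi(A)/2$. Since positivity includes (MU), \eqref{eq:maximal-unipotence-relations} gives $\Phi(A)^2=16$, hence $t=\pm2\in\mathbb Z$. Note that positivity is used only to secure $\Phi^2=16$ and $\Psi=8$, which are exactly the hypotheses of Proposition~\ref{prop:euler-fricke-dictionary}.

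Combining the two steps gives $\tr\rho(\gamma)\in\mathbb Z$ for every $\gamma$. The traces depend only on the character, so the result holds for any representative $\rho$, whether reducible or not. This also covers the complex-character convention adopted just before the corollary.

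The only step with real content is the first, namely integrality of the coefficients in the polynomial expressions. The delicate point is the treatment of $\tr(\alpha_1\alpha_3\alpha_2)$ and, more generally, of long words, so that no denominators appear. I will handle this by the standard induction: cyclically reduce the word, and whenever a letter repeats, write $\gamma=PQ$ with $Q^{-1}$ shorter. Then apply $\tr(PQ)=\tr P\tr Q-\tr(PQ^{-1})$, in which all the traces on the right are of shorter words. All coefficients are $\pm1$, so integrality propagates.
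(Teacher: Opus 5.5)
Your proposal follows the paper's proof: the six entries of \(A\) are integers, Proposition~\ref{prop:euler-fricke-dictionary} together with \(\Phi(A)^2=16\) gives \(\tr\rho(\alpha_1\alpha_3)=\pm2,\) and every trace is an integer polynomial in these seven traces, which the paper cites from \cite[Theorem~3.1]{Horowitz1972} rather than proving. If you write out that induction yourself, note two things: \(\tr(M^{-1})=\tr M\) does not remove inverses from mixed words, and no splitting \(\gamma=PQ\) makes \(PQ^{-1}\) shorter for a word such as \(\alpha_1\alpha_2\alpha_1^{-1}\alpha_3\); so first apply \(\tr(W\alpha_i^{-1})=\tr W\,\tr\alpha_i-\tr(W\alpha_i),\) and induct on length and then on the number of inverse letters.
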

\begin{proof}
The six traces in \eqref{eq:trace-stokes-matrix} are the integral entries
of \(A,\) and Proposition~\ref{prop:euler-fricke-dictionary} gives
\(\tr\rho(\alpha_1\alpha_3)=\Phi(A)/2\in\{-2,2\}.\)
By \cite[Theorem~3.1]{Horowitz1972}, the trace of every word in the matrices
\(\rho(\alpha_1),\rho(\alpha_2),\rho(\alpha_3)\) and their inverses is a polynomial
with integer coefficients in these seven traces.
\end{proof}
\endgroup\endgroup

\subsection{Fricke representations}

Let
\(
 \mathbb H=\{x+iy\mid x,y\in\mathbb R,\ y>0\}
\)
be the upper half-plane
{with the Poincar\'e metric}
\(ds^2=(dx^2+dy^2)/y^2.\)  The group
\(\PSL_2(\mathbb R)\) acts on \(\mathbb H\) by orientation-preserving
{isometries}.

\begin{definition}\label{def:fricke-representation}
\begingroup\begingroup
A discrete faithful representation
\[
 \overline\rho\colon\pi_1(\Sigma_{1,2})
 \longrightarrow\PSL_2(\mathbb R)
\]
with image \(\Gamma\) is called a \emph{Fricke representation} if
\(Y=\Gamma\backslash\mathbb H\) has finite area and an orientation-preserving homeomorphism
\(f\colon\Sigma_{1,2}^{\circ}\to Y\) induces \(\overline\rho,\) using
the identification \(\pi_1(Y)\simeq\Gamma\) supplied by the covering
\(\mathbb H\to Y.\) We consider such representations up to conjugation
in \(\PSL_2(\mathbb R).\)
\endgroup\endgroup
\end{definition}

A non-trivial element of
\(\PSL_2(\mathbb R)\) is \emph{parabolic} if it has a unique fixed point on
\(\partial\mathbb H=\mathbb R\cup\{\infty\}.\)  An element
\(P\in\SL_2(\mathbb R)\) projects to a parabolic element if and only if
\(P\ne\pm I\) and \(\tr P=\pm2.\)

{Let \(U,X,Y\) be the basis of
\(\pi_1(\Sigma_{1,2})\) defined in
\eqref{eq:goldman-generators-boundary}.}  For a representation \(\rho,\) put
\[
 u=\tr\rho(U),\quad x=\tr\rho(X),\quad y=\tr\rho(Y),
 \quad v=\tr\rho(UX),\quad w=\tr\rho(UY),\quad z=\tr\rho(XY).
\]
The identity
\(
\tr(RS^{-1})=\tr(R)\tr(S)-\tr(RS)
\)
for \(R,S\in\SL_2\) gives
\begin{equation}\label{eq:goldman-coordinates}
 (u,x,y,v,w,z)=(a,b,f,ab-e,d,c).
\end{equation}
For matrices \(R,S\in\SL_2,\) write
\([R,S]=RSR^{-1}S^{-1}.\)  {The Cayley--Hamilton theorem gives}
\begingroup
\begin{equation}\label{eq:fricke-commutator-identity}
 \tr[R,S]
 =(\tr R)^2+(\tr S)^2+(\tr RS)^2
  -(\tr R)(\tr S)(\tr RS)-2.
\end{equation}
\endgroup
Using
\(
\Delta(r,s,t)=rst-r^2-s^2-t^2+4
,\) equation \eqref{eq:goldman-coordinates} yields
\begin{equation}\label{eq:delta-commutator-traces}
\begin{aligned}
 \tr\rho([U,Y])=2-\Delta(a,d,f),\quad
 \tr\rho([X,Y])=2-\Delta(b,c,f),\quad
 \tr\rho([U,X])=2-\Delta(a,b,e).
\end{aligned}
\end{equation}

\begin{lemma}\label{lem:integral-traces-modular}
{Let \(\widetilde\Gamma\subset\SL_2(\mathbb C)\) be a subgroup
whose natural action on \(\mathbb C^2\) has no invariant line.}
Assume that every element of \(\widetilde\Gamma\) has integral trace and
that \(\widetilde\Gamma\) contains a non-central element of trace
\(\pm2.\)  Then for some \(h\in\SL_2(\mathbb C)\) we have
\(
 h\widetilde\Gamma h^{-1}\subset\SL_2(\mathbb Z).
\)
\end{lemma}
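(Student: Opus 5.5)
We conjugate the given parabolic element into a standard upper-triangular form and then use integrality of traces to bound the denominators of all matrix entries.

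\emph{Normalizing the parabolic.} Let \(P\in\widetilde\Gamma\) be non-central with \(\tr P=\pm2\). Replacing \(P\) by \(-P\) if necessary, which multiplies every trace by \(-1\) and so preserves integrality (we only need this for \(P\) itself), we may conjugate by some \(h_0\in\SL_2(\mathbb C)\) so that
\[
 h_0Ph_0^{-1}=\pm\begin{pmatrix}1&1\\0&1\end{pmatrix}.
\]
A further conjugation by a diagonal matrix rescales the upper-right entry to \(1\). From now on assume \(P=\varepsilon\begin{pmatrix}1&1\\0&1\end{pmatrix}\) with \(\varepsilon=\pm1\).

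\emph{Integrality constraints on an arbitrary element.} For \(g=\begin{pmatrix}\alpha&\beta\\\gamma&\delta\end{pmatrix}\in\widetilde\Gamma\) and \(n\in\mathbb Z\),
\[
 \tr(P^ng)=\varepsilon^n\bigl(\alpha+\delta+n\gamma\bigr)\in\mathbb Z.
\]
Taking \(n=0,1\) gives \(\alpha+\delta\in\mathbb Z\) and \(\gamma\in\mathbb Z\). Since \(\widetilde\Gamma\) has no invariant line, it is not contained in the upper-triangular Borel subgroup. Hence some \(g_0\) has \(\gamma_0\ne0\), and in particular \(\gamma_0\in\mathbb Z\setminus\{0\}\).

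\emph{Main step: bounding denominators.} The hard part is to control the remaining entries \(\alpha,\beta,\delta\). We use traces of products with \(g_0\):
\[
 \tr(gP^ng_0)\in\mathbb Z\quad\text{for all } n .
\]
This expression is affine in \(n\), and its \(n\)-coefficient is \(\varepsilon^n\gamma\alpha_0+\gamma_0\delta\) up to the correct bookkeeping. Combining it with \(\tr(gg_0)\), \(\tr(g)\) and \(\tr(g_0)\), we aim to show that
\[
 \gamma_0\alpha,\qquad \gamma_0\delta,\qquad \gamma_0^2\beta
\]
lie in a fixed lattice, say \(\frac1N\mathbb Z\), where \(N\) depends only on \(g_0\). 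To get this, solve linear equations whose coefficient matrix is built from \(g_0\) and \(P\). Concretely, the functionals \(g\mapsto\tr(gM)\) for \(M\) ranging over \(I,P,g_0,Pg_0\) span the dual of \(M_2(\mathbb C)\) provided these four matrices are linearly independent. One checks that they are independent once \(\gamma_0\neq0\). Consequently the \(\mathbb Z\)-span \(\mathcal O\) of \(\widetilde\Gamma\) inside \(M_2(\mathbb C)\) is contained in the dual lattice \(\Lambda^*\) of the lattice \(\Lambda\) spanned by these four matrices, with respect to the trace form. Hence \(\mathcal O\) is a finitely generated \(\mathbb Z\)-module, and it is a ring since it is closed under products. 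So \(\mathcal O\) is an order in a quaternion algebra over \(\mathbb Q\). That algebra is split: it contains the nilpotent element \(P-\varepsilon I\), so it is \(M_2(\mathbb Q)\), and after conjugation \(\mathcal O\otimes\mathbb Q=M_2(\mathbb Q)\).

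\emph{Conclusion.} Every order in \(M_2(\mathbb Q)\) is contained in a maximal order, and every maximal order is \(\mathrm{GL}_2(\mathbb Q)\)-conjugate to \(M_2(\mathbb Z)\). This is where the class number of \(\mathbb Q\) being one is used. Choose \(k\in\mathrm{GL}_2(\mathbb Q)\) with \(k\mathcal Ok^{-1}\subset M_2(\mathbb Z)\). Then \(k\widetilde\Gamma k^{-1}\) consists of integral matrices of determinant \(1\), so it lies in \(\SL_2(\mathbb Z)\). Finally, rescale \(k\) by a complex scalar so that its determinant is \(1\); this does not change the conjugation action, so the resulting \(h\) lies in \(\SL_2(\mathbb C)\).

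The obstacle I expect is verifying that \(\mathcal O\otimes\mathbb Q\) is indeed \(4\)-dimensional and central simple, rather than degenerate. Irreducibility together with the existence of \(P\) should give absolute irreducibility, and hence \(\mathcal O\otimes\mathbb C=M_2(\mathbb C)\) by Burnside's theorem.
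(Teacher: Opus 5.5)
Your proof is correct and follows essentially the paper's route. Integral traces make $\mathcal O=\mathbb Z[\widetilde\Gamma]$ an order: you prove this directly via the trace-dual of the lattice spanned by the $\mathbb C$-independent elements $I,P,g_0,Pg_0$, which is what the paper cites from Bass and which also settles your four-dimensionality worry. The nilpotent $P-\varepsilon I$ then forces $\mathbb Q\mathcal O\simeq M_2(\mathbb Q)$, and a conjugation (Skolem--Noether, as in the paper) followed by an integral-lattice argument puts $\widetilde\Gamma$ in $\SL_2(\mathbb Z)$. Where you invoke conjugacy of maximal orders in $M_2(\mathbb Q)$ to $M_2(\mathbb Z)$, the paper uses the $\mathcal O$-stable lattice $\mathcal O e_1+\mathcal O e_2$ directly, and your preliminary ``bounding denominators'' computation can be dropped.
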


\begin{proof}
Let
\(
 \mathcal O=\mathbb Z[\widetilde\Gamma]
 \subset M_2(\mathbb C)
\)
and
\(
 \mathcal B=\mathbb Q[\widetilde\Gamma].
\)
{By \cite[Proposition~2.2]{Bass1980}, the assumption
\(\tr(\gamma)\in\mathbb Z\) for every \(\gamma\in\widetilde\Gamma\)}
implies that \(\mathcal O\) is an order in the four-dimensional central
simple \(\mathbb Q\)-algebra \(\mathcal B.\)

Choose \(P\in\widetilde\Gamma\) with
\(
 \tr P=2\delta
,\) where \(\delta\in\{\pm1\},\) and \(P\ne\delta I.\)  {The
Cayley--Hamilton theorem gives}
\( P-\delta I\ne 0\) and
\((P-\delta I)^2=0.
\)
Thus, \(\mathcal B\) contains a non-zero nilpotent and is not a division
algebra.  Hence, \(\mathcal B\simeq M_2(\mathbb Q).\)  {Extend scalars from \(\mathbb Q\) to \(\mathbb C\) in a chosen
isomorphism \(\mathcal B\simeq M_2(\mathbb Q).\)  By the Skolem--Noether theorem, {this embedding}
\(\mathcal B\hookrightarrow M_2(\mathbb C)\) is conjugate to the original
inclusion.  After this conjugation, we may assume}
\(\mathcal O\subset M_2(\mathbb Q).\)

Let \(e_1,e_2\) be the basis of \(\mathbb Q^2\) and put
\(
 L=\mathcal Oe_1+\mathcal Oe_2.
\)
{The module \(L\) is a free \(\mathbb Z\)-module of rank two
spanning \(\mathbb Q^2,\) and it is}
preserved by \(\mathcal O.\) A \(\mathbb Z\)-basis of \(L\) gives
\(
 \mathcal O\subset\operatorname{End}_{\mathbb Z}(L)
 \simeq M_2(\mathbb Z).
\)
Since each element of \(\widetilde\Gamma\) has determinant one, this
conjugates \(\widetilde\Gamma\) into \(\SL_2(\mathbb Z).\)  Multiplying
the conjugating matrix by a scalar does not change the conjugation and
makes its determinant equal to one.
\end{proof}

{The \hyperref[app:half-turn-realization]{Appendix} constructs the representation
\(\rho_A^{\mathbb R}\) and proves Theorem~\ref{thm:positive-fricke-character}.}

\begin{theorem}\label{thm:positive-fricke-character}
\begingroup\begingroup
For every positive Euler matrix \(A,\) the character \([\rho_A]\) admits
a representative \(\rho_A^{\mathbb R}\colon F_3\to\SL_2(\mathbb R)\)
whose projectivization is a Fricke representation.  Its quotient is a
complete finite-area hyperbolic surface of genus one with two cusps.
The two peripheral matrices are non-central and satisfy
\[
 \tr\rho_A^{\mathbb R}(K_1)
 =\tr\rho_A^{\mathbb R}(K_2)
 =\frac{\Phi(A)}2\in\{-2,2\}.
\]
Every representative of \([\rho_A]\) is irreducible.
\endgroup\endgroup
\end{theorem}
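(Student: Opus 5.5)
We plan to construct \(\rho_A^{\mathbb R}\) geometrically by gluing ideal polygons, and then identify its character with \([\rho_A]\).

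The two Fricke--Vogt invariants determine a character: the six entries of \(A\) together with \(t=\tr\rho(K_1)\), by Proposition~\ref{prop:full-euler-fricke-cover}. So it suffices to produce a Fricke representation \(\overline\rho\) of \(\pi_1(\Sigma_{1,2})\) and a lift \(\rho\colon F_3\to\SL_2(\mathbb R)\) that satisfy three conditions:
\[
\tr\rho(\alpha_1)=a,\quad \tr\rho(\alpha_2)=b,\quad \tr\rho(\alpha_3)=c,\quad \tr\rho(\alpha_1\alpha_2)=e,\quad \tr\rho(\alpha_2\alpha_3)=f,\quad \tr\rho(\alpha_1\alpha_2\alpha_3)=d,
\]
and \(\tr\rho(K_1)=\Phi(A)/2\). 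Once these hold, the unique closed point of Proposition~\ref{prop:euler-fricke-dictionary} forces \([\rho]=[\rho_A]\).

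Work in the Goldman basis \(U,X,Y\) of \eqref{eq:goldman-generators-boundary}, where the required traces are given by \eqref{eq:goldman-coordinates}.

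First we realize the pair \((U,X)\). Condition (iv) of Definition~\ref{def:positive-euler-matrix} and \eqref{eq:delta-commutator-traces} give \(\tr[U,X]=2-\Delta(a,b,e)<-2\). By the classical Fricke theory of the one-holed torus, there is a unique up to conjugacy pair \(\rho(U),\rho(X)\in\SL_2(\mathbb R)\) with traces \(a\), \(b\), \(ab-e\). These are hyperbolic, since \(a,b\ge3\). They generate a discrete free group that uniformizes a one-holed torus with geodesic boundary, because \(\tr[U,X]<-2\).

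Next we adjoin \(Y\). The remaining data are \(\tr Y=f\), \(\tr UY=d\) and \(\tr XY=c\), together with the boundary trace \(\tr UXY=\Phi(A)/2\). The cubic relation determines \(\rho(Y)\) up to the two roots \(t\). Because \(\Phi^2=16\) and \(\Psi=8\), these roots coincide, so \(\rho(Y)\) is determined uniquely.

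The hardest step is showing that the resulting group is discrete and that \(\Gamma\backslash\mathbb H\) is the twice-punctured torus, not just that the traces are right. Our plan is to cut along a separating curve, namely the boundary of the one-holed torus, with trace \(2-\Delta(a,b,e)\). This decomposes \(\Sigma_{1,2}\) into a one-holed torus and a pair of pants with two cusps. For the pants part, the triple \(\bigl([U,X],K_1,K_2\bigr)\) has traces of absolute values \(\Delta-2>2\), \(2\) and \(2\). The signs of these traces follow from \(\Phi\) and from positivity of the off-diagonal minors, as in Proposition~\ref{prop:geometric-minors}. This realizes a hyperbolic pair of pants with one geodesic boundary and two cusps. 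The Maskit combination theorem then glues the two pieces. The twist parameter along the gluing curve is fixed by the remaining trace \(d\), and the positivity inequalities \(N,L>0\) of Lemma~\ref{lem:radical-mutation-inequalities} should guarantee that the glued structure is consistent. If this sign bookkeeping becomes unmanageable, the fallback is mutation invariance: reduce by Proposition~\ref{prop:mutation-positivity} to a minimal representative in the \(B_4\)-orbit, and verify discreteness directly by an explicit ideal polygon there.

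The remaining assertions then follow.
\begin{itemize}
\item Non-centrality of the peripheral matrices: the cusps are genuine punctures, so \(\rho(K_i)\) are parabolic.
\item Irreducibility of every representative: the image is a non-elementary Fuchsian group, so it has no invariant line. The character is determined, and a reducible representative would have a semisimplification with abelian image; both traces \(\pm2\) then contradict \(\tr[U,X]\ne2\).
\item Genus one with two cusps: this follows from Euler characteristic \(-2\) and the two parabolic classes.
\end{itemize}
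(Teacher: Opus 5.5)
Your first two steps are fine. Fix an irreducible real pair $(\rho(U),\rho(X))$ with traces $a,b,ab-e$. Then $\rho(Y)$ is determined linearly by $\tr Y,\tr UY,\tr XY,\tr UXY$, because $I,U,X,UX$ span $M_2$. It is real, and $A_\rho=A$. Irreducibility of every representative also follows from $\tr\rho([U,X])=2-\Delta(a,b,e)\ne2$.

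\textbf{The gap.} The gap lies in the step that carries the whole content of the theorem: discreteness and the identification of the quotient. Write $K_2'=XK_2X^{-1}=XUY$, so that $[U,X]=K_1K_2'^{-1}$. Then $\langle\rho(U),\rho(X)\rangle$ and $\langle\rho(K_1),\rho(K_2')\rangle$ are each Fuchsian: a one-holed torus and a pair of pants with two cusps. However, Maskit's combination theorem also requires their convex cores to lie on \emph{opposite} sides of the axis of $\rho([U,X])$. This is a discrete datum that none of your ingredients detects.

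Indeed, work in coordinates where $\rho([U,X])$ is diagonal, and conjugate only the pants piece by $\operatorname{diag}(1,-1)$, which commutes with $\rho([U,X])$. This keeps $a$, $b$, $e$, $\tr[U,X]<-2$ and $\tr K_1=\tr K_2=\pm2$, hence $\Phi^2=16$ and $\Psi=8$ by \eqref{eq:boundary-traces}. Yet it puts both cores on the same side. The resulting real representation is not Fricke, even up to $\GL_2(\mathbb R)$-conjugation, because in any hyperbolic structure the two subsurfaces lie on opposite sides of the geodesic $[U,X]$.

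A twist along $[U,X]$ cannot change the side, so ``the twist is fixed by $d$'' does not settle this. The assertion that $N,L>0$ ``should guarantee'' consistency is exactly the missing argument. The signs of the pants traces come from $\Phi$ and $\Delta$ alone, so the minors never actually enter your argument. Neither does the integrality of $A$, which is what the paper uses for discreteness.

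The fallback is not available either. You have no descent showing that every $B_4$-orbit contains one of finitely many explicit matrices. In the paper, finiteness of the orbits is deduced \emph{from} this theorem via Sebbar--Besrour, so invoking it here would be circular.

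\textbf{The paper's route.} The paper avoids gluing altogether. Positivity gives $B$ rank three with inertia $(1,2)$ on $\mathbb R^4/\ker B$. Positive off-diagonal entries then place four unit vectors $J_1,\dots,J_4$ with $-\tr(J_iJ_j)=B_{ij}$ in one sheet of the hyperboloid. Setting $\rho(\alpha_i)=-J_iJ_{i+1}$ realizes $A$ and exhibits $\rho(F_3)$ as the even subgroup of the group $H$ generated by four half-turns.

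Discreteness is arithmetic. All traces are integers by Horowitz, and a non-central element of trace $\pm2$ splits the quaternion algebra. So Lemma~\ref{lem:integral-traces-modular} conjugates the image into $\SL_2(\mathbb Z)$.

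The topology is then forced by group theory, as in Lemma~\ref{lem:four-half-turns}. A discrete non-elementary group generated by four involutions whose product is parabolic is $(\mathbb Z/2\mathbb Z)^{*4}$. It uniformizes a sphere with four cone points of order two and one cusp, and its even subgroup gives the twice-punctured torus.

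Orientation is fixed by Dehn--Nielsen--Baer together with a possible conjugation by $\operatorname{diag}(1,-1)$, as in Corollary~\ref{cor:half-turn-fricke}. Your proposal also omits this point.

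To keep your gluing route, you would need a genuine proof that the positivity or integrality of $c,d,f$ and the minors forces the opposite-sides configuration.
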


\begin{theorem}\label{thm:positive-fuchsian-group}
\begingroup\begingroup
For a positive Euler matrix \(A,\) there is a unique Fricke representation
\[
 \overline\rho_A\colon
 \pi_1(\Sigma_{1,2})\longrightarrow\PSL_2(\mathbb R),
\]
up to \(\PSL_2(\mathbb R)\)-conjugation, admitting an
\(\SL_2(\mathbb R)\)-lift \(\rho\) with \(A_\rho=A.\)
Consequently its image \(\Gamma_A\) is determined up to the same conjugation.
\endgroup\endgroup
\end{theorem}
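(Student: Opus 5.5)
Existence comes from Theorem~\ref{thm:positive-fricke-character}. It supplies a representative \(\rho_A^{\mathbb R}\colon F_3\to\SL_2(\mathbb R)\) of \([\rho_A]\) whose projectivization \(\overline\rho_A\) is a Fricke representation. Since \(\Theta([\rho_A])=A\), we have \(A_{\rho_A^{\mathbb R}}=A\). This gives one Fricke representation with an \(\SL_2(\mathbb R)\)-lift \(\rho\) satisfying \(A_\rho=A\).

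For uniqueness, let \(\overline\rho'\) be another Fricke representation with an \(\SL_2(\mathbb R)\)-lift \(\rho'\) such that \(A_{\rho'}=A\). Then \([\rho']\in\Theta^{-1}(A)\). Positivity gives \(\Phi(A)^2=16\) and \(\Psi(A)=8\), so by Proposition~\ref{prop:euler-fricke-dictionary} the fibre has a unique closed point. Hence \([\rho']=[\rho_A]\) in \(\mathcal X(F_3)\). By Theorem~\ref{thm:positive-fricke-character}, every representative of \([\rho_A]\) is irreducible, so both \(\rho'\) and \(\rho_A^{\mathbb R}\) are irreducible. Irreducible representations with the same character are conjugate by some \(g\in\SL_2(\mathbb C)\), because their orbits are closed and lie in the same fibre of \(q_{F_3}\).

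The main step is to replace the complex conjugator \(g\) by a real one, so that \(\overline\rho'\) and \(\overline\rho_A\) become conjugate in \(\PSL_2(\mathbb R)\). I would argue as follows.
\begin{enumerate}
\item Both \(\rho'\) and \(\rho_A^{\mathbb R}\) take values in \(\SL_2(\mathbb R)\). Applying complex conjugation to \(g\rho_A^{\mathbb R}g^{-1}=\rho'\) shows that \(\bar g\) also conjugates \(\rho_A^{\mathbb R}\) to \(\rho'\).
\item Hence \(g^{-1}\bar g\) centralizes the irreducible representation \(\rho_A^{\mathbb R}\). By Schur's lemma \(g^{-1}\bar g=\pm I\), that is, \(\bar g=\pm g\).
\item If \(\bar g=g\), then \(g\in\SL_2(\mathbb R)\). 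If \(\bar g=-g\), then \(g=ih\) with \(h\) real and \(\det h=-1\).
\end{enumerate}

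In the second case \(h\in\GL_2(\mathbb R)\) has negative determinant and conjugates \(\overline\rho_A\) to \(\overline\rho'\). Such an \(h\) acts on \(\mathbb H\) by an orientation-reversing isometry. To rule this out, I use that both Fricke representations are induced by orientation-preserving homeomorphisms \(\Sigma_{1,2}^{\circ}\to\Gamma\backslash\mathbb H\), together with the identity marking on \(\pi_1\). Conjugating by \(h\) would then give a homeomorphism \(\Gamma_A\backslash\mathbb H\to\Gamma'\backslash\mathbb H\) that reverses orientation but, after composing with the markings, induces the identity on \(\pi_1(\Sigma_{1,2})\) up to inner automorphisms. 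By Dehn--Nielsen--Baer, such a self-map of \(\Sigma_{1,2}^{\circ}\) would be isotopic to the identity, which is orientation preserving. This is a contradiction, so the second case cannot occur.

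This orientation argument is the step I expect to be hardest. An alternative is a purely algebraic check: the sign of \(\tr\rho([U,X])\) is determined by \(A\) through \eqref{eq:delta-commutator-traces}, and one could test whether it detects the orientation on the Fricke locus. Either way, \(g\) is real, so \(\overline\rho'\) and \(\overline\rho_A\) are conjugate in \(\PSL_2(\mathbb R)\). Consequently \(\Gamma_A=\overline\rho_A(\pi_1(\Sigma_{1,2}))\) is determined up to the same conjugation.
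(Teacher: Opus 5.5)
Your proposal is correct and follows essentially the same route as the paper. Existence comes from Theorem~\ref{thm:positive-fricke-character}; uniqueness uses the unique closed point of \(\Theta^{-1}(A)\), irreducibility, Schur's lemma to make the conjugator real up to a scalar, and injectivity in the extended Dehn--Nielsen--Baer theorem to rule out a real conjugator of negative determinant. The paper works in \(\GL_2(\mathbb C)\) and rescales by \(\mu\) with \(\mu/\overline\mu=\lambda\), which is equivalent to your \(\overline g=\pm g\) normalization.

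You should drop the suggested trace-based alternative. Conjugation by \(h\in\GL_2(\mathbb R)\) with \(\det h<0\) preserves every trace, so no trace function, including \(\tr\rho([U,X])\), can detect orientation. The marking argument is genuinely needed.
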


\begin{proof}
\begingroup\begingroup
Existence follows from Theorem~\ref{thm:positive-fricke-character}.  Let
\(\rho,\rho'\colon\pi_1(\Sigma_{1,2})\to\SL_2(\mathbb R)\)
be lifts of two Fricke representations satisfying
\(A_\rho=A_{\rho'}=A.\)  Choose orientation-preserving homeomorphisms
\(f\colon\Sigma_{1,2}^{\circ}\to Y\) and
\(f'\colon\Sigma_{1,2}^{\circ}\to Y'\) inducing the projectivizations of
\(\rho\) and \(\rho',\) respectively, on fundamental groups.
Proposition~\ref{prop:euler-fricke-dictionary} gives
\([\rho]=[\rho_A]=[\rho'].\)

Both representations are irreducible, so
\(\rho'=g\rho g^{-1}\) for some \(g\in\GL_2(\mathbb C).\)
Because \(\rho\) and \(\rho'\) take values in
\(\SL_2(\mathbb R),\) the matrix \(g^{-1}\overline g\) centralizes
\(\rho(F_3).\)  Irreducibility and Schur's lemma give
\(g^{-1}\overline g=\lambda I.\)  Then \(\lambda\overline\lambda=1;\)
choose \(\mu\in\mathbb C^*\) with \(\mu/\overline\mu=\lambda.\)
Replacing \(g\) by \(\mu g,\) we may take
\(g\in\GL_2(\mathbb R).\)
Write \(g=\begin{psmallmatrix}a&b\\c&d\end{psmallmatrix}.\)
If \(\det g>0,\) set
\(\widetilde g(z)=(az+b)/(cz+d);\) if \(\det g<0,\) set
\(\widetilde g(z)=(a\overline z+b)/(c\overline z+d).\)
In either case \(\widetilde g\) is an isometry of \(\mathbb H\) conjugating
\(\overline\rho(F_3)\) to \(\overline\rho'(F_3).\)
The isometry \(\widetilde g\) descends to an isometry
\(h\colon Y\to Y'.\) For each \(i=1,2,\) it sends the cusp
corresponding to \(\overline\rho(K_i)\) to the cusp corresponding to
\(\overline\rho'(K_i).\)
The equivariance of \(\widetilde g\) implies that
\(h_*\circ f_*\) and \(f'_*\) differ by an inner automorphism of
\(\pi_1(Y');\) here the induced homomorphisms are formed after
choosing basepoints and connecting paths.
Consequently,
\(
 \varphi=(f')^{-1}\circ h\circ f
 \quad\text{satisfies}\quad
 [\varphi_*]=1\in\operatorname{Out}
       (\pi_1(\Sigma_{1,2}^{\circ},*)).
\)
Moreover, \(\varphi\) extends to a homeomorphism
\(\widehat\varphi\colon\widehat\Sigma\to\widehat\Sigma\)
satisfying \(\widehat\varphi(p_i)=p_i\) for \(i=1,2.\)
By the injectivity in the extended Dehn--Nielsen--Baer theorem
\cite[Theorem~8.8]{FarbMargalit2012}, there is an isotopy
\(H_t\colon\widehat\Sigma\to\widehat\Sigma,\) \(0\le t\le1,\)
such that
\[
 H_0=\widehat\varphi,\qquad H_1=\operatorname{id}_{\widehat\Sigma},
 \qquad H_t(p_i)=p_i, \quad i=1,2,\quad 0\le t\le1.
\]
Thus, \(\varphi\) and, hence, \(h,\) preserves orientation, so
\(\det g>0.\) Multiplying \(g\) by the positive scalar
\((\det g)^{-1/2}\) puts it in \(\SL_2(\mathbb R).\)  Therefore, the two
Fricke representations are conjugate in \(\PSL_2(\mathbb R).\)
\endgroup\endgroup
\end{proof}

\subsection{{An integral conjugate}}

\begingroup\begingroup
Because \(F_3\) is free, every homomorphism
\(\overline\rho\colon F_3\to\PSL_2(\kk)\) lifts to
\(\SL_2(\kk).\)  For two lifts of the same \(\overline\rho,\)
there is a homomorphism \(\eta\colon F_3\to\{\pm I\}\) such that
\(\rho'(\gamma)=\eta(\gamma)\rho(\gamma)\) for every \(\gamma\in F_3.\)
More generally, suppose that the projectivizations of \(\rho\) and
\(\rho'\) are conjugate. Choose \(g\in\SL_2(\kk)\) such that
\(\overline\rho'=[g]\overline\rho[g]^{-1}.\) There is then a homomorphism
\(\eta\colon F_3\to\{\pm I\}\) such that
\(
 \rho'(\gamma)=\eta(\gamma)g\rho(\gamma)g^{-1}
\) for \(\gamma\in F_3.
\)
If both \(A_\rho\) and \(A_{\rho'}\) are positive, the positive traces
of \(\rho(\alpha_i)\) and \(\rho'(\alpha_i)\) give \(\eta(\alpha_i)=I\) for
\(i=1,2,3.\) Hence, \(\eta\) is trivial and the two
\(\SL_2(\kk)\)-characters coincide.
\endgroup\endgroup

\begin{corollary}\label{cor:positive-modular-group}
Let \(A\) be a positive Euler matrix.  The image of \(\overline\rho_A\) is conjugate in
\(\PSL_2(\mathbb R)\) to a subgroup of \(\PSL_2(\mathbb Z).\)
\end{corollary}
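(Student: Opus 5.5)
The plan is to combine Theorem~\ref{thm:positive-fricke-character}, Corollary~\ref{cor:all-traces-integral} and Lemma~\ref{lem:integral-traces-modular}, and then to pass from a complex conjugation to a real one.

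First, let \(\rho=\rho_A^{\mathbb R}\colon F_3\to\SL_2(\mathbb R)\) be the representative supplied by Theorem~\ref{thm:positive-fricke-character}. Its projectivization is a Fricke representation admitting a lift with \(A_\rho=A\). By Theorem~\ref{thm:positive-fricke-character} and the uniqueness in Theorem~\ref{thm:positive-fuchsian-group}, this projectivization is \(\overline\rho_A\) up to \(\PSL_2(\mathbb R)\)-conjugation. It therefore suffices to conjugate \(\widetilde\Gamma=\rho(F_3)\) into \(\SL_2(\mathbb Z)\) by an element of \(\SL_2(\mathbb R)\).

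Second, we check the three hypotheses of Lemma~\ref{lem:integral-traces-modular} for \(\widetilde\Gamma\). All traces are integral by Corollary~\ref{cor:all-traces-integral}, since \(\rho\) represents \([\rho_A]\). The element \(\rho(K_1)\) is non-central and has trace \(\Phi(A)/2=\pm2\), by Theorem~\ref{thm:positive-fricke-character}. Finally, there is no invariant line in \(\mathbb C^2\), because every representative of \([\rho_A]\) is irreducible. The lemma then gives \(h\in\SL_2(\mathbb C)\) with \(h\widetilde\Gamma h^{-1}\subset\SL_2(\mathbb Z)\).

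The main obstacle is that \(h\) is only complex, and we must replace it by a real conjugator. Put \(\rho'=h\rho h^{-1}\). Both \(\rho\) and \(\rho'\) take values in \(\SL_2(\mathbb R)\), and \(\rho\) is irreducible. We repeat the argument from the proof of Theorem~\ref{thm:positive-fuchsian-group}:
\begin{itemize}
\item The matrix \(h^{-1}\overline h\) centralizes \(\rho(F_3)\), so Schur's lemma gives \(h^{-1}\overline h=\lambda I\) with \(|\lambda|=1\).
\item Rescaling \(h\) by some \(\mu\) with \(\mu/\overline\mu=\lambda\), we may take \(h\in\GL_2(\mathbb R)\). Rescaling does not change the conjugation.
\item If \(\det h>0\), we scale \(h\) into \(\SL_2(\mathbb R)\) and are done.
\item If \(\det h<0\), we replace \(h\) by \(Jh\) with \(J=\operatorname{diag}(1,-1)\), and then scale into \(\SL_2(\mathbb R)\).
\end{itemize}
The last case is harmless because conjugation by \(J\) preserves \(\SL_2(\mathbb Z)\). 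In every case we obtain \(g\in\SL_2(\mathbb R)\) with \(g\rho(F_3)g^{-1}\subset\SL_2(\mathbb Z)\). Projecting to \(\PSL_2\) shows that the image of \(\overline\rho_A\) is \(\PSL_2(\mathbb R)\)-conjugate to a subgroup of \(\PSL_2(\mathbb Z)\).
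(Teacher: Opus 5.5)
Your proof is correct and follows essentially the same route as the paper: you take the Fricke representative from Theorem~\ref{thm:positive-fricke-character}, apply Lemma~\ref{lem:integral-traces-modular} via integral traces, the non-central trace-\(\pm2\) peripheral element and irreducibility, and then make the conjugator real by the Schur-lemma argument of Theorem~\ref{thm:positive-fuchsian-group}, fixing the sign of the determinant with a diagonal \(\pm1\) matrix. The only differences are cosmetic: you use \(\operatorname{diag}(1,-1)\) rather than \(\operatorname{diag}(-1,1)\), and you state explicitly the identification with \(\overline\rho_A\) via the uniqueness in Theorem~\ref{thm:positive-fuchsian-group}, which the paper leaves implicit.
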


\begin{proof}
\begingroup\begingroup
The real representative \(\rho^{\mathbb R}\) from
Theorem~\ref{thm:positive-fricke-character} is irreducible and has a
non-central peripheral matrix of trace \(\pm2.\)  By
Corollary~\ref{cor:all-traces-integral} and
Lemma~\ref{lem:integral-traces-modular}, there is \(g\in\GL_2(\mathbb C)\)
such that
\(
 \rho^{\mathbb Z}:=g\rho^{\mathbb R}g^{-1}
 \)
 and
 \(
 \rho^{\mathbb Z}(F_3)\subset\SL_2(\mathbb Z).
\)
As in the proof of Theorem~\ref{thm:positive-fuchsian-group}, a scalar
multiple of \(g\) is real.
If \(\det g<0,\) replace \(g\) by \(\operatorname{diag}(-1,1)g;\)
this preserves integrality.  Scaling by \((\det g)^{-1/2}\) now gives a
conjugating matrix in \(\SL_2(\mathbb R).\)
\endgroup\endgroup
\end{proof}

\subsection{Modular images and mutation orbits}

For a positive Euler matrix \(A,\) we conjugate the representation
\(\overline\rho_A,\) as in Corollary~\ref{cor:positive-modular-group}, so
that its image lies in \(\PSL_2(\mathbb Z).\)  {We call this
representation} a \emph{modular representative} of \(A,\) and
its image \(\Gamma\subset\PSL_2(\mathbb Z)\) is called a \emph{modular
image} of \(A.\)

\begin{proposition}\label{prop:index-twelve}
Every modular image \(\Gamma\) of a positive Euler matrix is torsion-free,
has index twelve in \(\PSL_2(\mathbb Z),\) and the quotient
\(\Gamma\backslash\mathbb H\) is a genus-one surface with two cusps.
\end{proposition}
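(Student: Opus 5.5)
Let \(\Gamma\) be a modular image of a positive Euler matrix \(A\), obtained from the Fricke representation \(\overline\rho_A\) of Theorem~\ref{thm:positive-fuchsian-group} by the conjugation of Corollary~\ref{cor:positive-modular-group}. The plan is to compare two area computations. Since \(\overline\rho_A\) is discrete and faithful on \(\pi_1(\Sigma_{1,2})\simeq F_3\), the group \(\Gamma\) is free of rank three. Hence \(\Gamma\) is torsion-free, because free groups have no non-trivial elements of finite order. Torsion-freeness is the first step, and it is immediate.

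Next, by Theorem~\ref{thm:positive-fricke-character} and Definition~\ref{def:fricke-representation}, \(Y=\Gamma\backslash\mathbb H\) is a complete finite-area hyperbolic surface homeomorphic to \(\Sigma_{1,2}^{\circ}\). So it has genus one and two cusps, which gives the last assertion of the statement. Gauss--Bonnet then gives
\[
 \Area(Y)=-2\pi\chi(\Sigma_{1,2}^{\circ})=4\pi .
\]

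It remains to obtain the index. Since \(\Gamma\subset\PSL_2(\mathbb Z)\) and \(\Gamma\backslash\mathbb H\) has finite area, \(\Gamma\) has finite index in \(\PSL_2(\mathbb Z)\). Indeed, a fundamental domain for \(\Gamma\) is a union of \([\PSL_2(\mathbb Z):\Gamma]\) translates of a fundamental domain for \(\PSL_2(\mathbb Z)\). This gives
\[
 \Area(Y)=[\PSL_2(\mathbb Z):\Gamma]\cdot\Area\bigl(\PSL_2(\mathbb Z)\backslash\mathbb H\bigr)=[\PSL_2(\mathbb Z):\Gamma]\cdot\frac{\pi}{3},
\]
where the value \(\pi/3\) comes from the standard fundamental domain, the triangle with angles \(\pi/2,\pi/3,0\). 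Comparing with \(4\pi\) yields index twelve.

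The main point needing care is the step from finite covolume to finite index. This could equally be argued from the Riemann--Hurwitz formula for the orbifold covering \(Y\to\PSL_2(\mathbb Z)\backslash\mathbb H\), using orbifold Euler characteristic \(-1/6\) for the modular orbifold, which gives \(-2=-\tfrac{1}{6}\cdot12\). A second point is that conjugation by an element of \(\SL_2(\mathbb R)\) preserves the quotient up to isometry, so the area and cusp count computed for \(\overline\rho_A\) apply to \(\Gamma\). An alternative check is also available: in a torsion-free subgroup of index \(\mu\), the counts of elliptic points vanish, so \(\mu/6=2g-2+c=2\) also gives \(\mu=12\).
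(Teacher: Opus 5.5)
Your proof is correct and is essentially the paper's argument: torsion-freeness and the genus-one, two-cusp quotient come from the Fricke representation, Gauss--Bonnet gives area \(4\pi\), and dividing by the area \(\pi/3\) of \(\PSL_2(\mathbb Z)\backslash\mathbb H\) yields index twelve (the paper gets \(\pi/3\) from the orbifold Euler characteristic \(-1/6\), as in your Riemann--Hurwitz remark). One small slip: a hyperbolic triangle with angles \(\pi/2,\pi/3,0\) has area \(\pi/6\), since it is half of the standard domain, whereas the standard fundamental domain of \(\PSL_2(\mathbb Z)\) is the triangle with angles \(\pi/3,\pi/3,0\); its area \(\pi/3\) is the value you correctly use.
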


\begin{proof}
Theorem~\ref{thm:positive-fuchsian-group} shows that \(\Gamma\) is
torsion-free and that its quotient has genus one and two cusps.
The Poincar\'e metric fixed above has curvature \(-1.\)
Gauss--Bonnet theorem therefore gives the area of a complete finite-area
hyperbolic surface of genus \(g\) with \(n\) cusps as
\(
2\pi(2g-2+n)
,\) so
\[
 \Area(\Gamma\backslash\mathbb H)=2\pi(2\cdot1-2+2)=4\pi.
\]
The modular orbifold
\(\PSL_2(\mathbb Z)\backslash\mathbb H\) has genus zero, one cusp, and
two orbifold points with stabilizers of orders \(2\) and \(3.\)  Its orbifold Euler
characteristic is therefore
\(2-1-(1-1/2)-(1-1/3)=-1/6,\) and its area is \(\pi/3.\)
The orbifold covering
\(
\Gamma\backslash\mathbb H\longrightarrow
\PSL_2(\mathbb Z)\backslash\mathbb H
\)
has degree \([\PSL_2(\mathbb Z):\Gamma];\) integration of the
area form over this covering gives
\(
 [\PSL_2(\mathbb Z):\Gamma]=\frac{4\pi}{\pi/3}=12.
\)
\end{proof}

Let \(\mathscr M\) be the set of \(\PSL_2(\mathbb Z)\)-conjugacy classes
of torsion-free index-twelve subgroups
\(\Gamma\subset\PSL_2(\mathbb Z)\) for which
\(\Gamma\backslash\mathbb H\) has genus one and two cusps.
{Choosing one modular image for each \(B_4\)-orbit of positive Euler
matrices and taking its \(\PSL_2(\mathbb Z)\)-conjugacy class defines a map
from the set of these orbits to \(\mathscr M.\)
Proposition~\ref{prop:modular-orbit-injectivity} shows that this map is injective.}

\begin{proposition}\label{prop:modular-orbit-injectivity}
{Let \(A,A'\) be positive Euler matrices.  If they have modular
images \(\Gamma,\Gamma'\subset\PSL_2(\mathbb Z)\) that are conjugate in
\(\PSL_2(\mathbb Z),\) then \(A\) and \(A'\) are mutation-equivalent.}
\end{proposition}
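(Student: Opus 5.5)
Suppose $\Gamma'=[g]\Gamma[g]^{-1}$ with $[g]\in\PSL_2(\mathbb Z)$. The plan is to turn this conjugacy into a mapping class relating the two characters, lift that mapping class to the braid group, and then read off mutation-equivalence of $A$ and $A'$ from the equivariance of $\Theta$.

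First I will make the modular representatives share an image. Let $\overline\rho$ and $\overline\rho'$ be the modular representatives of $A$ and $A'$. Replacing $\overline\rho$ by $[g]\overline\rho[g]^{-1}$ changes neither its Fricke class nor its character; for the character this is the sign argument at the start of the subsection on the integral conjugate, using positive traces on the $\alpha_i$. So I may assume $\Gamma=\Gamma'$. Then $\overline\rho$ and $\overline\rho'$ are two isomorphisms $\pi_1(\Sigma_{1,2})\to\Gamma$, since Fricke representations are faithful. Hence $\phi=\overline\rho^{-1}\circ\overline\rho'$ is an automorphism of $F_3$.

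Next I will identify $\phi$ as a mapping class. By Definition~\ref{def:fricke-representation}, $\overline\rho$ and $\overline\rho'$ are induced by orientation-preserving homeomorphisms $f,f'\colon\Sigma_{1,2}^\circ\to Y=\Gamma\backslash\mathbb H$. Thus $\phi$ is induced, up to inner automorphisms, by $\psi=f^{-1}\circ f'$, an orientation-preserving self-homeomorphism of $\Sigma_{1,2}^\circ$. The map $\psi$ might swap the two punctures. In that case I compose with the deck involution $\jmath$. This is orientation-preserving, swaps the punctures, and by \eqref{eq:euler-fricke-deck-involution} acts trivially on the Euler matrix. The composite is a pure mapping class $[\psi]\in\operatorname{PMod}^{+}(\Sigma_{1,2}^\circ)$. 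By construction $\overline\rho'=\overline\rho\circ\psi_*$ up to conjugation, possibly after inserting $\jmath_*$.

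Then I will pass to characters and lift to $B_4$. By the sign argument above, $[\rho_{A'}]=[\rho_A]\cdot[\psi]$, possibly composed with $\iota_\jmath$. By Proposition~\ref{prop:braid-pmod-quotient}, $[\psi]$ is the image of some braid $\sigma\in B_4$, so $[\rho_{A'}]=\sigma\cdot[\rho_A]$ up to $\iota_\jmath$. Applying $\Theta$, which is $B_4$-equivariant by Proposition~\ref{prop:euler-fricke-braid-equivariance}, and using that $\Theta\circ\iota_\jmath=\Theta$, I get $A'=\sigma(A)$ or $A'=\sigma^{-1}(A)$, depending on the left/right convention. In either case $A$ and $A'$ are mutation-equivalent.

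I expect the main obstacle to be the middle step. One must show that an abstract automorphism $\phi$ with $\overline\rho\circ\phi=\overline\rho'$ really comes from an orientation-preserving homeomorphism; this is where the Fricke condition (homeomorphisms $f,f'$) is used rather than Dehn--Nielsen--Baer. One must also control the puncture swap and the inner-automorphism and sign ambiguities, including checking that $\iota_\jmath$ commutes appropriately with the braid action.
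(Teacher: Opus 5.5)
Your outline follows the paper's proof: a common image $\Gamma$, homeomorphisms $f,f'$, correcting a puncture swap with $\jmath$, lifting the pure mapping class to $B_4$ via Proposition~\ref{prop:braid-pmod-quotient}, and applying Proposition~\ref{prop:euler-fricke-braid-equivariance}. However, the sign step does not go through as you justify it, and this is a genuine gap.

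From $\overline\rho'=\overline\rho\circ\psi_*$ up to $\PSL_2(\mathbb R)$-conjugation you only get $\rho'(\gamma)=\eta(\gamma)\,g\,\rho(\psi_*\gamma)\,g^{-1}$ for some homomorphism $\eta\colon F_3\to\{\pm I\}$. Concluding $\eta(\alpha_i)=I$ requires $\tr\rho(\psi_*\alpha_i)>0$ for $i=1,2,3$. The sign argument you quote needs positive traces on the $\alpha_i$ for \emph{both} lifts. For $\rho'$ this is positivity of $A'$, but for $\rho\circ\psi_*$ nothing in your argument supplies it. So the equality $[\rho_{A'}]=[\rho_A]\cdot[\psi]$ is not justified at the point where you assert it. A priori you only get that $A'$ is obtained from $\sigma(A)$ by a sign twist $(a,b,c,d,e,f)\mapsto(\epsilon_1a,\epsilon_2b,\epsilon_3c,\epsilon_1\epsilon_2\epsilon_3d,\epsilon_1\epsilon_2e,\epsilon_2\epsilon_3f)$, which is not a mutation.

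The repair is to reverse the order of your last two steps, as the paper does.
\begin{enumerate}
\item First lift the pure mapping class to some $W\in B_4$. Then $\psi_*$ may be replaced by $\beta(W)$ up to an inner automorphism; this uses the fact that $[\widetilde H_i]$ induces the outer class of $\beta_i$, which you also use implicitly.
\item By equivariance, $A_{\rho\circ\beta(W)}=W(A)$.
\item The missing input is Proposition~\ref{prop:mutation-positivity}: the braid action preserves positivity, so $W(A)$ is positive.
\item Only now do both lifts have positive traces on $\alpha_1,\alpha_2,\alpha_3$. Hence $\eta$ is trivial and $W(A)=A'$.
\end{enumerate}
With this order the $\jmath$ correction is also harmless. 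Replace $\rho'$ by $\rho'\circ\jmath_*$ at the outset; it has the same six traces, so it still has Euler matrix $A'$. No compatibility of $\iota_\jmath$ with the braid action is needed.
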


\begin{proof}
Let \(A,A'\) be positive.  Choose modular representatives
\(\overline\rho,\overline\rho'\) and \(\SL_2(\mathbb R)\)-lifts
\(\rho,\rho'\) so that
\(A_\rho=A,\) \(A_{\rho'}=A',\) and, after conjugation, both projective
images equal \(\Gamma.\)  The two representations
give orientation-preserving homeomorphisms
\(
 f,f'\colon\Sigma_{1,2}^{\circ}\longrightarrow
 \Gamma\backslash\mathbb H.
\)
\begingroup\begingroup
The homeomorphism \((f')^{-1}\circ f\) preserves orientation.
If its extension to \(\widehat\Sigma\) interchanges \(p_1\) and \(p_2,\)
replace \((f',\rho')\) by
\((f'\circ\jmath^{\circ},\rho'\circ\jmath_*),\) where
\(\jmath^{\circ}=\jmath|_{\Sigma_{1,2}^{\circ}}.\)
The extension of \(\jmath^{\circ}\) interchanges \(p_1\) and \(p_2,\) and
\eqref{eq:euler-fricke-deck-involution} gives
\(A_{\rho'\circ\jmath_*}=A_{\rho'}=A'.\)
After this replacement, the extension of \((f')^{-1}\circ f\)
to \(\widehat\Sigma\) fixes \(p_1\) and \(p_2\) individually.
Its isotopy class therefore belongs to
\(\operatorname{PMod}^{+}(\Sigma_{1,2}^{\circ}).\)
\endgroup\endgroup

\begingroup\begingroup
Choose \(W\in B_4\) such that the projectivizations of
\(\rho_W:=W\cdot\rho\) and \(\rho'\) are conjugate in
\(\PSL_2(\mathbb R).\)
Equation~\eqref{eq:trace-braid-equivariance} gives \(A_{\rho_W}=W(A),\)
and Proposition~\ref{prop:mutation-positivity} shows that \(W(A)\) is
positive. Choose \(g\in\SL_2(\mathbb R)\) such that
\(\overline\rho'=[g]\overline\rho_W[g]^{-1}.\) There is a homomorphism
\(\eta\colon F_3\to\{\pm I\}\) satisfying
\(
 \rho'(\gamma)=\eta(\gamma)g\rho_W(\gamma)g^{-1}
 \)
for \(\gamma\in F_3.
\)
Positivity of \(W(A)\) and \(A'\) gives \(\eta(\alpha_i)=I\) for
\(i=1,2,3,\) so \(\eta\) is trivial. Hence
\(A_{\rho_W}=A_{\rho'},\) that is, \(W(A)=A'.\)
\endgroup\endgroup
\end{proof}

Section~6 of \cite{SebbarBesrour2021} enumerates the
torsion-free subgroups of index twelve in \(\PSL_2(\mathbb Z)\) and gives
exactly five \(\PSL_2(\mathbb Z)\)-conjugacy classes whose quotients have
genus one and two cusps.
Proposition~\ref{prop:modular-orbit-injectivity} therefore gives the bound
\begin{equation}\label{eq:at-most-five-positive-orbits}
 \#\{\text{mutation orbits of positive Euler matrices}\}\le5.
\end{equation}

\begin{samepage}
\begin{theorem}\label{thm:main-reduction}
\begingroup\begingroup
The positive Euler matrices form exactly five mutation-equivalence classes,
and the Euler degree \(D(A)\) distinguishes them. For every positive Euler
matrix there are unique positive integers \(s,\iota,\) with \(s\)
square-free, such that \(D(A)=2s\iota^2.\) The table gives the Euler
degree, the integers \(s,\iota,\) a representative, and its primitive
radical vector:
\begin{equation}\label{eq:five-positive-representatives}
\begin{array}{c|c|c|c|c}
D(A)&s&\iota&(a,b,c,d,e,f)&p_D\\
\hline
22&11&1&(4,4,7,7,3,20)&(-3,2,-3,1)^t\\
40&5&2&(5,5,5,5,3,18)&(-2,1,-2,1)^t\\
54&3&3&(4,5,5,4,4,11)&(-2,1,-1,1)^t\\
64&2&4&(4,4,4,4,6,6)&(-1,1,-1,1)^t\\
72&1&6&(3,6,3,6,7,7)&(-1,1,-1,1)^t.
\end{array}
\end{equation}
Moreover, every positive Euler matrix \(A\) satisfies \(\Phi(A)=-4,\)
and every representative \(\rho\) of \([\rho_A]\) satisfies
\(
\tr\rho(K_1)=\tr\rho(K_2)=-2.
\)
\endgroup\endgroup
\end{theorem}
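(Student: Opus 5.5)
The plan is to combine the upper bound \eqref{eq:at-most-five-positive-orbits} with explicit verification on the five listed matrices. First I check that each of the five parameter tuples in \eqref{eq:five-positive-representatives} gives a positive Euler matrix. Condition (i) is visible from the table. For (iii), I compute \(\Phi\) and \(\Psi\) from \eqref{eq:Phi-def} and \eqref{eq:Psi-def}. For example, for \((4,4,4,4,6,6)\) one gets \(\Phi=16+16-36=-4\). The expected output in every case is \(\Phi=-4\) and \(\Psi=8\). By the discussion after \eqref{eq:detB}, these values together with one nonzero principal minor give (MU).

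For (ii), I then verify directly that the stated \(p_D\) lies in \(\ker B\) and is primitive. Next I compute one diagonal entry of \(\operatorname{adj}(B)\), say \(D_{11}=2\Delta(b,c,f)\). By \eqref{eq:deleted-minor-factorization} this gives \(D(A)=D_{11}/p_1^2\). For instance, \(\Delta(4,4,6)=96-16-16-36+4=32\), so \(D=64\) for \(\mathbb P^3\). Once \(D(A)>0\) and \(p\) has alternating signs, every minor \(D_{ij}=(-1)^{i+j}D(A)p_ip_j\) is positive. Condition (iv) then follows from Proposition~\ref{prop:strict-principal-deltas}. This shows the five matrices are positive, with Euler degrees \(22,40,54,64,72\).

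These degrees are pairwise distinct. By Lemma~\ref{lem:formal-mutation-invariants} together with Proposition~\ref{prop:mutation-positivity}, the Euler degree is a mutation invariant. So the five matrices lie in five distinct orbits. Combined with the bound \eqref{eq:at-most-five-positive-orbits}, there are exactly five orbits, and \(D(A)\) distinguishes them. The factorization \(D=2s\iota^2\) with \(s\) square-free exists and is unique for every positive even integer, by Proposition~\ref{prop:euler-degree-arithmetic}; the table entries are read off from this.

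For the final assertions, \(\Phi\) is a mutation invariant by Lemma~\ref{lem:formal-mutation-invariants}. Every positive matrix is mutation-equivalent to one of the five representatives, all of which have \(\Phi=-4\). Hence \(\Phi(A)=-4\) for every positive \(A\). Proposition~\ref{prop:euler-fricke-dictionary} then gives \(\operatorname{tr}\rho(K_1)=\operatorname{tr}\rho(K_2)=\Phi(A)/2=-2\) for every representative \(\rho\) of \([\rho_A]\).

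The main obstacle is only bookkeeping: the five computations of \(\Phi\), \(\Psi\), \(\ker B\) and one principal minor must be done carefully. The heavy lifting, namely the bound of five orbits, is already supplied by Proposition~\ref{prop:modular-orbit-injectivity} and \cite{SebbarBesrour2021}.
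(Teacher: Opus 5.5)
Your proposal is correct and follows the paper's proof essentially step for step. You verify positivity and the Euler degree of the five representatives by direct computation, use mutation invariance of \(D(A)\) to place them in five distinct orbits, and invoke the bound \eqref{eq:at-most-five-positive-orbits}. You then transport \(\Phi=-4\) along mutations to obtain the trace equalities from Proposition~\ref{prop:euler-fricke-dictionary}. The only cosmetic difference is that you read off \(D(A)\) from the single minor \(D_{11}=2\Delta(b,c,f)\), which is legitimate because \(\det B=0\) and \(D_{11}\ne0\) give \(\operatorname{rank}B=3\); the paper instead checks the full identity \(\adj(B_D)=D\,p_Dp_D^t\).
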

\end{samepage}
\begin{proof}
{For each row of \eqref{eq:five-positive-representatives}, let
\(A_D\) be the Euler matrix with the given parameters, let \(p_D\) be
the given primitive vector, and put \(B_D=A_D+A_D^t.\)}
Direct computation gives
\begin{equation}\label{eq:five-representative-identities}
 \Phi(A_D)=-4,
 \qquad
 \Psi(A_D)=8,
 \qquad
 B_Dp_D=0,
 \qquad
 \adj(B_D)=D\,p_Dp_D^t.
\end{equation}
\begingroup\begingroup
All parameters of \(A_D\) are at least \(3.\)  The last identity in
\eqref{eq:five-representative-identities} and the alternating signs of
\(p_D\) give
\[
 D_{ij}(B_D)=(-1)^{i+j}D(p_D)_i(p_D)_j>0.
\]
In particular \(\operatorname{rank}B_D=3.\)  Together with
\(\Phi(A_D)^2=16\) and \(\Psi(A_D)=8,\) the equation
\eqref{eq:coxeter-characteristic} gives the characteristic polynomial
\((\lambda-1)^4,\) while \eqref{eq:fixed-subspace-radical} gives
\(\operatorname{rank}(C-I)=3.\)  Thus, \(C\) has one Jordan block and
satisfies \(\mathrm{(MU)}.\)
Condition \textup{(iv)} of Definition~\ref{def:positive-euler-matrix}
now follows from Proposition~\ref{prop:strict-principal-deltas}.
Hence, \(A_D\) is positive.
\endgroup\endgroup
The equality
\(
\adj(B_D)=D\,p_Dp_D^t
\)
and primitivity of \(p_D\) show, by the definition
\eqref{eq:euler-degree}, that \(D(A_D)=D.\)  Euler degree is invariant under
mutations by Proposition~\ref{prop:euler-degree-arithmetic} and
Lemma~\ref{lem:formal-mutation-invariants}.  The five matrices therefore
belong to five distinct mutation orbits.  Together with
\eqref{eq:at-most-five-positive-orbits}, this shows that these are all the
orbits, and the
five distinct values of \(D(A)\) distinguish them.  Finally, let \(A\) be
any positive Euler matrix.  The classification just proved gives a word
\(W\in B_4\) and one of the five matrices \(A_D\) such that
\(W(A)=A_D.\)  Lemma~\ref{lem:formal-mutation-invariants} and
\eqref{eq:five-representative-identities} give
\(
 \Phi(A)=\Phi(W(A))=\Phi(A_D)=-4.
\)
The trace equalities follow from
Proposition~\ref{prop:euler-fricke-dictionary}.
\end{proof}

Theorem~\ref{thm:main-reduction} proves the classification statement of
Theorem~\ref{thm:intro-classification} from the Introduction.
Minifolds realizing the five mutation classes are described in
Subsections~\ref{subsec:commutative-minifolds} and~\ref{subsec:m72-construction}.
\section{Cusp widths and Euler degrees}
\label{sec:further-modular-data}
{In this section we study properties of the five conjugacy
classes of modular subgroups attached to the matrices in
Theorem~\ref{thm:main-reduction}.}

\begingroup
Let \({\mathsf s},{\mathsf u}\in\PSL_2(\mathbb Z)\) be
\(\begin{psmallmatrix}0&-1\\1&0\end{psmallmatrix}\) and
\(\begin{psmallmatrix}1&1\\0&1\end{psmallmatrix},\) respectively, and
put \({\mathsf r}={\mathsf s}{\mathsf u}.\)  Then
\[
 \PSL_2(\mathbb Z)=\langle {\mathsf s},{\mathsf r}\mid
 {\mathsf s}^2={\mathsf r}^3=1\rangle,
 \qquad {\mathsf s}{\mathsf r}={\mathsf u}.
\]
\endgroup
A cusp of a finite-index subgroup
\(\Gamma\subset\PSL_2(\mathbb Z)\) is a
\(\Gamma\)-orbit in \(\mathbb P^1(\mathbb Q).\)  If a cusp is
represented by \(g\infty\) for \(g\in\PSL_2(\mathbb Z),\) its
\emph{width} is
 \begin{equation}\label{eq:cusp-width-definition}
 w_\Gamma(g\infty)
 =\min\{m\ge1\mid g{\mathsf u}^mg^{-1}\in\Gamma\}.
\end{equation}
This integer is independent of the choice of \(g.\)  Equivalently, for
\(c=g\infty,\) there is an equality
\(
 w_\Gamma(c)
 =[\operatorname{Stab}_{\PSL_2(\mathbb Z)}(c):
   \operatorname{Stab}_\Gamma(c)],
\)
where \(\operatorname{Stab}\) denotes the point stabilizer.

{For a torsion-free subgroup \(\Gamma\) of index twelve, the group
\(\PSL_2(\mathbb Z)\) acts from the right on the twelve cosets in
\(\Gamma\backslash\PSL_2(\mathbb Z).\)}  {Right multiplication by
\(\mathsf s\) and \(\mathsf r\) gives permutations of cycle types \(2^6\) and \(3^4,\) respectively,
which together generate a transitive action.  The cycles of right
multiplication by \(\mathsf s\mathsf r=\mathsf u\)
correspond to the cusps, and}
their lengths are the cusp widths defined in
\eqref{eq:cusp-width-definition}.  In particular, the sum of the cusp
widths is equal to $12$.  Among these groups, those with genus-one quotient
and two cusps form five \(\PSL_2(\mathbb Z)\)-conjugacy classes,
containing twenty-eight subgroups in total.  The five
classes have the unordered pairs of cusp widths
\cite[Section~6]{SebbarBesrour2021}:
\begin{equation}\label{eq:index-twelve-cusp-width-pairs}
        (1,11),\qquad(2,10),\qquad(3,9),\qquad(4,8),
        \qquad(6,6).
\end{equation}

\begingroup\begingroup
For a positive Euler matrix \(A,\) let \(s,\iota\) be the integers in
Theorem~\ref{thm:main-reduction}.
\endgroup\endgroup

\begin{proposition}\label{prop:cusp-width-euler-degree}
Let \(A\) be a positive Euler matrix, and let
\(w_1\le w_2\) be the two cusp widths of the image of a modular
representative of \(A.\)  Then
\begin{equation}\label{eq:euler-degree-cusp-widths}
 D(A)=2w_1w_2,
 \qquad
 \iota=\gcd(w_1,w_2),
 \qquad
 {s=\frac{w_1w_2}{\gcd(w_1,w_2)^2}}{.}
\end{equation}
For the pairs in
\eqref{eq:index-twelve-cusp-width-pairs}, \(w_1\mid w_2;\) hence
\({(w_1,w_2)=(\iota,s\iota)}.\)
{The five possibilities are listed below; the normalizers are given
up to conjugacy in \(\PSL_2(\mathbb Z).\)}
\begingroup\begingroup
\begin{center}
\small
\normalfont
\setlength{\tabcolsep}{4pt}
\begin{tabular}{@{}ccccccc@{}}
\toprule
\(D(A)\) & \((s,\iota)\) & cusp widths & level & \(N(\Gamma)\)
 & \(N(\Gamma)/\Gamma\) & \(\#\text{conjugates}\)\\
\midrule
22 & \((11,1)\) & \((1,11)\) & 11
   & \(\Gamma_0(11)\) & \(1\) & 12\\
40 & \((5,2)\) & \((2,10)\) & 10
   & \(\Gamma_0(5)\) & \(\mathbb Z/2\mathbb Z\) & 6\\
54 & \((3,3)\) & \((3,9)\) & 9
   & \(\Gamma_0(3)\) & \(\mathbb Z/3\mathbb Z\) & 4\\
64 & \((2,4)\) & \((4,8)\) & 8
   & \(\Gamma_0(2)\) & \(\mathbb Z/4\mathbb Z\) & 3\\
72 & \((1,6)\) & \((6,6)\) & 6
   & \(\Gamma_{(3)}\) & \((\mathbb Z/2\mathbb Z)\times (\mathbb Z/2\mathbb Z)\) & 3\\
\bottomrule
\end{tabular}
\end{center}
\endgroup\endgroup

Here \(\Gamma_0(N)\) is the subgroup represented by
matrices whose lower-left entry is divisible by \(N.\)  The level is
the least \(N\) for which the group contains the principal congruence
subgroup \(\Gamma(N),\) and
\(\Gamma_{(3)}=\ker(\PSL_2(\mathbb Z)\longrightarrow \mathbb Z/3\mathbb Z),\) where
\(\mathbb Z/3\mathbb Z\) is written additively and \({\mathsf s}\mapsto0,\)
\({\mathsf r}\mapsto1,\) and \(N(\Gamma)\) is the normalizer of
\(\Gamma\) in \(\PSL_2(\mathbb Z).\)
The last column gives the number of subgroups {in the conjugacy class
in that row}.
\end{proposition}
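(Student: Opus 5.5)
The plan is to reduce everything to the five representatives of Theorem~\ref{thm:main-reduction}. Both sides of \eqref{eq:euler-degree-cusp-widths} are invariant along a mutation orbit. \(D(A)\) is invariant by Lemma~\ref{lem:formal-mutation-invariants}. The conjugacy class of the modular image, and hence the unordered pair of cusp widths, is invariant as well. Indeed, by Proposition~\ref{prop:euler-fricke-braid-equivariance}, a mutation \(W\) replaces \(\rho\) by \(\rho\circ\beta(W)\), which has the same image. By Theorem~\ref{thm:positive-fuchsian-group} together with Corollary~\ref{cor:positive-modular-group}, the modular image is well defined up to conjugation. The one point to watch is that two integral conjugates of \(\Gamma_A\) inside \(\PSL_2(\mathbb R)\) need not be \(\PSL_2(\mathbb Z)\)-conjugate. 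However, cusp widths can be read intrinsically as the traces of the primitive parabolic generators. A primitive parabolic element of \(\PSL_2(\mathbb Z)\) of width \(w\) is conjugate to \(\mathsf u^{w}\), so its width is invariant under any real conjugation preserving integrality. Since \(\Gamma\) is torsion-free with two cusps, the peripheral classes \(K_1,K_2\) are the primitive parabolics.

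This gives a computable formula. Let \(\rho\) be a modular representative. Then \(\rho(K_i)\) is \(\SL_2(\mathbb Z)\)-conjugate to \(\pm\begin{psmallmatrix}1&w_i\\0&1\end{psmallmatrix}\), since it is primitive in \(\Gamma\) and \(\Gamma\) contains no \(-I\) issue in \(\PSL_2\). Thus \(w_i\) is the gcd of the entries of \(\rho(K_i)\mp I\). Equivalently, and trace-theoretically, for every \(\gamma\in F_3\) we have \(\tr\rho(K_i\gamma)-\tr\rho(\gamma)\cdot(-1)\equiv0\pmod{w_i}\). Rather than work through this, I will simply compute for each representative in \eqref{eq:five-positive-representatives}. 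I will write explicit integral matrices \(\rho(\alpha_1),\rho(\alpha_2),\rho(\alpha_3)\in\SL_2(\mathbb Z)\) with the six prescribed traces. A convenient normalization is to take \(\rho(K_1)=\rho(\alpha_1\alpha_3)=-\begin{psmallmatrix}1&w\\0&1\end{psmallmatrix}\) upper triangular and solve for the rest. I then read off the gcd of the entries of \(\rho(K_j)+I\) for \(j=1,2\). Alternatively, I will write down the coset permutation representation and count the cycles of \(\mathsf u\). The expected outcome is that the rows \(D=22,40,54,64,72\) give \((1,11),(2,10),(3,9),(4,8),(6,6)\). This matches the five classes of \eqref{eq:index-twelve-cusp-width-pairs}, and the five orbits map injectively by Proposition~\ref{prop:modular-orbit-injectivity}, so each class occurs exactly once. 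Once the identification is fixed, \(D=2w_1w_2\) is immediate: \(2\cdot11=22\), \(2\cdot20=40\), \(2\cdot27=54\), \(2\cdot32=64\), \(2\cdot36=72\). The formulas \(\iota=\gcd(w_1,w_2)\) and \(s=w_1w_2/\iota^2\) then follow by comparing with the table \((s,\iota)\) of Theorem~\ref{thm:main-reduction}. The divisibility \(w_1\mid w_2\) is visible from the list.

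The matching is in fact forced up to one ambiguity, which reduces the explicit work. The value \(D=2w_1w_2\) takes the distinct values \(22,40,54,64,72\) on the five pairs. So it suffices to exhibit any bijection that is consistent with one invariant, for instance the cusp width \(w_1\) of \(K_1\) computed from a single representative matrix per row. I therefore only need one width per row.

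The remaining columns are properties of the specific subgroups, namely the level, the normalizer, \(N(\Gamma)/\Gamma\), and the number of conjugates. Given the identification, I take them from \cite[Section~6]{SebbarBesrour2021} or verify them directly. The level is the lcm of the cusp widths, by Wohlfahrt's theorem, for these congruence groups: \(11,10,9,8,6\). The number of conjugates equals \([\PSL_2(\mathbb Z):N(\Gamma)]\), which is \(12/|N(\Gamma)/\Gamma|\); the indices of \(\Gamma_0(11),\Gamma_0(5),\Gamma_0(3),\Gamma_0(2),\Gamma_{(3)}\) are \(12,6,4,3,3\), as required, and their sum is \(28\). For \(\Gamma_{(3)}\), I will check directly that it is normal of index \(3\) and contains a suitable \(\Gamma\) with quotient the Klein four group. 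The main obstacle is the explicit integral realization, and producing the width of one peripheral element per row. I would first try the normalization \(\rho(\alpha_1\alpha_3)\) upper unipotent (times \(-1\)), because \(\Phi=-4\) forces trace \(-2\).
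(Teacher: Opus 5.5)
There is a genuine gap, and it sits in the step you flag yourself: showing that the cusp widths do not depend on the choice of modular representative, nor on the matrix within its mutation orbit. The rest of your route is the paper's: integral triples for the five \(A_D\), widths as the gcd of the entries of \(\rho(K_i)+I\), Proposition~\ref{prop:modular-orbit-injectivity} plus the count of five classes, and citations for the remaining columns.

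Your justification of that step is false on two counts.

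\begin{itemize}
\item Traces cannot detect widths. By Theorem~\ref{thm:main-reduction}, both peripheral traces equal \(-2\) for every positive \(A\).
\item Widths are not invariant under real conjugations that preserve integrality. For a single element, \(\operatorname{diag}(\sqrt2,1/\sqrt2)\) conjugates \(\begin{psmallmatrix}1&1\\0&1\end{psmallmatrix}\) to \(\begin{psmallmatrix}1&2\\0&1\end{psmallmatrix}\). For whole groups, the same conjugation \(z\mapsto 2z\) carries \(\Gamma_0(4)\), with cusp widths \((1,1,4)\), onto \(\Gamma(2)\), with cusp widths \((2,2,2)\). Both groups lie in \(\PSL_2(\mathbb Z)\).
\end{itemize}

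So your argument does not exclude two modular images of the same \(A\), or of mutation-equivalent matrices, having different width pairs. The proposition asserts the formula for every choice of modular representative. Your ``trace-theoretic'' congruence is likewise only a divisibility, not an equivalence. For the literal group \(\Gamma_0(11)\), whose cusp at \(\infty\) has width \(1\), every number \(\tr\rho(K\gamma)+\tr\rho(\gamma)\) is divisible by \(11\).

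The paper gets well-definedness by counting, not by an intrinsic reading of widths.

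\begin{itemize}
\item Every modular image of every positive matrix lies in a class of \(\mathscr M\) (Proposition~\ref{prop:index-twelve}).
\item By Proposition~\ref{prop:modular-orbit-injectivity}, the sets of classes attained by different mutation orbits are disjoint.
\item The five explicit images \(\Gamma_D\) have five distinct width pairs, so they exhaust the five classes of \(\mathscr M\).
\item Hence every modular image of every \(A\) in the orbit of \(A_D\) lies in the class of \(\Gamma_D\), and its widths satisfy \(2w_1w_2=D=D(A)\).
\end{itemize}

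You already cite the injectivity, but only to match rows with classes. It has to be used here, in place of the intrinsic-width claim.

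Two smaller points.
\begin{itemize}
\item An integral triple with the six prescribed traces must still be shown to be a modular representative. The paper does this via the unique closed point of \(\Theta^{-1}(A)\) (Proposition~\ref{prop:euler-fricke-dictionary}), irreducibility (Theorem~\ref{thm:positive-fricke-character}), and a real multiple of the conjugating matrix. This holds up to conjugation by \(\operatorname{diag}(1,-1)\), which preserves the widths.
\item Computing only one width per row is legitimate. No width value occurs in two different pairs of \eqref{eq:index-twelve-cusp-width-pairs}, so a single width determines the pair.
\end{itemize}
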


\begin{proof}
We first match the five Euler matrices in
Theorem~\ref{thm:main-reduction} with the five cusp-width pairs.  For the
matrix \(A_D,\) define a representation \(\rho_D\) by putting
\(
P_D=\rho_D(\alpha_1),
Q_D=\rho_D(\alpha_2)
,\) and \(R_D=\rho_D(\alpha_3).\)  {The five triples below belong to}
\(\SL_2(\mathbb Z)^3\):
\[
\begin{array}{c|ccc}
D&P_D&Q_D&R_D\\
\hline
22&
\begin{psmallmatrix}4&-1\\1&0\end{psmallmatrix}&
\begin{psmallmatrix}1&1\\2&3\end{psmallmatrix}&
\begin{psmallmatrix}0&-1\\1&7\end{psmallmatrix}\\[2mm]
40&
\begin{psmallmatrix}5&-1\\1&0\end{psmallmatrix}&
\begin{psmallmatrix}1&1\\3&4\end{psmallmatrix}&
\begin{psmallmatrix}0&-1\\1&5\end{psmallmatrix}\\[2mm]
54&
\begin{psmallmatrix}4&-1\\1&0\end{psmallmatrix}&
\begin{psmallmatrix}2&1\\5&3\end{psmallmatrix}&
\begin{psmallmatrix}0&-1\\1&5\end{psmallmatrix}\\[2mm]
64&
\begin{psmallmatrix}4&-1\\1&0\end{psmallmatrix}&
\begin{psmallmatrix}2&1\\3&2\end{psmallmatrix}&
\begin{psmallmatrix}0&-1\\1&4\end{psmallmatrix}\\[2mm]
72&
\begin{psmallmatrix}3&-1\\1&0\end{psmallmatrix}&
\begin{psmallmatrix}3&2\\4&3\end{psmallmatrix}&
\begin{psmallmatrix}0&-1\\1&3\end{psmallmatrix}.
\end{array}
\]
{Let \(\overline\rho_D\) be the projectivization of \(\rho_D,\) and
put
\(
 \Gamma_D=\overline\rho_D(F_3)\subset\PSL_2(\mathbb Z).
\)}
For each \(\rho_D\),
\(A_{\rho_D}=A_D.\)  Hence, Proposition~\ref{prop:euler-fricke-dictionary}
identifies each triple with a representative of \([\rho_{A_D}].\)
\begingroup\begingroup
Since \(A_D\) is positive, Theorem~\ref{thm:positive-fricke-character}
implies that the same character has a Fricke representative and that every
representative of it is irreducible.  Let
\(\rho_D^{\mathrm F}\colon F_3\to\SL_2(\mathbb R)\) be a Fricke
representative of this character.  Since \(\rho_D\) and
\(\rho_D^{\mathrm F}\) are irreducible and have the same character,
there exists \(g_D\in\SL_2(\mathbb C)\) such that
\(\rho_D=g_D\rho_D^{\mathrm F}g_D^{-1}.\)
As in the proof of Theorem~\ref{thm:positive-fuchsian-group}, a scalar multiple
of \(g_D\) is real, so the corresponding isometry identifies the two quotients
of \(\mathbb H\).
Their projectivizations therefore have the same kernel, so
\(\overline\rho_D\) is faithful.
\endgroup\endgroup

{By \eqref{eq:goldman-generators-boundary}, the images under
\(\rho_D\) of the two primitive cusp loops are}
\begingroup\begingroup\begingroup
\[
 K_{1,D}:=\rho_D(K_1)=P_DR_D,
 \qquad
 K_{2,D}:=\rho_D(K_2)=P_DQ_DR_DQ_D^{-1}.
\]
\endgroup\endgroup\endgroup
Theorem~\ref{thm:main-reduction} gives
\(\tr K_{1,D}=\tr K_{2,D}=-2.\)
\begingroup\begingroup
Let \(c_{i,D}\in\mathbb P^1(\mathbb Q)\) be the fixed point of
the projective class \([K_{i,D}].\)  The conjugacy by
\([g_D]\in\PSL_2(\mathbb C)\) and the fact that each \(K_i\) goes once
around the corresponding puncture give
\(
 \operatorname{Stab}_{\Gamma_D}(c_{i,D})
 =\langle[K_{i,D}]\rangle ,\)
 where \(i=1,2.\)
More generally, let \(\Gamma\subset\PSL_2(\mathbb Z)\) be a subgroup
of finite index, let \(c\in\mathbb P^1(\mathbb Q),\) and suppose that
\(M\in\SL_2(\mathbb Z)\) has trace \(-2\) and that \([M]\) generates
\(\operatorname{Stab}_{\Gamma}(c).\)  Then
\(w_\Gamma(c)\) is the greatest common divisor of the entries of \(M+I.\)
\endgroup\endgroup
Indeed, if this width is \(w,\) then
\[
 M=-h
 \begin{pmatrix}1&\pm w\\0&1\end{pmatrix}
 h^{-1}
 \qquad\text{for some }h\in\SL_2(\mathbb Z),
\]
and left and right multiplication by unimodular matrices preserves the
greatest common divisor of the entries.  For
\(D=22,40,54,64,72,\) respectively, put
\((w_1,w_2)=(1,11),(2,10),(3,9),(4,8),(6,6).\)
Direct multiplication now gives
\[
 K_{1,D}=-\begin{pmatrix}1&w_2\\0&1\end{pmatrix},
 \qquad
 \gcd_{i,j}\bigl|(K_{2,D}+I)_{ij}\bigr|=w_1.
\]
{Thus, the five matrices \(A_D\) in
Theorem~\ref{thm:main-reduction} have the five cusp-width pairs in
\eqref{eq:index-twelve-cusp-width-pairs}.}  By
\eqref{eq:character-braid-action}, the \(B_4\)-action precomposes a
representation with an automorphism of \(F_3,\) so it does not change its
image.  By
Proposition~\ref{prop:modular-orbit-injectivity}, distinct mutation
orbits determine disjoint sets of
{
\(\PSL_2(\mathbb Z)\)-conjugacy classes of subgroups
\(\overline\rho(F_3),\) where \(\overline\rho\) is a modular representative.}
There are five
mutation orbits and five such conjugacy classes, and the triples
\((P_D,Q_D,R_D),\) for \(D=22,40,54,64,72,\) {give representatives
of all five}.  {Hence, the modular images of any two matrices in the
same mutation orbit are conjugate in \(\PSL_2(\mathbb Z),\) and images from
different mutation orbits are not conjugate.}

The equalities in \eqref{eq:euler-degree-cusp-widths}, as well as the
equivalent ordered form of the cusp-width pair, follow from
\eqref{eq:index-twelve-cusp-width-pairs} and the uniqueness of the
{square-free decomposition \(D(A)=2s\iota^2.\)}
{The chains of supergroups in
\cite[Table~2, rows \(11A^1,10A^1,9A^1,8A^1,6B^1\)]{CumminsPauli2003},
together with the standard names in \cite[Table~4]{CumminsPauli2003},
identify the normalizers} {up to conjugacy.}  {The \(L\)-column of Table~2 gives the
last column; equivalently, these numbers follow from the orbit--stabilizer
formula.}
\end{proof}

{For the four commutative three-dimensional minifolds in
Theorem~\ref{thm:threefold-minifolds}, the smaller cusp width \(w_1\) in
\eqref{eq:index-twelve-cusp-width-pairs} is the Fano index: it is
\(1,2,3,4\) for \(X_{22},V_5,Q^3,\mathbb P^3,\) respectively.}

\begin{remark}\label{rem:golyshev-modularity}
For these four minifolds, put
\(
        d=w_1=\iota,
        \;
        {N=\frac{w_2}{w_1}=s}.
\)
In the order \(X_{22}, V_5, Q^3, \mathbb P^3,\) one obtains
\[
        (N,d)=(11,1),(5,2),(3,3),(2,4),
        \qquad
        (-K_X)^3=D(A_X)=2d^2N.
\]
{Golyshev assigns these four pairs \((N,d)\) to the Fano
minifolds and constructs their quantum differential equations from the
modular curves \(X_0(N)/W_N\){, see}
\cite[Sections~3, 6.2, and~6.5--6.7]{Golyshev2007}.  If
\(Q=e^{2\pi i\tau}\) is the parameter used in this construction, the eta-product}
formula of \cite[Sections~5.5--5.6]{Golyshev2007} becomes
\begin{equation}\label{eq:golyshev-eta-product}
 I_{N,d}(\tau)
 =\eta(d\tau)^2\eta(Nd\tau)^2
 =\eta(w_1\tau)^2\eta(w_2\tau)^2.
\end{equation}
{Here \(N\) is the integer in the notation \(X_0(N),\) whereas
the index-twelve subgroup with cusp widths \((d,Nd)\) has level \(Nd.\)}

For the same four threefolds, Golyshev identifies the symmetrized Euler
pairing of the exceptional collection with the intersection pairing of
the {vanishing cycles in the modular Picard--Fuchs system for that
threefold}
\cite[Section~2.2 and Sections~3--6]{Golyshev2008}.
The identities \(w_1=d\) and \(w_2=Nd\) give the same pairs
\((N,d).\)  Substituting \((N,d)=(1,6)\) into
\eqref{eq:golyshev-eta-product} gives \(\eta(6\tau)^4,\) a modular form of
level \(36.\)
For the subsequent mirror-theoretic approach to
the classification of Fano manifolds, see \cite{CoatesEtAl2013}.
\end{remark}

{For a subgroup \(\Gamma\) in any of the five conjugacy classes
listed in Proposition~\ref{prop:cusp-width-euler-degree}, the number of its
conjugates in \(\PSL_2(\mathbb Z)\) is}
\begin{equation}\label{eq:twenty-eight-subgroups}
 \#\{g\Gamma g^{-1}\mid g\in\PSL_2(\mathbb Z)\}
 =[\PSL_2(\mathbb Z):N(\Gamma)]
 =\frac{[\PSL_2(\mathbb Z):\Gamma]}{[N(\Gamma):\Gamma]}
 =\frac{12}{|N(\Gamma)/\Gamma|}.
\end{equation}
Consequently, the twenty-eight subgroups enumerated by Sebbar and Besrour
decompose into the five conjugacy classes as
\(
        28=12+6+4+3+3.
\)
\begingroup\begingroup
Under the right action of \(\PSL_2(\mathbb Z)\) on
\(\Gamma\backslash\PSL_2(\mathbb Z),\) the stabilizer of
\(\Gamma g\) is \(g^{-1}\Gamma g.\) Hence, the transitive coset action,
without a distinguished coset, determines the
\(\PSL_2(\mathbb Z)\)-conjugacy class of \(\Gamma.\)
\endgroup\endgroup
\section{{Non-commutative deformations and the fifth minifold}}
\label{sec:noncommutative-consequences}

\subsection{Non-commutative deformations of commutative varieties}


\begingroup\begingroup
Pym \cite[Theorem~1.1 and Table~1]{Pym2015}
classifies flat graded Calabi--Yau deformations of the polynomial
algebra \(\kk[x_0,x_1,x_2,x_3]\) into six irreducible families,
denoted by
\(
  \mathsf L(1,1,1,1),\;
  \mathsf L(1,1,2),\;
  \mathsf R(2,2),\;
  \mathsf R(1,3),\;
  \mathsf S(2,3),\;
  \mathsf E(3).
\)
Their generic members are, respectively, Calabi--Yau skew-polynomial
algebras, generalized Laurent polynomial algebras, four-dimensional
Sklyanin algebras, central extensions of three-dimensional Sklyanin
algebras, graded Ore extensions of \(\kk[x_1,x_2,x_3]\) by
divergence-free derivations, and the
\(\operatorname{Aff}(\kk)\)-quantization of
\(\operatorname{Sym}^3(\mathbb P^1);\) see
\cite[Sections~3.1--3.6]{Pym2015}.  The four-dimensional
Sklyanin algebras were introduced in \cite{Sklyanin1982} and were proved
in \cite{SmithStafford1992} to be regular and noetherian and to have
Hilbert series \((1-t)^{-4}.\)
\endgroup\endgroup

\begingroup\begingroup
For \(X\in\{Q^3,V_5\},\) the \(\operatorname{Aff}(\kk)\)-actions described in
\cite[Example~5.1 and Theorems~5.2 and~7.1]{LorayPereiraTouzet2013}
lift to \(\mathcal O_X(1).\) The deformation formula of
\cite{CollGerstenhaberGiaquinto1989} then gives flat graded non-commutative
deformations of the homogeneous coordinate algebra of \(X,\) see
\cite[Lemmas~3.3 and~4.1]{Pym2015}.
For \(Q^3,\) the quantum flag construction gives another graded
\(q\)-deformation, see \cite{LakshmibaiReshetikhin1992}.
Applying the deformation formula to the \(\operatorname{Aff}(\kk)\)-action
on the Mukai--Umemura threefold gives a further example, see
\cite[Example~4.4]{Pym2015}.
We do not prove that these deformations satisfy the minifold conditions
of Definition~\ref{def:orlov-minifold}.
\endgroup\endgroup

\subsection{The construction of minifolds of degree \texorpdfstring{$72$}{72}}
\label{subsec:m72-construction}

  Let
\(V,\overline V,W,\overline W\) be three-dimensional vector spaces over
\(\kk.\) Denote by
\(
 U=\operatorname{Sym}^2W,
\) and
\(
 \overline U=\operatorname{Sym}^2\overline W
\)
the symmetric squares.
Choose bases \(x_1,x_2,x_3\) of \(V,\)
\(\overline x_1,\overline x_2,\overline x_3\) of \(\overline V,\) and
\(u^{ij}=u^{ji},\) \(\overline u^{ij}=\overline u^{ji}\) of
\(U,\) \(\overline U,\) respectively, with \(1\le i\le j\le3.\)
Fix \(p,q\in\kk^\times\) and suppress tensor-product signs in monomials.
With \((i,j,k)=\sigma(1,2,3)\) in each sum, define
\(\mathsf{\Phi}_{p,q}\in V\otimes U\otimes\overline V\otimes\overline U\) by

\begin{equation}\label{eq:m72-potential}
\begin{aligned}
\mathsf{\Phi}_{p,q}={}&
 \sum_{\sigma\in\mathfrak S_3}
 x_i u^{jj}\overline x_i\overline u^{kk}
-\sum_{\substack{\sigma\in\mathfrak S_3\\
                   \operatorname{sgn}(\sigma)=1}}
 p x_i u^{ij}\overline x_j\overline u^{kk}
-\sum_{\substack{\sigma\in\mathfrak S_3\\
                   \operatorname{sgn}(\sigma)=-1}}
 q x_i u^{ij}\overline x_j\overline u^{kk}
\\
&-\sum_{\substack{\sigma\in\mathfrak S_3\\
                   \operatorname{sgn}(\sigma)=1}}
 q x_i u^{kk}\overline x_j\overline u^{ij}
-\sum_{\substack{\sigma\in\mathfrak S_3\\
                   \operatorname{sgn}(\sigma)=-1}}
 p x_i u^{kk}\overline x_j\overline u^{ij}
\\
&+\sum_{\sigma\in\mathfrak S_3}
 pq x_i u^{ik}\overline x_j\overline u^{jk}
+\sum_{\sigma\in\mathfrak S_3}
 pq x_i u^{jk}\overline x_j\overline u^{ik}
+\sum_{\substack{\sigma\in\mathfrak S_3\\j<k}}
 pq x_i u^{jk}\overline x_i\overline u^{jk}.
\end{aligned}
\end{equation}

\begin{remark}\label{rem:m72-parameter}
\begingroup\begingroup
For \(a\in\kk^\times,\) let \(g_a\in\operatorname{GL}(U)\) fix
\(u^{11},u^{22},u^{33}\) and send \(u^{ij}\) to \(a u^{ij}\) for
\(i\ne j;\) define \(\overline g_a\) in the same way on \(\overline U.\)
Then
\(
 (1_V\otimes g_a\otimes1_{\overline V}\otimes\overline g_a)
 (\mathsf{\Phi}_{p,q})=\mathsf{\Phi}_{ap,aq}.
\)
These maps induce an isomorphism of \(\mathbb Z\)-algebras
\(\mathcal L(p,q)\simeq\mathcal L(ap,aq)\) fixing every object, where
\(\mathcal L(p,q)\) is defined in \eqref{eq:m72-z-algebra} below.
Thus, the isomorphism class of \(\mathcal L(p,q)\) depends only on \(p/q.\)
\endgroup\endgroup
\end{remark}

Choose the basis-preserving identifications
\(V\simeq\overline V\) and \(U\simeq\overline U.\)  They define the
involution
\(
 x_i u^{jk}\overline x_l\overline u^{mn}
 \longmapsto
 x_l u^{mn}\overline x_i\overline u^{jk}
\)
of \(V\otimes U\otimes\overline V\otimes\overline U.\)

\begin{lemma}\label{lem:m72-potential-involution}
The potential \(\mathsf{\Phi}_{p,q}\) is invariant under this involution.
\end{lemma}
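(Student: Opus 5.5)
The plan is to apply the involution \(\tau\colon x_i u^{jk}\overline x_l\overline u^{mn}\mapsto x_l u^{mn}\overline x_i\overline u^{jk}\) to each of the eight sums in \eqref{eq:m72-potential}, and check that it permutes their terms while preserving coefficients. Since \(\tau\) is linear and only swaps the pairs \((x_i,u^{jk})\) and \((\overline x_l,\overline u^{mn})\), it suffices to track the index pattern of each sum under this swap.

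First I handle the sums that are visibly self-paired.
\begin{itemize}
\item The first sum \(\sum_\sigma x_iu^{jj}\overline x_i\overline u^{kk}\) goes to \(\sum_\sigma x_iu^{kk}\overline x_i\overline u^{jj}\). This is the same sum re-indexed by the transposition of \(j\) and \(k\), that is, by composing \(\sigma\) with \((2\,3)\), which is a bijection of \(\mathfrak S_3\).
\item The last sum \(\sum_{j<k} x_iu^{jk}\overline x_i\overline u^{jk}\) consists of terms each fixed by \(\tau\).
\item The seventh sum \(\sum_\sigma x_iu^{jk}\overline x_j\overline u^{ik}\) goes to \(\sum_\sigma x_ju^{ik}\overline x_i\overline u^{jk}\). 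Re-indexing by swapping \(i\) and \(j\) returns the same sum, using \(u^{ik}=u^{ki}\).
\item The sixth sum \(\sum_\sigma x_iu^{ik}\overline x_j\overline u^{jk}\) goes to \(\sum_\sigma x_ju^{jk}\overline x_i\overline u^{ik}\). Swapping \(i\) and \(j\) again returns the original sum.
\end{itemize}
Each of these re-indexings is a bijection on \(\mathfrak S_3\), and the coefficient of each term is unchanged.

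The substantive step is the four sign-dependent sums, where the \(p\leftrightarrow q\) bookkeeping must match. Under \(\tau\), the term \(x_iu^{ij}\overline x_j\overline u^{kk}\) becomes \(x_ju^{kk}\overline x_i\overline u^{ij}\). Renaming \((i,j,k)\to(j',i',k')\) turns this into \(x_{i'}u^{k'k'}\overline x_{j'}\overline u^{j'i'}\), which has the pattern of the fourth and fifth sums. The renaming composes \(\sigma\) with the transposition \((1\,2)\) of the first two slots, so it reverses the sign of \(\sigma\).

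Hence \(\tau\) sends
\begin{itemize}
\item the second sum (\(\operatorname{sgn}=1\), coefficient \(p\)) to the pattern-\(u^{kk}\overline u^{ij}\) terms with \(\operatorname{sgn}=-1\), which carry coefficient \(p\) in the fifth sum;
\item the third sum (\(\operatorname{sgn}=-1\), coefficient \(q\)) to the \(\operatorname{sgn}=1\) terms, which carry coefficient \(q\) in the fourth sum.
\end{itemize}
Since \(\tau\) is an involution, the fourth and fifth sums are sent back correspondingly. Therefore \(\tau\) swaps the second sum with the fifth and the third sum with the fourth.

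This sign-reversal check is the only place where the asymmetric placement of \(p\) and \(q\) in \eqref{eq:m72-potential} matters, and it is where I expect any error to occur. I will verify it on one explicit term, for instance \(\sigma=\mathrm{id}\), to confirm the convention that \((i,j,k)=\sigma(1,2,3)\) is compatible with composing by \((1\,2)\) on the right. Summing the eight cases shows \(\tau(\mathsf\Phi_{p,q})=\mathsf\Phi_{p,q}\).
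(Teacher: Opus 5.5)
Your proposal is correct and follows the paper's proof: the first sum is preserved by interchanging \(j\) and \(k\). Interchanging \(i\) and \(j\) reverses the sign of \(\sigma\), so it exchanges the second sum with the fifth and the third with the fourth, while it preserves the sixth and seventh sums; the last sum is fixed termwise. The appeal to \(u^{ik}=u^{ki}\) for the seventh sum is harmless but unnecessary, since the swap already reproduces the terms exactly.
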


\begin{proof}
\begingroup
The first sum in \eqref{eq:m72-potential} is preserved by interchanging
\(j\) and \(k.\)  The second and fifth sums are exchanged, as are the
third and fourth, by interchanging \(i\) and \(j.\)
The sixth and seventh sums are each preserved by the same interchange;
the final summands are fixed individually.
\endgroup
\end{proof}

Contracting consecutive pairs of factors in the cyclic order of
\(\mathsf{\Phi}_{p,q}\) defines four maps
\begin{equation}\label{eq:m72-second-derivatives}
\begin{aligned}
\delta_{\overline V,\overline U}&:\overline V^*\otimes\overline U^*
       \longrightarrow V\otimes U,
&\delta_{\overline U,V}&:\overline U^*\otimes V^*
       \longrightarrow U\otimes\overline V,\\
\delta_{V,U}&:V^*\otimes U^*
       \longrightarrow\overline V\otimes\overline U,
&\delta_{U,\overline V}&:U^*\otimes\overline V^*
       \longrightarrow\overline U\otimes V.
\end{aligned}
\end{equation}

\begingroup\begingroup
In \(\delta_{A,B},\) the subscripts name the contracted factors; the
remaining factors are read in cyclic order.  For example, in the
contraction defining
\(\delta_{\overline U,V}(\overline\mu\otimes\xi),\) the monomial
\(x\otimes u\otimes\overline x\otimes\overline u\) contributes
\(\xi(x)\overline\mu(\overline u)\,u\otimes\overline x,\) multiplied
by its coefficient in \(\mathsf{\Phi}_{p,q}.\)
\endgroup\endgroup
\begin{lemma}\label{lem:m72-rank-seven}
Each map in \eqref{eq:m72-second-derivatives} has rank \(7.\)
\end{lemma}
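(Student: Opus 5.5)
First reduce the four maps to one. The involution of Lemma~\ref{lem:m72-potential-involution} swaps the pair \((V,U)\) with \((\overline V,\overline U)\). Since it fixes \(\mathsf\Phi_{p,q}\), it identifies \(\delta_{V,U}\) with \(\delta_{\overline V,\overline U}\) and \(\delta_{U,\overline V}\) with \(\delta_{\overline U,V}\), up to the basis-preserving identifications. Next, \(\delta_{\overline V,\overline U}\) and \(\delta_{V,U}\) are both flattenings of the same tensor along the splitting \((V\otimes U)\mid(\overline V\otimes\overline U)\). The only difference is which side is dualized, so one is the transpose of the other. The same holds for \(\delta_{\overline U,V}\) and \(\delta_{U,\overline V}\) along \((U\otimes\overline V)\mid(\overline U\otimes V)\). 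It therefore suffices to compute the ranks of two \(18\times18\) matrices: the flattening \(M_1\) along \(V\otimes U\mid\overline V\otimes\overline U\), and the flattening \(M_2\) along \(U\otimes\overline V\mid\overline U\otimes V\).

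Write \(M_1\) in the monomial bases \(x_iu^{jk}\) and \(\overline x_l\overline u^{mn}\), reading its entries directly from \eqref{eq:m72-potential}. To make the computation manageable, I would use the grading by the torus \(\mathbb T\) of diagonal matrices acting on \(V\simeq\overline V\) and \(W\simeq\overline W\) compatibly. Assign weights \(\epsilon_i\) to \(x_i\) and \(\epsilon_j+\epsilon_k\) to \(u^{jk}\), and the opposite weights to the barred variables, up to a shift. Inspecting the summands, each monomial of \(\mathsf\Phi_{p,q}\) has total index content \(\{i,j,j,k,k\}\)-type balanced. In the first sum, for example, \(x_iu^{jj}\overline x_i\overline u^{kk}\) contains \(i\) twice and \(j,k\) twice each. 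So \(\mathsf\Phi_{p,q}\) is homogeneous of the weight \(2(\epsilon_1+\epsilon_2+\epsilon_3)\) for the natural torus action on all four spaces. Consequently \(M_1\) is block diagonal: a row of weight \(\mu\) can only pair with columns of weight \(2(\epsilon_1+\epsilon_2+\epsilon_3)-\mu\).

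Enumerate the 18 weights \(\epsilon_i+\epsilon_j+\epsilon_k\) with \(j\le k\). They fall into three classes:
\begin{itemize}
\item type \(3\epsilon_i\), which has 3 weights, each of multiplicity one;
\item type \(2\epsilon_i+\epsilon_j\), which comes from \(x_iu^{ij}\) and \(x_ju^{ii}\) and has 6 weights, each of multiplicity two;
\item type \(\epsilon_1+\epsilon_2+\epsilon_3\), which has multiplicity three.
\end{itemize}
Pairing each weight with its complementary weight produces blocks of sizes \(1\times1\), \(2\times2\) and \(3\times3\). The rank of \(M_1\) is the sum of the block ranks, and each block is small enough to compute by hand in terms of \(p\) and \(q\). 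I expect the three \(3\epsilon_i\)-type blocks to vanish or pair trivially. The six \(2\times2\) blocks have determinants of the form \(pq-\ldots\) or similar. The central \(3\times3\) block is built from \(1\), \(pq\) and the sixth to eighth sums, and I would check that it has rank exactly one or two. Then I would sum the block ranks and confirm the total is \(7\) for all \(p,q\in\kk^\times\). Repeat the same procedure for \(M_2\), whose blocks come from \(U\otimes\overline V\).

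\textbf{Main obstacle.} The hard step is this bookkeeping. Specifically:
\begin{itemize}
\item getting the sign-dependent coefficients \(p\) versus \(q\) right in each block;
\item showing that the rank is exactly \(7\) for \emph{all} nonzero \(p,q\), not just generically, which means checking that no minor degenerates for special ratios \(p/q\);
\item confirming that the torus-weight argument really applies to the fourth and fifth sums, where the index pattern differs.
\end{itemize}
If a block determinant turns out to vanish at special values, such as \(p/q\) a root of unity, then the statement must be implicitly restricted. In that case I would find the restriction in the later definition of \(\mathcal L(p,q)\). By Remark~\ref{rem:m72-parameter}, the check only depends on \(p/q\), so I could normalize \(q=1\) throughout.
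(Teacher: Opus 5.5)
\textbf{The framework is sound, but the decisive computation is missing and your predicted outcome contradicts the lemma.}

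Several parts of your setup are correct. Reducing to the two flattenings \(M_1,M_2\) works; the transpose observation even makes the involution unnecessary for that step. Every monomial in all eight sums of \eqref{eq:m72-potential}, the fourth and fifth included, has index multiset \(\{1,1,2,2,3,3\}.\) So the torus grading does make each flattening block diagonal, with the block sizes you list. The rows \(x_iu^{ii}\) and the columns \(\overline x_l\overline u^{ll}\) are zero, since these monomials never occur.

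However, your proposal stops exactly where the proof starts. If the six \(2\times2\) blocks had nonvanishing determinants, as you expect, the rank would already be at least \(12.\) What you actually have to show is that every nonzero block has rank exactly one. For instance, the block of \(M_1\) with rows \(x_1u^{33},x_3u^{31}\) and columns \(\overline x_1\overline u^{22},\overline x_2\overline u^{12}\) is \(\begin{psmallmatrix}1&-q\\-p&pq\end{psmallmatrix}.\) Its determinant \(pq-pq\) vanishes identically. The central block, in the bases \(x_1u^{23},x_2u^{31},x_3u^{12}\) and their barred counterparts, is \(pq\) times the all-ones matrix. Hence the rank is \(6+1=7\) for all \(p,q\in\kk^\times.\)

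Your worry about special ratios \(p/q\) is misplaced. The relevant determinants vanish identically, not at special values. Each \(2\times2\) block contains an entry equal to \(1,\) so it is never zero. The only nondegeneracy needed is \(pq\neq0,\) for the central block.

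The paper organizes exactly this computation as the factorization \eqref{eq:m72-potential-factorization}:
\(\mathsf{\Phi}_{p,q}=\sum_{k=1}^3(r_{2k-1}\otimes\overline r_{2k}+r_{2k}\otimes\overline r_{2k-1})+pq\,r_7\otimes\overline r_7.\)
The vectors \(r_1,\ldots,r_6\) are the weight vectors spanning the images of your six \(2\times2\) blocks, and \(r_7\) spans the image of the central block. The \(r_i\) are linearly independent, and so are the \(\overline r_i.\) Together with the invertible coefficient matrix \(\operatorname{diag}(J,J,J,pq),\) where \(J=\begin{psmallmatrix}0&1\\1&0\end{psmallmatrix},\) this gives rank seven at once. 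The analogous expansion of the cyclically rotated potential in \(s_1,\ldots,s_7\) handles \(M_2.\)

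Carried out correctly, your block computation is the same proof as the paper's.
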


\begin{proof}
By Lemma~\ref{lem:m72-potential-involution}, it is enough to
consider \(\delta_{\overline V,\overline U}\) and
\(\delta_{\overline U,V}\) from \eqref{eq:m72-second-derivatives}.
\begingroup
Define \(r_1,\ldots,r_7\in V\otimes U\) by
\begin{equation}\label{eq:m72-relation-vectors}
\begin{gathered}
r_1=x_1u^{33}-px_3u^{31},\quad
r_2=x_1u^{22}-qx_2u^{12},\quad
r_3=x_2u^{33}-qx_3u^{23},\\
r_4=x_2u^{11}-px_1u^{12},\quad
r_5=x_3u^{22}-px_2u^{23},\quad
r_6=x_3u^{11}-qx_1u^{31},\\
r_7=x_1u^{23}+x_2u^{31}+x_3u^{12}.
\end{gathered}
\end{equation}
\endgroup
Direct contraction in \eqref{eq:m72-potential} shows that
\(\operatorname{im}\delta_{\overline V,\overline U}
=\Span\{r_1,\ldots,r_7\},\)
\begingroup\begingroup
whereas \(\operatorname{im}\delta_{\overline U,V}\) is spanned by
\(s_1,\ldots,s_7\in U\otimes\overline V,\) where
\[
\begin{gathered}
s_1=u^{33}\overline x_1-qu^{31}\overline x_3,\quad
s_2=u^{22}\overline x_1-pu^{12}\overline x_2,\quad
s_3=u^{33}\overline x_2-pu^{23}\overline x_3,\\
s_4=u^{11}\overline x_2-qu^{12}\overline x_1,\quad
s_5=u^{22}\overline x_3-qu^{23}\overline x_2,\quad
s_6=u^{11}\overline x_3-pu^{31}\overline x_1,\\
s_7=u^{23}\overline x_1+u^{31}\overline x_2+u^{12}\overline x_3.
\end{gathered}
\]
\endgroup\endgroup
\begingroup
For \(1\le i\le7,\) {define
\(\overline r_i\in\overline V\otimes\overline U\) by placing bars on the
basis vectors in \(r_i\) from \eqref{eq:m72-relation-vectors}}.  Expanding
\eqref{eq:m72-potential} gives
\begin{equation}\label{eq:m72-potential-factorization}
 \mathsf{\Phi}_{p,q}=
 \sum_{k=1}^3(r_{2k-1}\otimes\overline r_{2k}
             +r_{2k}\otimes\overline r_{2k-1})
       +pq\,r_7\otimes\overline r_7.
\end{equation}
\begingroup\begingroup
Let \(\overline s_i\in\overline U\otimes V\) be the image of \(s_i\)
under the basis-preserving identifications.  For the cyclic permutation
\(
 c(x\otimes u\otimes\overline x\otimes\overline u)
 =u\otimes\overline x\otimes\overline u\otimes x
\)
one has
\[
 c(\mathsf{\Phi}_{p,q})=
 \sum_{k=1}^3(s_{2k-1}\otimes\overline s_{2k}
              +s_{2k}\otimes\overline s_{2k-1})
       +pq\,s_7\otimes\overline s_7.
\]
\endgroup\endgroup
In each list the first six vectors have distinct diagonal monomials,
whereas the seventh has none; hence, the seven vectors are independent.
The coefficient matrix in either expansion is
\(\operatorname{diag}(J,J,J,pq),\) where
\(J=\begin{psmallmatrix}0&1\\1&0\end{psmallmatrix},\) and has determinant
\(-pq\ne0.\)  Thus, each contraction has rank \(7\).
\endgroup
\end{proof}

Define the connected quadratic \(\mathbb Z\)-algebra
\(\mathcal L=\mathcal L(p,q)\) by
\begin{equation}\label{eq:m72-z-algebra}
\mathcal L_{j+1}^{j}=
\begin{cases}
V,&j\equiv0\pmod4,\\
U,&j\equiv1\pmod4,\\
\overline V,&j\equiv2\pmod4,\\
\overline U,&j\equiv3\pmod4,
\end{cases}
\qquad
\mathcal R_{j+1}^{j-1}=
\begin{cases}
\operatorname{im}\delta_{U,\overline V},
 &j\equiv0\pmod4,\\
\operatorname{im}\delta_{\overline V,\overline U},
 &j\equiv1\pmod4,\\
\operatorname{im}\delta_{\overline U,V},
 &j\equiv2\pmod4,\\
\operatorname{im}\delta_{V,U},
 &j\equiv3\pmod4.
\end{cases}
\end{equation}
Here
\(
 \mathcal R_{j+1}^{j-1}
 \subset
 \mathcal L_j^{j-1}\otimes\mathcal L_{j+1}^{j}
,\)
and \(\mathcal L\) is the tensor \(\mathbb Z\)-algebra on
its degree-one components modulo the ideal generated by these
relation spaces.


\begin{remark}\label{rem:m72-two-periodic}
The algebra \(\mathcal L\) is \(4\)-periodic by construction.
Lemma~\ref{lem:m72-potential-involution} and the chosen
identifications induce a family of isomorphisms
\(
 \theta_i^j\colon\mathcal L_i^j\xrightarrow{\sim}
 \mathcal L_{i+2}^{j+2}
\)
compatible with multiplication and preserving the relation spaces.  Thus,
\(\mathcal L\) is \(2\)-periodic.  Write \(\theta\) for this family and
\(\theta^n,\) \(n\in\mathbb Z,\) for its iterates, using the inverse
isomorphisms for \(n<0.\)
\end{remark}

Using the periodicity isomorphisms \(\theta_i^j,\) the spaces
\(
 \widetilde R_m=\mathcal L_0^{-2m}
,\) \(m\geq0,\) form a graded algebra
\(\widetilde R=\bigoplus_{m\geq0}\widetilde R_m\): for
\(a\in\widetilde R_m\) and \(b\in\widetilde R_n,\) multiplication is
\(a\star b=\theta^{-n}(a)b.\)  Here
\(\theta^{-n}(a)\in\mathcal L_{-2n}^{-2(m+n)},\) so the product belongs
to \(\widetilde R_{m+n}.\)

{The statements below appear in
\cite[Theorems~2--4]{Orlov2026}.}

\begingroup\begingroup\begingroup
\begin{theorem}\label{thm:m72-koszul}
The quadratic \(\mathbb Z\)-algebra \(\mathcal L\) is Koszul and AS-regular
of dimension \(4\) and Gorenstein parameter \(4.\)
\end{theorem}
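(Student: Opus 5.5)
The approach is to realize \(\mathcal L\) as a twisted "derivation-quotient" \(\mathbb Z\)-algebra of the potential \(\mathsf{\Phi}_{p,q}\), in the spirit of Bocklandt–Schedler–Wemyss and Dubois-Violette. From \(\mathsf{\Phi}_{p,q}\) we write down an explicit candidate for the minimal projective resolution of each simple right module \(S_j\). We prove this complex is exact, and exactness yields all three claimed properties at once.

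\textbf{Step 1: the candidate resolution.} Write \(P_j=e_j\mathcal L\) for the indecomposable projectives. For each \(j\) the candidate complex is
\[
0\to P_{j+4}\to \mathcal L_{j+4}^{j+3\,*}\otimes P_{j+3}\to \mathcal R^{*}\otimes P_{j+2}\to \mathcal L_{j+1}^{j}\otimes P_{j+1}\to P_j\to S_j\to0 .
\]
The differentials are built as follows.
\begin{itemize}[label=\textbullet]
\item The last map is multiplication by the degree-one generators.
\item The next map is the inclusion of the relation space \(\mathcal R_{j+2}^{j}\), which has dimension \(7\) by Lemma~\ref{lem:m72-rank-seven}.
\item The third and fourth maps are obtained by contracting \(\mathsf{\Phi}_{p,q}\), suitably cyclically rotated, in one and then zero factors. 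This is possible because every relation space is the image of one of the four second-derivative maps \eqref{eq:m72-second-derivatives} of the same potential.
\end{itemize}
The factorization \eqref{eq:m72-potential-factorization}, together with its cyclic analogue for \(c(\mathsf{\Phi}_{p,q})\), shows that consecutive compositions vanish. The involution of Lemma~\ref{lem:m72-potential-involution} and the \(2\)-periodicity of Remark~\ref{rem:m72-two-periodic} reduce all checks to two values of \(j\) modulo \(4\).

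\textbf{Step 2: exactness at the ends.} Exactness at the two ends should follow from nondegeneracy of \(\mathsf{\Phi}_{p,q}\). Contracting a single factor must be injective on \(V^*,U^*,\overline V^*,\overline U^*\), which is immediate from the explicit monomials. Lemma~\ref{lem:m72-rank-seven} gives the rank-\(7\) statement in the middle. Exactness at \(P_j\) and \(\mathcal L_{j+1}^j\otimes P_{j+1}\) holds by the definition of a quadratic algebra.

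\textbf{Step 3: exactness in the middle (main obstacle).} The hard part is exactness in the two middle homological degrees, equivalently Koszulity plus the correct Hilbert function. I would attack it in two ways.
\begin{itemize}[label=\textbullet]
\item First, construct a PBW (Gröbner) basis for \(\mathcal L\) adapted to the ordered bases \(x_i,u^{jk},\overline x_i,\overline u^{jk}\). The leading terms of \(r_1,\dots,r_6\) are the distinct "diagonal" monomials, and \(r_7\) has a separate leading term. It then suffices to check that overlaps in degree three resolve, by Backelin's distributivity criterion for quadratic algebras. The Koszul dual \(\mathcal L^!\) would then have graded pieces of dimensions \(1,\dim\mathcal L_{j+1}^j,7,\dim\mathcal L_{j+4}^{j+3},1\). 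The top piece is spanned by the dual of \(\mathsf{\Phi}_{p,q}\).
\item If the overlap computation becomes unmanageable, use semicontinuity in the parameter \(p/q\), which by Remark~\ref{rem:m72-parameter} is the only invariant. Exactness of the complex is an open condition, so it suffices to verify it at one convenient specialization. The Hilbert function is bounded above by the Euler-characteristic prediction, and it suffices to establish a matching lower bound. The lower bound could come, for instance, from a representation on a point-module-type family or from an explicit monomial count.
\end{itemize}

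\textbf{Step 4: conclusion.} Once the complex is exact, the following properties are read off from it.
\begin{itemize}[label=\textbullet]
\item Koszulity: the resolution is linear.
\item Global dimension \(4\): the resolution has length \(4\).
\item AS-Gorenstein property: applying \(\operatorname{Hom}(-,\mathcal L)\) to the resolution of \(S_j\) yields the resolution of a left simple module shifted by \(4\). This self-duality comes from the cyclic symmetry \(c(\mathsf{\Phi}_{p,q})\), and it gives Gorenstein parameter \(4\).
\item Finiteness of the \(\mathcal L_i^j\): the Hilbert function is recovered by inverting the Euler characteristic of the resolution. As a consistency check, this should reproduce the positive Euler matrix \((3,6,3,6,7,7)\) of Theorem~\ref{thm:main-reduction}.
\end{itemize}
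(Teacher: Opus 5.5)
The paper does not prove this theorem; it quotes it from \cite[Theorems~2--4]{Orlov2026}. Your proposal therefore has to stand on its own, and it has a genuine gap.

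Steps~1, 2 and~4 are the formal part of the superpotential method. Since \(\mathsf{\Phi}_{p,q}\) lies in \(\mathcal R_2^0\otimes\overline V\otimes\overline U\), in \(V\otimes\mathcal R_3^1\otimes\overline U\) and in \(V\otimes U\otimes\mathcal R_4^2\), the sequence is a complex. Once it is exact, Koszulity, global dimension four and the Gorenstein symmetry follow essentially as you say. All of the actual content is exactness in homological degrees \(2,3,4\) for every \(p,q\in\kk^\times\), and that is deferred to Step~3, which is not carried out.

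Even the left end is not formal. Injectivity of \(P_{j+4}\to\mathcal L_{j+4}^{j+3\,*}\otimes P_{j+3}\) says that no nonzero element of \(e_{j+4}\mathcal L\) is annihilated on the left by all of \(\mathcal L_{j+4}^{j+3}\). That is a statement about \(\mathcal L\) in every degree, not a consequence of nondegeneracy of \(\mathsf{\Phi}_{p,q}\), which only controls the degree-one part of this map.

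Neither route in Step~3 works as stated. The PBW route fails in the bases you fix.
At the position \((U,\overline V,\overline U)\) the relation spaces are \(\mathcal R_3^1=\Span\{s_1,\dots,s_7\}\) and \(\mathcal R_4^2=\Span\{\overline r_1,\dots,\overline r_7\}\). In each family the monomial supports are pairwise disjoint, so for any admissible order the leading terms are one monomial from each relation.
Let \(a_l\) count the leading terms of the \(s_i\) ending in \(\overline x_l\), and let \(b_l\) count the leading terms of the \(\overline r_i\) beginning with \(\overline x_l\). Each \(\overline x_l\) occurs in only five relations of each family, so \(a_l,b_l\le5\) with \(\sum_la_l=\sum_lb_l=7\). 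Hence each of the vectors \(a,b\) has at most one zero entry, which forces \(\sum_la_lb_l\ge4\).
The number of normal words in \(U\otimes\overline V\otimes\overline U\) is therefore \(108-42-42+\sum_la_lb_l\ge28\). But exactness of your own complex for \(S_1\), evaluated at the object \(4\), forces \(\dim\mathcal L_4^1=6\cdot11-7\cdot6+3=27\). So for no order do all degree-three overlaps resolve. (Separately, Backelin's criterion is distributivity in all degrees, not a degree-three check.)

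The semicontinuity route has three problems.
First, openness of exactness holds only one internal degree at a time. Exactness at one value of \(p/q\) therefore gives the result only outside a countable set of parameters, not for every \(p,q\in\kk^\times\).
Second, the claimed upper bound of the Hilbert function by the Euler-characteristic prediction has no justification. Dimensions can only jump up under specialization, so semicontinuity gives no upper bound at special values.
Third, even equality of the Hilbert series with the prediction would not give exactness, since numerical Koszulity does not imply Koszulity.

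What is missing is an argument that controls \(\mathcal L(p,q)\) in all degrees, uniformly in \(p/q\).
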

\endgroup\endgroup\endgroup

\begin{proposition}\label{prop:m72-graded-algebra}
The graded algebra \(\widetilde R\) is quadratic, Koszul, noetherian,
AS-Gorenstein, and of polynomial growth.  For every \(m\geq0,\)
\begin{equation}\label{eq:m72-hilbert-polynomial}
 \dim_{\kk}\widetilde R_m
 =\frac32(m+1)^3-\frac12(m+1).
\end{equation}
\end{proposition}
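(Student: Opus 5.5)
The plan is to derive every property of \(\widetilde R\) from Theorem~\ref{thm:m72-koszul} by treating \(\widetilde R\) as a Veronese-type subalgebra of \(\mathcal L\). The \(2\)-periodicity of Remark~\ref{rem:m72-two-periodic} makes this possible. Up to reindexing, \(\widetilde R\) is the graded algebra of the full subalgebra of \(\mathcal L\) on the objects \(2\mathbb Z\), in the sense of Lemma~\ref{lem:periodic-z-subalgebra}. Thus \(\widetilde R_1=\mathcal L_0^{-2}\), which is a quotient of a tensor product of two degree-one components, and \(\widetilde R_m=\mathcal L_0^{-2m}\).

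First I compute the Hilbert function. Since \(\mathcal L\) is Koszul and AS-regular of dimension \(4\), it has a Koszul resolution of length four. The four generators have dimensions \(3,6,3,6\), the relation spaces have dimension \(7\) by Lemma~\ref{lem:m72-rank-seven}, and the Gorenstein parameter is \(4\). AS-Gorenstein duality then forces the remaining terms: the third term dimensions equal the generator dimensions, and the top term is one-dimensional. This gives a linear recursion
\[
\sum_{k=0}^4(-1)^k\dim K_k\cdot\dim\mathcal L_{i+k}^{j}=0 ,
\]
whose coefficients depend on residues mod \(4\). I then solve this recursion with \(\dim\mathcal L_i^i=1\). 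Alternatively, I note that the Euler matrix of the associated helix is \(A(3,6,3,6,7,7)\), and use Lemma~\ref{lem:adjoint-leading-term} with \(D=72\) and \(p=(-1,1,-1,1)^t\). From this I expect \(\dim\mathcal L_0^{-2m}\) to be the cubic \(\tfrac32(m+1)^3-\tfrac12(m+1)\). Checks are \(m=0\), giving \(1\), and \(m=1\), giving \(3\cdot6-7=11\). Since \(11=12-1\), this matches the formula. This also gives polynomial growth.

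Second, I treat the structural properties. For quadraticity and Koszulity, I use that a Veronese-type subalgebra of a Koszul (\(\mathbb Z\)-)algebra is Koszul. The Backelin argument for Veronese subalgebras transfers to \(\mathbb Z\)-algebras, and Koszul algebras are quadratic. For the noetherian property, I use Lemma~\ref{lem:periodic-z-subalgebra}. I compare \(\widetilde R\) with \(\mathcal L\) and show that \(\mathcal L\) is noetherian; here I would invoke the results of \cite{Orlov2026} or a point-module/twisted-coordinate-ring argument. For AS-Gorenstein, I use that the \(2\)-Veronese of an AS-regular \(\mathbb Z\)-algebra with Gorenstein parameter \(4\) has local cohomology concentrated in top degree. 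The dualizing module shifts by \(4/2=2\), so \(\widetilde R\) is AS-Gorenstein with parameter \(2\).

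The main obstacle is the noetherian property. AS-regularity alone does not give noetherianity. I would try to exhibit a normal regular sequence or a point scheme (presumably related to \(\mathbb P^2\times\mathbb P^2\)-type data from \(V,W\)) making \(\widetilde R\) finite over a noetherian twisted homogeneous coordinate ring. Failing that, I would cite \cite[Theorems~2--4]{Orlov2026} directly for this step.
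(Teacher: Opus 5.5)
The paper gives no proof of this proposition. It quotes it, together with Theorem~\ref{thm:m72-koszul}, from \cite[Theorems~2--4]{Orlov2026}. You instead try to derive it from Theorem~\ref{thm:m72-koszul} through the Veronese structure, which is a genuinely different route. That route yields the Hilbert function, quadraticity, Koszulity and polynomial growth. It does not establish the noetherian and AS-Gorenstein assertions, so as a proof it is incomplete.

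\textbf{Hilbert function.} Your recursion works, but it must be made precise and actually solved.
\begin{itemize}
\item In the minimal resolution of the simple right module at \(k\), the terms \(P_{k+1},P_{k+2},P_{k+3},P_{k+4}\) occur with multiplicities \(\dim\mathcal L_{k+1}^{k}\), \(7\), \(\dim\mathcal L_{k+4}^{k+3}\) and \(1\).
\item The third multiplicity is the \emph{last} degree-one step, by the Frobenius pairing on the Koszul dual. With the other choice you would get \(\dim\mathcal L_0^{-3}=30\) instead of the correct \(27\), which one can confirm from \(A_{\mathrm{pr}}\).
\end{itemize}
Write \(g(k)=\dim\mathcal L_0^{-k}\), with \(g(0)=1\) and \(g(k)=0\) for \(k<0\). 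For \(k\ge1\) one gets
\[
 g(k)=
 \begin{cases}
 6g(k-1)-7g(k-2)+3g(k-3)-g(k-4), & k\ \text{odd},\\
 3g(k-1)-7g(k-2)+6g(k-3)-g(k-4), & k\ \text{even}.
 \end{cases}
\]
This recursion couples the two parities, so checking \(m=0,1\) of the even formula proves nothing. You also need the companion formula \(g(2m+1)=\tfrac32(m+1)(m+2)^2\). Both closed forms vanish at \(k=-1,-2,-3\) and satisfy the two recursions identically. Induction then gives \eqref{eq:m72-hilbert-polynomial}, with values \(1,6,11,27,39,72,94,\dots\).

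Your alternative via Lemma~\ref{lem:adjoint-leading-term} gives only the cubic coefficient. Also, \(A(3,6,3,6,7,7)\) is the Euler matrix of \(\mathcal P^\vee\), not of \(\mathcal P\). The Euler matrix of \(\mathcal P\) is \(A_{\mathrm{pr}}\), whose radical vector is \((-1,2,-2,1)^t\).

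\textbf{Quadratic and Koszul.} Your argument via the Backelin--Fr\"oberg theorem (Veronese subalgebras of Koszul algebras are Koszul), carried over to \(\mathbb Z\)-algebras, is fine.

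\textbf{Noetherianity.} This is the real gap. As you note, it does not follow from Theorem~\ref{thm:m72-koszul}. Lemma~\ref{lem:periodic-z-subalgebra} is not the tool to transfer it, because an equivalence of \(\QGr\) categories says nothing about chains of ideals. The correct transfer is elementary: for a right ideal \(I\) of the even-index subalgebra one has \((I\mathcal L)e=I\), so noetherianity of \(\mathcal L\) passes to \(\widetilde R\). You never prove, however, that \(\mathcal L\) itself is noetherian.

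\textbf{AS-Gorenstein.} This step rests on the same missing input.
\begin{itemize}
\item The Veronese criterion ``\(d\mid\ell\)'' is proved via local duality and balanced dualizing complexes, which need noetherian hypotheses.
\item AS-Gorenstein also requires finite injective dimension of \(\widetilde R\), which has infinite global dimension. Concentration of local cohomology gives only the Cohen--Macaulay property.
\item The symmetry \(H(t^{-1})=t^2H(t)\) of \(H(t)=(1+7t+t^2)/(1-t)^4\) is consistent with Gorenstein parameter \(2\), but it is only a check.
\end{itemize}
For these two properties your proposal ends where the paper does, with the citation \cite[Theorems~2--4]{Orlov2026}.
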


\begingroup
By \cite[Theorem~5]{Orlov2026},
\(\qprf\!\operatorname{-}\mathcal L\) is the perfect category of a
three-dimensional non-commutative Fano minifold, denoted \(\mathsf M_{72};\)
we use its DG enhancement when referring to the non-commutative scheme.
\begingroup\begingroup
Put \(P_i=e_i\mathcal L\) and denote their images in
\(\qprf\!\operatorname{-}\mathcal L\) by the same letters.
With the right-module convention of Definition~\ref{def:z-algebra},
\(\operatorname{Hom}_{\mathcal L}(P_i,P_j)=\mathcal L_i^j.\)
Thus, \(\mathcal P=(P_3,P_2,P_1,P_0)\) is a full strong exceptional collection
in \(\qprf\!\operatorname{-}\mathcal L;\)
see \cite[after Theorem~2]{Orlov2026}.

For the exceptional object \(P_0,\) put
\(R=R(\mathsf M_{72},P_0)\) and \(T=S^{-1}[3].\)
The construction of \cite[Theorem~5]{Orlov2026} and the four-step
periodicity identify \(TP_i\) with \(P_{i-4},\) with the action on
morphisms given by \(\theta^{-2}.\)  In particular,
\(T^mP_0\simeq P_{-4m}.\)
Then
\[
 R_m=\operatorname{Hom}(P_0,T^mP_0)
 \simeq\mathcal L_0^{-4m}=\widetilde R_{2m},\qquad m\ge0.
\]
Under these identifications, the product \(T^n(f)\circ g,\) for
\(f\in R_m\) and \(g\in R_n,\) corresponds to \(\theta^{-2n}(f)g.\)
This is the product in the second Veronese algebra
\(\widetilde R^{(2)}=\bigoplus_{m\ge0}\widetilde R_{2m},\) so
\(R\simeq\widetilde R^{(2)}\) as graded algebras.
Substituting \(2m\) for \(m\) in
\eqref{eq:m72-hilbert-polynomial} gives
\[
        \dim_\kk R_m=12m^3+18m^2+8m+1.
\]

The Euler matrix of the exceptional collection \(\mathcal P\) is
\endgroup\endgroup
\[
 A_{\mathrm{pr}}=
 \begin{pmatrix}
 1&3&11&18\\
 0&1&6&11\\
 0&0&1&3\\
 0&0&0&1
 \end{pmatrix}.
\]
{Let \(\mathcal P^\vee\) be the right dual exceptional collection of
\(\mathcal P\) in \(\qprf\!\operatorname{-}\mathcal L.\)}
Put \(S_\pm=\operatorname{diag}(1,-1,1,-1).\) Inverting \(A_{\mathrm{pr}}\) gives the Euler matrix of the {right dual exceptional collection
\(\mathcal P^\vee\)}:
\[
 A_{M_{72}}=S_\pm A_{\mathrm{pr}}^{-1}S_\pm=
 \begin{pmatrix}
 1&3&7&6\\
 0&1&6&7\\
 0&0&1&3\\
 0&0&0&1
 \end{pmatrix}.
\]
This matrix is positive and has Euler degree \(72.\)  It therefore
{lies in the mutation class of Euler degree \(72\) in}
Theorem~\ref{thm:main-reduction}.
This completes the proof of Theorem~\ref{thm:intro-classification}
stated in the Introduction.
\endgroup
\appendix

\begingroup
\refstepcounter{section}
\section*{Appendix}
\label{app:half-turn-realization}

{In this section we construct four half-turns
\(j_1,\ldots,j_4\in\PSL_2(\mathbb R)\) associated with a positive Euler
matrix (Lemma~\ref{lem:positive-euler-half-turns}).
We prove that
\(\Gamma=\langle j_1j_2,j_2j_3,j_3j_4\rangle\) is discrete and
torsion-free and that \(\Gamma\backslash\mathbb H,\) with its quotient
metric, is complete, has finite area, and has genus one and
two cusps (Lemma~\ref{lem:four-half-turns}).
Corollary~\ref{cor:half-turn-fricke} then shows that, after
conjugation if necessary, the projectivization of
\(\rho\colon F_3\to\SL_2(\mathbb R)\) from
Lemma~\ref{lem:positive-euler-half-turns} is a Fricke representation.}

{A subgroup of \(\PSL_2(\mathbb R)\) is called
\emph{non-elementary} if it has no finite orbit in
\(\mathbb H\cup\partial\mathbb H.\)}
On \(\mathfrak{sl}_2(\mathbb R),\) consider the Lorentzian form
\begin{equation}\label{eq:lorentz-form-sl2}
        \langle R,S\rangle=-\frac12\tr(RS)
\end{equation}
of signature \((1,2).\)  {Each of the two connected components of}
\(
 \mathscr H=\{J\in\mathfrak{sl}_2(\mathbb R)\mid\langle J,J\rangle=1\}
\)
{is a hyperboloid model of \(\mathbb H.\)  Under the standard
\(\PSL_2(\mathbb R)\)-equivariant identification, a matrix
\(J\in\mathscr H\) represents a point \(x_J\in\mathbb H.\)  Since
\(J^2=-I,\) its class \([J]\in\PSL_2(\mathbb R)\) is the order-two
orientation-preserving isometry fixing \(x_J,\) that is, the half-turn
about \(x_J.\)  The matrices \(J\) and \(-J\) represent the same
half-turn.}

\begin{lemma}\label{lem:positive-euler-half-turns}
Let \(A=A(a,b,c,d,e,f)\) be a positive Euler matrix and put \(B=A+A^t.\)
There are \(J_1,\ldots,J_4\) in one component of \(\mathscr H\) such that
\begin{equation}\label{eq:half-turn-gram}
        -\tr(J_iJ_j)=B_{ij}.
\end{equation}
The assignment
\begin{equation}\label{eq:half-turn-representation}
        \rho(\alpha_i)=-J_iJ_{i+1},\qquad 1\le i\le3,
\end{equation}
defines an irreducible representation with \(A_\rho=A.\)  Write
\(
 j_i=[J_i],\;
 \Gamma=\overline\rho(F_3),\;
 H=\langle j_1,j_2,j_3,j_4\rangle.
\)
Both \(\Gamma\) and \(H\) are discrete and non-elementary, and
\({\kappa}=j_1j_2j_3j_4\) is a parabolic element.
\end{lemma}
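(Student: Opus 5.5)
Since \(B=A+A^t\) is a symmetric \(4\times4\) matrix of rank three, I will first identify its signature and realize it as a Gram matrix in \((\mathfrak{sl}_2(\mathbb R),\langle\,,\rangle)\). Every \(3\times3\) minor of \(B\) is positive by Definition~\ref{def:positive-euler-matrix}(ii). The diagonal entries equal \(2\) and the off-diagonal entries are at least \(3\). So each principal \(2\times2\) minor \(4-u^2\) is negative, while each principal \(3\times3\) minor is positive. By Sylvester's criterion, every \(3\times3\) principal block therefore has signature \((1,2)\), and hence \(B\) has signature \((1,2,0)\) with one-dimensional kernel.

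Rescaling by \(\tfrac12\), the form \(\tfrac12B\) has diagonal \(1\) and the same signature as \(\langle\,,\rangle\). Hence there is a linear map \(\mathbb R^4\to\mathfrak{sl}_2(\mathbb R)\), killing \(\ker B\), that is an isometry from \(\tfrac12 B\) onto \(\langle\,,\rangle\) restricted to its image. Choose a basis \(e_1,e_2,e_3\) of a complement of \(\ker B\), send it to an orthonormal-type basis with the prescribed Gram matrix (possible by Sylvester's law of inertia), and let \(J_i\) be the images of the standard basis vectors. Then \(\langle J_i,J_i\rangle=1\) and \(\langle J_i,J_j\rangle=B_{ij}/2\), which is \eqref{eq:half-turn-gram}.

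Since \(\langle J_i,J_j\rangle\ge 3/2>0\) for \(i\neq j\), all \(J_i\) lie in the same sheet. Two unit timelike vectors in opposite sheets have negative pairing, so replacing \(J_i\) by \(-J_i\) is never needed. I would double-check the sign convention for \(\langle\,,\rangle\) here.

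Next I verify \(A_\rho=A\) using \(J_i^2=-I\) and trace identities. For example, \(\tr(J_1J_2)=-a\) gives \(\tr\rho(\alpha_1)=a\). Similarly,
\[
\rho(\alpha_1\alpha_2)=J_1J_2J_2J_3=-J_1J_3,
\]
whose trace is \(e\). The product \(\rho(\alpha_1\alpha_2\alpha_3)=-J_1J_2J_2J_3J_3J_4\) reduces to \(\pm J_1J_4\) with trace \(d\); I will track the signs carefully here. Since the \(J\)'s span the three-dimensional Lie algebra (the rank of \(B\) is three), they have no common eigenvector, and so \(\rho\) is irreducible.

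The main obstacle is discreteness and non-elementarity. For non-elementarity, \(\Gamma\) is irreducible, and a quick argument rules out a finite orbit in \(\mathbb H\cup\partial\mathbb H\): a fixed point in \(\mathbb H\) forces an invariant definite form, and \(\tr\ge3\) elements are hyperbolic. For \(H\) one passes to the index-\(\le2\) subgroup \(\Gamma\).

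For discreteness, I would use that \(\Gamma\) is conjugate into \(\SL_2(\mathbb Z)\). Corollary~\ref{cor:all-traces-integral} gives integral traces, and Lemma~\ref{lem:integral-traces-modular} applies provided some non-central element has trace \(\pm2\). That element is \(\kappa\): lifting, \(J_1J_2J_3J_4\) equals \(\rho(\alpha_1\alpha_3)\) or \(\rho(K_1)\) up to sign, and so has trace \(\Phi(A)/2=\pm2\) by Proposition~\ref{prop:euler-fricke-dictionary}. It is non-central because otherwise \(J_1J_2=\pm J_4J_3\), and comparing traces would give \(a=c\) together with extra Gram relations contradicting \(\det\) of a \(3\times3\) minor; I would check this by computing. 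Hence \(\Gamma\subset\PSL_2(\mathbb Z)\) after conjugation, so \(\Gamma\) is discrete. Then \(H\), which contains \(\Gamma\) with index at most two, is discrete too. Finally, \(\kappa\) is parabolic because it has trace \(\pm2\) and is non-central.
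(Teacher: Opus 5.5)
Your overall route is the paper's. You realize \(\tfrac12B\) as a Gram matrix in \((\mathfrak{sl}_2(\mathbb R),\langle\,,\rangle)\) and use positivity of the off-diagonal pairings to put all \(J_i\) in one sheet. You read off \(A_\rho=A\) from \(M_i\cdots M_{j-1}=-J_iJ_j\), where \(M_i=\rho(\alpha_i)=-J_iJ_{i+1}\); your sign hedge resolves as \(\rho(\alpha_1\alpha_2\alpha_3)=-J_1J_4\). Discreteness comes from Corollary~\ref{cor:all-traces-integral} and Lemma~\ref{lem:integral-traces-modular}, and parabolicity of \(\kappa\) from \(\tr(J_1J_2J_3J_4)=\tr\rho(K_1)=\Phi(A)/2\) plus non-centrality.

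\textbf{Irreducibility.} This step does not follow as written. That \(J_1,\ldots,J_4\) span \(\mathfrak{sl}_2(\mathbb R)\), and so have no common eigenvector, is a statement about the group generated by the \(J_i\). But \(\rho(F_3)\) is the subgroup generated by the even products \(-J_iJ_{i+1}\). Two half-turns about distinct points already have no common eigenvector, while \(\langle J_1J_2\rangle\) is reducible. This matters, because Lemma~\ref{lem:integral-traces-modular} needs irreducibility. A correct version of your idea: \(J_iJ_j=-\tfrac12B_{ij}I+\tfrac12[J_i,J_j]\) and \(\pm J_iJ_j\in\rho(F_3)\). So a common eigenvector of \(\rho(F_3)\) would be a common eigenvector of all brackets \([J_i,J_j]\), which span \([\mathfrak{sl}_2,\mathfrak{sl}_2]=\mathfrak{sl}_2\). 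The paper argues more directly: \(\tr[M_1,M_2]=2-\Delta(a,b,e)<-2\), whereas a reducible pair has commutator trace \(2\).

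\textbf{Non-elementarity.} This step has a similar hole. Irreducibility plus the presence of hyperbolic elements excludes fixed points in \(\mathbb H\cup\partial\mathbb H\). It does not exclude an invariant pair of boundary points: a hyperbolic element and a half-turn about a point of its axis generate an irreducible but elementary group. You need to rule this case out, for example as the paper does. The fixed points of \(j_1,j_2,j_3\) are not collinear, since their Gram determinant is \(\Delta(a,b,e)/4>0\). Hence the hyperbolic elements \(j_1j_2\) and \(j_2j_3\) have distinct axes crossing at the fixed point of \(j_2\), and they share no fixed point at infinity.

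\textbf{Non-centrality.} Your deferred check does work, and more simply than you suggest. If \(J_1J_2J_3J_4=\pm I\), then \(J_1J_2J_3=\pm J_4\) is traceless. But the \(\SL_2\) trace identities give \(\tr(J_1J_2J_3)^2=\Delta(a,b,e)>0\). The paper argues instead that \(\rho(K_1)=\varepsilon I\) would force \(\rho(K_2)=\varepsilon[M_1,M_2]\), whose trace \(\varepsilon(2-\Delta(a,b,e))\) is not \(2\varepsilon\).

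\textbf{Index two.} For \(H=\Gamma\cup\Gamma j_1\), record explicitly that \(j_ij_k=(j_ij_{i+1})\cdots(j_{k-1}j_k)\in\Gamma\).
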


\begin{proof}
The first three leading principal minors of \(B/2\) are
\[
        1,\qquad 1-\frac{a^2}{4}<0,\qquad
        \frac{\Delta(a,b,e)}4>0.
\]
Together with \(\det B=0,\) these signs give rank three and inertia
\((1,2)\) on \(\mathbb R^4/\ker B.\)  {If \(\overline e_i\)
denotes the image in this quotient of the \(i\)-th standard basis vector of
\(\mathbb R^4,\) an isometry with the Lorentzian space
\eqref{eq:lorentz-form-sl2} sends \(\overline e_i\) to a matrix \(J_i\)
satisfying \eqref{eq:half-turn-gram}.}
They have norm one.  Moreover,
\(\langle J_i,J_j\rangle=B_{ij}/2>0\) for \(i\ne j,\) so all \(J_i\)
lie in the same connected component of \(\mathscr H.\)

Put \(M_i=-J_iJ_{i+1}.\)  The identities \(J_i^2=-I\) give
\[
        M_i\cdots M_{j-1}=-J_iJ_j, \qquad 1\le i<j\le4,
\]
so the six traces defining \(A_\rho\) are exactly the entries of \(A.\)
Let \(t_i=\tr\rho(K_i).\)  By \eqref{eq:boundary-traces} and
\eqref{eq:maximal-unipotence-relations},
\(t_1+t_2=\Phi(A),\) \(t_1t_2=4,\) and \(\Phi(A)^2=16.\)
Thus, \(t_1=t_2=\Phi(A)/2.\)
Since
\(\rho(K_1)=M_1M_3=J_1J_2J_3J_4,\) we obtain
\begin{equation}\label{eq:half-turn-pfaffian-trace}
        \tr(J_1J_2J_3J_4)=\frac{\Phi(A)}2.
\end{equation}
Set \(\varepsilon=\Phi(A)/4\in\{\pm1\}.\)  Then
\eqref{eq:half-turn-pfaffian-trace} and \(t_1=t_2\) give
\begin{equation}\label{eq:unknown-sign-boundary-traces}
        \tr\rho(K_1)=\tr\rho(K_2)=2\varepsilon.
\end{equation}

{The Fricke commutator formula and
Proposition~\ref{prop:strict-principal-deltas} give}
\(
        \tau:=\tr[M_1,M_2]=2-\Delta(a,b,e)<-2,
\)
{so \(\rho\) is irreducible.  Indeed, if \(\rho\) were reducible,
then \(M_1\) and \(M_2\) could be simultaneously upper triangularized;
their commutator would then have both diagonal entries equal to one and
hence, trace \(2.\)}  If \(\rho(K_1)=\varepsilon I,\) then
\(M_3=\varepsilon M_1^{-1}\) and
\(
 \rho(K_2)=M_1M_2M_3M_2^{-1}=\varepsilon[M_1,M_2],
\)
contradicting \(\tr\rho(K_2)=2\varepsilon.\)  Hence, \(\rho(K_1)\) is
non-central, and \({\kappa}\) is parabolic.

\begingroup\begingroup
Since \(A_\rho=A,\) Proposition~\ref{prop:euler-fricke-dictionary}
identifies \([\rho]\) with \([\rho_A].\)  Corollary~\ref{cor:all-traces-integral}
gives integral traces for every element of \(\rho(F_3).\)
Lemma~\ref{lem:integral-traces-modular} conjugates this group into
\(\SL_2(\mathbb Z),\) so \(\Gamma\) is discrete.
\endgroup\endgroup
{The fixed
points in \(\mathbb H\) of the half-turns \(j_1,j_2,j_3\) are not
collinear, since the determinant of their \(3\times3\) Gram matrix is
\(\Delta(a,b,e)/4>0.\)  Hence, the
axis of the hyperbolic element \(j_1j_2\) and the axis of the hyperbolic
element \(j_2j_3\) are distinct and intersect at the fixed point of
\(j_2.\)  Thus, \(\Gamma\) is non-elementary.}
For \(i<j,\)
\(
 j_i j_j=(j_i j_{i+1})\cdots(j_{j-1}j_j)\in\Gamma
.\)
Hence, every product of an even number of the generators \(j_i\) lies in
\(\Gamma,\) and \(H=\Gamma\cup\Gamma j_1.\)  It follows that \(H\) is discrete and
non-elementary as well.
\end{proof}

\begin{lemma}\label{lem:four-half-turns}
Let \(J_1,\ldots,J_4\) belong to one component of \(\mathscr H,\) and put
\(j_i=[J_i].\)  Suppose that \(H=\langle j_1,j_2,j_3,j_4\rangle\) is
discrete and non-elementary and that \({\kappa}=j_1j_2j_3j_4\) is
parabolic.  Then the homomorphism
\[
 (\mathbb Z/2\mathbb Z)*(\mathbb Z/2\mathbb Z)*(\mathbb Z/2\mathbb Z)*(\mathbb Z/2\mathbb Z)\longrightarrow H,\qquad s_i\longmapsto j_i,
\]
is an isomorphism.
\begingroup\begingroup
Let \(\xi\in\partial\mathbb H\) be the unique fixed point of
\(\kappa.\)  The orbifold \(H\backslash\mathbb H\) has finite area,
genus zero, four cone points with stabilizers of order two and one cusp, represented by
\(\xi,\) and
\(
 \operatorname{Stab}_H(\xi)=\langle\kappa\rangle.
\)
\endgroup\endgroup
The homomorphism \(\ell\colon H\to \mathbb Z/2\mathbb Z,\) \(\ell(j_i)=1,\) has the
torsion-free kernel \(\Gamma:=\ker\ell,\) with basis
\[
        h_1=j_1j_2,\qquad h_2=j_2j_3,\qquad h_3=j_3j_4.
\]
\begingroup\begingroup
The complete finite-area surface \(\Gamma\backslash\mathbb H\) has genus
one and two cusps, represented by the distinct \(\Gamma\)-orbits of
\(\xi\) and \(j_1\xi.\)  Their stabilizers in \(\Gamma\) are
\begin{equation}\label{eq:half-turn-cusp-stabilizers}
 \operatorname{Stab}_\Gamma(\xi)=\langle\kappa\rangle,
 \qquad
 \operatorname{Stab}_\Gamma(j_1\xi)
 =\langle j_1\kappa^{-1}j_1\rangle.
\end{equation}
\endgroup\endgroup
\end{lemma}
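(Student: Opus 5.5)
Since \(H\) is discrete and non-elementary, \(H\) is a Fuchsian group of the first or second kind. The plan is to build a fundamental polygon directly from the four cone points \(x_i=x_{J_i}\) and apply Poincaré's polygon theorem. That theorem gives the free-product presentation and the orbifold structure together.

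\emph{Step 1: the polygon.} Consider the geodesic polygon \(P\) with vertices \(x_1,x_2,x_3,x_4\) and the point \(\xi\). I would take two copies of the "half" polygon, or more naturally the region bounded by the geodesic segments \([x_1,x_2],[x_2,x_3],[x_3,x_4]\) together with the two geodesic rays from \(x_1\) and \(x_4\) to \(\xi\).

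The side pairings are the half-turns themselves. The half-turn \(j_i\) maps the segment through \(x_i\) to itself, reversing orientation about its midpoint. To make the pairings clean, I would use the standard construction for a sphere with cone points and cusps.

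Let \(D\) be the ideal/partly-ideal polygon \(\xi,x_1,x_2',\dots\) obtained by unfolding. Concretely, take \(Q=P\cup j_1P\) or similar, with sides identified by \(j_1,\dots,j_4\). The cycle relations are then:
\begin{enumerate}
\item each \(x_i\) has angle sum \(\pi\), giving \(j_i^2=1\);
\item the ideal vertex \(\xi\) has cycle transformation \(j_1j_2j_3j_4=\kappa\), which is parabolic by hypothesis.
\end{enumerate}

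\emph{Step 2: Poincaré's theorem.} Poincaré's theorem then yields that \(\langle j_i\rangle\) acts with fundamental domain \(Q\). It also gives the presentation \(\langle s_i\mid s_i^2\rangle\), hence the isomorphism with the free product. The quotient is a sphere with four cone points of order two and one cusp, with \(\operatorname{Stab}_H(\xi)=\langle\kappa\rangle\). Its area is finite, equal to \(2\pi(-2+4\cdot\tfrac12+1)=2\pi\).

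The main obstacle is embeddedness and convexity of the polygon. One must show that the four points, in the given cyclic order, bound a simple polygon whose interior angles at the \(x_i\) add correctly, with the rays to \(\xi\) not crossing. Discreteness alone does not immediately give this.

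I would try a more robust alternative first, which avoids polygon geometry. By discreteness, \(H\backslash\mathbb H\) is a hyperbolic orbifold whose fundamental group is \(H\), generated by four involutions. Each \(j_i\) is elliptic of order two, so it gives a cone point. Then use the classification of finitely generated Fuchsian groups: \(H\) has signature \((g;m_1,\dots;\text{cusps/funnels})\).

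Generation by four involutions with product parabolic, together with the abelianization and Euler characteristic count, should force signature \((0;2,2,2,2;1)\). The count runs as follows.
\begin{enumerate}
\item The torsion consists of conjugates of the cone stabilizers.
\item The hypothesis that \(\kappa\) is parabolic supplies at least one cusp.
\item A rank/Euler-characteristic count on a torsion-free finite-index subgroup pins down the rest.
\end{enumerate}
The subtle part is excluding funnels (second kind) and extra genus. For this I would use that the Gram matrix \(B\) has rank three, so all \(J_i\) span the hyperboloid, plus the hyperbolic-area computation in the convex core.

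\emph{Step 3: the subgroup \(\Gamma\).} Since \(\ell\) is nonzero on every conjugate of every \(j_i\), and all torsion in a free product of \(\mathbb Z/2\)'s is conjugate to some \(s_i\), the kernel \(\Gamma\) is torsion-free.

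By Kurosh, or Reidemeister–Schreier with transversal \(\{1,j_1\}\), \(\Gamma\) is free on \(j_1j_2,j_2j_3,j_3j_4\). This is the orientation double cover of the orbifold branched at the four cone points, so \(\Gamma\backslash\mathbb H\) has area \(4\pi\) and Euler characteristic \(-2\).

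The cusp \(\xi\) of \(H\) has stabilizer \(\langle\kappa\rangle\subset\Gamma\), since \(\kappa\) is an even word. Its preimage therefore splits into two \(\Gamma\)-orbits, \(\xi\) and \(j_1\xi\), with stabilizers \(\langle\kappa\rangle\) and \(j_1\langle\kappa\rangle j_1\). This gives two cusps, and \(-2=2-2g-2\) gives \(g=1\).
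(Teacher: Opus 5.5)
Your polygon route is unfinished, as you concede. The pentagon on \(x_1,\dots,x_4,\xi\) is not side-paired by the \(j_i\). The natural candidate is the ideal quadrilateral with vertices \(\xi,\ j_1\xi,\ j_2j_1\xi,\ j_3j_2j_1\xi=j_4\xi\), whose sides pass through \(x_1,\dots,x_4\). But you give no argument that these four points lie in cyclic order on \(\partial\mathbb H\), which Poincar\'e's theorem needs.

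In the signature route you misplace the difficulty. Funnels and extra genus are excluded at once by abelianization. Since \(s\ge1\), the standard presentation gives \(H\simeq F_{2g+s+b-1}*\mathbb Z/m_1\mathbb Z*\cdots*\mathbb Z/m_r\mathbb Z\), while \(H^{\mathrm{ab}}\) is a quotient of \((\mathbb Z/2\mathbb Z)^4\). Hence \(2g+s+b-1=0\), so \(g=0\), \(s=1\), \(b=0\), every \(m_i=2\), and \(3\le r\le4\). No Gram matrix or convex-core area is needed, and \(B\) is not among this lemma's hypotheses.

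The genuine gap is the case \(r=3\), i.e.\ signature \((0;2,2,2;1)\), which is hyperbolic (area \(\pi\)). Your Euler-characteristic/rank count does not exclude it. In that case \(\ker\ell\) is still torsion-free of index two, free of rank two, and generated by \(h_1,h_2,h_3\), with no contradiction.

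The missing idea is a mod-\(2\) comparison in \(H^{\mathrm{ab}}\cong(\mathbb Z/2\mathbb Z)^3\). Each \(j_i\) is conjugate to some standard generator \(x_k\), and the \([j_i]\) generate \(H^{\mathrm{ab}}\), so the multiplicities are \((2,1,1)\). Hence \([\kappa]\) has exactly two non-zero coordinates. But with one cusp, \(\kappa\) is conjugate to \(c^n\), and \(x_1x_2x_3c=1\) gives \([c]=e_1+e_2+e_3\). So \([c^n]\in\{0,e_1+e_2+e_3\}\), a contradiction.

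Two further steps are also missing:
\begin{enumerate}
\item An abstract isomorphism \(H\simeq(\mathbb Z/2\mathbb Z)^{*4}\) does not by itself make \(s_i\mapsto j_i\) injective. You need Hopficity of \((\mathbb Z/2\mathbb Z)^{*4}\), which is finitely generated and residually finite. Your Step~3 (torsion-freeness, and Reidemeister--Schreier on the \(j_i\)) depends on this.
\item \(\operatorname{Stab}_H(\xi)=\langle\kappa\rangle\) requires that \(\kappa\) is not a proper power. This holds because each \(j_i\) occurs once in the cyclically reduced word \(j_1j_2j_3j_4\).
\end{enumerate}
Granting these, your Step~3 agrees with the paper's argument.
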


\begin{proof}
{The standard presentation of a finitely generated
non-elementary Fuchsian group is given in
\cite[Section~4, equation~(11)]{BGLS2010}.  Let \(g\) be
the genus of \(H\backslash\mathbb H,\) let \(m_1,\ldots,m_r\) be the orders of the stabilizers
at its cone points, and {let \(s\) be the number of cusps, and let \(b\) be the
number of funnel ends of \(H\backslash\mathbb H.\)}
Use the presentation
\[
\begin{aligned}
H=\bigl\langle&u_1,v_1,\ldots,u_g,v_g,x_1,\ldots,x_r,
 c_1,\ldots,c_s,d_1,\ldots,d_b\bigm|\\
 &x_i^{m_i}=1\ (1\le i\le r),\quad
 x_1\cdots x_r c_1\cdots c_s d_1\cdots d_b
 \prod_{i=1}^{g}[u_i,v_i]=1\bigr\rangle,
\end{aligned}
\]
where \(c_j\) is represented by a loop around the \(j\)-th cusp.
Since \(\kappa\) is parabolic, \(H\backslash\mathbb H\) has at least one cusp, so \(s\ge1.\)
The product relation gives
\[
 c_s=(x_1\cdots x_r c_1\cdots c_{s-1})^{-1}
 \left(d_1\cdots d_b\prod_{i=1}^{g}[u_i,v_i]\right)^{-1}.
\]
Eliminating \(c_s\) gives}
\[
        H\simeq F_{2g+s+b-1}*(\mathbb Z/m_1\mathbb Z)*\cdots*(\mathbb Z/m_r\mathbb Z),
\]
and hence
\begin{equation}\label{eq:fuchsian-abelianization}
 H^{\mathrm{ab}}\simeq
 \mathbb Z^{2g+s+b-1}\oplus \mathbb Z/m_1\mathbb Z\oplus\cdots\oplus \mathbb Z/m_r\mathbb Z.
\end{equation}
Since \(H\) is generated by four involutions, its abelianization is a
quotient of \((\mathbb Z/2\mathbb Z)^4.\)  Comparing this with
\eqref{eq:fuchsian-abelianization} gives
\(g=0,\) \(s=1,\) \(b=0,\) and every \(m_i=2.\)
{Since \(H\) is finitely generated and \(b=0,\) the quotient has no
funnel ends and has finite area.}
Thus, \(H\simeq (\mathbb Z/2\mathbb Z)^{*r}.\)  Non-elementarity gives \(r\ge3.\)
Since \(H^{\mathrm{ab}}\simeq(\mathbb Z/2\mathbb Z)^r\) is generated by
\([j_1],\ldots,[j_4],\) we have \(r\le4.\)

Let \(c\) generate a maximal parabolic subgroup {of \(H\)}.
There is only one cusp, so \({\kappa}\) is
{conjugate in \(H\)} to \(c^n\) for some \(n\ne0\){, see}
\cite[Theorem~10.3.2 and Corollary~10.3.3]{Beardon1983}.
Use the presentation
\(H=\langle x_1,\ldots,x_r,c\mid x_1^2=\cdots=x_r^2=1,
x_1\cdots x_r c=1\rangle.\)
Identify \(H^{\mathrm{ab}}\) with \((\mathbb Z/2\mathbb Z)^r\) by
sending \([x_i]\) to the \(i\)-th standard basis vector \(e_i.\)
The relation \(x_1\cdots x_rc=1\) gives \([c]=e_1+\cdots+e_r.\)
Suppose that \(r=3.\)  By the normal-form theorem for free products,
each \(j_i\) is conjugate to one of \(x_1,x_2,x_3,\) see
\cite[Theorem~6.2.5\textup{(iii)}]{Robinson1996}.
Since the four classes \([j_i]\) generate \(H^{\mathrm{ab}},\)
all three basis vectors occur among them, with multiplicities
\((2,1,1),\) up to permutation.  Thus, \([\kappa]\) has two nonzero
coordinates, whereas \([c^n]\) is either \(0\) or \(e_1+e_2+e_3.\)
This contradicts the conjugacy of \(\kappa\) and \(c^n,\) so \(r=4.\)

The group \((\mathbb Z/2\mathbb Z)^{*4}\) is residually finite
\cite[Theorem~4.1, Corollary~\textup{(ii)}]{Gruenberg1957} and finitely
generated, hence, Hopfian
\cite[Theorem~6.1.11]{Robinson1996}.  Since \(H\simeq (\mathbb Z/2\mathbb Z)^{*4},\) the
surjection \(s_i\mapsto j_i\) is therefore an isomorphism.
The cyclically reduced word \({\kappa}=j_1j_2j_3j_4\) is not a proper power:
the cyclic normal form of a proper power repeats a shorter word, whereas
each \(j_i\) occurs exactly once in \({\kappa}.\)  Hence, \(|n|=1,\) so
\(\operatorname{Stab}_H(\xi)=\langle\kappa\rangle.\)
To describe \(\ker\ell,\) put \(k_i=j_1j_i,\) \(i=2,3,4.\)
Substituting \(j_i=j_1k_i\) into the free-product presentation gives
\[
 H=\langle j_1,k_2,k_3,k_4\mid
       j_1^2=1,\ j_1k_ij_1=k_i^{-1}\ (i=2,3,4)\rangle
   \simeq F(k_2,k_3,k_4)\rtimes (\mathbb Z/2\mathbb Z).
\]
Here the nontrivial element of \(\mathbb Z/2\mathbb Z\) sends each generator \(k_i\)
to \(k_i^{-1}.\)  Thus, \(\Gamma=\ker\ell\) is free on \(k_2,k_3,k_4.\)
The identities
\(
        k_2=h_1, k_3=h_1h_2, k_4=h_1h_2h_3
\)
show that \(h_1,h_2,h_3\) form a free basis of~\(\Gamma.\)

The cusps above the unique cusp of \(H\backslash\mathbb H\)
are indexed by the double cosets
\(\Gamma\backslash H/\langle\kappa\rangle.\)  Since \(\Gamma\) is normal
of index two and \(\langle\kappa\rangle\subset\Gamma,\) there are exactly
two, represented by \(1\) and \(j_1.\)
{The corresponding \(\Gamma\)-orbits of parabolic fixed points are
represented by \(\xi\) and \(j_1\xi.\)  Intersecting
\(\operatorname{Stab}_H(\xi)=\langle\kappa\rangle\) and its
\(j_1\)-conjugate with \(\Gamma\) gives
\eqref{eq:half-turn-cusp-stabilizers}.}  The quotient
\(\Gamma\backslash\mathbb H\) is a complete finite-area surface,
since \(\Gamma\) is torsion-free and has index two in \(H.\)
Its fundamental group has rank three, so its genus \(g'\) satisfies
\(2g'+2-1=3;\) hence, \(g'=1.\)
\end{proof}
\begin{corollary}\label{cor:half-turn-fricke}
In the notation of Lemma~\ref{lem:positive-euler-half-turns}, after
conjugating \(\rho\) by a matrix in \(\GL_2(\mathbb R),\) if necessary,
its projectivization is a Fricke representation of
\(\Sigma_{1,2}^{\circ}.\)  This conjugation does not change \(A_\rho.\)
\end{corollary}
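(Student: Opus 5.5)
The plan is to combine Lemma~\ref{lem:positive-euler-half-turns} and Lemma~\ref{lem:four-half-turns} with the definition of a Fricke representation. By Lemma~\ref{lem:positive-euler-half-turns}, the group \(H=\langle j_1,\ldots,j_4\rangle\) is discrete and non-elementary, and \(\kappa=j_1j_2j_3j_4\) is parabolic. Lemma~\ref{lem:four-half-turns} therefore applies. It shows that \(\Gamma=\ker\ell\) is free on \(h_i=j_ij_{i+1}\), and that \(\Gamma\backslash\mathbb H\) is a complete finite-area surface of genus one with two cusps. Since \(\overline\rho(\alpha_i)=[-J_iJ_{i+1}]=h_i\), the map \(\overline\rho\colon F_3\to\Gamma\) sends a free basis to a free basis. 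Hence \(\overline\rho\) is an isomorphism onto a discrete subgroup. In particular it is faithful, and its image has finite-area quotient.

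The remaining requirement is that \(\overline\rho\) be induced by an orientation-preserving homeomorphism \(f\colon\Sigma_{1,2}^{\circ}\to Y=\Gamma\backslash\mathbb H\). Both surfaces are tori with two punctures, so some homeomorphism exists. The isomorphism \(\overline\rho_*\colon\pi_1(\Sigma_{1,2}^\circ)\to\pi_1(Y)\) must, however, be realized geometrically. I would use the extended Dehn--Nielsen--Baer theorem \cite[Theorem~8.8]{FarbMargalit2012}. An isomorphism between fundamental groups of punctured surfaces is induced by a homeomorphism if it sends peripheral conjugacy classes to peripheral conjugacy classes. So I check the peripheral classes. Under \(\overline\rho\), \(K_1=\alpha_1\alpha_3\) maps to \(h_1h_3=j_1j_2j_3j_4=\kappa\). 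Likewise \(K_2=\alpha_1\alpha_2\alpha_3\alpha_2^{-1}\) maps to \(h_1h_2h_3h_2^{-1}=j_1j_2j_3j_4j_3j_2\). This equals \(j_1\,(j_2j_3j_4j_1)^{\ldots}\), which is conjugate to \(j_1\kappa^{-1}j_1\) up to inversion. Concretely, \(j_1j_2j_3j_4j_3j_2=(j_1j_2j_3)\,j_4\,(j_3j_2)\), and conjugating by suitable elements of \(\Gamma\) reduces this to \(j_1\kappa^{\pm1}j_1\). By \eqref{eq:half-turn-cusp-stabilizers}, these are exactly the generators of the two cusp stabilizers. Therefore both boundary classes map to primitive peripheral classes representing the two distinct cusps.

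Dehn--Nielsen--Baer then gives a homeomorphism \(f\) inducing \(\overline\rho\), up to inner automorphisms. It need not preserve orientation. If it reverses orientation, I conjugate \(\rho\) by \(\operatorname{diag}(1,-1)\in\GL_2(\mathbb R)\). This conjugation induces the reflection \(z\mapsto-\overline z\) of \(\mathbb H\), which descends to an orientation-reversing isometry between the two quotients. Composing \(f\) with this isometry produces an orientation-preserving homeomorphism inducing the conjugated representation. Conjugation preserves traces, so \(A_\rho\) is unchanged.

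The main obstacle is the peripheral bookkeeping. One has to verify that \(\overline\rho(K_2)\) is conjugate in \(\Gamma\), and not merely in \(H\), to a generator of the second cusp stabilizer. One also has to confirm that the orientation-sign subtleties of the extended Dehn--Nielsen--Baer theorem allow only an orientation flip, which the \(\GL_2(\mathbb R)\) conjugation then fixes.
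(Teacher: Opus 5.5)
Your route is the paper's route. You identify \(\overline\rho\) with the isomorphism \(F_3\to\Gamma\), \(\alpha_i\mapsto h_i\). You then check that \(K_1,K_2\) go to generators of the two cusp stabilizers, apply the extended Dehn--Nielsen--Baer theorem, and repair orientation by conjugating with \(\operatorname{diag}(1,-1)\). The paper applies \cite[Theorem~8.8]{FarbMargalit2012} in its literal one-surface form. It fixes an orientation-preserving homeomorphism \(\widehat f_0\colon\widehat\Sigma\to\widehat Y\) matching the punctures and applies the theorem to the class of \(f_{0*}^{-1}\circ\overline\rho\) in \(\operatorname{Out}(F_3)\). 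Your two-surface phrasing reduces to this.

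The step that fails as written is your computation of \(\overline\rho(K_2)\). You wrote \(h_1h_2h_3h_2^{-1}=j_1j_2j_3j_4j_3j_2\), but the \(j_i\) are involutions, so
\[
 h_1h_2h_3h_2^{-1}=(j_1j_2)(j_2j_3)(j_3j_4)(j_3j_2)=j_1j_4j_3j_2=j_1\kappa^{-1}j_1
\]
holds exactly. Your word is not just unsimplified; it is a different element. In \(H\simeq(\mathbb Z/2\mathbb Z)^{*4}\) it is cyclically reduced of length six. Every conjugate of a nonzero power \(\kappa^n\) has cyclically reduced length \(4|n|\). So your element is not peripheral at all, and no conjugation in \(\Gamma\) or in \(H\) turns it into \(j_1\kappa^{\pm1}j_1\). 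The hand-waved ``conjugating by suitable elements of \(\Gamma\)'' step is therefore false as stated.

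With the correct identity, the obstacle you flag disappears. The images \(\overline\rho(K_1)=h_1h_3=\kappa\) and \(\overline\rho(K_2)=j_1\kappa^{-1}j_1\) are exactly the generators in \eqref{eq:half-turn-cusp-stabilizers}, so no \(\Gamma\)-versus-\(H\) bookkeeping is needed. Your second worry is resolved as in the paper. The extended theorem is a statement about \(\operatorname{Mod}^{\pm}\), so the only possible defect is orientation reversal. Conjugating by \(\operatorname{diag}(1,-1)\) fixes that and preserves all traces.
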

\begin{proof}
Put \(Y=\Gamma\backslash\mathbb H.\)  The projectivization of \(\rho\)
sends \(\alpha_i\) to \(h_i=j_ij_{i+1}.\)  By
Lemma~\ref{lem:four-half-turns}, the elements \(h_1,h_2,h_3\) form a free
basis of \(\Gamma;\) hence
\(\overline\rho\colon F_3\xrightarrow{\sim}\Gamma.\)  Moreover,
\[
 \overline\rho(K_1)=h_1h_3=\kappa,
 \qquad
 \overline\rho(K_2)=h_1h_2h_3h_2^{-1}
 =j_1\kappa^{-1}j_1.
\]
\begingroup\begingroup
Thus, \(\overline\rho\) sends the conjugacy classes of the cyclic subgroups
\(\langle K_1\rangle,\langle K_2\rangle\subset F_3\) to the two
\(\Gamma\)-conjugacy classes of maximal parabolic subgroups of
\(\Gamma.\)

Let \(\widehat Y=Y\cup\{q_1,q_2\}\) be the compactification obtained
by adding one point at each cusp.  Label \(q_i\) by the
\(\Gamma\)-conjugacy class of
\(\langle\overline\rho(K_i)\rangle.\)
Choose an orientation-preserving homeomorphism
\(\widehat f_0:\widehat\Sigma\to\widehat Y\) with
\(\widehat f_0(p_i)=q_i\) for \(i=1,2,\) and put
\(f_0=\widehat f_0|_{\Sigma_{1,2}^{\circ}}.\)
Choose a basepoint \(y_0\in Y\) and a lift to \(\mathbb H,\) so that
the covering \(\mathbb H\to Y\) identifies \(\pi_1(Y,y_0)\) with
\(\Gamma.\)  A path from \(y_0\) to \(f_0(*)\) then defines an
isomorphism \(f_{0*}:F_3\to\Gamma.\)  For the chosen \(f_0,\) the class
\[
 \eta=[f_{0*}^{-1}\circ\overline\rho]\in\operatorname{Out}(F_3)
\]
is independent of the basepoint, lift, and connecting path.
For each \(i=1,2,\) \(\eta\) fixes the conjugacy class of the cyclic
subgroup \(\langle K_i\rangle.\)
The extended Dehn--Nielsen--Baer theorem
\cite[Theorem~8.8]{FarbMargalit2012} therefore gives a
homeomorphism \(\widehat h:\widehat\Sigma\to\widehat\Sigma\)
fixing \(p_1\) and \(p_2\) individually, whose restriction
\(h=\widehat h|_{\Sigma_{1,2}^{\circ}}\) induces \(\eta.\)
Consequently, \(f=f_0\circ h\) induces \(\overline\rho,\) up to
conjugation in \(\Gamma.\)
\endgroup\endgroup

If \(f\) reverses orientation, put
\(r=\operatorname{diag}(1,-1).\)  The map
\(z\mapsto-\overline z\) conjugates the action of \(\Gamma\) to that of
\(r\Gamma r^{-1}\) and induces an orientation-reversing isometry
\(
 Y\longrightarrow r\Gamma r^{-1}\backslash\mathbb H.
\)
\begingroup\begingroup
Composing this isometry with \(f\) gives an orientation-preserving homeomorphism
\(
 \Sigma_{1,2}^{\circ}\longrightarrow r\Gamma r^{-1}\backslash\mathbb H
\)
whose induced homomorphism on fundamental groups is the projectivization
of \(r\rho r^{-1},\) up to conjugation in \(\PSL_2(\mathbb R).\)
\endgroup\endgroup
Conjugation preserves all traces,
so \(A_{r\rho r^{-1}}=A_\rho.\)
\end{proof}

\begin{proof}[Proof of Theorem~\ref{thm:positive-fricke-character}]
Let \(\rho\) be the representation constructed in
Lemma~\ref{lem:positive-euler-half-turns} by
\eqref{eq:half-turn-representation}.  Apply
Corollary~\ref{cor:half-turn-fricke}, replacing \(\rho\) by the
{conjugate from that corollary} if necessary.  Then \(A_\rho=A,\) and the
projectivization of \(\rho\) is a Fricke representation whose quotient is
a complete finite-area surface of genus one with two cusps.
Proposition~\ref{prop:euler-fricke-dictionary} gives
\([\rho]=[\rho_A],\) and \eqref{eq:unknown-sign-boundary-traces} gives
\[
 \tr\rho(K_1)=\tr\rho(K_2)=\frac{\Phi(A)}2\in\{-2,2\}.
\]
\begingroup\begingroup
Lemma~\ref{lem:four-half-turns} shows that, for each \(i=1,2,\)
\(\overline\rho(K_i)\) is a parabolic element with a unique
fixed point \(\xi_i\in\partial\mathbb H,\) and
\(
 \operatorname{Stab}_{\overline\rho(F_3)}(\xi_i)
 =\langle\overline\rho(K_i)\rangle.
\)
The points \(\xi_1\) and \(\xi_2\) belong to distinct
\(\overline\rho(F_3)\)-orbits.
\endgroup\endgroup
Hence, \(\rho(K_1)\) and \(\rho(K_2)\) are non-central.

Every representative \(\rho'\) of \([\rho_A]\) satisfies
\[
 \tr\rho'([U,Y])=2-\Delta(a,d,f)<-2.
\]
If \(\rho'\) were reducible, \(\rho'(U)\) and \(\rho'(Y)\) could be
simultaneously upper triangularized, and their commutator would have trace
\(2.\)  Thus, every representative of \([\rho_A]\) is irreducible.
\end{proof}
\endgroup

\end{document}